\documentclass[12pt]{amsart}
\usepackage{a4wide}
\usepackage[english]{babel}

\usepackage{amssymb,amsfonts}
\usepackage{mathrsfs}
\usepackage{graphicx}
\usepackage[all,cmtip]{xy}
\usepackage{tikz}
\usepackage{tikz-cd}
\tikzset{dynkdot/.style={circle,draw,scale=.38}}
\usepackage{cases}
\usepackage{mathtools}
\usepackage{arydshln}
\usepackage{multirow}
\usepackage{amscd}
\usepackage{pb-diagram}
\usepackage{bm}
\usepackage{bbm}
\usepackage{dsfont}
\usepackage{extarrows}
\usepackage{hyperref}
\usepackage[alphabetic]{amsrefs}

\newtheorem{theorem}{Theorem}[section]

\newtheorem{proposition}[theorem]{Proposition}
\newtheorem{lemma}[theorem]{Lemma}
\newtheorem{setting}[theorem]{Setting}
\newtheorem{corollary}[theorem]{Corollary}

\theoremstyle{definition}
\newtheorem{definition}[theorem]{Definition}
\newtheorem{example}[theorem]{Example}
\newtheorem{proposition-definition}[theorem]{Proposition-Definition}
\newtheorem{definition-theorem}[theorem]{Definition-Theorem}

\theoremstyle{remark}
\newtheorem{remark}[theorem]{Remark}
\newtheorem{conjecture}[theorem]{Conjecture}
\numberwithin{equation}{section}

\def\TT{\mathbb{T}}

\def\QQ{\mathbb{Q}}

\def\Dcal{\mathcal{D}}
\def\Fcal{\mathcal{F}}

\newcommand{\hd}{\opname{hd}}

\newcommand{\opname}[1]{\operatorname{\mathsf{#1}}}
\newcommand{\Gr}{\opname{Gr}\nolimits}

\newcommand{\de}{\opname{deg}}

\newcommand{\Ext}{\opname{Ext}}

\newcommand{\Hom}{\opname{Hom}}

\newcommand{\ie}{\textit{i.e.,}\ }
\newcommand{\confer}{\textit{cf.}\ }

\newcommand{\im}{\opname{im}\nolimits}

\newenvironment{blue}{\relax\color{blue}}{\relax}
\newcommand{\nc}{\newcommand}
\nc{\beb}{\begin{blue}}
\nc{\eb}{\end{blue}}

\newcommand{\diag}{\operatorname{diag}}
\newcommand{\GLS}{{\sf GLS}}
\newcommand{\injc}{\mathchar`-\mathsf{inj}}
\newcommand{\projc}{\mathchar`-\mathsf{proj}}
\newcommand{\cK}{\mathcal{K}}
\newcommand{\modfp}{\mathchar`-\mathsf{mod}_{\mathrm{fp}}}
\newcommand{\modfcp}{\mathchar`-\mathsf{mod}_{\mathrm{fcp}}}
\newcommand{\modfg}{\mathchar`-\mathsf{mod}_{\mathrm{fg}}}
\newcommand{\modfcg}{\mathchar`-\mathsf{mod}_{\mathrm{fcg}}}
\newcommand{\op}{\mathrm{op}}
\newcommand{\fM}{\mathfrak{M}}
\newcommand{\Modln}{\mathchar`-\mathsf{Mod}_{\mathrm{ln}}}
\newcommand{\Modpct}{\mathchar`-\mathsf{Mod}_{\mathrm{pct}}}
\newcommand{\Mod}{\mathchar`-\mathsf{Mod}}
\newcommand{\modfd}{\mathchar`-\mathsf{mod}_{\mathrm{fd}}}

\newcommand{\DWZ}{\mathsf{DWZ}}
\newcommand{\al}{\alpha}
\newcommand{\be}{\beta}
\newcommand{\ga}{\gamma}
\tikzcdset{scale cd/.style={every label/.append style={scale=#1}, cells={nodes={scale=#1}}}}

\usepackage{appendix}
\begin{document}
\sloppy
\title[Partial $F$-invariants]{Partial $F$-invariants and cluster categorifications}


\author{Peigen Cao}
\address{School of Mathematical Sciences, University of Science and Technology of China, Hefei, 230026, People's Republic of China
}
\email{peigencao@126.com}

\author{Ryo Fujita}
\address{Research Institute for Mathematical Sciences, Kyoto University, Kitashirakawa Oiwake-cho, Sakyo, Kyoto, 606-8502, Japan}
\email{rfujita@kurims.kyoto-u.ac.jp}

\author{Kota Murakami}
\address{Kyoto University Institute for Advanced Study (KUIAS), Kyoto University,
Yoshida Ushinomiya-cho, Sakyo-ku, Kyoto 606-8501, Japan}
\email{murakami.kota.8v@kyoto-u.ac.jp}

\thanks{P. C. was supported by the National Key R\&D Program of China
(2024YFA1013801).
R.F.\ was supported by JSPS KAKENHI Grant Numbers JP23K12955 and JP26K16962. K.M. was supported by JSPS KAKENHI Grant Numbers JP21J14653, JP23KJ0337, JP22H05107, JP26K16959.}


\dedicatory{}

\subjclass[2020]{13F60, 16G10, 18M05}

\date{}

\keywords{}

\begin{abstract}

The $F$-invariant in cluster algebras is a combinatorial invariant that unifies the $E$-invariant from additive categorification and the $\mathfrak{d}$-invariant from monoidal categorification. In this paper, we study its refinement, the partial $F$-invariant, and establish its mutation formula under changes of the initial seed. As an application, we prove a conjecture of Reading, which asserts that the non-compatible cluster variables can be separated by sign-coherence of $g$-vectors upon varying the initial seed. We further show that, for cluster monomials, the partial $F$-invariants coincide with both the partial $E$-invariants for reachable decorated representations of quivers with potentials and the pole orders of normalized $R$-matrices (or partial $\mathfrak{d}$-invariants) for finite-dimensional reachable simple modules over quantum affine algebras. As consequences, we obtain a combinatorial formula for the pole orders for reachable simple modules in terms of $q$-characters; we verify the conjectural explicit formula for the pole orders between Kirillov--Reshetikhin modules. 
\end{abstract}

\maketitle

\setcounter{tocdepth}{1}
\tableofcontents

\section{Introduction}

\subsection{Cluster algebras} The notion of cluster algebras was introduced by Fomin and Zelevinsky \cite{fz_2002}  as a combinatorial approach to the dual canonical bases (or upper global bases) of quantum groups and to the theory of total positivity in algebraic groups. They often arise as the coordinate rings of various spaces of Lie-theoretic origin, and admit several interesting categorifications, some of which we are going to mention below.

A cluster algebra is a $\mathbb Z$-subalgebra of a rational function field generated by a special set of
generators called {\em cluster variables}, which are grouped into overlapping subsets,
called {\em clusters}. A {\em seed} is a pair consisting of a cluster and a rectangular
integer matrix with a skew-symmetrizable principal part. One can obtain new seeds from a given one by a procedure called {\em mutation}.
The sets of cluster variables and clusters of a cluster algebra are determined by an initial seed and the iterated mutations.
 A {\em cluster monomial} is a monomial in variables from the same cluster. 
 
 \subsection{Combinatorial invariants for cluster algebras}
A fundamental problem in cluster algebras is to characterize when two cluster variables are contained in the same cluster. Much progress has been made by introducing different types of compatibility degrees or invariants. For example,
\begin{itemize}
\item Fomin and Zelevinsky \cite{fz-2003y} introduced the compatibility degree $(-\mid\mid-)$ on the set $\Phi_{\geq -1}$ of almost positive roots associated to a Cartan matrix $C$ of finite type. It is known that the almost positive roots in  $\Phi_{\geq -1}$ are in bijection with the cluster variables of a cluster algebra of finite type, \confer \cite{fz_2003f}. 
\item Fomin, Shapiro and Thurston \cite{FST08} introduced the intersection pairing $(-\mid-)$  for tagged arcs on marked surfaces, which are in bijection with the cluster variables of cluster algebras of surface type.
\item Cao and Li \cite{cao-li-2020} introduced the $d$-compatibility degree
on the set of cluster variables using the components of the $d$-vectors, which can be defined for any skew-symmetrizable cluster algebra. 
\item Fu and Gyoda \cite{Fu-Gyoda} introduced the $f$-compatibility degree
on the set of cluster variables using the components of the $f$-vectors, which can also be defined for any skew-symmetrizable cluster algebra. 
\end{itemize}

Now we turn to the invariants that are of interest in this paper. In \cite{Cao-2023}, the first named author of this paper introduced two new $\mathbb Z$-valued invariants (the tropical invariant and the $F$-invariant) for a pair of good elements (e.g., cluster monomials) in a cluster algebra $\mathcal{A}$ with a compatible Poisson structure. 
  The tropical invariant $\langle u,u' \rangle_{\rm trop}$ for a pair of good elements $u$ and $u'$ of $\mathcal{A}$ is defined using tropicalization. More precisely, it is proved that the good element $u'$ determines a semifield homomorphism 
 \[\beta_{u'}\colon\mathbb Q_{\rm sf}(x_{1;t_0},\ldots,x_{m;t_0})\rightarrow \mathbb Z^{\max},\]
where $\mathbb Q_{\rm sf}(x_{1;t_0},\ldots,x_{m;t_0})$ is the universal semifield generated by the initial cluster variables (frozen and unfrozen) and $\mathbb Z^{\max}=(\mathbb Z,\;+,\;\max)$ is the tropical semifield. Then, the {\em tropical invariant} $\langle u,u' \rangle_{\rm trop}$ is defined by 
\[ \langle u,u' \rangle_{\rm trop}\coloneqq \beta_{u'}(u).\]
The {\em $F$-invariant} $(u\mid\mid u')_F$ is defined to be the symmetrized sum of the tropical invariants:
\[ (u\mid\mid u')_F\coloneqq \langle u,u' \rangle_{\rm trop}+\langle u',u \rangle_{\rm trop}.\]
It turns out that $(u\mid\mid u')_F$ is always a non-negative integer. Moreover, when both $u$ and $u'$ are cluster variables, we have $(u \mid\mid u')_F = 0$ if and only if $u$ and $u'$ belong to a common cluster, \confer \cite[Theorem 4.19]{Cao-2023}.
In this sense, the $F$-invariant can be seen as a compatibility degree of cluster variables. Although not immediately apparent from the definition, the $F$-invariant extends both Fomin--Zelevinsky's compatibility degree and Fu--Gyoda's $f$-compatibility degree, \confer \cite{Cao-2023}.

We consider the tropical invariant and the $F$-invariant to be canonical combinatorial invariants of a cluster algebra.
In fact, it is shown in \cite{Cao-2023,CCG-2026} that they admit natural categorical interpretations when our cluster algebra $\mathcal{A}$ admits categorifications. 

\subsection{Partial $F$-invariant}
In this paper, we discuss a combinatorial refinement of the $F$-invariant, which we call the partial $F$-invariant, as well as its categorical interpretations.

In what follows, given a non-zero polynomial $F=\sum_{{\bf v}\in\mathbb N^n}c_{{\bf v}}{\bf y}^{{\bf v}}\in\mathbb Z[y_1,\ldots,y_n]$  and a vector ${\bf r}\in \mathbb Z^n$, we denote 
$$F[{\bf r}]\coloneqq \max\{{\bf v}^T{\bf r}\mid c_{{\bf v}}\neq 0\}\in\mathbb Z.$$
The map $F[-]\colon\mathbb Z^n\rightarrow\mathbb Z$ is called a {\em tropical polynomial}.  Clearly, if $F$ has constant term $1$, then $F[{\bf r}]\in \mathbb Z_{\geq 0}$ for any ${\bf r}\in \mathbb Z^n$.

For an ordered pair $(u,u')$ of two good elements of our cluster algebra $\mathcal{A}$, we define the {\em partial $F$-invariant} to be the non-negative integer 
\[F_u^t[S({\bf g}_{u'}^t)^\circ],\] where $F_u^t\in\mathbb{Z}_{\ge 0}[y_1,\ldots,y_n]$ and $({\bf g}_u^t)^\circ\in\mathbb Z^n$ are the $F$-polynomial and $g$-vector (supported on unfrozen part) of $u$ with respect to a seed $t$ of $\mathcal A$, and $S=\diag(s_1,\ldots,s_n)$ is a chosen skew-symmetrizer for the exchange matrices of $\mathcal A$.  
The name comes from the fact \cite[Theorem 4.10]{Cao-2023} that the ordinary $F$-invariant $(u\mid\mid u')_F$ can be written as the symmetrized sum of them:
\begin{eqnarray}\label{eqn:def-F-inv-1}
    (u\mid\mid u')_F=F_u^t[S({\bf g}_{u'}^{t})^\circ]+F_{u'}^t[S({\bf g}_{u}^t)^\circ].
\end{eqnarray}
Thus, the partial $F$-invariant can be understood as a refinement of the $F$-invariant.   

As the first result of this paper, we give the mutation formula of the partial $F$-invariant under initial seed mutations, which allows us to give a new proof of the mutation-invariance of the $F$-invariant, \ie the value of the right-hand side of \eqref{eqn:def-F-inv-1} is independent of the choice of $t\in\mathbb T_n$.

\begin{theorem}[{Theorem \ref{thm:F-inv}}]
\label{main-thm:F-inv}
Let $\mathcal A$ be a cluster algebra of full rank and let $S=\diag(s_1,\ldots,s_n)$ be a fixed skew-symmetrizer for the exchange matrices of $\mathcal A$. Let  $u$ and $u'$ be two good elements in $\mathcal A$. Then the following statements hold.
    \begin{itemize}
        \item [(i)]  For any edge \begin{xy}(0,1)*+{t}="A",(10,1)*+{t'}="B",\ar@{-}^k"A";"B" \end{xy}  in $\TT_n$, we have
       \[   F_u^{t'}[S({\bf g}_{u'}^{t'})^\circ]-F_u^t[S({\bf g}_{u'}^t)^\circ]=s_k([-g_{k;u}^{t'}]_+[-g_{k;u'}^{t}]_+-[-g_{k;u}^t]_+[-g_{k;u'}^{t'}]_+).
        \]
        \item[(ii)] For any two vertices $t,t'\in\mathbb T_n$, we have 
       \[
            F_u^t[S({\bf g}_{u'}^t)^\circ]+F_{u'}^t[S({\bf g}_u^t)^\circ]=F_u^{t'}[S({\bf g}_{u'}^{t'})^\circ]+F_{u'}^{t'}[S({\bf g}_u^{t'})^\circ].
       \]
    In particular, the $F$-invariant $(u\mid\mid u')_F=F_u^t[S({\bf g}_{u'}^t)^\circ]+F_{u'}^t[S({\bf g}_u^t)^\circ]$ only depends on $u$ and $u'$, not on the choice of vertex $t\in\mathbb T_n$.
    \end{itemize}
\end{theorem}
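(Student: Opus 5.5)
We prove (i) by a direct tropical computation and then obtain (ii) from it by symmetrization. The tools are the standard mutation rules for $F$-polynomials and $g$-vectors under a change of initial seed. These are due to Fomin--Zelevinsky, or follow from the separation formula together with sign-coherence for full rank cluster algebras.

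Fix the edge $t \overset{k}{-} t'$. Write $B=B_t$ for the exchange matrix, ${\bf g}={\bf g}_u^t$, ${\bf g}'={\bf g}_u^{t'}$, ${\bf h}={\bf g}_{u'}^t$, ${\bf h}'={\bf g}_{u'}^{t'}$. The $g$-vectors transform by
\[ g'_k=-g_k,\qquad g'_i=g_i+[b_{ik}]_+g_k-b_{ik}\min(g_k,0)\quad (i\ne k), \]
up to the usual sign convention. The $F$-polynomial transforms by
\[ F_u^{t'}(y')=(1+y_k)^{-g_k'}\ldots, \]
or, in the clean form used in \cite{Cao-2023}: $(1+y_k)^{[-g_k]_+}F_u^{t'}(y')=(1+y_k)^{[-g'_k]_+}F_u^t(y)$, where $y'_k=y_k^{-1}$ and $y'_i=y_iy_k^{[b_{ki}]_+}(1+y_k)^{-b_{ki}}$ for $i\neq k$. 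This is the relation $F$-polynomials inherit from the mutation of $\hat y$-variables.

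The plan is to tropicalize this identity. Evaluate both sides at the tropical point ${\bf r}=S{\bf h}$, viewed in the $y$-coordinates of $t$. Tropicalization in the max semifield is a semifield homomorphism, so it turns the identity into
\[ s_k h_k[\,\cdot\,]\text{-terms}:\quad [-g_k]_+[s_kh_k]_+ + F_u^{t'}[{\bf r}']=[-g'_k]_+[s_kh_k]_+ + F_u^t[S{\bf h}], \]
where ${\bf r}'$ is the tropical image of ${\bf r}$ under the $y$-mutation at $k$. The key check is that ${\bf r}'=S{\bf h}'$. Equivalently, $S{\bf g}$ transforms under initial-seed mutation exactly as a tropical $y$-point, using the transposed, skew-symmetrized matrix $SB$ versus $BS^{-1}$ bookkeeping. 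This follows from comparing the $g$-vector mutation rule with the tropical $y$-rule $r'_k=-r_k$, $r'_i=r_i+[b_{ki}]_+r_k-b_{ki}[r_k]_+$, using $s_ib_{ik}=-s_kb_{ki}$.

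After this, one rewrites $[s_kh_k]_+=s_k[h_k]_+$. Since $g'_k=-g_k$ and $h'_k=-h_k$, we have $[h_k]_+=[-h'_k]_+$. Substituting gives
\[ F_u^{t'}[S{\bf h}']-F_u^t[S{\bf h}]=s_k\bigl([-g'_k]_+[-h'_k]_+-[-g_k]_+[-h'_k]_+\bigr)\cdot(\ldots). \]
At this point the signs must be sorted carefully. The paper's right side is $s_k([-g'_k]_+[-h_k]_+-[-g_k]_+[-h'_k]_+)$, so the correct pairing must come out of which side carries $h$ versus $h'$. I expect this sign and convention matching to be the main obstacle. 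The pieces involved are the exact form of the $F$-mutation factor, the tropical sign of ${\bf r}$ versus $-{\bf r}$, and the identification ${\bf r}'=S{\bf h}'$. I would first verify the formula on the rank-one case $u=x_k$, $u'=x'_k$ to fix conventions, then adjust.

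For (ii), add formula (i) to the same formula with $u$ and $u'$ swapped. The right side becomes
\[ s_k\bigl([-g'_k]_+[-h_k]_+-[-g_k]_+[-h'_k]_++[-h'_k]_+[-g_k]_+-[-h_k]_+[-g'_k]_+\bigr)=0. \]
So the symmetric sum is unchanged along each edge. Since $\TT_n$ is connected, it is independent of $t$, and by \eqref{eqn:def-F-inv-1} it equals $(u\mid\mid u')_F$.
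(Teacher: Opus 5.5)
You follow the paper's strategy: tropicalize the $F$-polynomial exchange identity at the tropical point $\{S({\bf g}_{u'}^w)^\circ\}_w$, then symmetrize for (ii). The tropical-point property of that family is exactly Lemma~\ref{lem:S-map}, and your check via $s_ib_{ik}=-s_kb_{ki}$ is correct.

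\textbf{The gap.} The identity you tropicalize, $(1+y_k)^{[-g_k]_+}F_u^{t'}(y')=(1+y_k)^{[-g'_k]_+}F_u^t(y)$, is false. The factor on the $t'$ side must be $1+y'_k=1+y_k^{-1}$:
\[
(1+y'_k)^{-[-g'_k]_+}F_u^{t'}(y')=(1+y_k)^{-[-g_k]_+}F_u^t(y).
\]
Your version is off by the monomial $y_k^{[-g'_k]_+}$. Take $\widetilde B=\begin{bmatrix}0\\1\end{bmatrix}$ and $u=x_{1;t}$. Then $F_u^t=1$, $g_1=1$, $F_u^{t'}=1+y_1$, $g'_1=-1$, and your identity reads $1+y_1^{-1}=1+y_1$.

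This is why the signs would not sort out, and no choice of convention repairs it. Tropicalizing your identity gives
\[
F_u^{t'}[S{\bf h}']-F_u^t[S{\bf h}]=s_k\bigl([-g'_k]_+-[-g_k]_+\bigr)[h_k]_+=s_kg_k[h_k]_+,
\]
which is simply wrong. For $u=u'=x_{1;t}$ in the example it predicts $F_u^{t'}[S{\bf g}_u^{t'}]=s_1$. The true value is $\max(0,-s_1)=0$.

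\textbf{The fix.} With the correct identity, the $t'$-side factor tropicalizes to $[r'_k]_+=[s_kh'_k]_+=s_k[-h_k]_+$. This gives
\[
F_u^{t'}[S{\bf h}']-F_u^t[S{\bf h}]=s_k\bigl([-g'_k]_+[-h_k]_+-[-g_k]_+[h_k]_+\bigr),
\]
which is (i) because $[h_k]_+=[-h'_k]_+$.

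Two justifications are also missing.
\begin{itemize}
\item The Fomin--Zelevinsky rule you cite concerns cluster variables, not arbitrary good elements. Derive the identity as in Lemmas~\ref{lem:1-mutation} and~\ref{F-mutation}. The exchange relation and the tropical rule for ${\bf g}$ give ${\bf x}_{t'}^{{\bf g}'}(1+\hat y_{k;t'})^{[-g'_k]_+}={\bf x}_t^{{\bf g}}(1+\hat y_{k;t})^{[-g_k]_+}$. Dividing the two canonical expressions of $u$ by this yields the identity in the $\hat y$-variables.
\item To apply a semifield homomorphism, both sides must lie in $\mathbb Q_{\rm sf}(\hat y_{1;t_0},\ldots,\hat y_{n;t_0})$. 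This uses full rank: the $\hat y_{i;t_0}$ are then algebraically independent, so the tropical point defines a homomorphism by Corollary~\ref{cor:bijection}. It also uses universal positivity of $u$: this makes $F_u^t$ and $F_u^{t'}$ subtraction-free, so their tropicalizations are $F[\cdot]$.
\end{itemize}
Your part (ii) is fine once (i) is established correctly.
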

\begin{remark}
We can remove the full rank condition, if we only consider cluster monomials rather than good elements (see Subsection \ref{sec:trivial-coeff}).
\end{remark}

An important result in cluster algebras concerning $g$-vectors is row sign-coherence, which says that if ${\bf z}=(z_1,\ldots,z_n)$ is a cluster of a cluster algebra $\mathcal A$ (with trivial coefficients), then the $i$-th components of the $g$-vectors ${\bf g}_{z_1}^{t},\ldots, {\bf g}_{z_n}^{t}$ with respect to any vertex $t\in\mathbb T_n$ are simultaneously
non-negative or simultaneously non-positive, \confer \cite{GHKK18}. In particular, it implies that if two cluster variables $u$ and $u'$ are in the same cluster, then $g_{i;u}^tg_{i;u'}^t\geq 0$ for any $i\in[1,n]$ and $t\in\mathbb T_n$. In \cite[Conjecture 8.21]{Reading-2014}, Reading conjectured that this property can be used to characterize the compatibility between cluster variables. As an application of the mutation formula of the partial $F$-invariant, we give a positive answer to Reading's conjecture.

\begin{theorem}[Theorem \ref{thm:sign-coherent}]
\label{main-thm:sign-coherent} Let $\mathcal A$ be a cluster algebra and let $u$ and $u'$ be two cluster monomials of $\mathcal A$. Then 
    the product $uu'$ is still a cluster monomial if and only if  $u$ and $u'$ are sign-coherent (see Definition \ref{def:sign-coherent}). In particular, Reading's conjecture is true.
\end{theorem}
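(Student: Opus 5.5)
Sign-coherence of $u$ and $u'$ presumably means that for every vertex $t\in\mathbb T_n$ and every $i\in[1,n]$ we have $g_{i;u}^t\,g_{i;u'}^t\ge 0$. I will work with that reading throughout.

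The ``only if'' direction is immediate. If $uu'$ is a cluster monomial, then $u$ and $u'$ are monomials in a common cluster. Row sign-coherence of $g$-vectors \cite{GHKK18}, applied to that cluster at every vertex $t$, then gives $g_{i;u}^tg_{i;u'}^t\ge 0$.

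For the ``if'' direction, I would reduce everything to the vanishing of the $F$-invariant. By \cite[Theorem 4.19]{Cao-2023} and its extension to cluster monomials, $uu'$ is a cluster monomial exactly when $(u\mid\mid u')_F=0$. For cluster variables this is the stated criterion. For monomials I would use bilinearity of tropical invariants on cluster monomials of a fixed cluster, so that $(u\mid\mid u')_F=0$ if and only if every pair of cluster variables appearing in $u$ and in $u'$ has vanishing $F$-invariant, which gives pairwise compatibility and hence a common cluster. Since the problem only involves cluster monomials, the Remark after Theorem \ref{main-thm:F-inv} lets me drop the full-rank hypothesis, or I pass to principal coefficients, where $g$-vectors and $F$-polynomials are unchanged. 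The plan is then to show that sign-coherence forces $F_u^t[S({\bf g}_{u'}^t)^\circ]=0$, together with the symmetric term, for some $t$.

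The key step uses the mutation formula of Theorem \ref{main-thm:F-inv}(i). Under sign-coherence, at each $k$ either both $g_{k;u}^t,g_{k;u'}^t\ge 0$ or both are $\le 0$, and the same holds at $t'$. Consider the path from $t$ to a seed $t_u$ containing the cluster monomial $u$. At $t_u$ we have $F_u^{t_u}=1$, so $F_u^{t_u}[\cdot]=0$. I want to show that the partial invariant $P(t):=F_u^t[S({\bf g}_{u'}^t)^\circ]$ is constant along paths. Its jump across an edge $t - t'$ labelled $k$ is
\[
s_k\bigl([-g_{k;u}^{t'}]_+[-g_{k;u'}^{t}]_+-[-g_{k;u}^{t}]_+[-g_{k;u'}^{t'}]_+\bigr).
\]
To control this I would use the tropical mutation rule for $g$-vectors. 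After choosing a sign at $t$ via sign-coherence of $c$-vectors, the $k$-th component simply negates: $g_{k}^{t'}=-g_k^{t}$. Write $a=g_{k;u}^t$ and $b=g_{k;u'}^t$. The jump is then $s_k([a]_+[-b]_+-[-a]_+[b]_+)$. Sign-coherence gives $ab\ge 0$, so both products vanish and the jump is zero. Hence $P(t)=P(t_u)=0$, and symmetrically $F_{u'}^t[S({\bf g}_u^t)^\circ]=0$. Therefore $(u\mid\mid u')_F=0$.

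The main obstacle is checking that $g_k^{t'}=-g_k^t$ holds exactly for the $k$-th coordinate, with no correction from other coordinates. I recall that the mutation of $g$-vectors modifies only the other coordinates, while the $k$-th one is negated; that is precisely the identity ${\bf g}^{t'}_k=-{\bf g}^t_k$ in the $g$-vector mutation rule. If that recollection fails, I would fall back on the symmetrized formula and sum the two partial jumps. A second delicate point is passing from vanishing of the $F$-invariant for monomials to compatibility, which I handle by the reduction to pairs of cluster variables described above.
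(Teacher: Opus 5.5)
Your proof is correct and follows essentially the same route as the paper: sign-coherence kills both terms of the jump in Theorem~\ref{main-thm:F-inv}(i), so each partial $F$-invariant is constant on $\mathbb T_n$ and vanishes because $F_u^{t_u}=1$; hence $(u\mid\mid u')_F=0$, and \cite[Theorem 4.19]{Cao-2023} finishes the argument, with GHKK sign-coherence giving the converse. The paper applies that theorem directly to cluster monomials, so your bilinearity reduction to pairs of cluster variables is unnecessary, and the identity $g_k^{t'}=-g_k^t$ is already part of the tropical mutation rule \eqref{eqn:y-trop}, so no appeal to $c$-vectors is needed.
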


\subsection{Partial $F$-invariant and additive categorification}
While the original definition of cluster algebras is purely combinatorial, a central theme in the development of cluster theory has been the search for a categorical framework that lifts the cluster combinatorics and interprets the cluster-theoretic invariants (e.g., $g$-vectors, $F$-polynomials, compatibility degrees). Such a program is known as {\em categorification}. Depending on whether the multiplication in cluster algebras is categorified by direct sums or tensor products, categorification is divided into {\em additive categorification} \cite{bmrrt_2006,DWZ10, BIRSm11,gls_2011,Amiot_2009,Plamondon-2011} and {\em monoidal categorification} \cite{HL_2010,HL16,kkko-2018,kkop-2024}.

The approach of categorifications has proven highly successful: it not only provides conceptual explanations for many combinatorial phenomena but also enables us to prove some results that are difficult to obtain purely algebraically. For example, the additive categorification approach shows that the $F$-polynomials have constant term~$1$ for skew-symmetric cluster algebras \cite{DWZ10}; the monoidal categorification approach was crucial to establish that the cluster monomials are contained in the dual canonical bases of the coordinate rings of unipotent subgroups of symmetric Kac--Moody groups \cite{kkko-2018}.

In additive categorification, the cluster combinatorics are categorified by several types of ``tilting theory'', including cluster tilting in $2$-Calabi--Yau categories~\cite{bmrrt_2006,IY08,Amiot_2009}, $\tau$-tilting in module categories of finite-dimensional algebras~\cite{air_2014}, and (two-term) silting in perfect derived categories~\cite{AI12,DK-2015}.

The theory of quivers with potentials was introduced by Derksen-Weyman-Zelevinsky \cite{DWZ08,DWZ10}, and it provides a powerful tool for proving several conjectures of Fomin-Zelevinsky~\cite{fomin_zelevinsky_2007} concerning $F$-polynomials and $g$-vectors \cite[Theorem~1.7]{DWZ10}.  It has also motivated the development of 
$\tau$-tilting theory \cite{air_2014} and the theory of general presentations \cite{DK-2015}. The theory of quivers with potentials was further developed in \cite{Amiot_2009,Keller-Yang_2011,Plamondon-2011}, where the perfect derived categories of (complete) Ginzburg dg algebras and generalized cluster categories from quivers with potentials were constructed and studied. 

Our next goal is to relate the partial $F$-invariant in cluster algebras to certain invariants from additive and monoidal categorifications. For the additive categorification considered in this paper, we adopt the seminal work of Derksen--Weyman--Zelevinsky \cite{DWZ08,DWZ10}, in which decorated representations of Jacobian algebras are used to categorify skew-symmetric cluster algebras. Recall that a {\em decorated representation} $\mathcal M=(M,V)$ of a Jacobian algebra $J=J(Q,W)$ (associated to a quiver with potential $(Q,W)$) is a pair consisting of a finite-dimensional $J$-module $M$ and a finitely generated semisimple $J$-module $V$.

 Derksen, Weyman, and Zelevinsky~\cite{DWZ10} defined   the $g$-vector ${\bf g}(\mathcal M)\in\mathbb Z^n$ and the $F$-polynomial $F_{\mathcal M}\in\mathbb Z[y_1,\ldots,y_n]$ for any decorated representation $\mathcal M$ of a Jacobian algebra $J=J(Q,W)$, where $n=|Q_0|$. For a pair of decorated representations $\mathcal M=(M,V)$ and $\mathcal M'=(M',V')$ of $J$, they also defined two integers $E^{\rm inj}(\mathcal M,\mathcal M')$\footnote{
 A similar quantity was also studied independently by Auslander--Reiten ~\cite[Theorem~1.4]{AR85} for Artin algebras.} and $E^{\rm sym}(\mathcal M,\mathcal M')$:
 \begin{eqnarray*}
     E^{\rm inj}(\mathcal M,\mathcal M')&\coloneq&\langle\underline{\dim} M, \mathbf{g}(\mathcal{M}')\rangle + \dim_{\mathbb C} \Hom_J(M, M' ),\\
     E^{\rm sym}(\mathcal M,\mathcal M')&\coloneq& E^{\rm inj}(\mathcal M,\mathcal M')+ E^{\rm inj}(\mathcal M',\mathcal M),
 \end{eqnarray*}
 which are called the {\em partial $E$-invariant} and {\em $E$-invariant} of the ordered pair $(\mathcal M,\mathcal M')$, respectively.
 
Since the partial $E$-invariant can be interpreted as the dimension of a certain Hom-space in the homotopy category $\mathcal K^{b}(J\mathchar`-\mathsf{inj})$ (see Definition \ref{def:g-F-E} and Remark \ref{rmk:def-DWZ}), we have \[E^{\rm inj}(\mathcal M,\mathcal M')\geq 0\quad \text{and}\quad E^{\rm sym}(\mathcal M,\mathcal M')\geq 0.\]
Derksen, Weyman and Zelevinsky  \cite[Corollary 7.2]{DWZ10} proved that  $E^{\rm sym}(\mathcal M,\mathcal M)=0$ for any {\em negative-reachable} (or simply {\em reachable}) decorated representation,  \ie those   corresponding to cluster monomials.

Given a decorated representation $\mathcal M$ of a quiver with non-degenerate potential $(Q,W)$ at the rooted vertex $t_0$ of the $n$-regular tree $\mathbb T_n$, by applying Derksen--Weyman--Zelevinsky's mutations \cite{DWZ08}, one can obtain a family of quivers with potentials and their decorated representations:
 \[[(Q,W)]\coloneqq \{(Q_t,W_t)\mid t\in\mathbb T_n\},\quad\quad [\mathcal M]\coloneqq\{\mathcal M^t\mid t\in\mathbb T_n\},
 \]
 where $\mathcal M^t$ is a decorated representation of $(Q_t,W_t)$. 
 
In the context of cluster categorification, Derksen–Weyman–Zelevinsky~\cite{DWZ10} showed that  each cluster monomial $u$ of the cluster algebra $\mathcal A_Q$ corresponds to a family of decorated representations $\{\mathcal M_u^t\mid t\in\mathbb T_n\}$. Moreover, the $g$-vector ${\bf g}_u^t$ and $F$-polynomial $F_u^t$ of $u$ with respect to vertex $t\in\mathbb T_n$ are categorified by the $g$-vector and $F$-polynomial of the decorated representation $\mathcal M_u^t$ of $(Q_t, W_t)$, \ie we have the following equalities: 
\begin{eqnarray}\label{eqn:cat-g-f}
    {\bf g}(\mathcal M_u^t)={\bf g}_u^t\quad \text{and}\quad F_{\mathcal M_u^t}=F_u^t.
\end{eqnarray}

The following result is a direct consequence of the mutation formula for the partial $F$-invariant (Theorem~\ref{main-thm:F-inv}) and that for the partial $E$-invariant established in \cite[Theorem~7.1]{DWZ10}. It serves as one of the key steps toward an explicit formula for the pole orders of normalized $R$-matrices, as discussed in \S\ref{intro:further}.

 \begin{theorem}[{Theorem \ref{thm:F-E-inv}}] \label{main-thm:F-E-inv}
Let $(Q,W)$ be a quiver with non-degenerate potential and $\mathcal A_Q$ the corresponding cluster algebra with trivial coefficients. 
Let $u, u'$ be two cluster monomials of $\mathcal A_Q$, and let  $\{\mathcal M_u^t\mid t\in\mathbb T_n\}$ and $\{\mathcal M_{u'}^t\mid t\in\mathbb T_n\}$ be the families of decorated representations corresponding to $u$ and $u'$, respectively. Then we have
\[E^{\rm inj}(\mathcal M_u^t,\mathcal M_{u'}^t)=F_u^t[{\bf g}_{u'}^t]\;\;\;\;\text{and}\;\;\;E^{\rm sym}(\mathcal M_u^t,\mathcal M_{u'}^t)=F_u^t[{\bf g}_{u'}^t]+F_{u'}^t[{\bf g}_{u}^t]=(u\mid\mid u')_F,
\]
for any vertex $t\in\mathbb T_n$.
\end{theorem}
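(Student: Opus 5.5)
Both sides transform identically under mutation of $t$, so the plan is induction along $\mathbb T_n$ from a vertex where the identity is obvious. Since $Q$ is skew-symmetric we have $S=\mathrm{id}$, and everything is in trivial coefficients, so Theorem~\ref{main-thm:F-inv}(i) applies to cluster monomials (by the remark following it). Set $a(t)\coloneqq E^{\rm inj}(\mathcal M_u^t,\mathcal M_{u'}^t)$ and $b(t)\coloneqq F_u^t[{\bf g}_{u'}^t]$.

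\textbf{Step 1 (mutation of $b$).} For an edge $t\overset{k}{\text{---}}t'$, Theorem~\ref{main-thm:F-inv}(i) gives
\[
b(t')-b(t)=[-g_{k;u}^{t'}]_+[-g_{k;u'}^{t}]_+-[-g_{k;u}^t]_+[-g_{k;u'}^{t'}]_+ .
\]

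\textbf{Step 2 (mutation of $a$).} DWZ \cite[Theorem~7.1]{DWZ10} gives the change of $E^{\rm inj}$ under the mutation $\mu_k$ of both decorated representations. It is a difference of products of the form $h_k(\mathcal M)h'_k(\mathcal M')-h'_k(\mathcal M)h_k(\mathcal M')$, where the $h$'s are the quantities $h_k=-\dim\ker(\gamma_k)$ and their mutated counterparts. The DWZ relations give $h_k(\mathcal M)=\min(g_k(\mathcal M),0)$ once one uses $\mathbf g(\mathcal M^t_u)=\mathbf g_u^t$ from \eqref{eqn:cat-g-f} and the fact that reachable $\mathcal M$ are $E$-rigid, so $\ker\gamma_k\cap\im\beta_k=0$ for them. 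Hence $h_k(\mathcal M^t_u)=-[-g^t_{k;u}]_+$, and likewise at $t'$.

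This is the step I expect to be the real work: matching DWZ's exact sign conventions and checking that the terms line up precisely as in Step 1. I would first test it on the simplest case, $\mathcal M'=$ negative simple, to fix the signs. Substituting, the increment of $a$ equals the increment of $b$ in Step 1, so $a(t)-b(t)$ is constant on $\mathbb T_n$.

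\textbf{Step 3 (base vertex).} Choose $t$ to be a seed containing the cluster of $u'$. Then $\mathcal M_{u'}^t$ is purely negative, i.e.\ $M'=0$ and ${\bf g}_{u'}^t=-\dim V'\le 0$. Consequently $a(t)=\langle\underline{\dim}M,{\bf g}'\rangle$, with the Hom term vanishing. On the other hand $F_u^t=F_{M}$ is a sum over dimension vectors of subrepresentations of $M$, and for nonpositive ${\bf r}$ the maximum of ${\bf v}^T{\bf r}$ is attained at ${\bf v}=0$, giving $b(t)=0$.

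For $a(t)=0$ we need $\underline{\dim}M$ to be supported away from $\operatorname{supp}V'$. This holds because $u$ is a cluster monomial and $E^{\rm inj}\ge0$. A cleaner route is to choose instead the seed containing $u$, so that $\mathcal M_u^t$ is negative: then $M=0$ gives $a(t)=0$ and $F_u^t=1$ gives $b(t)=0$ directly. I will use this choice, which is valid for arbitrary $u'$, and so $a\equiv b$.

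\textbf{Step 4 (symmetric form).} Summing the identity with the roles of $u,u'$ swapped gives $E^{\rm sym}(\mathcal M_u^t,\mathcal M_{u'}^t)=F_u^t[{\bf g}_{u'}^t]+F_{u'}^t[{\bf g}_u^t]$, which equals $(u\mid\mid u')_F$ by Theorem~\ref{main-thm:F-inv}(ii) together with \eqref{eqn:def-F-inv-1}.
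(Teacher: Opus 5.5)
Your argument is correct and is essentially the paper's proof. Both $E^{\rm inj}(\mathcal M_u^t,\mathcal M_{u'}^t)$ and $F_u^t[{\bf g}_{u'}^t]$ satisfy the same recursion along $\mathbb T_n$: Theorem~\ref{thm:F-inv}(i) on one side, and on the other \cite[Theorem~7.1]{DWZ10} together with $h_k=-[-g_k]_+$ for reachable representations, which the paper simply cites from \cite[(9.1)]{DWZ10}. Both also vanish at a seed containing the cluster of $u$, which is exactly your final choice in Step~3.

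There are a few slips, none of which affect your final argument:
\begin{itemize}
\item DWZ's $h_k$ is $-\dim\ker\beta_k$, not $-\dim\ker\gamma_k$.
\item Your condition $\ker\gamma_k\cap\im\beta_k=0$ would force $\beta_k=0$, since $\im\beta_k\subseteq\ker\gamma_k$.
\item A negative representation $(0,V')$ has $g$-vector $+\underline{\dim}V'$, not $-\underline{\dim}V'$. So the sign reasoning in your abandoned first base-vertex choice was wrong: at that vertex both sides actually equal $\langle\underline{\dim}M,\underline{\dim}V'\rangle$, not $0$.
\end{itemize}
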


    We note that the first equality above was previously observed by Fei in  \cite[Theorem~3.22]{fei_2019b} in his study of \emph{tropical $F$-polynomials} of modules over finite-dimensional algebras (see Remark~\ref{rmk-end} (ii)).

\subsection{Partial $F$-invariant and monoidal categorification}
We also discuss an interpretation of the partial $F$-invariant in monoidal categorifications of cluster algebras.
The notion of monoidal categorification of cluster algebras originates in the seminal paper \cite{HL_2010} by Hernandez--Leclerc, which is completely different in nature from additive categorifications above.

Let $\mathcal{C}$ be a monoidal abelian length category whose tensor product $\otimes$ is bi-exact. The Grothendieck ring $K_0(\mathcal{C})$ carries a canonical $\mathbb{Z}$-basis formed by the classes of simple objects, with positive structure constants.
We say that the category $\mathcal{C}$ gives a monoidal categorification of a cluster algebra $\mathcal{A}$ if there is a ring isomorphism $\varphi \colon K_0(\mathcal{C}) \to \mathcal{A}$ under which the cluster monomials correspond to some simple classes. 
Given such a monoidal categorification, a simple object $M \in \mathcal{C}$ is said to be {\em reachable} if $\varphi[M] \in \mathcal{A}$ is a cluster monomial.
Note that a reachable simple object $M$ is necessarily real, meaning that $M \otimes M$ is simple.  
A cluster corresponds to a collection of simple objects, any pair of whose members \emph{strongly commute} (\ie they have simple tensor products). 

Many known examples of monoidal categorification of cluster algebras arise from representation theory of quantum affine algebras \cite{HL_2010, Nak_cl, HL_2013, KQ14, HL16, Qin_2017, BC_cl, kkop-2020, kkop-2024}, that of (symmetric) quiver Hecke algebras \cite{kkko-2018}, and also perverse coherent sheaves on affine Grassmannians \cite{CW19}. 
In these cases, our monoidal category of interest is not a braided tensor category.
In fact, there are many pairs of simple objects $M$ and $N$ such that $M \otimes N$ is not isomorphic to $N \otimes M$. 
Nevertheless, such a monoidal category is endowed with ``weak braidings'', \ie a system of non-zero morphisms $\mathbf{r}_{M,N} \colon M \otimes N \to N \otimes M$, called  (renormalized) {\em $r$-matrices}, together with an integer $\Lambda(M,N) \in \mathbb{Z}$, called \emph{the $\Lambda$-invariant}, which measure the failure of the system $\{ \mathbf{r}_{M,N}\}$ to be braidings, roughly speaking.
The symmetrized quantity
\[ \mathfrak{d}(M,N) \coloneqq \frac{\Lambda(M,N) + \Lambda(N,M)}{2}, \]
called \emph{the $\mathfrak{d}$-invariant}, is known to be a non-negative integer, and to control the commutativity of real simple objects. 
More precisely, for simple objects $M$ and $N$, assuming that at least one of them is real, we have $\mathfrak{d}(M,N)=0$ if and only if $M$ and $N$ strongly commute.

The $\Lambda$-invariant plays a crucial role in the monoidal categorification of cluster algebras.
In the framework of \cite{kkko-2018, kkop-2020}, a natural compatibility between the $\Lambda$-invariant and the cluster structure is required so that the $\Lambda$-invariant gives rise to a Poisson structure on the categorified cluster algebra. A monoidal categorification satisfying such a requirement is called a \emph{$\mathsf{\Lambda}$-monoidal categorification}.  

Given that a cluster algebra $\mathcal{A}$ admits a $\mathsf{\Lambda}$-monoidal categorification $\varphi \colon K_0(\mathcal{C}) \to \mathcal{A}$, the first named author has shown in \cite[Theorem 5.16]{Cao-2023} that, for two good elements $u, u' \in \mathcal{A}$ and simple objects $M,M' \in \mathcal{C}$ such that $\varphi[M] = u$, $\varphi[M'] = u'$, assuming that at least one of $u,u'$ is a cluster monomial, we have the equality  
\[  \langle u,u' \rangle_{\rm trop} = \Lambda(M,M') \quad \text{and hence} \quad (u \mid\mid u')_F = 2\mathfrak{d}(M,M').\]
Thus, the $F$-invariant can be interpreted as the $\mathfrak{d}$-invariant in monoidal categorifications. 

In this paper, we discuss a similar interpretation of the partial $F$-invariant in the case of monoidal categorifications arising from quantum affine algebras.
A quantum affine algebra $U_q'(\mathfrak{g})$ is the Drinfeld--Jimbo quantized enveloping algebra associated with an affine Kac--Moody algebra $\mathfrak{g}$.
Here we assume that the quantum parameter $q$ is not a root of unity. 
For any finite-dimensional simple $U_q'(\mathfrak{g})$-modules (of type ${\bf 1}$) $M, N$, the $\Lambda$-invariant $\Lambda(M,N)$ is defined to be a certain formal degree of the universal $R$-matrix specialized at the tensor product $M\otimes N_z$, where $N_z$ is a deformation of $N$ with a formal parameter $z$ such that $N_{z}|_{z=1} = N$. Considering a normalization of the specialized universal $R$-matrix with respect to the highest weight vector, we obtain the {\em normalized $R$-matrix} $R_{M,N_z}^{\rm norm}$, which depends on $z$ only rationally. Let $\mathfrak{o}(M,N)$ denote the {\em pole order} of $R_{M,N_z}^{\rm norm}$ at $z=1$, which is a non-negative integer by definition. 
 Then, the $\Lambda$-invariant can be written as
\[ \Lambda(M,N) = 2 \mathfrak{o}(M,N) + \mathscr{N}(M,N),\] 
with $\mathscr{N}(M,N)$ being skew-symmetric (\confer \cite{kkop-2020,Fujita-Oh-2021}), and hence we have
\[ \mathfrak{d}(M,N) = \mathfrak{o}(M,N) + \mathfrak{o}(N,M).\]
Thus, the pole order $\mathfrak{o}(M,N)$ gives a refinement of $\mathfrak{d}(M,N)$, analogously to the partial $F$-invariant refining the $F$-invariant in \eqref{eqn:def-F-inv-1}.
The study of the poles of normalized $R$-matrices has a long history in relation to the highest weight cyclicity criterion of tensor product modules \cite{AK97, Kas02, KKKO-2015}. It also appears as a key ingredient in the theory of generalized quantum affine Schur--Weyl duality \cite{KKK-2018R}, where it is important to know the precise value of $\mathfrak{o}(M,N)$ for real simple modules $M,N$ of interest, not only its vanishing.
However, it is very hard to compute these values in general.

In the following result, we compare the pole orders of normalized $R$-matrices and the partial $F$-invariants, and provide a sufficient condition for their coincidence on reachable modules. This yields a computable formula for the pole orders in terms of $g$-vectors and $F$-polynomials. 
\begin{theorem}[Theorem \ref{thm:F-o} \& Corollary \ref{Cor:o=Fg=E}]\label{main-thm:F-o} 
Let $\mathcal C$ be a monoidal subcategory of the category of finite-dimensional representations of a quantum affine algebra.
Suppose that $\mathcal{C}$ gives a $\mathsf{\Lambda}$-monoidal categorification $\varphi \colon K_0(\mathcal{C}) \to \mathcal{A}$ of a skew-symmetric cluster algebra $\mathcal{A}$ such that $\varphi[N]$ is a good element for each simple module $N$ in $\mathcal{C}$, and there exists a monoidal cluster $\{ M_{i;w}\}_{1 \le i \le m}$ (a collection of simple modules in $\mathcal{C}$ such that $\{ \varphi[M_{i;w}]\}_{1 \le i \le m}$ is a cluster of $\mathcal{A}$) satisfying $\mathfrak{o}(M_{i;w}, N) = 0$\footnote{This condition is mild for our purposes, because on the cluster algebra side we have $F_{x_{i;w}}^w[{\bf r}]=0$ for any ${\bf r}\in\mathbb Z^n$, the reason being that the $F$-polynomials of initial cluster variables are trivial, namely
$F_{x_{i;w}}^w=1$.} for all $1 \le i \le m$ and any simple module $N$ in $\mathcal{C}$. Then, the following statements hold.
\begin{itemize}
    \item [(i)] We have $
\mathscr{N}(M,N) = ({\bf g}^{w}_M)^T \Lambda_{w} {\bf g}^{w}_N$ for any simple modules $M$ and $N$ in $\mathcal{C}$.
\item[(ii)] If either $M$ or $N$ is reachable, we have $\mathfrak{o}(M,N) = F_{M}^{w}[({\bf g}^{w}_N)^\circ]$, where $({\bf g}^{w}_N)^\circ\in\mathbb Z^n$ denotes the principal part of ${\bf g}^{w}_N\in\mathbb Z^m$.
\end{itemize} 
Here ${\bf g}^{w}_M\in \mathbb Z^m$ and $F_{M}^{w}\in\mathbb Z[y_1,\ldots,y_n]$ respectively denote the extended $g$-vector and $F$-polynomial of $\varphi[M]$ with respect to the seed $w$ given by $\{ \varphi[M_{i;w}]\}_{1 \le i \le m}$.
\end{theorem}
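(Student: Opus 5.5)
We argue in the order (i) then (ii), with (i) reducing the non-symmetric part of $\Lambda$ to the bilinear form $\Lambda_w$.

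\emph{Part (i).} We use the known decomposition $\Lambda(M,N)=2\mathfrak{o}(M,N)+\mathscr{N}(M,N)$ with $\mathscr{N}$ skew-symmetric, and the compatibility of $\Lambda$ with the seed $w$ in a $\mathsf{\Lambda}$-monoidal categorification: $\Lambda(M_{i;w},M_{j;w})=(\Lambda_w)_{ij}$. We also use that $\mathscr{N}(-,-)$ is additive on heads of tensor products: for a real simple $L$ and a simple $N$, $\mathscr{N}(L\nabla N,-)=\mathscr{N}(L,-)+\mathscr{N}(N,-)$. This follows because $\mathscr{N}$ is given by the highest $\ell$-weights, i.e.\ by the dominant monomials, which multiply.

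Now write $N$ via its extended $g$-vector. There is a simple $N'$ and a monomial $X$ in the initial cluster objects such that $N\nabla X\simeq$ (head of a tensor product of cluster objects with exponents $[{\bf g}^w_N]_+$), in the sense that $\varphi$ of the highest $\ell$-weights agree. Additivity then gives $\mathscr{N}(M,N)=\sum_j g_{j}\mathscr{N}(M,M_{j;w})$. For $M=M_{i;w}$ the hypothesis $\mathfrak{o}(M_{i;w},N)=0$ gives $\Lambda(M_{i;w},N)=\mathscr{N}(M_{i;w},N)$. Combining this with skew-symmetry and \cite[Theorem 5.16]{Cao-2023} at initial cluster variables, namely $\langle x_{i;w},u'\rangle_{\rm trop}=e_i^T\Lambda_w{\bf g}^w_{u'}$ since $F_{x_{i;w}}=1$, yields $\mathscr{N}(M_{i;w},N)=e_i^T\Lambda_w{\bf g}^w_N$. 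Expanding in the first argument by the same additivity gives (i).

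\emph{Part (ii).} Assume $M$ or $N$ is reachable. Then \cite[Theorem 5.16]{Cao-2023} gives $\Lambda(M,N)=\langle u,u'\rangle_{\rm trop}$ with $u=\varphi[M]$ and $u'=\varphi[N]$. Next compute the tropical invariant in the seed $w$ as
$$\langle u,u'\rangle_{\rm trop}=({\bf g}^w_u)^T\Lambda_w{\bf g}^w_{u'}+2F^w_u[({\bf g}^w_{u'})^\circ].$$
This holds for skew-symmetric $S=I$ with the factor $2$ coming from the normalization of $\Lambda$ versus the Poisson form. It comes from evaluating $\beta_{u'}$ on the expansion $u=x^{{\bf g}_u}F_u(\hat y)$ and using that $\beta_{u'}(\hat y_k)$ is the $k$-th component of $(B{\bf g}_{u'})$ up to the identity $\Lambda_w\tilde B=\binom{-2I}{0}$-type compatibility. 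Subtracting (i) from $\Lambda(M,N)=2\mathfrak{o}+\mathscr{N}$ then gives $\mathfrak{o}(M,N)=F^w_M[({\bf g}^w_N)^\circ]$.

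The main obstacle is justifying the tropical formula above in exactly the normalization used by $\beta_{u'}$, together with the additivity of $\mathscr{N}$ used in (i). For the latter, I would first try to derive it from the hypothesis $\mathfrak{o}(M_{i;w},-)=0$: this makes $\Lambda(M_{i;w},-)$ additive on heads, since $\Lambda(L,-)$ is additive whenever $L$ is real, which it is here.
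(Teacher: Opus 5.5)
\textbf{Part (i) has a genuine gap.} The additivity you rely on, $\mathscr{N}(L\nabla N,-)=\mathscr{N}(L,-)+\mathscr{N}(N,-)$ for real $L$, is false in general.

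Your justification breaks at one point. The universal coefficient is indeed read off from dominant extremal vectors. But the dominant monomial of $L\nabla N$ is the product of those of $L$ and $N$ only when $\mathbf{r}_{L,N}(v_L\otimes v_N)\neq 0$. Since $\mathbf{r}_{L,N}$ is the specialization at $z=1$ of $d_{L,N}(z)R^{\mathrm{norm}}_{L,N_z}$, this happens exactly when $\mathfrak{o}(L,N)=0$.

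A concrete counterexample: take $U_q'(\widehat{\mathfrak{sl}}_2)$, $L=L(\boldsymbol{\pi}_{1,q^2})$ and $N=L(\boldsymbol{\pi}_{1,q^4})$. Then $\mathfrak{o}(N,L)=0$ and $\mathfrak{o}(L,N)=1$, so $L\nabla N$ is the trivial module. Additivity would force $0=\mathscr{N}(L,L)+\mathscr{N}(N,L)$. But computing in the standard seed of $\mathcal{C}_{[0,4]}$ gives $\mathscr{N}(N,L)=\Lambda(N,L)=2$.

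Your fallback fails as well. $\Lambda(L,-)$ is additive on tensor products, but on heads it is only subadditive (Lemma~\ref{Lem:oconv}(i)). Equality holds when $L$ strongly commutes with the left factor (Proposition~\ref{pro:kkop-20}(iii)). In the example, $N=M^{[0,4]}_{(1,3)}$ is itself an initial cluster module satisfying the hypothesis, yet $\Lambda(N,L\nabla N)=0\neq 2=\Lambda(N,L)+\Lambda(N,N)$.

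The decomposition step is also off. What is available is Proposition~\ref{pro:head}: $\mathcal{M}_w(\mathbf{d})\nabla N\cong\mathcal{M}_w(\mathbf{h})$, with the cluster monomial on the left. Your $N\nabla X$ puts it on the side where nothing controls $\mathfrak{o}(N,X)$. Moreover, nothing forces $\mathbf{h}-\mathbf{d}=\mathbf{g}^w_N$, let alone $\mathbf{h}=[\mathbf{g}^w_N]_+$. One only gets $\Lambda_w(\mathbf{h}-\mathbf{d})=\Lambda_w\mathbf{g}^w_N$ (Lemma~\ref{Lem:ng}).

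\textbf{The missing idea is that the hypothesis yields exactly the instance of additivity you need.}

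1. Lemma~\ref{Lem:oconv}(ii), using its equality case for strongly commuting factors, turns $\mathfrak{o}(M_{i;w},-)=0$ into $\mathfrak{o}(\mathcal{M}_w(\mathbf{a}),M)=\mathfrak{o}(\mathcal{M}_w(\mathbf{a}),N)=0$.
2. Lemma~\ref{Lem:alinear} then applies. Here $v_N\otimes v_X$ is the dominant extremal vector of $X\nabla N\subset N\otimes X$, and \eqref{eq:q-tri} makes the universal coefficients multiply. This gives $\mathscr{N}(\mathcal{M}_w(\mathbf{d})\nabla M,N)=\mathscr{N}(\mathcal{M}_w(\mathbf{d}),N)+\mathscr{N}(M,N)$.
3. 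Your observation $\mathscr{N}(\mathcal{M}_w(\mathbf{a}),N)=\Lambda(\mathcal{M}_w(\mathbf{a}),N)=\mathbf{a}^T\Lambda_w\mathbf{g}^w_N$ is correct: it follows from the hypothesis, Theorem~\ref{thm:trop-Lambda}, and $F^w_{\mathbf{x}_w^{\mathbf{a}}}=1$. Combined with step 2 it gives $\mathscr{N}(M,N)=(\mathbf{h}-\mathbf{d})^T\Lambda_w\mathbf{g}^w_N$.
4. Take $N=M_{j;w}$ and use skew-symmetry of $\mathscr{N}$ to get $(\mathbf{h}-\mathbf{d})^T\Lambda_w=(\mathbf{g}^w_M)^T\Lambda_w$, which finishes (i).

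This repaired route is a legitimate alternative to the paper. The paper uses head decompositions of both $M$ and $N$ and proves Lemma~\ref{Lem:ng} via Proposition~\ref{pro:kkop-20}(iii); your version needs only one head decomposition and replaces that lemma by skew-symmetry.

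\textbf{Part (ii) matches the paper.} The tropical formula you flag as an obstacle is simply Theorem~\ref{thm:mutation-inv}(i) with $S=2I_n$. The relation $\widetilde{B}_w^T\Lambda_w=(2I_n\mid\mathbf{0})$ gives $\beta_{u'}(\hat{y}_{k;w})=2g^w_{k;u'}$, not a component of $B\mathbf{g}_{u'}$.
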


This result can be applied to the standard examples of monoidal categorification introduced by Hernandez--Leclerc \cite{HL16} and by Kashiwara--Kim--Oh--Park \cite{kkop-2024}.
From now on, let us assume that our quantum affine algebra $U'_q(\mathfrak{g})$ is of untwisted type\footnote{This untwisted assumption is just for simplicity and not essential (see Remark~\ref{rem:twisted-case}).}, \ie $\mathfrak{g}$ is the (untwisted) affinization of a finite-dimensional simple Lie algebra $\mathfrak{g}_0$.
By Chari--Pressley \cite[Chapter 12]{CP-1994}, simple finite-dimensional $U_q'(\mathfrak{g})$-modules (of type ${\bf 1}$) are in bijection with the set $(1+z\mathbb{C}[z])^{\mathtt I_0}$ of $\mathtt I_0$-tuples of polynomials with constant term $1$, called the Drinfeld polynomials, with $\mathtt I_0$ being a labeling set of simple roots of $\mathfrak{g}_0$.
For a finite integer interval $[a,b] \subset \mathbb{Z}$, let $\mathcal{C}_{[a,b]}$ be the Serre subcategory generated by  the simple modules whose Drinfeld polynomials factorize into products of $(1-q^pz)$ with $p \in [a,b]$ (subject to a parity condition).
The category $\mathcal{C}_{[a,b]}$ is known to give a $\mathsf{\Lambda}$-monoidal categorification of a certain skew-symmetric cluster algebra $\mathcal{A}_{[a,b]}$. We show in Propositions \ref{pro:standard-cor} and \ref{prop:good} that the condition in the above Theorem \ref{main-thm:F-o} is satisfied by a standard initial monoidal cluster $\{M_{i;t_0}\}$ formed by certain Kirillov--Reshetikhin (KR) modules.

Since the $g$-vectors and $F$-polynomials of reachable simple modules with respect to the standard initial seed can be read off from their (truncated) $q$-characters (see Lemma \ref{lem:gFab}), by Theorem \ref{main-thm:F-o} (ii), we can calculate the pole orders for such modules directly from their $q$-characters, as we explain below. Recall that for a simple module $M \in \mathcal{C}_{[a,b]}$, its {\em $q$-character} in the sense of Frenkel--Reshetikhin \cite{FR99} is a Laurent polynomial $$\chi_q(M) \in \mathbb{Z}_{\ge 0}[Y_{i,p}^{\pm 1} \mid i \in \mathtt I_0, p \in \mathbb{Z}]$$ encoding the spectral decomposition of $M$ with respect to the action of the loop Cartan part of $U'_q(\mathfrak{g})$. 
Discarding the terms containing $Y_{i,p}^{\pm 1}$ with $p > b$ from $\chi_q(M)$, we get the {\em truncated $q$-character} $\chi^{-, b}_q(M)$, which can be written in the form:
\[ \chi^{-, b}_q(M) = \left( \prod_{i \in \mathtt I_0, p \in \mathbb{Z}}Y_{i,p}^{u_{i,p}(M)} \right) F_M^{-, b}({\bf A}^{-1}),\]
where $F_M^{-, b}({\bf A}^{-1}) \in \mathbb{Z}_{\ge 0}[A_{i,p}^{-1} \mid i \in \mathtt I_0, p \le b-d_i]$ is a polynomial with constant term $1$, $A_{i,p} \coloneqq Y_{i,p+d_i}Y_{i,p-d_i}\prod_{j \in \mathtt I_0}\prod_{k=1}^{-c_{ji}}Y^{-1}_{j,p+c_{ji}+2k-1}$ is an analog of the $i$-th simple root, $(c_{ij})_{i,j \in \mathtt I_0}$ is the Cartan matrix of $\mathfrak{g}_0$, $(d_i)_{i \in \mathtt I_0}$ is the minimal left symmetrizer of $(c_{ij})_{i,j \in \mathtt I_0}$. 
Note that $u_{i,p}(M) \in \mathbb{Z}_{\ge 0}$ is nothing but the multiplicity of the linear factor $(1-q^pz)$ in the $i$-th Drinfeld polynomial of $M$.

Combining  Theorem \ref{main-thm:F-o} (ii) with Lemma \ref{lem:gFab} yields the following result.
\begin{theorem}[Theorem \ref{thm:oF}]\label{main-thm:oF}
Let $[a,b]$ be a finite integer interval and $M,N$ simple objects of $\mathcal{C}_{[a,b]}$ such that at least one of them is reachable in $\mathcal{C}_{[a,b]}$. 
Then we have
\[ \mathfrak{o}(M,N) = F^{-, b}_{M}[u_{i,p+d_i}(N)-u_{i,p-d_i}(N) \mid  i \in \mathtt{I}_0, p \le b-d_i],\]
where the right-hand side denotes the tropical polynomial $F_M^{-, b}[-]$ evaluated at $A_{i,p}^{-1} = u_{i,p+d_i}(N)- u_{i,p-d_i}(N)$ for each $(i,p)$. 
\end{theorem}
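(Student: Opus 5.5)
The plan is to specialize Theorem~\ref{main-thm:F-o}~(ii) to $\mathcal{C}=\mathcal{C}_{[a,b]}$, using the standard initial monoidal cluster $\{M_{i;t_0}\}$ of Kirillov--Reshetikhin modules as the seed $w=t_0$. The computation then reduces to the dictionary between $q$-characters and the pair ($g$-vector, $F$-polynomial) supplied by Lemma~\ref{lem:gFab}.

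First we check the hypotheses of Theorem~\ref{main-thm:F-o}. The category $\mathcal{C}_{[a,b]}$ gives a $\mathsf{\Lambda}$-monoidal categorification of the skew-symmetric cluster algebra $\mathcal{A}_{[a,b]}$, so skew-symmetrizability with $S=\mathrm{id}$ holds. Proposition~\ref{prop:good} tells us that every simple class $\varphi[N]$ is a good element. Proposition~\ref{pro:standard-cor} gives the vanishing $\mathfrak{o}(M_{i;t_0},N)=0$ for the KR modules forming the initial cluster and all simple $N$. Since at least one of $M,N$ is reachable, part (ii) yields
\[
\mathfrak{o}(M,N)=F_M^{t_0}\bigl[({\bf g}^{t_0}_N)^\circ\bigr].
\]

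Next we translate both sides into $q$-character data. Lemma~\ref{lem:gFab} identifies the unfrozen variables $y_k$ of $\mathcal{A}_{[a,b]}$, indexed by pairs $(i,p)$ with $p\le b-d_i$ in the appropriate parity, with the monomials $A_{i,p}^{-1}$. Under this identification $F_M^{t_0}$ becomes $F_M^{-,b}({\bf A}^{-1})$. The same lemma expresses the extended $g$-vector ${\bf g}^{t_0}_N$ in terms of the highest $\ell$-weight $\prod Y_{i,p}^{u_{i,p}(N)}$.

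The key step is to compute the principal component of ${\bf g}^{t_0}_N$ at the vertex $(i,p)$. We do this by writing the dominant monomial of $N$ as a product of the highest monomials of the initial KR modules $M_{i;t_0}$. These are the monomials $Y_{i,p}Y_{i,p-2d_i}\cdots$, truncated at $b$. Inverting this triangular change of basis expresses each $Y_{i,p}$ as a ratio of two consecutive KR highest monomials. The principal component at $(i,p)$ should therefore come out as $u_{i,p+d_i}(N)-u_{i,p-d_i}(N)$, up to an index shift that depends on how the KR module at vertex $(i,p)$ is labelled. Substituting this into the tropical polynomial gives exactly $F^{-,b}_M$ evaluated at $A_{i,p}^{-1}=u_{i,p+d_i}(N)-u_{i,p-d_i}(N)$, which is the claimed formula.

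The main obstacle is this bookkeeping. One has to ensure that the vertex labels, the parity conventions, and the frozen vertices (those with $p$ near $b$, which are discarded when passing to $(\,\cdot\,)^\circ$) match between Lemma~\ref{lem:gFab} and the substitution variables $A_{i,p}^{-1}$ with $p\le b-d_i$. If Lemma~\ref{lem:gFab} already states the $g$-vector in this difference form, this step is immediate. Otherwise I would first verify the index shift on a single fundamental module $N=L(Y_{j,s})$, where exactly one $u$ is nonzero, and then extend by additivity of $g$-vectors in the dominant monomial.
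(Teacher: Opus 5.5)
Your proposal is correct and matches the paper's proof. Propositions~\ref{pro:standard-cor} and~\ref{prop:good} verify the hypotheses of Corollary~\ref{Cor:o=Fg=E} (the body version of Theorem~\ref{main-thm:F-o}~(ii)) at the standard seed $t_0$, giving $\mathfrak{o}(M,N)=F_M^{t_0}[({\bf g}^{t_0}_N)^\circ]$, and Lemma~\ref{lem:gFab} then turns this into the stated formula.

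The bookkeeping you were worried about is already done in that lemma: Lemma~\ref{lem:gFab}~(i) states the $g$-vector directly in the difference form $u_{i,p+d_i}(N)-u_{i,p-d_i}(N)$. One small correction: the highest monomials of the initial KR modules run upward, $Y_{i,p+d_i}Y_{i,p+3d_i}\cdots$ up to $b$, not downward as you wrote.
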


\begin{remark}
(i) The equality in the theorem above shows that the right-hand side is independent of the choice of $[a,b]$, as long as $M,N\in \mathcal C_{[a,b]}$ and at least one of them is reachable in $\mathcal C_{[a,b]}$. 

(ii) The above theorem also admits a variant formulated in terms of (full) $q$-characters instead of the truncated ones (see Corollary~\ref{cor:oF2}). In any case, the result tells us that the pole order $\mathfrak{o}(M,N)$ can be explicitly computed for reachable modules $M, N$ once we know their $q$-characters $\chi_q(M), \chi_q(N)$.
\end{remark}

\subsection{Further applications}\label{intro:further}
Recall that for each finite integer interval $[a,b] \subset \mathbb{Z}$, we have a skew-symmetric cluster algebra $\mathcal{A}_{[a,b]}$, which is categorified by the monoidal subcategory $\mathcal{C}_{[a,b]}$ of finite-dimensional modules over the (untwisted) quantum affine algebra $U_q'(\mathfrak{g})$ associated with the finite-dimensional simple Lie algebra $\mathfrak{g}_0$. On the other hand, the cluster algebra $\mathcal{A}_{[a,b]}$ has an additive categorification determined by an explicit quiver with potential  $(\Gamma_{[a,b]}^\circ, W_{[a,b]})$ (see \S\ref{sec:poE}).
Since the partial $F$-invariant between cluster monomials in $\mathcal A_{[a,b]}$
admits two interpretations --- both as the pole orders for reachable simple modules in $\mathcal C_{[a,b]}$ and as the partial $E$-invariants for reachable decorated representations of the Jacobian algebra $A_{[a,b]}\coloneqq J(\Gamma_{[a,b]}^\circ, W_{[a,b]})$ --- it follows that the two quantities coincide. 
In the special case when $\mathfrak{g}_0$ is of simply-laced type and $b=a+3$, the Jacobian algebra $A_{[a,b]}$ is the path algebra of a Dynkin quiver and such coincidence recovers the main result of \cite{Fuj26}.

    On the monoidal categorification side, we have a global monoidal category  $\mathcal{C}_\mathbb{Z} = \bigcup_{[a,b]}\mathcal{C}_{[a,b]}$.
    A simple module $M \in \mathcal{C}_\mathbb{Z}$  is said to be {\em reachable} if there is a finite integer interval $[a,b]$ such that $M \in \mathcal{C}_{[a,b]}$ and it is reachable in $\mathcal{C}_{[a,b]}$. 
For example, all the KR modules in $\mathcal{C}_\mathbb{Z}$ are reachable in this sense.

On the additive categorification side, for a finite integer interval $[a,b]$, we have a Jacobian algebra $A_{[a,b]}$. By varying the interval $[a,b]\subseteq \mathbb Z$, the Jacobian algebras $A_{[a,b]}$ form a suitable projective system with respect to natural projections. Thus we have a global algebra $\Lambda(\infty)$ defined by taking the projective limit of this projective system, which can be given by an infinite quiver $\Gamma$ with a potential $W_{\Gamma}$ (see Proposition \ref{prop:Lambdainfty}).

As a consequence of Theorem~\ref{main-thm:F-E-inv}, Theorem~\ref{main-thm:F-o} and the study of the above limit process, we obtain the following fundamental result:

\begin{theorem}[Proposition \ref{prop:TX}, Theorem \ref{Thm:oMN=Ext1}] \label{main-thm:oMN=Ext1} Let $T_{[a,b]}$ be the functor defined by
\[T_{[a,b]}\coloneq \Hom_{\Lambda(\infty)}(A_{[a,b]}, -) \colon \Lambda(\infty)\injc \to A_{[a,b]}\injc.\]
We have the following results.
\begin{itemize}
    \item [(i)] Let $f \colon I_0 \to I_1$ and $f' \colon I_0' \to I_1'$ be two morphisms in the category $\Lambda(\infty)\injc$.
Then, for sufficiently large $[a,b]$, the derived functor of $T_{[a,b]}$ induces an isomorphism
\[ \Hom_{\mathcal{K}^b(\Lambda(\infty)\injc)}(f,f'[1]) \cong \Hom_{\mathcal{K}^b(A_{[a,b]}\injc)}(T_{[a,b]}f, T_{[a,b]}f'[1]).\]

\item[(ii)] Let $M$ and $N$ be two reachable simple modules in $\mathcal{C}_{\mathbb{Z}}$, and let $f_M$ and $f_N$ be the corresponding\footnote{The correspondence is given in Lemma \ref{Lem:KM}, which identifies the $g$-vector of $f_M$ and that of $M$.} two-term rigid complexes in $\mathcal{K}^{[0,1]}(\Lambda(\infty)\injc)$. Then we have
\[  \mathfrak{o}(M,N) = \dim_{\mathbb{C}}\Hom_{\mathcal{K}^b(\Lambda(\infty)\injc)}(f_M, f_N[1]).\]
\end{itemize}
\end{theorem}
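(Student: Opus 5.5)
Part (i) is a stabilization statement, and I would prove it by reducing both sides to finite-dimensional data that are already visible in some $A_{[a,b]}$. Here $\Lambda(\infty)$ is the projective limit of the Jacobian algebras $A_{[a,b]}$ along the natural projections (Proposition \ref{prop:Lambdainfty}). So $A_{[a,b]}=\Lambda(\infty)/\mathfrak a_{[a,b]}$ is a quotient by the closed ideal generated by the idempotents of vertices outside $[a,b]$.

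The first step is to use that an object of $\Lambda(\infty)\injc$ is a finite direct sum of indecomposable injectives $I(v)$, each with finite-dimensional socle. I would check that each $I(v)$ is locally finite, and that for $[a,b]$ large, $T_{[a,b]}I(v)=\Hom_{\Lambda(\infty)}(A_{[a,b]},I(v))$ is the largest submodule of $I(v)$ annihilated by $\mathfrak a_{[a,b]}$. This submodule is the injective $I_{[a,b]}(v)$ of $A_{[a,b]}$. The functor $T_{[a,b]}$ is right adjoint to restriction along $\Lambda(\infty)\to A_{[a,b]}$, so it preserves injectives, and we get
\[
\Hom_{A_{[a,b]}}(T X, T Y)=\Hom_{\Lambda(\infty)}(\mathrm{res}\, TX, Y).
\]
The second step is to write $\Hom_{\mathcal K^b}(f,f'[1])$ as a cokernel of Hom-spaces between the terms:
\[
\Hom(I_0,I_0')\oplus\Hom(I_1,I_1')\to\Hom(I_0,I_1').
\]
The same description holds after applying $T_{[a,b]}$. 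It then suffices to show that $\Hom_{\Lambda(\infty)}(I,I')\to\Hom_{A_{[a,b]}}(TI,TI')$ is surjective, and bijective modulo maps that become zero on the finite-dimensional pieces that matter. In fact I would argue on the dual side, with the projective presentations given by $D$ of the injectives restricted to a large finite truncation. The cokernel only sees compositions through the finitely many vertices in the supports of the socles and tops involved, and these supports are contained in $[a,b]$ for $[a,b]$ large.

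The main obstacle is this second step. We must show that both the image and the kernel stabilize, i.e.\ that no homotopy on the $\Lambda(\infty)$ side is lost in the truncation, and no new one is created by it. I would first try to show that $\Hom_{\Lambda(\infty)}(I(v),I(w))=\varprojlim_{[a,b]}\Hom_{A_{[a,b]}}(I_{[a,b]}(v),I_{[a,b]}(w))$, with eventually surjective transition maps, and then apply a Mittag-Leffler argument to the cokernel. Finiteness of $\Hom_{\mathcal K^b}(f,f'[1])$ should follow from the finite-dimensionality of the truncations.

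For part (ii), I would take $[a,b]$ large enough that $M,N\in\mathcal C_{[a,b]}$, both are reachable there, and part (i) applies to $f_M,f_N$. By Lemma \ref{Lem:KM}, $T_{[a,b]}f_M$ is the two-term rigid injective complex of the reachable decorated representation $\mathcal M_M$ of $A_{[a,b]}$ with $g$-vector ${\bf g}_M$, and likewise for $N$. By Remark \ref{rmk:def-DWZ}, $\dim\Hom_{\mathcal K^b(A_{[a,b]}\injc)}(T f_M,T f_N[1])=E^{\rm inj}(\mathcal M_M,\mathcal M_N)$. Theorem \ref{main-thm:F-E-inv} identifies this with $F_M^t[{\bf g}_N^t]$. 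Corollary \ref{Cor:o=Fg=E}, i.e.\ Theorem \ref{main-thm:F-o}(ii) applied to the standard seed via Propositions \ref{pro:standard-cor} and \ref{prop:good}, identifies it with $\mathfrak o(M,N)$. Combining these with (i) gives the claim.
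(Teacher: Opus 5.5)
Your part (ii) follows the paper's route: choose $[a,b]$ with $M,N$ reachable in $\mathcal C_{[a,b]}$, identify $T_{[a,b]}f_M\cong f_M^{[a,b]}$, then chain Theorem~\ref{thm:F-E-inv} and Corollary~\ref{Cor:o=Fg=E}. One correction there: $T_{[a,b]}f_M\cong f_M^{[a,b]}$ for an arbitrary large interval is not literally Lemma~\ref{Lem:KM}. It follows from rigidity of $T_{[a,b]}f_M$ (Remark~\ref{rem:T_X2}), comparison of $g$-vectors, and Lemma~\ref{lem:gvec-rigid}.

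The real gap is in part (i), on which (ii) depends. You correctly present $\Hom_{\mathcal K^b}(f,f'[1])$ as the cokernel of $\Hom(I_0,I_0')\oplus\Hom(I_1,I_1')\to\Hom(I_0,I_1')$. You also correctly see that $\Hom_{\Lambda(\infty)}(I_{i,p},I_{j,s})\cong e_{i,p}\Lambda(\infty)e_{j,s}=\varprojlim e_{i,p}A_{[a,b]}e_{j,s}$, with surjective transition maps.

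However, a Mittag-Leffler argument only identifies the $\Lambda(\infty)$-side with the \emph{inverse limit} of the $A_{[a,b]}$-sides. The statement asks for an isomorphism at a single sufficiently large stage, and for that the inverse systems must stabilize. Your justification, that finiteness ``should follow from the finite-dimensionality of the truncations'', is false. An inverse limit of finite-dimensional spaces along surjections, such as $\varprojlim_n\mathbb C[x]/(x^n)$, need not be finite-dimensional. The heuristic that only vertices in the supports of the socles involved matter also fails. Paths in $\Gamma$ between two fixed vertices can climb arbitrarily far along the arrows $\varepsilon_i(p)$ and come back along the $\alpha_{ij}(q)$, and nothing in your argument controls them.

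The missing ingredient is Hom-finiteness of $\Lambda(\infty)\injc$ (Proposition~\ref{Prop:KS}). The space $e_{i,p}\Lambda(\infty)e_{j,s}\cong(e_i\Pi(\infty)e_j)_{p-s}$ is finite-dimensional because $\Pi(\infty)$ is graded free of finite rank over $\mathbb C[\varepsilon]$ with $\deg\varepsilon=2r^\vee>0$. This is a genuinely nontrivial input, resting on the Gei\ss--Leclerc--Schr\"oer freeness results for $\Pi(l)$. If by ``locally finite'' you meant $\dim e_wI(v)<\infty$, that is exactly this fact, but you neither prove it nor use it where it is needed.

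Once you have it, the argument closes quickly. A surjective inverse system of finite-dimensional spaces with finite-dimensional limit has bounded, hence eventually constant, dimensions. So the comparison maps on $\prod_m\Hom(I_m,I_m')$ and on $\Hom(I_0,I_1')$ are isomorphisms for all large $[a,b]$. Comparing cokernels by the five lemma then gives (i). This is precisely the paper's proof of Proposition~\ref{prop:TX}.
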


The limit process studied in Theorem~\ref{main-thm:oMN=Ext1} is not merely a reformulation of the previous results; it also offers a new perspective on the pole orders and $q$-characters for reachable modules in $\mathcal{C}_\mathbb{Z}$, which we will explain below. Roughly speaking, it enables us to study these invariants via the graded modules over a graded algebra $\Pi(\infty)$.

As a consequence of a classical smash product construction~\cite{CM84,Bea88}, the unital modules over the algebra $\Lambda(\infty)$ are tautologically equivalent to $\mathbb{Z}$-graded modules over an infinite-dimensional Jacobian algebra $\Pi(\infty)$ associated with $\mathfrak{g}_0$ studied by Gei\ss--Leclerc--Schr\"oer in \cite{GLS-2017}.
More precisely,  the algebra $\Pi(\infty)$ is defined as a projective limit of a projective system given by $\Pi(l)$ for $l \in \mathbb{Z}_{>0}$, where each $\Pi(l)$ is a finite-dimensional algebra which is referred to as the {\em generalized preprojective algebra} of $\mathfrak{g}_0$ in \cite{GLS-2017}.
Through this identification, we may regard the two-term complex $f_M$ in Theorem~\ref{main-thm:oMN=Ext1} as an object of the bounded homotopy category $\mathcal{K}^b(\Pi(\infty)\injc^\mathbb{Z})$ of complexes of $\mathbb{Z}$-graded finitely cogenerated injective $\Pi(\infty)$-modules. This fits into the framework of \cite{FM} by the second and third named authors of this paper.

In \cite[\S4]{HL16}, Hernandez and Leclerc give a geometric $q$-character formula for KR modules via modules over the Jacobian algebra associated to a  semi-infinite quiver with potential. Their geometric $q$-character formula can be reformulated via the graded modules over $\Pi(\infty)$ as follows:
Let $M$ be a KR module and  $f_M\in \mathcal{K}^b(\Pi(\infty)\injc^\mathbb{Z})$ the corresponding two-term rigid complex. 
Then we have
\begin{equation} \label{eq:gqchintro}
\chi_q(M)= \left(\prod_{i,p}Y_{i,p}^{u_{i,p}(M)}\right) \sum_{\mathbf d_\bullet=(d_{j,s}) \in \bigoplus_{\mathtt I_0\times\mathbb Z} \mathbb N} \chi\left(\mathop{\mathsf{Gr}^{\mathbb Z}_{\mathbf d_\bullet}}(H^0(f_M))\right) \prod_{j\in\mathtt I_0,\,s\in\mathbb Z} A_{j,s}^{-d_{j,s}},
\end{equation}
where $\chi(\mathop{\mathsf{Gr}^{\mathbb{Z}}_{\mathbf{d}_{\bullet}}}(K))$ denotes the Euler characteristic of the submodule Grassmannian of a graded $\Pi(\infty)$-module $K$.
In \cite{HL16}, for a KR module $M$, the $0$-th cohomology $H^0 (f_M)$ was studied under the name of {\em generic kernel}.

Let $M,N\in\mathcal C_{\mathbb Z}$ be two KR modules. In \cite[\S5 \& Appendix A]{FM}, the second and third named authors computed the dimension of the vector space $\Hom_{\mathcal{K}^b(\Pi(\infty)\injc^\mathbb{Z})}(f_M, f_N[1])$ explicitly in terms of the inverse quantum Cartan matrix (see Proposition \ref{Prop:HomF} below). 
A comparison with the (conjectural) denominator formulas \cite[Conjecture 6.7]{Fujita-Oh-2021} of normalized $R$-matrices between KR modules led the authors to the conjectural equality    
\begin{equation} \label{eq:o=Eintro}
\mathfrak{o}(M,N) = \dim_{\mathbb{C}}\Hom_{\mathcal{K}^b(\Pi(\infty)\injc^\mathbb{Z})}(f_M, f_N[1])  
\end{equation}
for any KR modules $M,N \in \mathcal{C}_{\mathbb{Z}}$ \cite[Conjecture 5.17]{FM}. By applying
Theorem~\ref{main-thm:oMN=Ext1} (ii)  to KR modules, we obtain the equality \eqref{eq:o=Eintro}. As a result, we obtain the following explicit formula of the pole orders for KR modules.

\begin{theorem}[{Theorem \ref{thm:oKR}, \confer \cite[Conjecture 6.7]{Fujita-Oh-2021}, \cite[Conjecture 5.17]{FM}}] \label{main-thm:oKR}
Let $M=W^{(i)}_{k,q^{p+d_i}}$ and $N=W^{(j)}_{l,q^{s+d_j}}$ be two KR modules in $\mathcal C_{\mathbb Z}$ in the notation of \eqref{eq:KR}. Then we have
\[
\mathfrak{o}(W^{(i)}_{k,q^{p+d_i}}, W^{(j)}_{l,q^{s+d_j}}) = 
\left[ F_{i,k;j,l}(q)\right]_{s-p+ld_j-kd_i},
\]
where $F_{i,k; j,l}(q)$ is an explicit polynomial in $\mathbb{Z}[q]$ given by \eqref{eq:Fikjl} and $[F_{i,k; j,l}(q)]_u$ denotes the coefficient of $q^u$ in $F_{i,k; j,l}(q)\in \mathbb Z[q]$. 
\end{theorem}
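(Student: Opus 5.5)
The plan is to derive the statement from Theorem~\ref{main-thm:oMN=Ext1}~(ii) combined with the explicit Hom-computation of \cite{FM}. Kirillov--Reshetikhin modules are reachable in $\mathcal{C}_\mathbb{Z}$: each KR module $W^{(i)}_{k,q^{p+d_i}}$ belongs to the standard initial monoidal cluster $\{M_{i;t_0}\}$ of some $\mathcal{C}_{[a,b]}$, after shifting $[a,b]$ so that its right end sits at the appropriate spectral parameter. So we fix an interval $[a,b]$ large enough that both $M$ and $N$ lie in $\mathcal{C}_{[a,b]}$ and are reachable there. Theorem~\ref{main-thm:oMN=Ext1}~(ii) then gives
\[
\mathfrak{o}(M,N)=\dim_{\mathbb C}\Hom_{\mathcal{K}^b(\Lambda(\infty)\injc)}(f_M,f_N[1]).
\]

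The second step is to transport this Hom-space to the graded setting. The smash product equivalence identifies $\Lambda(\infty)$-modules with $\mathbb Z$-graded $\Pi(\infty)$-modules, and we use it to identify $\Hom_{\mathcal{K}^b(\Lambda(\infty)\injc)}(f_M,f_N[1])$ with $\Hom_{\mathcal{K}^b(\Pi(\infty)\injc^{\mathbb Z})}(f_M,f_N[1])$. We must check that the two-term complex $f_M$ of Lemma~\ref{Lem:KM}, determined by its $g$-vector, matches the complex attached to the KR module in \cite{FM}. Both are the unique rigid two-term complex with a given $g$-vector. On the \cite{FM} side, the $g$-vector should be read off from the highest $\ell$-weight $Y$-monomial $Y_{i,p+d_i}Y_{i,p+3d_i}\cdots$ of $M$ via the dictionary $Y\leftrightarrow$ injective envelopes. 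This is the same dictionary used in Lemma~\ref{lem:gFab} for the truncated $q$-character, so the two $g$-vectors agree.

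The third step invokes Proposition~\ref{Prop:HomF} (\cite[\S5 \& Appendix A]{FM}). It expresses $\dim\Hom(f_M,f_N[1])$ for KR modules as the coefficient $[F_{i,k;j,l}(q)]_{s-p+ld_j-kd_i}$, where $F_{i,k;j,l}$ is built from the inverse quantum Cartan matrix as in \eqref{eq:Fikjl}. Chaining the three equalities proves the theorem.

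The main obstacle is the bookkeeping of conventions in the second step. The spectral parameter normalizations $q^{p+d_i}$ versus $p$, the grading shift in the smash product, and the direction of the Hom ($f_M$ to $f_N[1]$ rather than the reverse) must all line up, so that the exponent comes out exactly as $s-p+ld_j-kd_i$. I would first settle this on the fundamental case $k=l=1$. There I would compare against $\mathfrak{o}(M,N)$ computed independently from Theorem~\ref{main-thm:oF}, using the known $q$-characters of fundamental modules, which should fix every sign and shift. I would then extend to general $k,l$ by the same identifications.
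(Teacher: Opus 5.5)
Your route is the paper's: KR modules are reachable, Theorem~\ref{Thm:oMN=Ext1} turns $\mathfrak{o}(M,N)$ into $\dim_{\mathbb{C}}\Hom_{\mathcal{K}^b(\Lambda(\infty)\injc)}(f_M,f_N[1])$, the uniqueness in Lemma~\ref{Lem:KM} identifies $f_M$ with $f^{(i)}_{k,p}\colon I_{i,p+2kd_i}\to I_{i,p}$ (both have $g$-vector $\mathbf{e}_{i,p}-\mathbf{e}_{i,p+2kd_i}$, which follows by telescoping $u_{i,p'+d_i}(M)-u_{i,p'-d_i}(M)$), and Proposition~\ref{Prop:HomF} finishes. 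The only input you assert without justification is that $f^{(i)}_{k,p}$ is rigid; the uniqueness argument needs exactly this, and the paper supplies it in Lemma~\ref{lem:KKR}~(i); on the other hand, your graded transport and the calibration on fundamental modules are unnecessary, since Proposition~\ref{Prop:HomF} is already stated over $\Lambda(\infty)$ with the exponent $s-p+ld_j-kd_i$.
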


We remark that the recent preprint by Oh--Scrimshaw \cite{OS} also computes $\mathfrak{o}(M,N) $ for KR modules in many cases independently, using a different method.

  The following result, which is a consequence of the additive and monoidal categorifications of the cluster algebras $\mathcal{A}_{[a,b]}$ and the limit process, extends Hernandez--Leclerc's geometric $q$-character formula from KR modules to arbitrary reachable simple modules in $\mathcal{C}_{\mathbb{Z}}$.  The geometric $q$-character formula below is formulated in terms of the  $\mathbb Z$-graded representation theory of the generalized preprojective algebra $\Pi(\infty)$.

\begin{theorem}[Theorem \ref{thm:qchformula}] \label{main-thm:qchformula} 
The geometric $q$-character formula \eqref{eq:gqchintro} holds for any reachable simple module $M$ in $\mathcal C_{\mathbb Z}$.
\end{theorem}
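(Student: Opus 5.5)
The strategy is to reduce the geometric $q$-character formula for an arbitrary reachable simple $M\in\mathcal C_{\mathbb Z}$ to a finite interval, and there to the Caldero--Chapoton-type formula for $F$-polynomials of reachable decorated representations. First I fix $[a,b]$ with $M\in\mathcal C_{[a,b]}$ and $M$ reachable in $\mathcal C_{[a,b]}$. By Lemma~\ref{lem:gFab}, the truncated $q$-character $\chi_q^{-,b}(M)$ equals $\prod Y_{i,p}^{u_{i,p}(M)}\,F_M^{-,b}(\mathbf A^{-1})$. Here $F_M^{-,b}$ is the $F$-polynomial of the cluster monomial $\varphi[M]$ with respect to the standard seed, under the substitution $y_{(i,p)}\mapsto A_{i,p}^{-1}$.

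On the additive side, $\varphi[M]$ corresponds to a reachable decorated representation $\mathcal M$ of $A_{[a,b]}$. By \eqref{eqn:cat-g-f} (DWZ), $F_M^{-,b}=F_{\mathcal M}=\sum_{\mathbf d}\chi(\Gr_{\mathbf d}(M_{\mathcal M}))\mathbf y^{\mathbf d}$. By Remark~\ref{rmk:def-DWZ}, $M_{\mathcal M}$ is $H^0$ of the two-term injective complex $T_{[a,b]}f_M$ attached via Lemma~\ref{Lem:KM}; the $g$-vectors match there. This gives the formula for the truncated $q$-character with $H^0(T_{[a,b]}f_M)$ in place of $H^0(f_M)$.

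Next I pass to the limit. Enlarging $[a,b]$ to $[a,b']$ with $b'\ge b$ keeps $M$ reachable, by the compatibility of the seeds $\mathcal A_{[a,b]}\subset\mathcal A_{[a,b']}$ (freezing or localization). Since $\chi_q(M)$ is the limit of $\chi_q^{-,b'}(M)$ as $b'\to\infty$, it suffices to show that $H^0(T_{[a,b']}f_M)$ stabilizes to $H^0(f_M)$ in the relevant degrees. This compatibility of submodule Grassmannians under the smash-product identification with graded $\Pi(\infty)$-modules should come from Proposition~\ref{prop:TX}. The underlying reason is that $T_{[a,b]}=\Hom_{\Lambda(\infty)}(A_{[a,b]},-)$ is the truncation to the idempotent subquotient, and it is left exact on injectives. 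So $H^0(T_{[a,b]}f_M)=T_{[a,b]}H^0(f_M)$, which is the submodule of $H^0(f_M)$ supported on the vertices in $[a,b]$. Moreover, $H^0(f_M)$ is finite-dimensional, with support bounded below by $a$ because $f_M$ comes from $\mathcal C_{[a,b]}$. Hence for $b'$ large, $T_{[a,b']}H^0(f_M)=H^0(f_M)$, and the graded submodule Grassmannians agree.

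The main obstacle is this stabilization step. One must check that the complex $f_M$ of Lemma~\ref{Lem:KM} is independent of the interval and that its $H^0$ is finite-dimensional, so that truncation eventually captures everything. In parallel, one must check that the $Y$-monomial prefactor and the terms with $p>b'$ discarded in truncation match up: the terms of $\chi_q(M)$ must be exactly the limit of those of the truncated characters.
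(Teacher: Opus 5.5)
Your architecture matches the paper's: truncate to a finite interval, identify $H^0(T_Lf_M)\cong H^0(f_M^L)$, whose $F$-polynomial is $F_M^{-,b}$ by Lemma~\ref{lem:gFab} and Theorem~\ref{thm:DWZ-g-F}, and use left exactness of $T_L$ to view $X_L:=H^0(T_Lf_M)$ as a submodule of $H^0(f_M)$. The gap is the step that carries the whole argument: that $H^0(f_M)=X_L$ for large $L$, and in particular that $H^0(f_M)$ is finite-dimensional. You only assert this, and the reason you give (``$f_M$ comes from $\mathcal C_{[a,b]}$'') does not support it.

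The terms of $f_M$ are injectives $I_v=\mathbb D(e_v\Lambda(\infty))$, which are infinite-dimensional with support unbounded below. For example, $e_{i,p-2kd_i}I_{i,p}\neq 0$ for all $k\ge 0$, being dual to the nonzero element $\varepsilon_i^k\in(e_i\Pi(\infty)e_i)_{2kd_i}$, because $e_i\Pi(\infty)e_i$ is free over $\mathbb C[\varepsilon_i]$. So knowing that the $g$-vector is supported in $\Gamma_{[a,b]}$ gives no a priori control on $\ker d_M$. Finite-dimensionality is an output of the argument, not an input, and you leave it as something ``one must check''. Relatedly, enlarging only $b$ with $a$ fixed makes the exhaustion $H^0(f_M)=\bigcup X_L$ depend on exactly this unproved lower bound on the support. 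You should enlarge both ends, which Lemma~\ref{lem:clust_ab} permits.

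The missing idea is the paper's stabilization argument, in three steps. First, $H^0(f_M)=\bigcup_L X_L$: $I_M^0$ is a finite sum of objects of $\Lambda(\infty)\injc$, so $I_M^0=\bigcup_L T_LI_M^0$. Second, $F_{X_L}=F_M^{-,b}$, and since $\chi_q(M)$ is a finite Laurent polynomial, $F_M^{-,b}=F_M$ for all sufficiently large $L=[a,b]$. Hence the $F$-polynomials of the $X_L$ are eventually constant. Third, the $y_v$-degree of the $F$-polynomial of a finite-dimensional module equals $\dim e_v$ of that module, since the top term $\Gr_{\underline{\dim}X}(X)$ is a point. So the dimension vectors of the nested modules $X_L\subseteq X_{L'}$ stabilize, which forces equality. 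Hence $H^0(f_M)=X_L\cong H^0(f_M^L)$ is finite-dimensional.

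Only then can you transfer submodule Grassmannians. The tool for that is the categorical isomorphism of Proposition~\ref{rem:compare-gpa}, not Proposition~\ref{prop:TX}, which concerns only Hom spaces in homotopy categories. Two smaller points. The independence of $f_M$ from the interval, which you list as an obstacle, is already part of the uniqueness in Lemma~\ref{Lem:KM}. What you do need for $T_Lf_M\cong f_M^L$ is that $T_Lf_M$ is rigid (Remark~\ref{rem:T_X2}), not merely that the $g$-vectors agree.
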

We remark that the geometric $q$-character formula for $\chi_q(M)$ admits a natural specialization to a geometric formula for the classical character $\chi(M)$ (see Corollary~\ref{cor:qchformula}).

    As a future direction of \S \ref{intro:further}, we make a concluding remark.
    The infinite quiver $\Gamma$ defining $\Lambda(\infty)$ does not give the same cluster algebra corresponding to $\mathcal{C}_\mathbb{Z}$.
    Related to this, there are more general types of two-term rigid complexes in $\mathcal{K}^b(\Pi(\infty)\injc^\mathbb{Z})$ and the generating functions of Euler characteristics, which are not necessarily given in the manner of Theorem~\ref{main-thm:oMN=Ext1} (ii) and Theorem~\ref{main-thm:qchformula} but are explained from the viewpoints of generalized preprojective algebras.
    In a forthcoming project with Bernard Leclerc, we will return to such objects in $\mathcal{K}^b(\Pi(\infty)\injc^\mathbb{Z})$ in relation to the theory of shifted quantum affine algebras including infinite-dimensional real simple representations (Remark~\ref{rem:shifted}).

\subsection*{Organization}
In Section \ref{sec2}, we recall the basic notions of cluster algebras and the definitions of good elements, tropical invariant and $F$-invariant in cluster algebras.

In Section \ref{sec:3}, we give the mutation formula for the partial $F$-invariant under the initial seed mutations (see Theorem \ref{thm:F-inv}). As an application, we prove a conjecture of Reading \cite[Conjecture 8.21]{Reading-2014}, which asserts that the non-compatible cluster variables can be separated by the sign-coherence of $g$-vectors upon varying the initial seeds (see Theorem \ref{thm:sign-coherent}).

In Section \ref{sec:4},  we recall the notions of $g$-vectors, $F$-polynomials, and the (partial) $E$-invariant within the theory of quivers with potentials, along with several auxiliary results needed for our proof. We then show that the partial $E$-invariant and the partial $F$-invariant coincide on cluster monomials, by comparing their respective mutation formulas under the initial seed mutations (see Theorem \ref{thm:F-E-inv}).

In Section \ref{sec:pole-order}, we compare the pole orders of normalized $R$-matrices and the partial $F$-invariants, and provide a sufficient condition for them to coincide on cluster monomials (see Theorem \ref{thm:F-o} \& Corollary \ref{Cor:o=Fg=E}). We show that this sufficient condition is satisfied automatically for the standard examples of monoidal categorification constructed in \cite{kkop-2024}. Thanks to the coincidence of pole orders with partial $F$-invariants, and the fact that $F$-polynomials and $g$-vectors can be read off from $q$-characters, we obtain an explicit formula for the pole orders between reachable simple modules in terms of $q$-characters (see Theorem \ref{thm:oF} \& Corollary \ref{cor:oF2}).

In Section \ref{sec: application}, we introduce two algebras $\Lambda(\infty)$ and $\Pi(\infty)$ to study the pole orders and $q$-characters. The unital modules over $\Lambda(\infty)$ can be identified with graded modules over the graded algebra $\Pi(\infty)$ (see Proposition \ref{rem:compare-gpa}). We prove that the pole orders between reachable simple modules can be expressed in terms of two-term complexes of injective modules over $\Lambda(\infty)$ (see Theorem \ref{Thm:oMN=Ext1}). As a consequence, we verify the conjectural explicit formula \cite[Conjecture 5.17]{FM} (refining \cite[Conjecture 6.7]{Fujita-Oh-2021}) of the pole orders of the normalized $R$-matrices between KR modules (see Theorem \ref{thm:oKR}).
 We also give an interpretation of the $q$-character of reachable modules in terms of the graded representation theory of $\Pi(\infty)$ (see Theorem \ref{thm:qchformula}).

In Appendix~\ref{sec: appendixA}, we give an introduction to the theory of decorated representations of infinite-dimensional pseudo-compact algebras, which includes Jacobian algebras of quivers with potentials as special cases.
As a consequence, we obtain a homological interpretation of the partial $E$-invariants and the $g$-vectors of decorated representations of Jacobian algebras introduced by \cite{DWZ10} in terms of the bounded homotopy category of complexes of relative injective objects (see Theorem~\ref{thm:g-DWZ}).

\subsection*{Conventions and notation}
Throughout this paper, we work with cluster algebras whose frozen variables are not inverted. Vectors are understood to be column vectors unless otherwise stated. All the categories in this paper are essentially small. The rings/algebras considered in this paper are not necessarily unital. The non-unital cases will be clearly indicated.
We refer to a (graded) module over a (graded) ring $A$ as a (graded) left $A$-module. When we discuss right $A$-modules, we naturally regard left $A^\op$-modules as right $A$-modules.

We say a (graded) module $M$ over a (graded) unital ring $A$ is {\em finitely generated}  if for any collection of (graded) submodules $\{M_i\}_{i\in I}$ such that $\sum_{i\in I}M_i = M$, there exists a finite subset $F\subseteq I$ such that $\sum_{i'\in F}M_{i'} = M$.
Dually, a (graded) module $M$ over a (graded) unital ring $A$ is {\em finitely cogenerated} if for any collection of (graded) submodules $\{M_i\}_{i\in I}$ such that $\bigcap_{i\in I}M_i = \{0\}$, there exists a finite subset $F\subseteq I$ such that $\bigcap_{i'\in F}M_{i'} = \{0\}$, \confer \cite[\S 10]{AF92}.

 We refer to a variety as a reduced separated scheme of finite type over an algebraically closed field, whose topology is the Zariski topology.

\section{Preliminaries}\label{sec2}

Throughout this paper, we fix a pair $(n,m)$ of integers with $m\geq n > 0$. For any positive integer $r$, we denote $[1,r] \coloneqq   \{1,\ldots,r\}$.
The matrices in this paper are always integer matrices.
\subsection{Mutation matrices and compatible pairs}

Recall that an $n\times n$ integer matrix $B$ is said to be {\em skew-symmetrizable}, if there exists a diagonal integer matrix $S=\diag(s_1,\ldots,s_n)$ with $s_i>0$ ($i\in[1,n]$) such that $SB$ is skew-symmetric. Such a diagonal matrix $S$ is called a {\em skew-symmetrizer} of $B$.

An $m\times n$ integer matrix $\widetilde B=\begin{bmatrix}
       B\\ P
   \end{bmatrix}=(b_{ij})$ is called a {\em mutation matrix}, if its top $n\times n$ submatrix $B$ is skew-symmetrizable. The submatrix $B$ is called the {\em principal part} of $\widetilde B$.

Let $\widetilde B=\begin{bmatrix}
       B\\ P
   \end{bmatrix}=(b_{ij})$ be an $m\times n$ mutation matrix. The {\em mutation} of $\widetilde B$
   in direction $k\in[1,n]$ is
defined to be the new integer matrix $\mu_k(\widetilde B)=\widetilde B'=\begin{bmatrix}
       B'\\ P'
   \end{bmatrix}=(b_{ij}')$ given by
\begin{equation}\label{eqn:b-mutation}
b_{ij}^\prime=\begin{cases}-b_{ij}, & \text{if}\;i=k\;\text{or}\;j=k;\\
 b_{ij}+[b_{ik}]_+[b_{kj}]_+-[-b_{ik}]_+[-b_{kj}]_+,&\text{otherwise},\end{cases}
\end{equation}
where $[a]_+ \coloneqq   \max\{a,0\}$ for any $a\in\mathbb R$.

The following statements are well-known,  \confer\cite[Proposition 4.5]{fz_2002}, \cite[Lemma 3.2]{bfz_2005}.
\begin{itemize}
    \item $\mu_k(\widetilde B)=\widetilde B'$ is still a mutation matrix, namely, the submatrix $B'$ of $\widetilde B'$ is skew-symmetrizable;
    \item $B$ and $B'$ share the same skew-symmetrizer;
    \item $\mu_k$ is an involution,  \ie $\mu_k^2=id$;
    \item The rank of $ \widetilde B'=\mu_k(\widetilde B)$ is equal to the rank of $\widetilde B$.
\end{itemize}

 Let $\Lambda=(\lambda_{ij})$ be an $m\times m$ skew-symmetric integer matrix, and let
$\widetilde B=\begin{bmatrix}
       B\\ P
   \end{bmatrix}=(b_{ij})$ be
 an $m\times n$ integer matrix, where $B$ is the top $n\times n$ submatrix of $\widetilde B$.  Here we do not assume that $B$ is skew-symmetrizable.

\begin{definition}[Compatible pair, \cite{bz-2005}]
   Keep $\Lambda$ and $\widetilde B$ as above. Let $S=\diag(s_1,\ldots,s_n)$ be a diagonal integer matrix with $s_j>0,\;j\in[1,n]$. The pair $(\widetilde B,\Lambda)$ is called a {\em compatible pair} of type $S$, if for any $j\in[1,n]$ and $i\in[1,m]$, we have
   \begin{eqnarray}\label{eqn:lpair}
    \sum_{k=1}^mb_{kj}\lambda_{ki}=\delta_{i,j}s_j.
   \end{eqnarray}
 In other words,  $\widetilde B^T\Lambda=(S\mid {\bf 0})$, where  ${\bf 0}$ is the $n\times (m-n)$ zero matrix.
\end{definition}

The matrix $\Lambda$ in a compatible pair $(\widetilde B,\Lambda)$ is called a {\em Poisson coefficient matrix}.

\begin{proposition}[{\cite[Proposition 3.3]{bz-2005}}]
\label{pro:fullrank}
Let $(\widetilde B,\Lambda)$ be a compatible pair of type $S$ and $B$ the top $n\times n$ submatrix of $\widetilde B$. Then the matrix $\widetilde B$ has full rank $n$ and $SB=\widetilde B^T\Lambda \widetilde B$. In particular,  $B$  is skew-symmetrizable.
\end{proposition}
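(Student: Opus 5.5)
The plan is to derive both claims from the matrix identity $\widetilde B^T\Lambda=(S\mid{\bf 0})$ in \eqref{eqn:lpair}, and then use skew-symmetry of $\Lambda$ for the consequences.

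\emph{Full rank.} Suppose $\widetilde B{\bf v}={\bf 0}$ for some ${\bf v}\in\mathbb Q^n$. Transposing gives ${\bf v}^T\widetilde B^T={\bf 0}$. Multiplying on the right by $\Lambda$ then gives
\[
{\bf 0}={\bf v}^T\widetilde B^T\Lambda={\bf v}^T(S\mid{\bf 0})=({\bf v}^TS\mid{\bf 0}).
\]
Hence ${\bf v}^TS={\bf 0}$. Since $S$ is diagonal with strictly positive entries, it is invertible, so ${\bf v}={\bf 0}$. Therefore the $n$ columns of $\widetilde B$ are linearly independent, and $\widetilde B$ has rank $n$. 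Equivalently, $(S\mid{\bf 0})$ has rank $n$ and factors through $\widetilde B^T$, which forces $\operatorname{rank}\widetilde B\ge n$.

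\emph{The identity $SB=\widetilde B^T\Lambda\widetilde B$.} Right-multiply $\widetilde B^T\Lambda=(S\mid{\bf 0})$ by $\widetilde B=\begin{bmatrix}B\\P\end{bmatrix}$:
\[
\widetilde B^T\Lambda\widetilde B=(S\mid{\bf 0})\begin{bmatrix}B\\P\end{bmatrix}=SB+{\bf 0}\cdot P=SB.
\]

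\emph{Skew-symmetrizability.} Since $\Lambda^T=-\Lambda$, we have
\[
(SB)^T=\widetilde B^T\Lambda^T\widetilde B=-\widetilde B^T\Lambda\widetilde B=-SB,
\]
so $SB$ is skew-symmetric. Because $S=\diag(s_1,\ldots,s_n)$ has positive integer entries, it is a skew-symmetrizer of $B$, and $B$ is skew-symmetrizable.

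No step is a real obstacle. The only point needing care is to use $s_j>0$ (not merely $s_j\neq 0$) for the final conclusion, since a skew-symmetrizer is required to have positive diagonal entries; invertibility alone already suffices for the rank argument.
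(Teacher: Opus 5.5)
Your proof is correct. It is essentially the standard argument from Berenstein--Zelevinsky's Proposition 3.3, which the paper cites without reproducing a proof: full rank follows because $\widetilde B^T\Lambda=(S\mid{\bf 0})$ has rank $n$, since $S$ is nondegenerate, and $SB=\widetilde B^T\Lambda\widetilde B$ is skew-symmetric because $\Lambda$ is.
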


Let $(\widetilde B,\Lambda)$ be a compatible pair of type $S$ and let $k\in [1,n]$. We define 
a new pair $(\widetilde B',\Lambda')$, where $\widetilde B'=\mu_k(\widetilde B)$ and $\Lambda'=(\lambda_{ij}')$ is given as follows:
\begin{eqnarray}\label{eqn:L-mutation}
    \lambda_{ij}'=\begin{cases}-\lambda_{ik}+\sum_{l=1}^m[- b_{lk}]_+\lambda_{il},&\text{if }i\neq k\text{ and }j=k;\\
-\lambda_{kj}+\sum_{l=1}^m[- b_{lk}]_+\lambda_{lj},&\text{if }i=k \text{ and }j\neq k;\\
\lambda_{ij},&\text{otherwise}.
\end{cases}
\end{eqnarray}

\begin{proposition}[{\cite[Proposition 3.4]{bz-2005}}] \label{pro:pm} The new pair $(\widetilde B',\Lambda')$ is still a compatible pair of type $S$.
\end{proposition}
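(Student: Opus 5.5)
The plan is to verify directly that the mutated pair $(\widetilde B',\Lambda')$ satisfies $(\widetilde B')^T\Lambda'=(S\mid\mathbf 0)$. The key device is to write both mutations as matrix conjugations. Set
\[
E=E_k=\bigl(e_{ij}\bigr)_{m\times m},\qquad e_{ij}=\delta_{ij}\ \text{for } j\neq k,\qquad e_{kk}=-1,\qquad e_{ik}=[-b_{ik}]_+\ \text{for } i\neq k.
\]
Let $F=F_k$ be the $n\times n$ matrix with $f_{ij}=\delta_{ij}$ for $i\neq k$, $f_{kk}=-1$, and $f_{kj}=[b_{kj}]_+$ for $j\neq k$. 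The standard facts to use are then
\[
\widetilde B'=E\widetilde B F,\qquad \Lambda'=E^T\Lambda E.
\]
The identity $\Lambda'=E^T\Lambda E$ should be read off from \eqref{eqn:L-mutation}. The one entry that needs care is $\lambda'_{kk}$, which must come out as $0$ by skew-symmetry. The identity $\widetilde B'=E\widetilde BF$ is checked entrywise against \eqref{eqn:b-mutation}, using $[a]_+-[-a]_+=a$.

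With these factorizations, $(\widetilde B')^T\Lambda'=F^T\widetilde B^T E^TE^T\Lambda E$. This is not yet in usable form. The obstacle is that $E^2=I$: since $e_{kk}=-1$, the product $E\cdot E$ has $(i,k)$ entry $e_{ik}\cdot(-1)+e_{ik}=0$. So I will instead use the form $\widetilde B'=E\widetilde BF$, note that $E^{-1}=E$, and rewrite the target as
\[
(\widetilde B')^T\Lambda'=F^T\widetilde B^TE^T\,E^T\Lambda E .
\]
Rather than collapsing $E^TE^T$ abstractly, I will compute $(E\widetilde B)^T(E^T)$ and handle the special column $k$ directly.

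A cleaner route is to work column by column. Write $\mathbf b_j$ for the $j$-th column of $\widetilde B$. The compatibility condition says $\mathbf b_j^T\Lambda=s_j\mathbf e_j^T$, with $\mathbf e_j$ a unit vector in $\mathbb Z^m$.

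For $j=k$, the new column is $\mathbf b'_k=-\mathbf b_k$. The new row of $\Lambda'$ is $\mathbf e_i^T\Lambda'=\mathbf e_i^TE^T\Lambda E=(E\mathbf e_i)^T\Lambda E$. Then $\mathbf b_k'^T\Lambda'\mathbf e_i=-\mathbf b_k^T\Lambda E\mathbf e_i=-s_k\mathbf e_k^TE\mathbf e_i=-s_ke_{ki}$. This equals $s_k\delta_{ik}$, because row $k$ of $E$ is $-\mathbf e_k^T$.

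For $j\neq k$, I will use $\mathbf b'_j=E\mathbf b_j+[b_{kj}]_+\mathbf b'_k$, computed through $EF$. Expanding gives a first term $\mathbf b_j^TE^TE^T\Lambda E$ and a correction term. I will simplify using $\mathbf b_j^T\Lambda=s_j\mathbf e_j^T$ and $\mathbf b_k^T\Lambda=s_k\mathbf e_k^T$. The contributions proportional to $\mathbf e_k$ are multiplied by $[-b_{ik}]_+$-type coefficients and by $[b_{kj}]_+$. They should cancel using the skew-symmetrizability relation $s_kb_{kj}=-s_jb_{jk}$, so only $s_j\mathbf e_j^T$ survives.

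The main obstacle is this bookkeeping for $j\neq k$. I will first try to keep it in matrix form, noting that $(E^T)^2-I$ is supported in column $k$. Since $\mathbf e_i^T(E^2)^T=\mathbf e_i^T$ except on column $k$, the computation reduces to evaluating one scalar, and that scalar vanishes precisely by Proposition \ref{pro:fullrank}, which gives $SB=\widetilde B^T\Lambda\widetilde B$.
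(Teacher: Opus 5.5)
Your factorizations $\widetilde B'=E\widetilde BF$ and $\Lambda'=E^T\Lambda E$ are correct. They are exactly what the cited proof in \cite{bz-2005} uses; the paper itself only quotes the result. The trouble is what you do next: you treat $E^2=I$ as an obstacle, although your own computation verifies it, and it is the key identity. It gives $E^TE^T=(E^2)^T=I$, hence
\[
(\widetilde B')^T\Lambda'=F^T\widetilde B^T\Lambda E=F^T(S\mid\mathbf 0)E .
\]
All that remains is to check $F^T(S\mid\mathbf 0)E=(S\mid\mathbf 0)$:
\begin{itemize}
\item columns $j\neq k$ agree immediately;
\item the $(k,k)$ entry is $(-1)(-s_k)=s_k$;
\item for $i\neq k$, the $(i,k)$ entry is $s_i[-b_{ik}]_+-s_k[b_{ki}]_+=0$, since $s_ib_{ik}=-s_kb_{ki}$ by Proposition~\ref{pro:fullrank}.
\end{itemize}
Your closing remark that $(E^T)^2-I$ is ``supported in column $k$'' and leaves a scalar to evaluate is therefore confused. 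That matrix is zero, and the scalar that must vanish lives in $F^T(S\mid\mathbf 0)E$.

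The column-by-column route you actually write down fails at its one nontrivial step, because $\mathbf b'_j=E\mathbf b_j+[b_{kj}]_+\mathbf b'_k$ has the wrong sign. Since $F\mathbf e_j=\mathbf e_j+[b_{kj}]_+\mathbf e_k$ and $E\mathbf b_k=\mathbf b_k$, the correct formula is
\[
\mathbf b'_j=E\mathbf b_j+[b_{kj}]_+\mathbf b_k=E\mathbf b_j-[b_{kj}]_+\mathbf b'_k .
\]
With your sign, the $\mathbf e_k^T$-coefficient of $\mathbf b_j'^T\Lambda'$ is $s_j[-b_{jk}]_++s_k[b_{kj}]_+=2s_k[b_{kj}]_+$. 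This is nonzero whenever $b_{kj}>0$, so the cancellation you anticipate does not occur. With the correct sign the coefficient is $s_j[-b_{jk}]_+-s_k[b_{kj}]_+=0$.

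In the case $j=k$ you also dropped the factor $E^T$ in $-\mathbf b_k^TE^T\Lambda E\mathbf e_i$. This is legitimate only because $E\mathbf b_k=\mathbf b_k$, which uses $b_{kk}=0$ (again from Proposition~\ref{pro:fullrank}), and you should say so. With the sign corrected and $E^2=I$ used directly, your argument coincides with the Berenstein--Zelevinsky proof.
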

Keep the above setting. We call the new pair $(\widetilde B',\Lambda')$ the {\em mutation of the compatible pair}  $(\widetilde B,\Lambda)$ in direction $k$ and denote $(\widetilde B',\Lambda')=\mu_k(\widetilde B,\Lambda)$. It is known from \cite[Proposition 3.6]{bz-2005} that  $\mu_k(\widetilde B',\Lambda')=(\widetilde B,\Lambda)$,  \ie $\mu_k$ is an involution.

\subsection{$Y$-pattern and cluster pattern}
Recall that we fixed a pair $(n,m)$ of integers with $m\geq n >0$. Let $\mathbb F$ be the field of rational functions over $\mathbb Q$ in $m$ variables.

A {\em $Y$-seed} of rank $n$ in $\mathbb F$ is a pair $({\bf y}, \widehat B)$, where
\begin{itemize}
	\item ${\bf y} = (y_1, \ldots, y_{m})$ is an ordered set of free generators of $\mathbb F$ over $\mathbb Q$;
	\item  $\widehat B=(B\mid Q)=(\hat b_{ij})$ is an $n\times m$ integer matrix such that its leftmost $n\times n$ submatrix $B$ is skew-symmetrizable.
\end{itemize}
The variables $y_1,\ldots,y_m$ are called the {\em $y$-variables} of $({\bf y}, \widehat B)$.

Let  $({\bf y}, \widehat B)$ be a $Y$-seed of rank $n$ in $\mathbb F$. The {\em mutation} of  $({\bf y}, \widehat B)$ in direction $k\in[1,n]$ is the pair  $({\bf y}', \widehat B') \coloneqq   \mu_k({\bf y}, \widehat B)$ given as follows:
\begin{align}\label{eqn:y-mutation}
 y_i^\prime&=\begin{cases}y_k^{-1}, &\text{if}\;i=k; \\
y_iy_k^{[\hat b_{ki}]_+}(1+y_k)^{-\hat b_{ki}},&\text{otherwise}.\end{cases}\\
\hat b_{ij}^\prime&=\begin{cases}-\hat b_{ij}, & \text{if}\;i=k\;\text{or}\;j=k;\\
\hat b_{ij}+[\hat b_{ik}]_+[\hat b_{kj}]_+-[-\hat b_{ik}]_+[-\hat b_{kj}]_+,&\text{otherwise}.\end{cases}\nonumber
\end{align}
One can check that the new pair  $({\bf y}', \widehat B')=\mu_k({\bf y}, \widehat B)$ is still a $Y$-seed of rank $n$ and $({\bf y}, \widehat B)=\mu_k({\bf y}', \widehat B')$.

A {\em cluster seed} or simply a {\em seed} of rank $n$ in $\mathbb F$ is a pair
$({\bf x}, \widetilde B)$, where
\begin{itemize}
	\item ${\bf x} = (x_1, \ldots, x_{m})$ is an ordered set of free generators of $\mathbb F$ over $\mathbb Q$;
	\item  $\widetilde B=\begin{bmatrix}
       B\\ P
   \end{bmatrix}=(b_{ij})$ is an $m\times n$ mutation matrix.
\end{itemize}
In this case, the tuple ${\bf x}$ is called the {\it cluster} of $({\bf x}, \widetilde B)$. The elements of ${\bf x}$ are called
{\it cluster variables}. More precisely, we call $x_1,\ldots,x_n$ {\em unfrozen cluster variables} and $x_{n+1},\ldots,x_{m}$ {\em frozen (cluster) variables}.
The matrices $B$ and $P$ are respectively called the {\it  exchange matrix} and the {\it coefficient matrix} of $({\bf x}, \widetilde B)$. We denote by $\widehat {\bf y} \coloneqq   (\hat y_1,\ldots,\hat y_n)$, where $\hat y_k \coloneqq   \prod_{j=1}^mx_j^{b_{jk}}$. The variables $\hat y_1,\ldots,\hat y_n$ are called {\em $\hat y$-variables} of $({\bf x}, \widetilde B)$.

Let  $({\bf x}, \widetilde B)$ be a seed of rank $n$ in $\mathbb F$. The {\em mutation} of $({\bf x}, \widetilde B)$ in direction $k\in[1,n]$ is the pair  $({\bf x}', \widetilde B')=\mu_k({\bf x}, \widetilde B)$ given by $\widetilde B'=\mu_k(\widetilde B)$ and
\begin{eqnarray}
\label{eqn:x-mutation}
 x_i^\prime=\begin{cases}x_i,&
 \text{if}\;i\neq k;\\
 x_k^{-1}\cdot (\prod_{j=1}^mx_j^{[b_{jk}]_+}+\prod_{j=1}^mx_j^{[-b_{jk}]_+}),&\text{if}\;i= k.\end{cases}
\end{eqnarray}
One can check that the new pair  $({\bf x}', \widetilde B')=\mu_k({\bf x}, \widetilde B)$ is still a seed of rank $n$ and $({\bf x}, \widetilde B)=\mu_k({\bf x}', \widetilde B')$.

Let $\mathbb T_n$ denote the $n$-regular tree. We
 label the edges of $\mathbb T_n$ by $1,\ldots, n$ such that the $n$ different edges adjacent to the same vertex of $\mathbb T_n$ receive different labels.

\begin{definition}
(i) A {\em $Y$-pattern} $\mathcal S_Y=\{({\bf y}_t, \widehat B_t)\mid t\in \mathbb T_n\}$ of rank $n$
	is an assignment of a $Y$-seed $({\bf y}_t, \widehat B_t)$ of rank $n$ to
 	every vertex $t$ of $\mathbb T_n$ such that $({\bf y}_{t'}, \widehat B_{t'})=\mu_k({\bf y}_t, \widehat B_t)$ whenever
	\begin{xy}(0,1)*+{t}="A",(10,1)*+{t'}="B",\ar@{-}^k"A";"B" \end{xy} in $\TT_n$.

 (ii) A {\em cluster pattern} $\mathcal S_X=\{({\bf x}_t, \widetilde B_t)\mid t\in \mathbb T_n\}$ of rank $n$
	is an assignment of a cluster seed $({\bf x}_t, \widetilde B_t)$ of rank $n$ to
 	every vertex $t$ of $\mathbb T_n$ such that $({\bf x}_{t'}, \widetilde B_{t'})=\mu_k({\bf x}_t, \widetilde B_t)$ whenever
	\begin{xy}(0,1)*+{t}="A",(10,1)*+{t'}="B",\ar@{-}^k"A";"B" \end{xy} in $\TT_n$.
\end{definition}

We usually write ${\bf y}_t=(y_{1;t},\ldots,y_{m;t})$, $\widehat B_t=(B_t\mid Q_t)=(\hat b_{ij;t})$, ${\bf x}_t=(x_{1;t},\ldots,x_{m;t})$ and \[\widetilde B_t = \begin{pmatrix}
	    B_t\\ P_t
	\end{pmatrix}=(b_{ij;t}).\]
We write $\widehat{\bf y}_t=(\hat y_{1;t},\ldots,\hat y_{n;t})$ for the ordered set of $\hat y$-variables of the seed $({\bf x}_t,\widetilde B_t)$, where $$\hat y_{k;t}=\prod_{j=1}^m x_{j;t}^{b_{jk;t}}.$$

In this paper, we usually fix a vertex $t_0$ as the {\em rooted vertex} of the $n$-regular tree $\mathbb T_n$. Clearly, both $Y$-pattern and cluster pattern are uniquely determined by the data at the rooted vertex $t_0$.

\begin{definition}\label{def:dual-pair}
Let  $\mathcal S_X=\{({\bf x}_t, \widetilde B_t)\mid t\in \mathbb T_n\}$  be a cluster pattern with $\widetilde B_{t_0}\in\mathbb Z^{m\times n}$
, $\mathcal S_Y=\{({\bf y}_t, \widehat B_t)\mid t\in \mathbb T_n\}$ a $Y$-pattern with $\widehat B_{t_0}\in\mathbb Z^{n\times l}$, and let $\mathsf{\Lambda}=\{\Lambda_t\mid t\in\mathbb T_n\}$ be a collection of $m\times m$ skew-symmetric integer matrices indexed by the vertices in $\mathbb T_n$.

\begin{itemize}
\item [(i)] The pair $(\mathcal S_X,\mathcal S_Y)$ is called a {\em cluster ensemble}, if 
$\widehat B_{t_0}$ is the principal part of $\widetilde B_{t_0}$, that is, $l=n$ and $\widehat B_{t_0}=B_{t_0}$.
    \item [(ii)] The pair $(\mathcal S_X,\mathcal S_Y)$ is called a {\em Langlands dual pair}, if $l=m$ and $\widehat B_{t_0}=-\widetilde B_{t_0}^T$.
    \item[(iii)] The pair $(\mathcal S_X, \mathsf{\Lambda})$ is called a {\em $\mathsf{\Lambda}$-cluster pattern}, if $\{(\widetilde B_t,\Lambda_t)\mid t\in\mathbb T_n\}$ forms a collection of compatible pairs and $(\widetilde B_{t'},\Lambda_{t'})=\mu_k(\widetilde B_t,\Lambda_t)$  whenever \begin{xy}(0,1)*+{t}="A",(10,1)*+{t'}="B",\ar@{-}^k"A";"B" \end{xy} in $\TT_n$. In this case, we call the triple $({\bf x}_t,\widetilde B_t,\Lambda_t)$ a {\em $\mathsf{\Lambda}$-seed}.
\end{itemize}
\end{definition}
Clearly, each cluster pattern can be embedded into a cluster ensemble and a Langlands dual pair in a unique way up to isomorphism. The following result can be easily verified.

\begin{corollary}
 Let $\mathcal S_X=\{({\bf x}_t, \widetilde B_t)\mid t\in \mathbb T_n\}$  be a cluster pattern and $\mathcal S_Y=\{({\bf y}_t, \widehat B_t)\mid t\in \mathbb T_n\}$ a $Y$-pattern. The following statements hold.
 \begin{itemize}
     \item [(i)] If  $(\mathcal S_X,\mathcal S_Y)$ is a cluster ensemble, then $\widehat B_t$ is the principal part of $\widetilde B_t$ for any vertex $t$ of $\mathbb T_n$, \ie $\widehat B_t=B_t$.
     \item[(ii)] If $(\mathcal S_X,\mathcal S_Y)$ is a Langlands dual pair, then $\widehat B_{t}=-\widetilde B_{t}^T$ for any vertex $t$ of $\mathbb T_n$.
 \end{itemize}
\end{corollary}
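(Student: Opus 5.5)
The plan is an induction along the tree $\mathbb T_n$: both assertions are statements about every vertex $t$, and both patterns are determined by their data at $t_0$. So it suffices to check that the claimed relation holds at $t_0$, and that it passes from $t$ to any neighbour $t'$ joined by an edge labelled $k$.

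For (i), the relation holds at $t_0$ by Definition~\ref{def:dual-pair}(i), since $\widehat B_{t_0}=B_{t_0}$. For the inductive step, assume $\widehat B_t=B_t$ and let \begin{xy}(0,1)*+{t}="A",(10,1)*+{t'}="B",\ar@{-}^k"A";"B" \end{xy}. Then $\widehat B_{t'}$ is obtained from $\widehat B_t$ by the matrix rule in \eqref{eqn:y-mutation}. The matrix $B_{t'}$ is the top $n\times n$ block of $\mu_k(\widetilde B_t)$, obtained by \eqref{eqn:b-mutation}. These two rules are identical formulas in the entries. When $l=n$, the entries $\hat b_{ij}$ with $i,j\in[1,n]$ involve only entries of the principal $n\times n$ block, both for $i,j$ themselves and for the intermediate index $k\le n$. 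The entries of $\widetilde B_t$ in this range are also unaffected by the frozen rows $P_t$. Hence $\widehat B_{t'}=B_{t'}$.

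For (ii), the base case is again given by Definition~\ref{def:dual-pair}(ii). For the inductive step, assume $\widehat B_t=-\widetilde B_t^T$, so that $\hat b_{ij;t}=-b_{ji;t}$ for $i\in[1,n]$ and $j\in[1,m]$. We need to show $\hat b'_{ij}=-b'_{ji}$.
\begin{itemize}
\item If $i=k$ or $j=k$, both sides change sign under mutation, so the equality is preserved.
\item Otherwise, we have
\[\hat b'_{ij}=\hat b_{ij}+[\hat b_{ik}]_+[\hat b_{kj}]_+-[-\hat b_{ik}]_+[-\hat b_{kj}]_+ .\]
Substituting $\hat b_{ik}=-b_{ki}$ and $\hat b_{kj}=-b_{jk}$ turns this into
\[-b_{ji}+[-b_{ki}]_+[-b_{jk}]_+-[b_{ki}]_+[b_{jk}]_+ .\]
We must compare this with $-b'_{ji}=-b_{ji}-[b_{jk}]_+[b_{ki}]_+ +[-b_{jk}]_+[-b_{ki}]_+$. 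The two expressions agree term by term after reordering the factors.
\end{itemize}

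No real obstacle is expected; this is routine bookkeeping. The only point requiring care is the index range in (ii): $i$ runs over $[1,n]$ and $j$ over $[1,m]$, and the intermediate index $k$ is unfrozen, so all the entries involved are defined.
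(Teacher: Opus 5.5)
Your proposal is correct and is exactly the routine verification the paper has in mind; the paper gives no proof beyond saying the result ``can be easily verified.'' The argument is induction along $\mathbb T_n$ from $t_0$, checking that the $Y$-seed matrix mutation in \eqref{eqn:y-mutation} is compatible with \eqref{eqn:b-mutation}, both under restriction to the principal $n\times n$ block and under $\widetilde B\mapsto -\widetilde B^T$. Your entrywise computations for both cases, including the index-range check with $k\le n\le m$, are right.
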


\begin{proposition}[{\cite[Proposition 3.9]{fomin_zelevinsky_2007}}] \label{pro:p-map}
Let $\mathcal S_X$ be a cluster pattern, and let $\widehat {\bf y}_t=(\hat y_{1;t},\ldots,\hat y_{n;t})$ be the ordered set of $\hat y$-variables of the seed $({\bf x}_t,\widetilde B_t)$. Suppose that  $\widetilde B_t$ has full rank. Then the following statements hold.
\begin{itemize}
    \item [(i)] The $\hat y$-variables $\hat y_{1;t},\ldots,\hat y_{n;t}$ are algebraically independent.
    \item [(ii)] The assignment $\mathcal S_Y=\{(\widehat {\bf y}_t, B_t)\mid t\in\mathbb T_n\}$ forms a $Y$-pattern and $(\mathcal S_X,\mathcal S_Y)$ is a cluster ensemble.
\end{itemize}

\end{proposition}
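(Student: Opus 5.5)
We prove (i) by translating algebraic independence into a statement about exponent vectors. We prove (ii) by a direct check that the $\hat y$-variables transform by the $Y$-seed mutation rule \eqref{eqn:y-mutation} with exchange matrix $B_t$.

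\emph{Step 1: part (i).} We have $\hat y_{k;t}={\bf x}_t^{{\bf b}_k}$, where ${\bf b}_k\in\mathbb Z^m$ is the $k$-th column of $\widetilde B_t$. For ${\bf a}\in\mathbb N^n$ this gives $\hat{\bf y}_t^{\bf a}={\bf x}_t^{\widetilde B_t{\bf a}}$. Since $\widetilde B_t$ has full rank $n$, the map ${\bf a}\mapsto\widetilde B_t{\bf a}$ is injective. Hence distinct monomials in the $\hat y_{k;t}$ are distinct Laurent monomials in the free generators $x_{1;t},\dots,x_{m;t}$. Distinct Laurent monomials in free generators are linearly independent over $\mathbb Q$, so a polynomial relation among the $\hat y_{k;t}$ must have all coefficients zero.

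\emph{Step 2: full rank at every vertex.} Mutation preserves the rank of $\widetilde B$, so the full-rank condition holds at every vertex once it holds at one. Therefore Step 1 applies at every $t$. Thus $\hat{\bf y}_t$ freely generates the subfield $\mathbb Q(\hat{\bf y}_t)$, and $(\hat{\bf y}_t,B_t)$ is a $Y$-seed of rank $n$ in it. This subfield does not depend on $t$, because the mutation formulas in Step 3 express $\hat{\bf y}_{t'}$ rationally in $\hat{\bf y}_t$ and conversely.

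\emph{Step 3: the mutation rule, part (ii).} Take an edge $t\frac{k}{\quad}t'$ and write $b_{ij}=b_{ij;t}$. Set $M^\pm=\prod_j x_{j;t}^{[\pm b_{jk}]_+}$, so that $M^+/M^-=\hat y_{k;t}$ and
\[
x_{k;t'}=x_{k;t}^{-1}M^-(1+\hat y_{k;t}).
\]

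For the $k$-th variable: since $b_{kk}=0$, the variable $x_{k;t'}$ does not occur in $\hat y_{k;t'}$. As $b'_{jk}=-b_{jk}$, we get $\hat y_{k;t'}=\hat y_{k;t}^{-1}$.

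For $i\neq k$: we have
\[
\hat y_{i;t'}=\prod_{j\neq k}x_{j;t}^{b'_{ji}}\cdot x_{k;t'}^{-b_{ki}}.
\]
We split into two cases according to the sign of $b_{ki}$.
\begin{itemize}
\item If $b_{ki}\ge 0$, then $b'_{ji}=b_{ji}+[b_{jk}]_+b_{ki}$ for $j\neq k$. Substituting gives $\hat y_{i;t}(M^+)^{b_{ki}}(M^-)^{-b_{ki}}(1+\hat y_{k;t})^{-b_{ki}}=\hat y_{i;t}\hat y_{k;t}^{[b_{ki}]_+}(1+\hat y_{k;t})^{-b_{ki}}$.
\item If $b_{ki}<0$, then $b'_{ji}=b_{ji}-[-b_{jk}]_+b_{ki}$. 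In this case the factors $M^-$ cancel, and we obtain $\hat y_{i;t}(1+\hat y_{k;t})^{-b_{ki}}$, which again equals $\hat y_{i;t}\hat y_{k;t}^{[b_{ki}]_+}(1+\hat y_{k;t})^{-b_{ki}}$.
\end{itemize}
In both cases this is exactly \eqref{eqn:y-mutation} with $\widehat B_t=B_t$. The exchange matrices mutate correctly, because the principal part of $\mu_k(\widetilde B_t)$ is $\mu_k(B_t)$. Hence $\{(\hat{\bf y}_t,B_t)\}$ is a $Y$-pattern, and it is a cluster ensemble by Definition \ref{def:dual-pair}(i).

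The main obstacle is the exponent bookkeeping in Step 3: one must check that the $[\cdot]_+$ terms in $b'_{ji}$ combine exactly with $M^\pm$ in each sign case. The remaining steps are routine.
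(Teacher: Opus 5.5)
Your argument is correct and is the standard direct verification behind the cited result of Fomin--Zelevinsky (the paper gives no proof of its own): you use injectivity of ${\bf a}\mapsto\widetilde B_t{\bf a}$ for (i), and for (ii) you substitute the exchange relation, splitting by the sign of $b_{ki}$. One sign slip needs fixing: for $b_{ki}<0$ the matrix mutation rule gives $b'_{ji}=b_{ji}+[-b_{jk}]_+b_{ki}$ (that is, $b_{ji}-[-b_{jk}]_+|b_{ki}|$), not $b_{ji}-[-b_{jk}]_+b_{ki}$, and only with the corrected sign do the factors $(M^-)^{b_{ki}}$ and $(M^-)^{-b_{ki}}$ cancel as you claim.
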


Given a $\mathsf{\Lambda}$-cluster pattern $(\mathcal S_X, \mathsf{\Lambda})$, we can define a Poisson bracket $\{-,-\}$ on the ambient field $\mathbb F$ using the $m\times m$ skew-symmetric matrix $\Lambda_{t_0}=(\lambda_{ij;t_0})$ at the rooted vertex $t_0$:
$$\{x_{i;t_0},x_{j;t_0}\} \coloneqq   \lambda_{ij;t_0}\cdot x_{i;t_0}x_{j;t_0}.$$
It turns out that this Poisson bracket is compatible with the cluster pattern $\mathcal S_X$, that is, for any cluster ${\bf x}_t$ of $\mathcal S_X$, we have $$\{x_{i;t},x_{j;t}\}=\lambda_{ij;t}\cdot x_{i;t}x_{j;t},$$
where $\lambda_{ij;t}$ is the $(i,j)$-entry of $\Lambda_t$, \confer \cite{gsv-2003}, \cite[Remark 4.6]{bz-2005}.

\begin{definition}
 Let $\mathcal S_X=\{({\bf x}_t, \widetilde B_t)\mid t\in \mathbb T_n\}$ be a cluster pattern of rank $n$ in $\mathbb F$.
\begin{itemize}
    \item [(i)]  The {\em cluster algebra} $\mathcal A$ associated to $\mathcal S_X$ is the $\mathbb Z$-subalgebra of $\mathbb F$ generated by all the cluster variables (frozen and unfrozen), \ie $\mathcal A \coloneqq   \mathbb Z[x_{1;t},\ldots,x_{m;t}\mid t\in\mathbb T_n]$. 
    
    \item[(ii)]  If $(\mathcal S_X, \mathsf{\Lambda})$ is a  $\mathsf{\Lambda}$-cluster pattern, then the corresponding  cluster algebra $\mathcal A$ 
    is endowed with extra data $\mathsf{\Lambda}=\{\Lambda_t\mid t\in\mathbb T_n\}$. In this case, we call $\mathcal A$  a {\em $\mathsf{\Lambda}$-cluster algebra}.
    \item[(iii)] If $m=n$,  \ie there are no frozen variables, the corresponding  cluster algebra is said to be {\em with trivial coefficients}.  
\end{itemize}
\end{definition}

Note that the frozen variables are not inverted in cluster algebras in this paper.

\begin{theorem}[{\cite{fz_2002}*{Laurent phenomenon}}] Let $({\bf x}_t,\widetilde B_t)$ be a seed of a cluster algebra $\mathcal A$. Then any cluster variable $z$ can be written as a Laurent polynomial in $\mathbb Z[x_{1;t}^{\pm 1}, \ldots, x_{n;t}^{\pm 1},x_{n+1;t},\ldots,x_{m;t}]$.
\end{theorem}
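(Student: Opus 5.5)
The plan is to run Fomin--Zelevinsky's caterpillar-lemma argument over the coefficient ring $R\coloneqq\mathbb Z[x_{n+1;t},\ldots,x_{m;t}]$. Frozen variables never mutate, so $R$ does not depend on $t$. It is a polynomial ring, hence a UFD, and frozen variables are not inverted in it. The point is that every exchange relation \eqref{eqn:x-mutation} has the form $x_{k;t}x_{k;t'}=P_{k;t}$, where
\[
P_{k;t}=\prod_{j=1}^m x_{j;t}^{[b_{jk;t}]_+}+\prod_{j=1}^m x_{j;t}^{[-b_{jk;t}]_+}\in R[x_{1;t},\ldots,x_{n;t}].
\]
Its frozen part enters only through monomials with \emph{non-negative} exponents. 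So it suffices to prove the statement "every cluster variable lies in $\mathcal L_t\coloneqq R[x_{1;t}^{\pm1},\ldots,x_{n;t}^{\pm1}]$" by the abstract caterpillar lemma, which is formulated for an arbitrary UFD of coefficients.

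I would fix $t$ and argue by induction on the distance in $\mathbb T_n$ from $t$ to a vertex $t''$ at which the given cluster variable lies in the cluster. The induction step is the standard reduction. Consider a path
\[
t=t_0\overset{k}{-}t_1\overset{l}{-}t_2\overset{k}{-}t_3\cdots .
\]
By the inductive hypothesis, the variable is Laurent over $R$ in the clusters at $t_1$ and at $t_3$ (the latter reached after mutating back in direction $k$). The task is to show that it is Laurent in the cluster at $t_0$.

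Writing the variable as $f/x_{k;t_1}^{a}$ in one chart and as $g/x_{l;t_2}^{b}\cdots$ in the other, one reduces, exactly as in \cite{fz_2002}, to checking three properties in $R[x_{1;t},\ldots,x_{n;t}]$:
\begin{enumerate}
\item[(a)] $P_{k;t}$ is not divisible by any unfrozen $x_{i;t}$;
\item[(b)] $P_{k;t}$ is coprime to $P_{l;t_1}$ after substituting $x_{k;t_1}=P_{k;t}/x_{k;t}$;
\item[(c)] the "$P_{k;t_2}$ restricted at $x_{l}=0$" compatibility identity holds.
\end{enumerate}
Property (a) is clear from the binomial form, since at least one of the two monomials avoids each $x_{i;t}$ with $i\le n$. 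Property (c) is the usual computation using the mutation rule \eqref{eqn:b-mutation} for $b_{jk}$ and $b_{jl}$. Here the frozen rows behave exactly like the unfrozen ones, because the matrix mutation treats all $m$ rows uniformly.

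I expect the main obstacle to be the coprimality (b). Over $R$, two exchange binomials whose $k$-th and $l$-th columns vanish in the principal part could share a factor coming purely from frozen variables. For example, one could have $P_k=P_l=x_{n+1}+x_{n+2}$.

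I would split into cases on $b_{lk;t}$.
\begin{itemize}
\item If $b_{lk;t}=0$, the mutations $\mu_k$ and $\mu_l$ commute and do not change each other's exchange polynomials. The path can then be rearranged, so the needed Laurentness follows directly from the inductive hypothesis without invoking coprimality.
\item If $b_{lk;t}\neq0$, then $P_{l;t_1}$ genuinely involves $x_{k;t_1}$. After the substitution it becomes a polynomial whose non-trivial dependence on $x_{k;t}$ (through a binomial in $x_{k;t}$ and $P_{k;t}$) rules out a common irreducible factor with $P_{k;t}$. I would verify this by specializing the unfrozen variables other than $x_{k;t}$ and comparing degrees in $x_{k;t}$.
\end{itemize}
With (a)--(c) in hand, the caterpillar lemma gives that every cluster variable lies in $\mathcal L_t$, which is precisely $\mathbb Z[x_{1;t}^{\pm1},\ldots,x_{n;t}^{\pm1},x_{n+1;t},\ldots,x_{m;t}]$.
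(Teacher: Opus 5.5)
\paragraph{The gap: the case $b_{lk;t}=0$.} The paper does not prove this statement; it is quoted from \cite{fz_2002}. Your route, the caterpillar lemma over $R=\mathbb Z[x_{n+1;t},\ldots,x_{m;t}]$, is the standard one, but your treatment of this case does not work. There $\mu_k$ and $\mu_l$ commute, so the seed at $t_3=\mu_k\mu_l\mu_k(t)$ is the seed of $\mu_l(t)$. The inductive hypothesis therefore gives exactly $z\in\mathcal L_{\mu_k(t)}\cap\mathcal L_{\mu_l(t)}$. Swapping the first two steps only produces another path of the same length, so ``rearranging'' yields nothing more.

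Passing from this intersection to $\mathcal L_t$ is precisely what coprimality of $P_{k;t}$ and $Q_0=P_{l;t}$ is needed for. Without it the intersection is strictly larger than $\mathcal L_t$, and this already happens with trivial coefficients. Take type $A_3$ with $b_{31}=b_{32}=1=-b_{13}=-b_{23}$ and all other entries zero. Then $P_{1;t}=P_{2;t}=1+x_3$, and
\[
\frac{x_1x_2}{1+x_3}=\frac{x_2}{x_{1;\mu_1(t)}}=\frac{x_1}{x_{2;\mu_2(t)}}
\]
lies in both neighbouring Laurent rings but not in $\mathcal L_t$. Your own example $P_k=P_l=x_{n+1}+x_{n+2}$ is of the same kind. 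So the case you set aside as harmless is exactly where the caterpillar step breaks.

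\paragraph{A standard repair: reduce to generic coefficients, then specialize.}
\begin{enumerate}
\item Adjoin extra frozen rows, e.g.\ $I_n$ (principal coefficients), so that the extended exchange matrix has full rank. Rank is preserved by mutation, so for every $t$ columns $k$ and $l$ of $\widetilde B_t$ are not parallel.
\item Two binomials $\mathbf x^{[\mathbf b]_+}+\mathbf x^{[-\mathbf b]_+}$ with non-parallel $\mathbf b$ are coprime. They have no monomial factors, and their irreducible factors in the Laurent ring are polynomials in $\mathbf x^{\mathbf v}$, where $\mathbf v$ is the primitive vector on $\mathbb Q\mathbf b$; so their Newton polytopes are segments in different directions.
\item When $b_{lk;t}\neq0$, $Q_0$ is a monomial, so coprimality is automatic. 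This is the clean reason behind your second bullet.
\item Run the caterpillar argument in the full-rank situation, then specialize the added frozen variables to $1$. This is a ring homomorphism on a Laurent ring containing all cluster variables. It is compatible with the exchange relations, because deleting frozen rows does not affect how the other rows mutate. Hence it sends cluster variables to cluster variables and keeps $x_{n+1},\ldots,x_m$ polynomial.
\end{enumerate}

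\paragraph{A smaller point in (c).} Your claim that frozen rows ``behave exactly like unfrozen ones'' is not literally right, since frozen variables are not units of $R$. The Laurent monomial $L$ can carry a non-invertible frozen part. For rows $(0,-1),(1,0),(-1,-1)$ one gets $P_{1;t_2}|_{x_2\leftarrow Q_0/x_2}=x_2^{-1}x_3\,P_{1;t}$ with $Q_0=x_3^2$. This is harmless: the caterpillar step only uses that $P_{k;t_2}|_{x_l\leftarrow Q_0/x_l}$ is $P_{k;t}$ times an element of the Laurent ring localized at $Q_0$. But it needs to be checked rather than asserted.
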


\begin{example}\label{ex:A2}
   Take $\widetilde B=\begin{bmatrix}
    0&1\\-1&0
\end{bmatrix}$ and ${\bf x}=(x_1,x_2)$. One can check that the  cluster algebra $\mathcal A$ defined by the initial seed $({\bf x},\widetilde B)$  has only five cluster variables:
\[x_1, \;x_2, \;x_3 \coloneqq   \frac{x_2+1}{x_1}, \;x_4 \coloneqq   \frac{x_1+x_2+1}{x_1x_2}, \;x_5 \coloneqq   \frac{x_1+1}{x_2}.\]
It is clear that all the cluster variables are contained in $\mathbb Z[x_1^{\pm 1},x_2^{\pm 1}]$.
\end{example}

\subsection{Semifield and tropical points}
Recall that $(\mathbb P, \cdot, \oplus)$ is called a {\em semifield} if $(\mathbb P,  \cdot)$ is an abelian multiplicative group endowed with a binary operation of auxiliary addition $\oplus$ which is commutative, associative and satisfies that the multiplication  distributes over the auxiliary addition. For example, $$\mathbb Z^{\rm max} \coloneqq   (\mathbb Z,+, \max\{-,-\})$$ 
is a semifield, which is called a {\em tropical semifield}.

Let $\QQ_{\rm sf}(u_1, \ldots, u_m)$ be the
set of all non-zero rational functions in $u_1, \ldots, u_m$ that have subtraction-free expressions. The set $\QQ_{\rm sf}(u_1, \ldots, u_m)$  is a semifield
with respect to the usual operations of multiplication and addition. It is called a {\em universal semifield}.

Now let $\mathbb F \coloneqq   \mathbb Q(u_1,\ldots,u_m)$ and $\mathbb F_{>0} \coloneqq   \QQ_{\rm sf}(u_1, \ldots, u_m)$. We denote by
$\Hom_{\rm sf}(\mathbb  F_{>0},\mathbb Z^{\max})$ the set of semifield homomorphisms from $\mathbb F_{>0}$ to $\mathbb Z^{\max}$. It is easy to see that the following map \[\Hom_{\rm sf}(\mathbb  F_{>0},\mathbb Z^{\max})\ni \beta\mapsto (\beta(u_1),\ldots,\beta(u_m))^T\in\mathbb Z^m\] is a bijection.

\begin{definition}[Tropical points]
    Let $\mathcal S_Y=\{({\bf y}_t, \widehat B_t)\mid t\in \mathbb T_n\}$ be a $Y$-pattern of rank $n$ in $\mathbb F$ and $\mathcal S_X=\{({\bf x}_t, \widetilde B_t)\mid t\in \mathbb T_n\}$ a cluster pattern of rank $n$ in $\mathbb F$.
\begin{itemize}
    \item [(i)] A tropical point $[{\bf g}]=\{{\bf g}^t\in\mathbb Z^m\mid t\in\mathbb T_n\}$ associated to the $Y$-pattern $\mathcal S_Y$ is an assignment of a (column) vector ${\bf g}^t=(g_{1}^t,\ldots,g_{m}^t)^T$ in $\mathbb Z^m$ to each vertex $t$ of $\mathbb T_n$ such that
    \begin{eqnarray}\label{eqn:y-trop}
g_{i}^{t'}=\begin{cases}-g_{k}^t,& \text{if}\;i=k;\\ g_{i}^t+[\hat b_{ki;t}]_+g_{k}^t+(-\hat b_{ki;t})[g_{k}^t]_+,&\text{if}\;i\neq k. \end{cases}
\end{eqnarray}
whenever
	\begin{xy}(0,1)*+{t}="A",(10,1)*+{t'}="B",\ar@{-}^k"A";"B" \end{xy} in $\TT_n$. We denote by $\mathcal S_Y(\mathbb Z^{\rm max})$ the set of tropical points associated to the $Y$-pattern $\mathcal S_Y$.

 \item[(ii)] A tropical point $[{\bf a}]=\{{\bf a}^t\in\mathbb Z^m\mid t\in\mathbb T_n\}$ associated to the cluster pattern $\mathcal S_X$ is an assignment of a (column) vector ${\bf a}^t=(a_{1}^t,\ldots,a_{m}^t)^T$ in $\mathbb Z^m$ to each vertex $t$ of $\mathbb T_n$ such that
    \begin{eqnarray}\label{eqn:x-trop}
a_{i}^{t'}=\begin{cases}-a_{k}^t+\max\{\sum_{j=1}^m[b_{jk;t}]_+a_{j}^t, \; \sum_{j=1}^m[-b_{jk;t}]_+a_{j}^t\}
,& \text{if}\;i=k;\\ a_{i}^t,&\text{if}\;i\neq k. \end{cases}
\end{eqnarray}
whenever
	\begin{xy}(0,1)*+{t}="A",(10,1)*+{t'}="B",\ar@{-}^k"A";"B" \end{xy} in $\TT_n$.
 We denote by $\mathcal S_X(\mathbb Z^{\rm max})$ the set of tropical points associated to the cluster pattern $\mathcal S_X$.
\end{itemize}
\end{definition}

\begin{remark}
The relations in \eqref{eqn:y-trop} and  \eqref{eqn:x-trop}  are obtained from the mutation relations \eqref{eqn:y-mutation} and \eqref{eqn:x-mutation} by tropicalization over the tropical semifield $\mathbb Z^{\rm max}=(\mathbb Z,+,\max\{-,-\})$. Namely, we make the following replacements.
\begin{itemize}
    \item Replace the multiplication and addition in \eqref{eqn:y-mutation} and \eqref{eqn:x-mutation}  by ``+" and ``{\rm max}\{--,--\}" over $\mathbb Z$, respectively.
     \item Replace ``1" in \eqref{eqn:y-mutation} by ``0";
\end{itemize}
\end{remark}

The following corollary can be easily checked using the mutation relations in \eqref{eqn:y-trop} and  \eqref{eqn:x-trop}.
\begin{corollary}\label{cor:bijection} 
Let  $\mathcal S_X=\{({\bf x}_t, \widetilde B_t)\mid t\in \mathbb T_n\}$  be a cluster pattern, and let $\mathcal S_Y=\{({\bf y}_t, \widehat B_t)\mid t\in \mathbb T_n\}$ be a $Y$-pattern. Denote by $\mathbb F_{>0}^X\coloneqq \mathbb Q_{\rm sf}(x_{1;t_0},\ldots, x_{m;t_0})$
and $\mathbb F_{>0}^Y\coloneqq \mathbb Q_{\rm sf}(y_{1;t_0},\ldots, y_{m;t_0})$. Then the following statements hold.
\begin{itemize}
\item[(i)] There is a bijection from 
$\Hom_{\rm sf}(\mathbb F_{>0}^X,\mathbb Z^{\max})$ to  $\mathcal S_X(\mathbb Z^{\max})$  defined by
\[ 
\beta \mapsto \{\;(\beta(x_{1;t}),\ldots,\beta(x_{m;t}))^T\in\mathbb Z^m\mid t\in\mathbb T_n \;\}. 
\]

\item[(ii)] There is a bijection from 
$\Hom_{\rm sf}(\mathbb F_{>0}^Y,\mathbb Z^{\max})$ to  $\mathcal S_Y(\mathbb Z^{\max})$ defined by
\[ 
\beta \mapsto \{\;(\beta(y_{1;t}),\ldots,\beta(y_{m;t}))^T\in\mathbb Z^m\mid t\in\mathbb T_n \;\}. 
\]
\end{itemize}
\end{corollary}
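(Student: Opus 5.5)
The plan is to reduce both bijections to the earlier observation that $\Hom_{\rm sf}(\mathbb F_{>0},\mathbb Z^{\max})\to\mathbb Z^m$, $\beta\mapsto(\beta(u_1),\ldots,\beta(u_m))^T$, is a bijection for a universal semifield. Two facts are needed alongside it: every tropical point is determined by its value at the rooted vertex $t_0$, and every vector in $\mathbb Z^m$ occurs as such a value. I treat (i) in detail; (ii) is identical with \eqref{eqn:y-mutation} and \eqref{eqn:y-trop} in place of \eqref{eqn:x-mutation} and \eqref{eqn:x-trop}.

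First I check that the map in (i) is well defined. I show by induction on the distance from $t_0$ in $\mathbb T_n$ that every $x_{i;t}$ lies in $\mathbb F_{>0}^X$. The exchange relation \eqref{eqn:x-mutation} writes $x_{k;t'}$ as $x_{k;t}^{-1}$ times a sum of two monomials in the $x_{j;t}$, which is a subtraction-free expression in elements already known to lie in the semifield. The same argument works for $y_{i;t}$ in (ii), since \eqref{eqn:y-mutation} only uses products, inverses and the factor $1+y_{k;t}$.

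Next, applying a semifield homomorphism $\beta$ to the exchange relation turns products into sums and $+$ into $\max$, so it gives exactly \eqref{eqn:x-trop}. Likewise, applying $\beta$ to \eqref{eqn:y-mutation} gives \eqref{eqn:y-trop}: the exponent $[\hat b_{ki;t}]_+$ contributes $[\hat b_{ki;t}]_+g_k^t$, and the factor $(1+y_{k;t})^{-\hat b_{ki;t}}$ contributes $-\hat b_{ki;t}\max\{0,g_k^t\}$, using $\beta(1)=0$. Hence the collection $\{(\beta(x_{1;t}),\ldots,\beta(x_{m;t}))^T\mid t\in\mathbb T_n\}$ is a tropical point.

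For bijectivity, note that the rules \eqref{eqn:x-trop} and \eqref{eqn:y-trop} express ${\bf a}^{t'}$ explicitly in terms of ${\bf a}^t$. Since $\mathbb T_n$ is a tree, hence connected, the assignment $[{\bf a}]\mapsto{\bf a}^{t_0}$ from tropical points to $\mathbb Z^m$ is injective. Conversely, any ${\bf a}^{t_0}$ extends to a tropical point by propagating outward from $t_0$ along the tree; there is no consistency issue because the tree has no cycles, and a neighbouring pair can be processed in either direction since mutation is involutive. So tropical points correspond bijectively to $\mathbb Z^m$ via their value at $t_0$. Composing with the bijection $\beta\mapsto(\beta(x_{i;t_0}))_i$ shows that the map in (i) is a bijection, since it factors as these two bijections.

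The only point needing care is the involutivity check in the propagation step. One must verify that mutating \eqref{eqn:x-trop} or \eqref{eqn:y-trop} at $t'$ in direction $k$ returns ${\bf a}^t$, respectively ${\bf g}^t$. I would sidestep a direct computation: the image of any $\beta$ is already known to be a tropical point with arbitrary prescribed value at $t_0$, so existence follows directly, and only uniqueness, which is immediate, is needed.
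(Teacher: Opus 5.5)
Your proposal is correct and follows the route the paper intends. The paper only remarks that the corollary is easily checked from the tropicalized mutation rules \eqref{eqn:x-trop} and \eqref{eqn:y-trop}, and your argument fills this in: apply $\beta$ to the exchange relations, then combine the universal-semifield bijection with the fact that a tropical point is determined by its value at $t_0$. Deriving surjectivity from the image of $\beta$ itself, so that involutivity of tropical mutation never needs a separate check, is a clean way to close the argument.
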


\subsection{Compatibly pointed elements and good elements}
In this subsection, we fix a {\em full rank} cluster algebra $\mathcal A$, that is, the initial mutation matrix $\widetilde B_{t_0}$ has full rank, equivalently, any mutation matrix $\widetilde B_t$ of $\mathcal A$ has full rank.

 Since $\mathcal A$ is of full rank, each seed $({\bf x}_t,\widetilde B_t)$ of $\mathcal A$ defines a partial order $\preceq_t$ on $\mathbb Z^m$. For two vectors ${\bf g},{\bf g}'\in\mathbb Z^m$, we write ${\bf g}'\preceq_t{\bf g}$ if there exists some vector ${\bf v}=(v_1,\ldots,v_n)^T\in\mathbb N^n$ such that $${\bf g}'={\bf g}+\widetilde B_t{\bf v},$$ equivalently, ${\bf x}_t^{{\bf g}'}={\bf x}_t^{\bf g}\cdot \widehat {\bf y}_t^{\bf v}$, where $\widehat {\bf y}_t^{\bf v}={\bf x}_t^{\widetilde B_t{\bf v}}$. We denote by ${\bf g}'\prec_t{\bf g}$ for the case ${\bf g}'\preceq_t{\bf g}$ and ${\bf g}'\neq {\bf g}$.

Now we recall the notion of pointed elements introduced by Qin \cite{Qin_2017}. Let 
 $$u=\sum_{{\bf h}\in\mathbb Z^m}b_{\bf h}{\bf x}_t^{\bf h}\in \mathbb Z[x_{1;t}^{\pm 1},\ldots,x_{m;t}^{\pm 1}]$$
be the Laurent expansion of an element $u\in\mathcal A$ with respect to a seed 
 $({\bf x}_t,\widetilde B_t)$. The element $u\in\mathcal A$ is said to be {\em pointed} for the seed $({\bf x}_t,\widetilde B_t)$, if the following two conditions are satisfied.
\begin{itemize}
\item The set $\{{\bf h}\in\mathbb Z^m\mid b_{\bf h}\neq 0\}$ has a unique maximal element ${\bf g}$ under the dominance order $\preceq_t$ on $\mathbb Z^m$. We denote by $\de^t(u) \coloneqq   {\bf g}$ and call it the {\em degree} of $u$ with respect to the seed $({\bf x}_t,\widetilde B_t)$.
    \item The coefficient corresponding to the degree term is $1$,  \ie $b_{\bf g}=1$ for ${\bf g}=\de^t(u)$.
\end{itemize}

Clearly, if $u\in\mathcal A$ is a pointed element for a seed $({\bf x}_t,\widetilde B_t)$, it can be written in the form:
$$u={\bf x}_t^{\bf g}+\sum_{{\bf h}\prec_t {\bf g}}b_{\bf h}{\bf x}_t^{\bf h}={\bf x}_t^{\bf g}F(\hat y_{1;t},\ldots,\hat y_{n;t}),$$
where $b_{\bf h}\in\mathbb Z$ and $F\in\mathbb Z[y_1,\ldots,y_n]$ is a polynomial with constant term $1$.

In cluster algebras, we are often interested in the pointed elements which can be controlled by tropical points associated to a $Y$-pattern. Such elements correspond to compatibly pointed elements introduced in \cite{qin_2019}.

\begin{definition}[Compatibly pointed elements and good elements]\label{def:pointed}
Let $\mathcal A$ be a cluster algebra of full rank, and let  $(\mathcal S_X,\mathcal S_Y)$ be the Langlands dual pair corresponding to $\mathcal A$.
 \begin{itemize}
 \item[(i)] An element $u\in\mathcal A$ is said to be {\em compatibly pointed}, if $u$ is pointed for any seed $({\bf x}_t,\widetilde B_t)$ of $\mathcal A$ and the collection $[{\bf g}] \coloneqq   \{\de^t(u)\in\mathbb Z^m\mid t\in\mathbb T_n\}$ forms a tropical point in $\mathcal S_Y(\mathbb Z^{\rm max})$. 

 \item [(ii)] A compatibly pointed element  $u\in\mathcal A$ is said to be a {\em good element}, if it is universally positive,  \ie $$u\in\mathbb Z_{\geq 0}[x_{1;t}^{\pm 1},\ldots,x_{m;t}^{\pm 1}]$$ for any vertex $t\in\mathbb T_n$.
\end{itemize}
\end{definition}

 Let $u$ be a compatibly pointed element in $\mathcal A$.  Then the Laurent expansion of  $u$ with respect to any seed $({\bf x}_t,\widetilde B_t)$ of $\mathcal A$ has a {\em canonical expression:}
\begin{eqnarray}\label{eqn:upointed}
   u={\bf x}_t^{{\bf g}_u^t}F_u^t(\hat y_{1;t},\ldots,\hat y_{n;t}),
 \end{eqnarray}
where ${\bf g}_u^t\coloneq \de^t(u)\in\mathbb Z^m$ and $F_u^t$ is a polynomial in $\mathbb Z[y_1,\ldots,y_n]$ with constant term $1$.
\begin{itemize}
\item The polynomial $F_u^t$ is called the {\em $F$-polynomial} of $u$ with respect to vertex $t$;
\item The vector ${\bf g}_u^t=\de^t(u)\in\mathbb Z^m$ is  called the {\em extended $g$-vector} of $u$ with respect to vertex $t$;

\item The principal part $({\bf g}_u^t)^\circ=(g_{1;u}^t,\ldots,g_{n;u}^t)^T\in\mathbb Z^n$ of ${\bf g}_u^t=(g_{1;u}^t,\ldots,g_{m;u}^t)^T\in\mathbb Z^m$ is called the {\em $g$-vector} of $u$ with respect to vertex $t$.
\end{itemize}

 Recall that a {\em cluster monomial} in  $\mathcal A$ is a monomial in cluster variables from the same cluster. That is, each cluster monomial is of the form
 ${\bf x}_w^{\bf v} \coloneqq   \prod_{j=1}^mx_{j;w}^{v_j}$
for  some vector ${\bf v}=(v_1,\ldots,v_m)^T\in \mathbb Z_{\geq 0}^m$ and cluster ${\bf x}_w$.

\begin{proposition}[{\cite{GHKK18}}]
\label{pro:ghkk}
 Let $\mathcal A$ be a cluster algebra of full rank. Then any cluster monomial $u$ of $\mathcal A$ is a good element.
\end{proposition}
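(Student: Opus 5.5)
The plan is to verify the two conditions of Definition~\ref{def:pointed} separately for a cluster monomial $u={\bf x}_w^{\bf v}$. The first is that $u$ is compatibly pointed. The second is that $u$ is universally positive. Both reduce to deep structural facts proved in \cite{GHKK18}, namely positivity and sign-coherence. These I would invoke rather than reprove.

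\emph{Universal positivity.} By the positivity theorem of \cite{GHKK18}, every cluster variable $x_{i;w}$ lies in $\mathbb Z_{\geq 0}[x_{1;t}^{\pm1},\ldots,x_{m;t}^{\pm1}]$ for every $t\in\mathbb T_n$. Frozen variables are trivially positive. Since products of Laurent polynomials with non-negative coefficients again have non-negative coefficients, every cluster monomial is universally positive.

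\emph{Pointedness at each seed.} By the separation formula of Fomin--Zelevinsky \cite{fomin_zelevinsky_2007}, each cluster variable can be written as $x_{i;w}={\bf x}_t^{{\bf g}}F(\hat y_{1;t},\ldots,\hat y_{n;t})$, where $F$ is the $F$-polynomial computed with principal coefficients. By sign-coherence of $c$-vectors \cite{GHKK18}, this $F$ has constant term $1$. Because $\widetilde B_t$ has full rank, the map ${\bf v}\mapsto \widetilde B_t{\bf v}$ is injective. Hence the monomials ${\bf x}_t^{\bf g}\hat{\bf y}_t^{\bf v}$ with distinct ${\bf v}$ have distinct exponents, so no cancellation occurs. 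Every exponent other than ${\bf g}$ is then strictly $\prec_t {\bf g}$, and the coefficient of ${\bf x}_t^{\bf g}$ is $1$. Thus $x_{i;w}$ is pointed with degree ${\bf g}$. A product of pointed elements is pointed, with the sum of the degrees and product of the $F$-polynomials, and $F$-polynomials with constant term $1$ multiply to one with constant term $1$. Therefore $\de^t(u)=\sum_i v_i\,\de^t(x_{i;w})$ and $u$ is pointed at every $t$.

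\emph{Tropical compatibility.} It remains to show that $t\mapsto \de^t(u)$ satisfies \eqref{eqn:y-trop} with $\widehat B_t=-\widetilde B_t^T$. For a single cluster variable, this is the $g$-vector mutation rule under change of initial seed, i.e.\ \cite[Conjecture~6.13]{fomin_zelevinsky_2007}, proved in \cite{GHKK18}. After substituting $\hat b_{ki;t}=-b_{ik;t}$, one checks that this rule coincides with \eqref{eqn:y-trop}. For frozen variables the degree is constantly $e_i$, which is trivially a tropical point.

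The main obstacle is passing from single variables to monomials. The rule \eqref{eqn:y-trop} is only piecewise linear, because of the term $[g_k^t]_+$, so a sum of tropical points need not be a tropical point. I would resolve this using row sign-coherence of $g$-vectors of a cluster \cite{GHKK18}: for fixed $t$ and $k$, the $k$-th components $g^t_{k;x_{i;w}}$, $i\in[1,m]$, are all $\geq0$ or all $\leq0$. On such a sign-coherent family, $[\,\cdot\,]_+$ is additive, so \eqref{eqn:y-trop} behaves linearly on non-negative combinations. Hence $\sum_i v_i\,\de^t(x_{i;w})$ again satisfies \eqref{eqn:y-trop} along every edge, which completes the proof.
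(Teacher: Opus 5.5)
Your decomposition is sound, and it takes a different route from the paper. The paper gives no argument of its own and imports the statement wholesale from \cite{GHKK18}. There, roughly, cluster monomials are theta functions. Theta functions are universally positive, and in each seed they have a leading term indexed by a tropical point of the Fock--Goncharov dual, whose coordinates in adjacent seeds are related exactly by \eqref{eqn:y-trop} with $\widehat B_t=-\widetilde B_t^T$. In that picture your ``main obstacle'' never arises. A cluster monomial is itself a single theta function indexed by a point of one cluster chamber, and in the coordinates of any seed $t$ that chamber lies on one side of each hyperplane $\{g_k=0\}$; this is exactly the row sign-coherence you invoke. Your route replaces the theta-function machinery with four standard corollaries of \cite{GHKK18}: positivity, constant term $1$, the $g$-vector mutation rule, and row sign-coherence. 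This makes the mechanism more transparent.

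One step is not covered by what you cite: the frozen coordinates. Definition~\ref{def:pointed} requires \eqref{eqn:y-trop} for every $i\in[1,m]$, and for $i\in[n+1,m]$ the condition
\[
g_i^{t'}=g_i^t+[-b_{ik;t}]_+\,g_k^t+b_{ik;t}\,[g_k^t]_+
\]
is a genuine constraint on the frozen part of $\de^t(x_{j;w})$. But \cite[Conjecture~6.13]{fomin_zelevinsky_2007} concerns $g$-vectors in $\mathbb Z^n$, defined through principal coefficients, so it says nothing about these coordinates. Relatedly, in the separation formula for geometric coefficients, the frozen part of the degree comes from the tropical denominator $F|_{\mathbb P}$. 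It equals the componentwise $-\min_{\mathbf v}(P_t\mathbf v)$ over the exponents $\mathbf v$ of $F^t$, so it depends on $F^t$ and not only on the principal $g$-vector.

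A clean fix is to complete the initial matrix to
\[
B^{\rm big}=\begin{bmatrix} B & -\lambda S^{-1}P^T\\ P & 0\end{bmatrix},\qquad \lambda=\operatorname{lcm}(s_1,\ldots,s_n),
\]
which is skew-symmetrizable by $\operatorname{diag}(S,\lambda I_{m-n})$. Regard $x_{n+1},\ldots,x_m$ as mutable variables that are never mutated. Mutations in directions $k\le n$ only use the first $n$ columns, and these evolve exactly as $\widetilde B_t$. Hence the cluster variables, the $\hat y_{1;t},\ldots,\hat y_{n;t}$, and the $F$-polynomials coincide with those of $\mathcal A$: the first $n$ columns of the $C$-matrix stay supported on rows $1,\ldots,n$, and $F_{i;t}=1$ for $i>n$. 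So the rank-$m$ $g$-vector of $x_{j;w}$ at $t$ is exactly $\de^t(x_{j;w})\in\mathbb Z^m$, and the rank-$m$ $g$-vector mutation rule gives \eqref{eqn:y-trop} for all $i\in[1,m]$.

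With this supplement the rest of your argument goes through. The principal parts of the $\de^t$ are the usual $g$-vectors, to which Theorem~\ref{thm:GHKK} applies, and frozen variables have $k$-th coordinate $0$ for $k\le n$.
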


\begin{example}\label{ex:A2-2}
Let us continue with Example \ref{ex:A2}. We see that $\widetilde B=\begin{bmatrix}
    0&1\\-1&0
\end{bmatrix}$ has full rank and $\hat y_1=x_2^{-1},\;\hat y_2=x_1$. The canonical expressions of the five cluster variables
$$x_1, \;x_2, \;x_3=\frac{x_2+1}{x_1}, \;x_4=\frac{x_1+x_2+1}{x_1x_2}, \;x_5=\frac{x_1+1}{x_2}$$
with respect to the initial seed $({\bf x},\widetilde B)$ are given as follows:
    \[x_1=x_1\cdot 1,\;\;\;x_2=x_2\cdot 1,\;\;\;x_3={x_1^{-1}x_2\cdot(1+\widehat y_1),}\;\;\;  x_4={x_1^{-1}\cdot (1+\widehat y_1+\widehat y_1\widehat y_2),}\;\;\;
    x_5={x_2^{-1}\cdot (1+\widehat y_2)}.\]
\end{example}

\subsection{Tropical-invariant and $F$-invariant}
In this subsection, we recall the tropical invariant and the $F$-invariant introduced in \cite{Cao-2023} for $\mathsf{\Lambda}$-cluster algebras.

\begin{proposition}[{\cite[Proposition 4.4]{Cao-2023}}]
\label{pro:beta-map}
    Let  $\mathcal A$ be a $\mathsf{\Lambda}$-cluster algebra. Then any good element $u\in \mathcal A$ defines a unique semifield homomorphism 
  $$\beta_{u}:\; \mathbb F_{>0} \coloneqq   \mathbb Q_{\rm sf}(x_{1;t_0},\ldots,x_{m;t_0})\rightarrow \mathbb Z^{\rm max}$$ such that $\beta_{u}({\bf x}_t)=(\Lambda_t{\bf g}_u^t)^T$ for any vertex $t\in\mathbb T_n$, where ${\bf g}_u^t\in\mathbb Z^m$ is the extended $g$-vector of $u$ with respect to vertex $t$.
\end{proposition}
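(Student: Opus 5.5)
Since $\mathbb F_{>0}=\mathbb Q_{\rm sf}(x_{1;t_0},\ldots,x_{m;t_0})$ is the universal semifield on the free generators $x_{1;t_0},\ldots,x_{m;t_0}$, a semifield homomorphism to $\mathbb Z^{\max}$ is determined by its values on these generators. Uniqueness is therefore immediate, since the condition at $t=t_0$ already forces $\beta_u({\bf x}_{t_0})=(\Lambda_{t_0}{\bf g}_u^{t_0})^T$.

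For existence, I would use Corollary \ref{cor:bijection}(i). It suffices to show that the collection ${\bf a}^t\coloneqq \Lambda_t{\bf g}_u^t$, $t\in\mathbb T_n$, is a tropical point of $\mathcal S_X$, i.e.\ satisfies \eqref{eqn:x-trop} along every edge $t \overset{k}{-} t'$. The homomorphism corresponding to this tropical point then has $\beta(x_{i;t})=a_i^t$ for all $t$. By Definition \ref{def:pointed}, the vectors ${\bf g}^t={\bf g}_u^t$ form a tropical point of the $Y$-pattern of the Langlands dual pair. Hence they obey \eqref{eqn:y-trop} with $\hat b_{ki;t}=-b_{ik;t}$. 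This is the only property of $u$ needed; universal positivity plays no role.

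The key step is to linearize both mutation rules with a common sign. Fix the edge and put $\varepsilon\in\{\pm1\}$ with $\varepsilon g_k^t\ge 0$ (either choice if $g_k^t=0$). Let $E_\varepsilon$ be the $m\times m$ matrix that agrees with the identity except in column $k$, where its entries are $-1$ at row $k$ and $[-\varepsilon b_{ik;t}]_+$ at row $i\neq k$. I would check three things.
\begin{itemize}
\item With this sign, \eqref{eqn:y-trop} reads ${\bf g}^{t'}=E_\varepsilon {\bf g}^t$. This is a direct case check on the sign of $g_k^t$, and the bookkeeping of the minus sign in $\hat b=-\widetilde B^T$ is the point to be careful about.
\item $E_\varepsilon^2=I$.
\item By the Berenstein--Zelevinsky mutation rule \eqref{eqn:L-mutation}, $\Lambda_{t'}=E_\varepsilon^T\Lambda_t E_\varepsilon$ holds for both values of $\varepsilon$. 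The case $\varepsilon=+1$ is literally \eqref{eqn:L-mutation}. For $\varepsilon=-1$, the difference between the two versions is governed by $\widetilde B_t^T\Lambda_t=(S\mid{\bf 0})$ together with the skew-symmetry of $\Lambda_{t'}$, as in \cite{bz-2005}.
\end{itemize}
It follows that
\[
{\bf a}^{t'}=\Lambda_{t'}{\bf g}^{t'}=E_\varepsilon^T\Lambda_tE_\varepsilon E_\varepsilon{\bf g}^t=E_\varepsilon^T{\bf a}^t .
\]
Since $E_\varepsilon^T$ differs from the identity only in row $k$, this gives $a_i^{t'}=a_i^t$ for $i\ne k$, and
\[
a_k^{t'}=-a_k^t+\sum_j[-\varepsilon b_{jk;t}]_+a_j^t .
\]

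It remains to identify this with the maximum in \eqref{eqn:x-trop}. The two candidate sums differ by
\[
\sum_j[b_{jk;t}]_+a_j^t-\sum_j[-b_{jk;t}]_+a_j^t=\sum_j b_{jk;t}a_j^t=(\widetilde B_t^T\Lambda_t{\bf g}^t)_k=s_kg_k^t,
\]
where the last equality is the compatibility condition \eqref{eqn:lpair}. Because $s_k>0$, the maximum is the $[-b]_+$-sum when $g_k^t\ge0$ (i.e.\ $\varepsilon=+1$) and the $[b]_+$-sum when $g_k^t\le 0$ (i.e.\ $\varepsilon=-1$), exactly matching our choice of $\varepsilon$. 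This proves \eqref{eqn:x-trop}.

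I expect the main obstacle to be the sign conventions rather than the conceptual content. These are the orientation of $E_\varepsilon$ against \eqref{eqn:y-trop} under $\hat b_{ki}=-b_{ik}$, and the verification that the $\varepsilon=-1$ conjugation still yields \eqref{eqn:L-mutation}.
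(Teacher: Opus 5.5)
Your overall strategy is sound. The paper gives no proof of this statement, only the citation to \cite{Cao-2023}, and your argument is the $\Lambda$-analogue of the entrywise check in Lemma~\ref{lem:S-map}. Several of your steps are correct as they stand: uniqueness from the generators, existence via Corollary~\ref{cor:bijection}(i), $E_\varepsilon^2=I$, $\Lambda_{t'}=E_\varepsilon^T\Lambda_tE_\varepsilon$ for both signs, and the identity $\sum_j b_{jk;t}a_j^t=s_kg_k^t$ from \eqref{eqn:lpair}. However, two of your sign claims are false as written, and the proof reaches the right conclusion only because they cancel.

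First, with $\hat b_{ki;t}=-b_{ik;t}$, \eqref{eqn:y-trop} gives $g_i^{t'}=g_i^t+[b_{ik;t}]_+g_k^t$ if $g_k^t\ge 0$, and $g_i^{t'}=g_i^t+[-b_{ik;t}]_+g_k^t$ if $g_k^t\le 0$, as in the proof of Lemma~\ref{lem:1-mutation}. So for $\varepsilon g_k^t\ge 0$ you get ${\bf g}^{t'}=E_{-\varepsilon}{\bf g}^t$, not $E_\varepsilon{\bf g}^t$. For example, when $g_k^t>0$ one has $(E_{+1}{\bf g}^t)_i-g_i^{t'}=-b_{ik;t}g_k^t$, which is nonzero whenever $b_{ik;t}\neq 0$.

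Second, your own identity $\sum_j[b_{jk;t}]_+a_j^t-\sum_j[-b_{jk;t}]_+a_j^t=s_kg_k^t$ shows that for $g_k^t\ge 0$ the maximum in \eqref{eqn:x-trop} is the $[b]_+$-sum, not the $[-b]_+$-sum.

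The repair is to choose $\varepsilon$ with $\varepsilon g_k^t\le 0$. Then ${\bf g}^{t'}=E_\varepsilon{\bf g}^t$ is correct, hence ${\bf a}^{t'}=E_\varepsilon^T{\bf a}^t$. This gives $a_i^{t'}=a_i^t$ for $i\neq k$ and $a_k^{t'}=-a_k^t+\sum_j[-\varepsilon b_{jk;t}]_+a_j^t$. Moreover,
\[
\sum_j[-\varepsilon b_{jk;t}]_+a_j^t-\sum_j[\varepsilon b_{jk;t}]_+a_j^t=-\varepsilon s_kg_k^t\ \ge\ 0,
\]
so this sum is indeed the maximum and \eqref{eqn:x-trop} holds. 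With this single change of convention the proof is complete.
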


\begin{definition}[Tropical invariant and $F$-invariant] \label{def:f-invariant}
Let $\mathcal A$ be a $\mathsf{\Lambda}$-cluster algebra, and let $u,u'$ be two good elements in $\mathcal A$.

(i) The {\em tropical invariant} $\langle u,u'\rangle_{\rm trop}$ of the ordered pair $(u,u')$
 is  defined by 
 $$\langle u,u'\rangle_{\rm trop} \coloneqq   \beta_{u'}(u)\;\in \mathbb Z,$$ where
 $\beta_{u'}\colon\mathbb F_{>0} \coloneqq   \mathbb Q_{\rm sf}(x_{1;t_0},\ldots,x_{m;t_0})\rightarrow \mathbb Z^{\rm max}$ is the semifield homomorphism defined in Proposition \ref{pro:beta-map}.

(ii) The {\em $F$-invariant} $(u\mid\mid u')_F$ of the ordered pair $(u,u')$
 is defined by
 $$(u\mid\mid u')_F \coloneqq   \beta_{u'}(u)+\beta_u(u')=\langle u,u'\rangle_{\rm trop}+\langle u',u\rangle_{\rm trop}.$$
\end{definition}

 Given a non-zero polynomial $$F=\sum_{{\bf v}\in\mathbb N^n}c_{{\bf v}}{\bf y}^{{\bf v}}\in\mathbb Z[y_1,\ldots,y_n]$$ and a vector ${\bf r}\in \mathbb Z^n$, we set
$$F[{\bf r}] \coloneqq   \max\{{\bf v}^T{\bf r}\mid c_{{\bf v}}\neq 0\}\in\mathbb Z.$$
We call the map $F[-]\colon \mathbb Z^n\rightarrow\mathbb Z$ a {\em tropical polynomial}. It is easy to see that if $F$ has the constant term $1$, then $F[{\bf r}]\geq 0$ for any ${\bf r}\in\mathbb Z^n$.

\begin{example}
 Take $F=1+y_1+y_1y_2\in\mathbb Z[y_1,y_2]$ and ${\bf r}=\begin{bmatrix}
     -2\\1
 \end{bmatrix}$. Then
 \begin{eqnarray}
     F[{\bf r}]=\max\{
     \begin{bmatrix}
         0,0
     \end{bmatrix}\begin{bmatrix}
     -2\\1
 \end{bmatrix}, \begin{bmatrix}
         1,0
     \end{bmatrix}\begin{bmatrix}
     -2\\1
 \end{bmatrix}, \begin{bmatrix}
         1,1
     \end{bmatrix}\begin{bmatrix}
     -2\\1
 \end{bmatrix}
     \}=\max\{0,-2,-1\}=0.
     \nonumber
 \end{eqnarray}
\end{example}

The following theorem provides very explicit formulas to  calculate the tropical invariant and $F$-invariant.

\begin{theorem}[{\cite[Theorem 4.10]{Cao-2023}}]
\label{thm:mutation-inv}
Let $\mathcal A$ be a $\mathsf{\Lambda}$-cluster algebra and let  $u$ and $u'$ be two good elements in  $\mathcal A$. Let
\[ u={\bf x}_w^{{\bf g}_u^w}F_u^w(\hat y_{1;w},\ldots,\hat y_{n;w}),\;\;\;u'={\bf x}_w^{{\bf g}_{u'}^w}F_{u'}^w(\hat y_{1;w},\ldots,\hat y_{n;w})
\]
be the canonical expressions of $u$ and $u'$ with respect to any vertex $w\in \mathbb T_n$. Then we have
\vspace{1mm}
\begin{itemize}
    \item [(i)] $\langle u,u'\rangle_{\rm trop}= ({\bf g}_u^w)^T\Lambda_w{\bf g}_{u'}^w+F_u^w[(S\mid {\bf 0}){\bf g}_{u'}^w]$.\vspace{1.5mm}
    \item[(ii)] $(u\mid\mid u')_F= F_u^w[(S\mid {\bf 0}){\bf g}_{u'}^w]+F_{u'}^w[(S\mid {\bf 0}){\bf g}_{u}^w]$. In particular,  $(u\mid\mid u')_F\in\mathbb Z_{\geq 0}$.\vspace{1.5mm}
    \item[(iii)] $(u\mid\mid u)_F=0$ whenever $u$ is a cluster monomial.
\end{itemize}
\end{theorem}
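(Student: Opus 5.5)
The plan is to evaluate the semifield homomorphism $\beta_{u'}$ of Proposition~\ref{pro:beta-map} directly on the canonical expression of $u$ at the vertex $w$. Part~(i) then reduces to two facts: the Laurent coefficients of $u$ are non-negative, and the compatibility condition $\widetilde B_w^T\Lambda_w=(S\mid{\bf 0})$ converts the tropical value of $\hat y$-variables into $(S\mid {\bf 0}){\bf g}_{u'}^w$. Parts~(ii) and~(iii) should then follow formally.

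\textbf{Step 1 (the $\hat y$-variables).} By Proposition~\ref{pro:beta-map}, $\beta_{u'}(x_{j;w})=(\Lambda_w{\bf g}_{u'}^w)_j$. Each cluster variable $x_{j;w}$ is a subtraction-free rational function of the initial cluster, obtained by iterating \eqref{eqn:x-mutation}, so it lies in $\mathbb F_{>0}$. Since $\beta_{u'}$ is multiplicative,
\[
\beta_{u'}(\hat y_{k;w})=\sum_{j=1}^m b_{jk;w}(\Lambda_w{\bf g}_{u'}^w)_j=(\widetilde B_w^T\Lambda_w{\bf g}_{u'}^w)_k=((S\mid{\bf 0}){\bf g}_{u'}^w)_k,
\]
where the last equality is the compatibility condition \eqref{eqn:lpair} for the $\mathsf{\Lambda}$-seed at $w$. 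Consequently, for ${\bf v}\in\mathbb N^n$,
\[
\beta_{u'}({\bf x}_w^{{\bf g}_u^w}\hat{\bf y}_w^{\bf v})=({\bf g}_u^w)^T\Lambda_w{\bf g}_{u'}^w+{\bf v}^T(S\mid{\bf 0}){\bf g}_{u'}^w .
\]

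\textbf{Step 2 (positivity of $F_u^w$).} Because $\mathcal A$ has full rank, the monomials $\hat{\bf y}_w^{\bf v}={\bf x}_w^{\widetilde B_w{\bf v}}$ for distinct ${\bf v}$ are distinct Laurent monomials. Hence the coefficients of $F_u^w$ coincide with Laurent coefficients of $u$ in ${\bf x}_w$, and these are non-negative since $u$ is a good element, i.e.\ universally positive. So
\[
u=\sum_{{\bf v}} c_{\bf v}\,{\bf x}_w^{{\bf g}_u^w}\hat{\bf y}_w^{\bf v}
\]
is a subtraction-free expression in the elements $x_{j;w}\in\mathbb F_{>0}$. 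A semifield homomorphism is determined on elements, not on expressions, so we may compute $\beta_{u'}(u)$ using this expression. It sends a sum with positive integer coefficients to the maximum over the support, because $\beta(c)=0$ for positive integers $c$. Combining with Step~1 gives
\[
\langle u,u'\rangle_{\rm trop}=({\bf g}_u^w)^T\Lambda_w{\bf g}_{u'}^w+F_u^w[(S\mid{\bf 0}){\bf g}_{u'}^w],
\]
which is~(i). I expect the only delicate point to be precisely this justification that $\beta_{u'}$ may be evaluated on the seed-$w$ expansion: membership of $x_{j;w}$ in $\mathbb F_{>0}$, positivity of the coefficients, and well-definedness on elements.

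\textbf{Step 3 (parts (ii) and (iii)).} For~(ii), add~(i) to the same formula with $u$ and $u'$ interchanged. The two $\Lambda_w$ terms cancel because $\Lambda_w$ is skew-symmetric. Each $F$-polynomial has constant term $1$, so each tropical value is at least $0$, and hence the sum is non-negative.

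For~(iii), write the cluster monomial as $u={\bf x}_w^{\bf v}$ for some seed $w$. At this seed the canonical expression is $u={\bf x}_w^{\bf v}\cdot 1$, so $F_u^w=1$. Its tropical polynomial is identically $0$, and~(ii) evaluated at this particular $w$ gives $(u\mid\mid u)_F=0$. This is legitimate because $(u\mid\mid u)_F$ is defined intrinsically through $\beta_u$, while~(ii) holds for every vertex $w$.
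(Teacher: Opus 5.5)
Your proposal is correct and is essentially the standard argument from the cited source \cite{Cao-2023}; the paper itself gives no proof and only cites it. You apply $\beta_{u'}$ to the canonical expansion at $w$, use $\widetilde B_w^T\Lambda_w=(S\mid{\bf 0})$ to get $\beta_{u'}(\hat y_{k;w})=s_kg^w_{k;u'}$, use universal positivity plus full rank (which follows from Proposition~\ref{pro:fullrank}) so that $\beta_{u'}$ turns $F_u^w(\hat{\bf y}_w)$ into the tropical polynomial, and obtain (ii) and (iii) from skew-symmetry of $\Lambda_w$ and from $F_u^w=1$ at a seed containing $u$ (which is good by Proposition~\ref{pro:ghkk}); this is the same evaluation mechanism the paper uses in the proof of Theorem~\ref{thm:F-inv}.
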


The non-negative integer $F_u^w[(S\mid {\bf 0}){\bf g}_{u'}^w]=F_u^w[S({\bf g}_{u'}^w)^\circ]$ is called the {\em partial $F$-invariant} of the ordered pair $(u,u')$ at vertex $w\in\mathbb T_n$.  By Theorem \ref{thm:mutation-inv}, we have
\begin{eqnarray}
({\bf g}_u^t)^T\Lambda_t{\bf g}_{u'}^t+F_u^t[S({\bf g}_{u'}^t)^\circ]   =  &\langle u,u'\rangle_{\rm trop}&= ({\bf g}_u^{t'})^T\Lambda_{t'}{\bf g}_{u'}^{t'}+F_u^{t'}[S ({\bf g}_{u'}^{t'})^\circ],\nonumber\\
F_u^t[S({\bf g}_{u'}^t)^\circ]+F_{u'}^t[S({\bf g}_{u}^t)^\circ]=&(u\mid\mid u')_F&= F_u^{t'}[S({\bf g}_{u'}^{t'})^\circ]+F_{u'}^{t'}[S({\bf g}_{u}^{t'})^\circ],\nonumber
\end{eqnarray}
for any two vertices $t,t'\in\mathbb T_n$, which says that the tropical invariant and the $F$-invariant are mutation-invariant.

\section{Partial $F$-invariant and Reading's conjecture}\label{sec:3}

\subsection{Mutation formula for the partial $F$-invariant}
In this subsection, we study the mutation formula for the partial $F$-invariant. As a result, we give a new proof of the mutation-invariance for the $F$-invariant.

\begin{lemma}\label{lem:1-mutation}
Let $\mathcal A$ be a cluster algebra and let $(\mathcal S_X,\mathcal S_Y)$ be the Langlands dual pair corresponding to  $\mathcal A$.  Let $[{\bf g}]=\{{\bf g}^t=(g_1^t,\ldots,g_m^t)^T\in\mathbb Z^m\mid t\in\mathbb T_n\}$ be a tropical point in $\mathcal S_Y(\mathbb Z^{\rm max})$. Then we have
\begin{eqnarray}\label{eqn:1-mutation}
 {\bf x}_{t'}^{{\bf g}^{t'}}(1+\hat y_{k;t'})^{[-g_{k}^{t'}]_+} ={\bf x}_t^{{\bf g}^t}(1+\hat y_{k;t})^{[-g_{k}^t]_+},
\end{eqnarray}
for any edge \begin{xy}(0,1)*+{t}="A",(10,1)*+{t'}="B",\ar@{-}^k"A";"B" \end{xy} in $\TT_n$,  where $\hat y_{k;t}$ is the $k$-th $\hat y$-variable of the seed $({\bf x}_t,\widetilde B_t)$.
\end{lemma}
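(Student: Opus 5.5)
This is a direct computation: I will express the left-hand side in terms of the seed at $t$ and compare. The ingredients are the exchange relation \eqref{eqn:x-mutation}, the transformation of $\hat y$-variables under mutation, and the tropical mutation rule \eqref{eqn:y-trop}. Since $(\mathcal S_X,\mathcal S_Y)$ is a Langlands dual pair, $\hat b_{ki;t}=-b_{ik;t}$ for all $i\in[1,m]$. So \eqref{eqn:y-trop} reads
\[g_k^{t'}=-g_k^t,\qquad g_i^{t'}=g_i^t+[-b_{ik;t}]_+g_k^t+b_{ik;t}[g_k^t]_+\quad (i\neq k).\]

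First I collect the standard identities.

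\emph{The $\hat y$-variable at $k$ inverts:} $\hat y_{k;t'}=\hat y_{k;t}^{-1}$. This holds because $b_{kk;t}=0$ and $b_{jk;t'}=-b_{jk;t}$, while $x_{j;t'}=x_{j;t}$ for $j\neq k$.

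\emph{The exchange relation in $\hat y$-form:*} $x_{k;t'}=x_{k;t}^{-1}\,{\bf x}_t^{[-{\bf b}_k]_+}(1+\hat y_{k;t})$. Here ${\bf b}_k$ is the $k$-th column of $\widetilde B_t$, and I use $[b]_+-[-b]_+=b$ to pull out the factor.

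Next I compute ${\bf x}_{t'}^{{\bf g}^{t'}}$ in the variables at $t$. Substituting the two identities gives
\[{\bf x}_{t'}^{{\bf g}^{t'}}=x_{k;t}^{g_k^t}\,{\bf x}_t^{-g_k^t[-{\bf b}_k]_+}(1+\hat y_{k;t})^{-g_k^t}\prod_{i\neq k}x_{i;t}^{g_i^t+[-b_{ik;t}]_+g_k^t+b_{ik;t}[g_k^t]_+}.\]
The factors $[-b_{ik;t}]_+g_k^t$ cancel, and the $i=k$ terms of these products are trivial since $b_{kk;t}=0$. This leaves
\[{\bf x}_{t'}^{{\bf g}^{t'}}={\bf x}_t^{{\bf g}^t}\,\hat y_{k;t}^{[g_k^t]_+}(1+\hat y_{k;t})^{-g_k^t}.\]

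It remains to multiply by $(1+\hat y_{k;t'})^{[-g_k^{t'}]_+}=(1+\hat y_{k;t}^{-1})^{[g_k^t]_+}=\hat y_{k;t}^{-[g_k^t]_+}(1+\hat y_{k;t})^{[g_k^t]_+}$. The powers of $\hat y_{k;t}$ cancel. The exponent of $(1+\hat y_{k;t})$ becomes $[g_k^t]_+-g_k^t=[-g_k^t]_+$, which is exactly \eqref{eqn:1-mutation}.

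The only delicate point is sign bookkeeping. The Langlands dual convention introduces $\hat b_{ki;t}=-b_{ik;t}$, and both $[\pm b]_+$ terms must cancel correctly. No full-rank hypothesis is needed, since all identities hold in $\mathbb F$.
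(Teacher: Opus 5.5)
Your computation is correct and is essentially the paper's argument: both proofs rewrite the tropical rule \eqref{eqn:y-trop} using $\hat b_{ki;t}=-b_{ik;t}$, then expand one side via the exchange relation written in terms of $\hat y_{k}$. The only difference is cosmetic. The paper uses $g_k^{t'}=-g_k^t$ to reduce to $g_k^t\ge 0$ and expands ${\bf x}_t^{{\bf g}^t}$ in the $t'$-variables, whereas you expand ${\bf x}_{t'}^{{\bf g}^{t'}}$ in the $t$-variables and handle both signs at once through $[a]_+-a=[-a]_+$.
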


\begin{proof}
Since $(\mathcal S_X,\mathcal S_Y)$ is a Langlands dual pair, we have $\widehat B_t=-\widetilde B_t^T$. Thus $\hat b_{ij;t}=-b_{ji;t}$ for any $i\in[1,n]$ and $j\in[1,m]$. Since $[{\bf g}]$ is a tropical point in $\mathcal S_Y(\mathbb Z^{\rm max})$ and by  \eqref{eqn:y-trop}, we have
\begin{align}
    g_{i}^{t'}&=\begin{cases}-g_{k}^t,& \text{if}\;i=k;\\ g_{i}^t+[-b_{ik;t}]_+g_{k}^t+b_{ik;t}[g_{k}^t]_+,&\text{if}\;i\neq k. \end{cases} \nonumber\\
    &=\begin{cases}-g_{k}^t,& \text{if}\;i=k;\\ g_{i}^t+[b_{ik;t}]_+g_{k}^t,&\text{if}\;i\neq k\;\;\text{and }\;g_{k}^t\geq 0;\\
    g_{i}^t+[-b_{ik;t}]_+g_{k}^t,&\text{if}\;i\neq k\;\;\text{and }\;g_{k}^t< 0. 
    \end{cases}\nonumber
\end{align}
Since $g_k^{t'}=-g_k^t$, without loss of generality, we can assume that $g_{k}^t\geq 0$. Then the desired equality \eqref{eqn:1-mutation} becomes 
\begin{eqnarray}\label{eqn:2-mutation}
    {\bf x}_t^{{\bf g}^t}={\bf x}_{t'}^{{\bf g}^{t'}}(1+\hat y_{k;t'})^{[-g_{k}^{t'}]_+}={\bf x}_{t'}^{{\bf g}^{t'}}(1+\hat y_{k;t'})^{g_k^t}.
\end{eqnarray}
Since $({\bf x}_{t},\widetilde B_{t})=\mu_k({\bf x}_{t'},\widetilde B_{t'})$, we have $x_{i;t}=x_{i;t'}$ for $i\neq k$ and \[x_{k;t}x_{k;t'}={\bf x}_{t'}^{[-{\bf b}_{k;t'}]_+}(1+\hat y_{k;t'})={\bf x}_{t'}^{[{\bf b}_{k;t}]_+}(1+\hat y_{k;t'})\]
where $[{\bf b}_{k;t}]_+\coloneqq ([b_{1k;t}]_+,\ldots,[b_{mk;t}]_+)^T\in\mathbb Z_{\geq 0}^m$. By using these relations, we can obtain the expansion of ${\bf x}_t^{{\bf g}^t}$ in terms of ${\bf x}_{t'}$ and $\hat y_{k;t'}$, which is exactly the desired equality \eqref{eqn:2-mutation}.
\end{proof}

\begin{lemma}\label{F-mutation}
Let $\mathcal A$ be a cluster algebra of full rank, and let $\widehat {\bf y}_t=(\hat y_{1;t},\ldots, \hat y_{n;t})$ be the ordered set of $\hat y$-variables of the seed $({\bf x}_t,\widetilde B_t)$.
 Let $u$ be a compatibly pointed element in  $\mathcal A$.  Then we have 
\begin{eqnarray}\label{eqn:F-mutation}
    (1+\hat y_{k;t'})^{-[-g_{k;u}^{t'}]_+}F_u^{t'}(\hat y_{1;t'},\ldots,\hat y_{n;t'})= (1+\hat y_{k;t})^{-[-g_{k;u}^{t}]_+}F_u^{t}(\hat y_{1;t},\ldots,\hat y_{n;t}),
\end{eqnarray}
for any edge \begin{xy}(0,1)*+{t}="A",(10,1)*+{t'}="B",\ar@{-}^k"A";"B" \end{xy}  in $\TT_n$.
\end{lemma}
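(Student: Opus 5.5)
The plan is to divide the canonical expressions of $u$ at the two ends of the edge and use Lemma~\ref{lem:1-mutation} to cancel the monomial parts. Since $u$ is compatibly pointed, we have
\[
u={\bf x}_t^{{\bf g}_u^t}F_u^t(\hat y_{1;t},\ldots,\hat y_{n;t})={\bf x}_{t'}^{{\bf g}_u^{t'}}F_u^{t'}(\hat y_{1;t'},\ldots,\hat y_{n;t'}),
\]
and the collection $[{\bf g}_u]=\{{\bf g}_u^t\mid t\in\mathbb T_n\}$ is a tropical point in $\mathcal S_Y(\mathbb Z^{\max})$ for the Langlands dual $Y$-pattern. Hence Lemma~\ref{lem:1-mutation} applies to $[{\bf g}_u]$ and gives
\[
{\bf x}_{t'}^{{\bf g}_u^{t'}}(1+\hat y_{k;t'})^{[-g_{k;u}^{t'}]_+}={\bf x}_t^{{\bf g}_u^t}(1+\hat y_{k;t})^{[-g_{k;u}^t]_+}.
\]

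Next, we divide the equality of the two expressions for $u$ by this identity, working in the field $\mathbb F$. The monomials ${\bf x}_t^{{\bf g}_u^t}$ and ${\bf x}_{t'}^{{\bf g}_u^{t'}}$ cancel on the two sides. What remains is
\[
(1+\hat y_{k;t'})^{-[-g_{k;u}^{t'}]_+}F_u^{t'}(\hat{\bf y}_{t'})=(1+\hat y_{k;t})^{-[-g_{k;u}^t]_+}F_u^t(\hat{\bf y}_t),
\]
which is exactly \eqref{eqn:F-mutation}.

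The only step needing care is the bookkeeping. We must check that the hypotheses of Lemma~\ref{lem:1-mutation} hold, namely that compatibly pointed means the degree vectors form a tropical point of $\mathcal S_Y$ with $\widehat B_t=-\widetilde B_t^T$ (Definition~\ref{def:pointed}). We also use full rank so that the $\hat y$-variables are algebraically independent (Proposition~\ref{pro:p-map}). This makes $F_u^t$ well defined as a polynomial and ensures that $1+\hat y_{k;t}\neq 0$, so the division is legitimate. I expect no real obstacle beyond this, since the whole computation is a direct consequence of Lemma~\ref{lem:1-mutation}.
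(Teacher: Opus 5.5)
Your proof is correct and is the same argument as the paper's: write $u$ in its canonical expressions at $t$ and $t'$, then divide by the identity \eqref{eqn:1-mutation} from Lemma~\ref{lem:1-mutation}, applied to the tropical point $\{{\bf g}_u^t\mid t\in\mathbb T_n\}$, so that the monomial parts cancel. One small correction: $1+\hat y_{k;t}\neq 0$ holds without the full rank assumption, since $\hat y_{k;t}$ is a Laurent monomial; what full rank actually provides is that the pointed expansion, and hence $F_u^t$, is well defined.
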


\begin{proof}
 For any edge \begin{xy}(0,1)*+{t}="A",(10,1)*+{t'}="B",\ar@{-}^k"A";"B" \end{xy}  in $\TT_n$, we have
\[
{\bf x}_{t'}^{{\bf g}_u^{t'}}F_u^{t'}(\widehat{\bf y}_{t'})=u={\bf x}_{t}^{{\bf g}_u^{t}}F_u^{t}(\widehat{\bf y}_{t}).
\]
Combining this with \eqref{eqn:1-mutation}, we obtain 
\[(1+\hat y_{k;t'})^{-[-g_{k;u}^{t'}]_+}F_u^{t'}(\widehat{\bf y}_{t'})=(1+\hat y_{k;t})^{-[-g_{k;u}^{t}]_+}F_u^{t}(\widehat{\bf y}_{t}).\qedhere\]
\end{proof}

\begin{lemma}\label{lem:S-map}
Let $\mathcal S_X=\{({\bf x}_t, (\widetilde B_t)_{m\times n})\mid t\in\mathbb T_n\}$ be a cluster pattern. Let $\mathcal S_Y$ and $\mathcal S_Y^p$ be two $Y$-patterns such that $(\mathcal S_X,\mathcal S_Y)$ is a Langlands dual pair and $(\mathcal S_X,\mathcal S_Y^p)$ is a cluster ensemble. Let $S=\diag(s_1,\ldots, s_n)$ be a skew-symmetrizer for $B_{t_0}$. Then there is a map from  $\mathcal S_Y(\mathbb Z^{\rm max})$ to $\mathcal S_Y^p(\mathbb Z^{\rm max})$ defined by
\[[{\bf g}]\coloneqq \{{\bf g}^t\in\mathbb Z^m\mid t\in\mathbb T_n\}\mapsto [(S\mid {\bf 0}){\bf g}]\coloneqq \{ (S\mid {\bf 0}){\bf g}^t\in\mathbb Z^n\mid t\in\mathbb T_n\},
\]
where ${\bf 0}$ is the $n\times (m-n)$ zero matrix.
\end{lemma}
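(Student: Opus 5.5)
The plan is to verify directly that the family $\{\mathbf r^t:=(S\mid\mathbf 0)\mathbf g^t\}$ satisfies the tropical mutation rule \eqref{eqn:y-trop} for the $Y$-pattern $\mathcal S_Y^p$. Since $(\mathcal S_X,\mathcal S_Y^p)$ is a cluster ensemble, the exchange matrices of $\mathcal S_Y^p$ are the principal parts $B_t$ (by the Corollary after Definition \ref{def:dual-pair}). So we must show that for every edge $t\overset{k}{-}t'$,
\[
r_i^{t'}=\begin{cases}-r_k^t,& i=k;\\ r_i^t+[b_{ki;t}]_+r_k^t+(-b_{ki;t})[r_k^t]_+,& i\neq k,\end{cases}\qquad i\in[1,n].
\]
Here $r_i^t=s_ig_i^t$ for $i\in[1,n]$.

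On the other side, $[\mathbf g]\in\mathcal S_Y(\mathbb Z^{\max})$ with $\widehat B_t=-\widetilde B_t^T$. As already computed in the proof of Lemma \ref{lem:1-mutation}, this gives $g_k^{t'}=-g_k^t$ and, for $i\neq k$,
\[
g_i^{t'}=g_i^t+[-b_{ik;t}]_+g_k^t+b_{ik;t}[g_k^t]_+.
\]
Only the rows $i\in[1,n]$ matter, since the frozen coordinates are killed by $(S\mid\mathbf 0)$. The case $i=k$ is immediate: multiply by $s_k$.

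For $i\neq k$ in $[1,n]$, multiply the displayed rule by $s_i$. Then use skew-symmetrizability, $s_ib_{ik;t}=-s_kb_{ki;t}$, which holds for every $t$ because $S$ is also a skew-symmetrizer of each $B_t$ (mutation preserves skew-symmetrizers). Since $s_i,s_k>0$, we get $s_i[-b_{ik;t}]_+=s_k[b_{ki;t}]_+$ and $s_ib_{ik;t}=-s_kb_{ki;t}$. Hence
\[
s_ig_i^{t'}=s_ig_i^t+[b_{ki;t}]_+\,s_kg_k^t-b_{ki;t}\,s_k[g_k^t]_+,
\]
and $s_k[g_k^t]_+=[s_kg_k^t]_+$ because $s_k>0$. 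This is exactly the required rule for $\mathbf r$.

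There is no real obstacle. The only point needing care is the sign and index bookkeeping between the Langlands-dual convention ($\hat b_{ki}=-b_{ik}$ with the $m\times n$ matrix $\widetilde B$) and the ensemble convention ($\hat b_{ki}=b_{ki}$ with $B$), together with the observation that the skew-symmetrizer is mutation-invariant, so the identity $s_ib_{ik;t}=-s_kb_{ki;t}$ is available at every vertex $t$ and not only at $t_0$.
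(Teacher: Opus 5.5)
Your proof is correct and is essentially the paper's argument. Both of you rewrite the Langlands-dual tropical rule using $\hat b_{ki;t}=-b_{ik;t}$, multiply the $i$-th row by $s_i$, and use $s_ib_{ik;t}=-s_kb_{ki;t}$ (with $s_i,s_k>0$, and $S$ skew-symmetrizing every $B_t$) to recover the tropical mutation rule for the ensemble $Y$-pattern with exchange matrices $B_t$.
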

\begin{proof}
We write $S_Y=\{({\bf y}_t, \widehat B_t)\mid t\in\mathbb T_n \}$ and $S_Y^p=\{ (\widehat {\bf y}_t, B_t)\mid t\in\mathbb T_n\}$, where $\widehat B_t=-\widetilde B_t^T$ and $B_t$ is the principal part of $\widetilde B_t$. Since $[{\bf g}]\coloneqq \{{\bf g}^t=(g_1^t,\ldots,g_m^t)^T\in\mathbb Z^m\mid t\in\mathbb T_n\}$ is a tropical point in $\mathcal S_Y(\mathbb Z^{\rm max})$ and by  \eqref{eqn:y-trop}, for any edge \begin{xy}(0,1)*+{t}="A",(10,1)*+{t'}="B",\ar@{-}^k"A";"B" \end{xy}  in $\TT_n$ and $i\in[1,m]$, we have
\begin{align}
    g_{i}^{t'}&=\begin{cases}-g_{k}^t,& \text{if}\;i=k,\\ g_{i}^t+[\hat b_{ki;t}]_+g_{k}^t+(-\hat b_{ki;t})[g_{k}^t]_+,&\text{if}\;i\neq k. \end{cases} \nonumber\\
    &=\begin{cases}-g_{k}^t,& \text{if}\;i=k,\\ g_{i}^t+[-b_{ik;t}]_+g_{k}^t+b_{ik;t}[g_{k}^t]_+,&\text{if}\;i\neq k. \end{cases} \nonumber
\end{align}
 Since $SB_t$ is skew-symmetric, we have that $s_ib_{ik;t}=-s_kb_{ki;t}$, where $i\in[1,n]$. We write \[{\bf q}^t=(S\mid {\bf 0}){\bf g}^t=(s_1g_1^t,\ldots,s_ng_n^t)^T=(q_1^t,\ldots,q_n^t)^T\] for $t\in\mathbb T_n$. Then for $i\in[1,n]$, we have 
 \begin{eqnarray}
  q_{i}^{t'}=s_ig_{i}^{t'}&=&  \begin{cases}-s_kg_{k}^t, &{i=k,}\\
        s_ig_{i}^t+[-s_ib_{ik;t}]_+g_{k}^t+s_ib_{ik;t}[g_{k}^t]_+,&{i\neq k}.
    \end{cases}\nonumber\\
    &=&\begin{cases}-s_kg_{k}^t, &{i=k,}\\
        s_ig_{i}^t+[b_{ki;t}]_+(s_kg_{k}^t)+(-b_{ki;t})[s_kg_{k}^t]_+,&{i\neq k}.
    \end{cases}\nonumber\\
     &=&\begin{cases}-q_k^t, &{i=k,}\\
        q_{i}^t+[b_{ki;t}]_+q_{k}^t+(-b_{ki;t})[q_{k}^t]_+,&{i\neq k}.
    \end{cases}\nonumber
\end{eqnarray}
This implies that $\{{\bf q}^t\in\mathbb Z^n\mid t\in\mathbb T_n\}=\{ (S\mid {\bf 0}){\bf g}^t\in\mathbb Z^n\mid t\in\mathbb T_n\}$ is a tropical point in  $\mathcal S_Y^p(\mathbb Z^{\rm max})$. 
\end{proof}

Recall that  given a non-zero polynomial $F=\sum_{{\bf v}\in\mathbb N^n}c_{{\bf v}}{\bf y}^{{\bf v}}\in\mathbb Z[y_1,\ldots,y_n]$  and a vector ${\bf r}\in \mathbb Z^n$, we set
$$F[{\bf r}]\coloneqq \max\{{\bf v}^T{\bf r}\mid c_{{\bf v}}\neq 0\}\in\mathbb Z.$$

\begin{theorem}\label{thm:F-inv}
Let $\mathcal A$ be a cluster algebra of full rank and let $S=\diag(s_1,\ldots,s_n)$ be a fixed skew-symmetrizer for the exchange matrices of $\mathcal A$. Let  $u$ and $u'$ be two good elements in $\mathcal A$. Then the following statements hold.
    \begin{itemize}
        \item [(i)]  For any edge \begin{xy}(0,1)*+{t}="A",(10,1)*+{t'}="B",\ar@{-}^k"A";"B" \end{xy}  in $\TT_n$, we have
        \begin{eqnarray}\label{eqn:f-inv}
            F_u^{t'}[S({\bf g}_{u'}^{t'})^\circ]-F_u^t[S({\bf g}_{u'}^t)^\circ]=s_k([-g_{k;u}^{t'}]_+[-g_{k;u'}^{t}]_+-[-g_{k;u}^t]_+[-g_{k;u'}^{t'}]_+).
        \end{eqnarray}
        \item[(ii)] For any two vertices $t,t'\in\mathbb T_n$, we have 
        \begin{eqnarray}\label{eqn:F-inv-mut}
            F_u^t[S({\bf g}_{u'}^t)^\circ]+F_{u'}^t[S({\bf g}_u^t)^\circ]=F_u^{t'}[S({\bf g}_{u'}^{t'})^\circ]+F_{u'}^{t'}[S({\bf g}_u^{t'})^\circ].
        \end{eqnarray}
    In particular, the number $(u\mid\mid u')_F\coloneqq F_u^t[S({\bf g}_{u'}^t)^\circ]+F_{u'}^t[S({\bf g}_u^t)^\circ]$ only depends on $u$ and $u'$, not on the choice of vertex $t\in\mathbb T_n$.
    \end{itemize}
\end{theorem}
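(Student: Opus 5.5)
Part (i) reduces to tropicalizing the identity of Lemma \ref{F-mutation}. Set $P_t\coloneqq(1+\hat y_{k;t})^{-[-g_{k;u}^{t}]_+}F_u^{t}(\widehat{\bf y}_t)$. By Lemma \ref{F-mutation} this is a single element of $\mathbb F$, and it equals its counterpart $P_{t'}$ at $t'$. It is a subtraction-free rational function in the $\hat y$-variables of the cluster ensemble $(\mathcal S_X,\mathcal S_Y^p)$. This uses Proposition \ref{pro:p-map}, which needs full rank, and the positivity of $F_u^t$, which holds because $u$ is a good element.

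Now take the tropical point $[{\bf q}]=[(S\mid{\bf 0}){\bf g}_{u'}]\in\mathcal S_Y^p(\mathbb Z^{\max})$ supplied by Lemma \ref{lem:S-map}; it is a tropical point because $u'$ is compatibly pointed. By Corollary \ref{cor:bijection}(ii), applied to $\mathcal S_Y^p$, it corresponds to a semifield homomorphism $\beta$ with $\beta(\hat y_{i;t})=q_i^t=s_ig_{i;u'}^t$ for every $t$. Apply $\beta$ to both expressions of $P_t=P_{t'}$.

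The left-hand side gives $F_u^{t'}[S({\bf g}_{u'}^{t'})^\circ]-[-g_{k;u}^{t'}]_+[s_kg_{k;u'}^{t'}]_+$. Here we use two facts. First, $\beta$ applied to a polynomial with nonnegative coefficients, written in the $\hat y_{t'}$-variables, is its tropical polynomial evaluated at ${\bf q}^{t'}$; positivity rules out cancellation, so every monomial $c_{\bf v}\neq0$ contributes to the maximum. Second, $\beta(1+\hat y_{k;t'})=\max(0,q_k^{t'})$.

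The right-hand side gives $F_u^{t}[S({\bf g}_{u'}^{t})^\circ]-[-g_{k;u}^{t}]_+[s_kg_{k;u'}^{t}]_+$.

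Since $g_{k;u'}^{t'}=-g_{k;u'}^t$, we have $[g_{k;u'}^{t'}]_+=[-g_{k;u'}^{t}]_+$, and symmetrically with $t$ and $t'$ exchanged. Rearranging and pulling out $s_k>0$ then gives exactly \eqref{eqn:f-inv}.

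The one point requiring care is justifying that $\beta$, defined on $\mathbb Q_{\rm sf}(\hat y_{1;t_0},\dots)$, evaluates the two expansions of the same rational function consistently. This is automatic because $\beta$ is a homomorphism on the semifield, and both sides lie in it: the $\hat y_{t}$ and $\hat y_{t'}$ are subtraction-free in the $\hat y_{t_0}$ by $Y$-pattern mutation. I expect this bookkeeping, together with using positivity to identify $\beta(F)$ with $F[\cdot]$, to be the only subtle step.

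For (ii), apply (i) with $u$ and $u'$ swapped and add the two identities. The right-hand sides are
\[s_k\bigl([-g_{k;u}^{t'}]_+[-g_{k;u'}^{t}]_+-[-g_{k;u}^t]_+[-g_{k;u'}^{t'}]_+\bigr)
\quad\text{and}\quad
s_k\bigl([-g_{k;u'}^{t'}]_+[-g_{k;u}^{t}]_+-[-g_{k;u'}^t]_+[-g_{k;u}^{t'}]_+\bigr),\]
and these cancel exactly. So the symmetric sum is unchanged along each edge. Since $\mathbb T_n$ is connected, it is unchanged between any two vertices, which gives \eqref{eqn:F-inv-mut} and the independence of $(u\mid\mid u')_F$ from $t$.
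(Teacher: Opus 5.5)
Your proposal is correct and is essentially the paper's own proof. You tropicalize the identity of Lemma~\ref{F-mutation} using the semifield homomorphism on $\mathbb Q_{\rm sf}(\hat y_{1;t_0},\ldots,\hat y_{n;t_0})$ attached, via Lemma~\ref{lem:S-map} and Corollary~\ref{cor:bijection}, to the tropical point $[(S\mid{\bf 0}){\bf g}_{u'}]$, and you get (ii) by adding the edge formulas for $(u,u')$ and $(u',u)$, whose right-hand sides cancel.
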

\begin{proof}
(i) Let $\mathcal S_X$ be the cluster pattern corresponding to the cluster algebra $\mathcal A$. Let $\mathcal S_Y$ and $\mathcal S_Y^p$ be two $Y$-patterns such that $(\mathcal S_X,\mathcal S_Y)$ is a Langlands dual pair and $(\mathcal S_X,\mathcal S_Y^p)$ is a cluster ensemble. By Proposition \ref{pro:p-map}, we can identify the $Y$-pattern $\mathcal S_Y^p$ with the following assignment:
\[ \mathcal S_Y^p=\{(\widehat{\bf y}_t, B_t)\mid t\in\mathbb T_n\},\]
where $\widehat{\bf y}_t=(\hat y_{1;t},\ldots,\hat y_{n;t})$ is the ordered set of the $\hat y$-variables of the seed $({\bf x}_t,\widetilde B_t)$.

Since $u'$ is a good element, the assignment $\{{\bf g}_{u'}^t\in\mathbb Z^m\mid t\in\mathbb T_n\}$ is a tropical point in $\mathcal S_Y(\mathbb Z^{\rm max})$. Then by Lemma \ref{lem:S-map}, we know that $\{(S\mid {\bf 0}){\bf g}_{u'}^t\in\mathbb Z^n\mid t\in\mathbb T_n\}$ is a tropical point in $\mathcal S_Y^p(\mathbb Z^{\rm max})$. Since the initial $\hat y$-variables  $\hat y_{1;t_0},\ldots,\hat y_{n;t_0}$ are algebraically independent, they generate a universal semifield \[\hat{\mathbb F}_{>0}\coloneqq \mathbb Q_{\rm sf}(\hat y_{1;t_0},\ldots,\hat y_{n;t_0}).\] By Corollary \ref{cor:bijection}, the tropical point $\{(S\mid {\bf 0}){\bf g}_{u'}^t\in\mathbb Z^n\mid t\in\mathbb T_n\}\in \mathcal S_Y^p(\mathbb Z^{\rm max})$ corresponds to  a semifield homomorphism $\beta_{u'}:\hat{\mathbb F}_{>0}\rightarrow \mathbb Z^{\max}$ such that \[\beta_{u'}(\widehat {\bf y}_w)=((S\mid{\bf 0}){\bf g}_{u'}^w)^T\] for any vertex $w\in\mathbb T_n$.
For a non-zero polynomial $F(\widehat{ \bf y}_{w})=\sum_{{\bf v}\in\mathbb N^n}c_{\bf v}\widehat {\bf y}_w^{\bf v}\in \mathbb N[\hat y_{1;w},\ldots,\hat y_{n;w}]
$, it can be viewed as an element of $\hat{\mathbb F}_{>0}$ and
we have 
\[\beta_{u'}(F(\widehat {\bf y}_w))=\max\{{\bf v}^T (S\mid {\bf 0}){\bf g}_{u'}^w\mid c_{\bf v}\neq 0\}=F[(S\mid {\bf 0}){\bf g}_{u'}^w].\]
Since $u$ is a good element in $\mathcal A$, \eqref{eqn:F-mutation} can be viewed as an equality in $\hat{\mathbb F}_{>0}$. Thus we can apply $\beta_{u'}$ to \eqref{eqn:F-mutation} and get
\[-[-g_{k;u}^{t'}]_+\cdot \max\{0,\;\beta_{u'}(\hat y_{k;t'})\}+F_u^{t'}[(S\mid {\bf 0}){\bf g}_{u'}^{t'}]=-[-g_{k;u}^t]_+\cdot \max\{0,\;\beta_{u'}(\hat y_{k;t})\}+F_u^t[(S\mid {\bf 0}){\bf g}_{u'}^t].
\]
Since $\beta_{u'}(\hat y_{k;t'})=s_kg_{k;u'}^{t'}$ and $\beta_{u'}(\hat y_{k;t})=s_kg_{k;u'}^t$, we obtain
\[-[-g_{k;u}^{t'}]_+[s_kg_{k;u'}^{t'}]_++F_u^{t'}[(S\mid {\bf 0}){\bf g}_{u'}^{t'}]=-[-g_{k;u}^t]_+[s_kg_{k;u'}^t]_++F_u^t[(S\mid {\bf 0}){\bf g}_{u'}^t].
\]
Thus 
\begin{eqnarray}\label{eqn:f0-inv}
     F_u^{t'}[(S\mid {\bf 0}){\bf g}_{u'}^{t'}]-F_u^t[(S\mid {\bf 0}){\bf g}_{u'}^t]=s_k([-g_{k;u}^{t'}]_+[g_{k;u'}^{t'}]_+-[-g_{k;u}^t]_+[g_{k;u'}^t]_+).
\end{eqnarray}
Since $g_{k;u}^{t'}=-g_{k;u}^t$ and $g_{k;u'}^{t'}=-g_{k;u'}^t$, we can rewrite \eqref{eqn:f0-inv} in the form given in \eqref{eqn:f-inv}.

(ii) It suffices to prove that \eqref{eqn:F-inv-mut} holds for any edge \begin{xy}(0,1)*+{t}="A",(10,1)*+{t'}="B",\ar@{-}^k"A";"B" \end{xy} in $\mathbb T_n$.  So let us assume
$({\bf x}_{t'}, \widetilde B_{t'})=\mu_k({\bf x}_t, \widetilde B_t)$. By applying (i) to the ordered pair $(u,u')$ of good elements, we have 
\[  F_u^{t'}[S({\bf g}_{u'}^{t'})^\circ]-F_u^t[S({\bf g}_{u'}^t)^\circ]=s_k([-g_{k;u}^{t'}]_+[-g_{k;u'}^{t}]_+-[-g_{k;u}^t]_+[-g_{k;u'}^{t'}]_+).\]
By applying (i) to the ordered pair $(u',u)$ of good elements, we have
\[
  F_{u'}^{t'}[S({\bf g}_{u}^{t'})^\circ]-F_{u'}^t[S({\bf g}_{u}^t)^\circ]=s_k([-g_{k;u'}^{t'}]_+[-g_{k;u}^{t}]_+-[-g_{k;u'}^t]_+[-g_{k;u}^{t'}]_+).
       \]
       Then by taking the sum of the two equalities, we obtain
       \[F_u^t[S({\bf g}_{u'}^t)^\circ]+F_{u'}^t[S({\bf g}_u^t)^\circ]=F_u^{t'}[S({\bf g}_{u'}^{t'})^\circ]+F_{u'}^{t'}[S({\bf g}_u^{t'})^\circ].\]
       Then by induction, we can see that $ F_u^t[S({\bf g}_{u'}^t)^\circ]+F_{u'}^t[S({\bf g}_u^t)^\circ]$ is independent of $t\in\mathbb T_n$.
        In particular, the number $(u\mid\mid u')_F=F_u^t[S({\bf g}_{u'}^t)^\circ]+F_{u'}^t[S({\bf g}_u^t)^\circ]$ depends only on $u$ and $u'$, not on the choice of vertex $t\in\mathbb T_n$.
\end{proof}
As will be seen in the next subsection, the above theorem also holds for cluster algebras with trivial coefficients, if we only consider cluster monomials. 

\subsection{$F$-invariant for cluster algebras with trivial coefficients}\label{sec:trivial-coeff}
Originally, the $F$-invariant was defined for any pair of good elements in  $\mathsf{\Lambda}$-cluster algebras (particularly, cluster algebras of full rank). Thanks to the formula in Theorem \ref{thm:mutation-inv} (ii), the $F$-invariant for a pair of cluster monomials can be defined for any cluster algebra with trivial coefficients. In this subsection, we give more details about this fact.

\begin{setting}\label{setting}
\begin{itemize}
\item[(i)] Let $B$ be an $n\times n$ skew-symmetrizable matrix with a fixed skew-symmetrizer $S=\diag(s_1,\ldots,s_n)$. Let ${\bf z}=(z_1,\ldots,z_n)$ and ${\bf x}=(x_1,\ldots,x_m)$ be the ordered sets of indeterminates, where $m=2n$.
\item[(ii)]  Set $\widetilde B=\begin{bmatrix}
    B\\ I_n
\end{bmatrix}$. Let $\mathcal A^{\rm pr}$ be the (principal coefficient) cluster algebra with initial seed $({\bf x},\widetilde B)$ and let $\mathcal A$ be the cluster algebra with trivial coefficients whose initial seed is $({\bf z},B)$. Clearly, $\mathcal A^{\rm pr}$ is a cluster algebra of full rank.
\item[(iii)] Denote by $({\bf x}_t,\widetilde B_t)$ the seed of $\mathcal A^{\rm pr}$ at vertex $t\in\mathbb T_n$ and by  $({\bf z}_t, B_t)$ the seed of $\mathcal A$ at vertex $t\in\mathbb T_n$, where ${\bf x}_t=(x_{1;t},\ldots,x_{m;t})$ and ${\bf z}_t=(z_{1;t},\ldots,z_{n;t})$.
\end{itemize}
\end{setting}

\begin{theorem}[Separation formula, \cite{fomin_zelevinsky_2007}]
 Keep the above setting. Let $u=\prod_{i=1}^nz_{i;t}^{a_i}$ be a cluster monomial in $\mathcal A$ and $u^{\rm pr}=\prod_{i=1}^nx_{i;t}^{a_i}$ the corresponding cluster monomial in $\mathcal A^{\rm pr}$. Denote by ${\bf g}_{u^{\rm pr}}^w\in\mathbb Z^m$ and $F_{u^{\rm pr}}^w\in\mathbb Z[y_1,\ldots,y_n]$ the extended $g$-vector and $F$-polynomial of $u^{\rm pr}$ with respect to vertex $w\in\mathbb T_n$. Then the Laurent expansion of $u$ with respect to the seed $({\bf z}_w,B_w)$ has a canonical expression in terms of ${\bf g}_{u^{\rm pr}}^w$ and $F_{u^{\rm pr}}^w$: 
 \[ u={\bf z}_w^{({\bf g}_{u^{\rm pr}}^w)^\circ}F_{u^{\rm pr}}^w({\bf z}_w^{B_w{\bf e}_1},\ldots,{\bf z}_w^{B_w{\bf e}_n}),
 \]
 where $({\bf g}_{u^{\rm pr}}^w)^\circ\in\mathbb Z^n$ is the principal part of ${\bf g}_{u^{\rm pr}}^w\in\mathbb Z^m$ and ${\bf e}_k$ is the $k$-th column of $I_n$. 
\end{theorem}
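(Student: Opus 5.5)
We deduce the statement from Fomin--Zelevinsky's separation of additions by specializing the principal-coefficient cluster algebra $\mathcal A^{\rm pr}$ to the trivial-coefficient algebra $\mathcal A$. Consider the ring homomorphism $\pi\colon \mathbb Z[x_1^{\pm1},\dots,x_n^{\pm1},x_{n+1},\dots,x_m]\to\mathbb Z[z_1^{\pm1},\dots,z_n^{\pm1}]$ sending $x_i\mapsto z_i$ for $i\le n$ and the frozen variables $x_{n+i}\mapsto 1$. The exchange relations \eqref{eqn:x-mutation} for $\mathcal A^{\rm pr}$ specialize under $\pi$ to those of $\mathcal A$, since the frozen rows of $\widetilde B_t$ contribute only monomials in $x_{n+1},\dots,x_m$, which become $1$. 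Because the principal parts satisfy $B_t$ (of $\widetilde B_t$) $=B_t$ (of $\mathcal A$), induction along $\mathbb T_n$ gives $\pi(x_{i;t})=z_{i;t}$ for all $i\in[1,n]$ and all $t$. Hence $\pi(u^{\rm pr})=u$. The Laurent phenomenon guarantees that $x_{i;t}$ is Laurent in the initial cluster with frozen variables in the numerator, so applying $\pi$ is legitimate.

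It remains to compute $\pi(u^{\rm pr})$ starting from the seed $w$ instead of $t_0$. Re-rooting, the same specialization argument works with $w$ as the initial vertex. We define $\pi_w$ by $x_{i;w}\mapsto z_{i;w}$ for $i\le n$ and $x_{n+i;w}=x_{n+i}\mapsto 1$. Frozen variables never mutate, so $\pi_w$ agrees with $\pi$ on the generators $x_{i;w}$, because $\pi(x_{i;w})=z_{i;w}$. Now take the canonical expression \eqref{eqn:upointed} of $u^{\rm pr}$ at $w$, valid since $\mathcal A^{\rm pr}$ is of full rank and $u^{\rm pr}$ is good by Proposition \ref{pro:ghkk}:
\[ u^{\rm pr}={\bf x}_w^{{\bf g}_{u^{\rm pr}}^w}F_{u^{\rm pr}}^w(\hat y_{1;w},\dots,\hat y_{n;w}),\qquad \hat y_{k;w}={\bf x}_w^{\widetilde B_w{\bf e}_k}. \]
Applying $\pi$ gives $\pi({\bf x}_w^{{\bf g}})={\bf z}_w^{({\bf g})^\circ}$, since the frozen components go to $1$, and $\pi(\hat y_{k;w})={\bf z}_w^{B_w{\bf e}_k}$. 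This yields the asserted formula.

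The main point to check carefully is the compatibility $\pi(x_{i;t})=z_{i;t}$ for all $t$, i.e.\ that specialization commutes with mutation. Mutation involves division by $x_{k;t}$, and frozen variables are not inverted. The issue is resolved because we apply $\pi$ on the Laurent ring in which the $x_{k;t}$ ($k\le n$) are invertible, and $\pi(x_{k;t})=z_{k;t}\neq0$ by induction. The identity $x'_k x_k=\cdots$ then maps to the corresponding identity in $\mathcal A$, and we divide in the field $\mathbb Q(z_1,\dots,z_n)$. No $g$-vector or sign-coherence input is needed beyond the existence of the canonical expression.
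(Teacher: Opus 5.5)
Your argument is correct. The paper gives no proof of its own here; it simply cites Fomin--Zelevinsky \cite{fomin_zelevinsky_2007}. Their separation formula is a specialization argument from principal coefficients, valid for arbitrary coefficient semifields. It is stated for cluster variables, in terms of $F$-polynomials and $g$-vectors defined by placing principal coefficients at the chosen initial seed.

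You instead apply a coefficient-killing ring homomorphism directly to the pointed expansion of $u^{\rm pr}$ at $w$ inside the fixed algebra $\mathcal A^{\rm pr}$, which has principal coefficients at $t_0$. This matches the statement's data ${\bf g}^w_{u^{\rm pr}}$ and $F^w_{u^{\rm pr}}$ exactly, and it handles cluster monomials at once. Quoting Fomin--Zelevinsky at the vertex $w$ would instead require identifying those data with theirs for principal coefficients placed at $w$. That identification goes through the separation formula for $\mathcal A^{\rm pr}$ re-rooted at $w$, constant term $1$ of $F$-polynomials, and uniqueness of pointed expansions in full rank. The price of your route is that you presuppose the canonical expression at $w$ (Proposition \ref{pro:ghkk}), which Fomin--Zelevinsky do not need.

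Some tidying is worthwhile.
\begin{itemize}
\item Define $\pi_w$ on $\mathbb Z[x_{1;w}^{\pm1},\ldots,x_{m;w}^{\pm1}]$ with the frozen variables inverted, because ${\bf x}_w^{{\bf g}^w_{u^{\rm pr}}}$ and $\hat y_{k;w}$ may contain negative powers of frozen variables. This is harmless since they are sent to $1$, and with this domain only the ordinary Laurent phenomenon is used.
\item The re-rooted induction then gives $\pi_w(u^{\rm pr})=u$ directly: apply $\pi_w$ to each exchange relation and cancel $z_{k;t}\neq 0$. So your first paragraph, the specialization from $t_0$, is superfluous.
\item Your remark that $\pi_w$ ``agrees with $\pi$ on the generators'' does not by itself give $\pi_w(u^{\rm pr})=\pi(u^{\rm pr})$, since the two maps have different domains. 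You would first have to extend $\pi$ to the localization at $x_{1;w},\ldots,x_{n;w}$, which is possible because $\pi(x_{i;w})=z_{i;w}\neq 0$.
\item In the final step the map you apply is $\pi_w$, not $\pi$.
\end{itemize}
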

\begin{definition}
    Keep the setting and notations as above. We call 
   \[ {\bf g}_{u}^w \coloneqq   ({\bf g}_{u^{\rm pr}}^w)^\circ\in\mathbb Z^n\quad \text{and}\quad F_{u}^w \coloneqq   F_{u^{\rm pr}}^w\in\mathbb Z[y_1,\ldots,y_n]\] 
    the {\em $g$-vector} and {\em $F$-polynomial} of the cluster monomial $u\in\mathcal A$ with respect to vertex $w\in\mathbb T_n$.
\end{definition}
By replacing the notations, we have the following direct consequence.
\begin{proposition} Keep the setting and notations as above. Let $u,v$ be two cluster monomials in $\mathcal A$ and let $u^{\rm pr},v^{\rm pr}$ be the corresponding cluster monomials in $\mathcal A^{\rm pr}$. Then for any vertex $w\in\mathbb T_n$, we have 
\[ F_{u}^w[S{\bf g}_{v}^w]+ F_{v}^w[S{\bf g}_{u}^w]=F_{u^{\rm pr}}^w[S({\bf g}_{v^{\rm pr}}^w)^\circ]+ F_{v^{\rm pr}}^w[S({\bf g}_{u^{\rm pr}}^w)^\circ]=(u^{\rm pr}\mid\mid v^{\rm pr})_F.
\]
 In particular, the number $(u\mid\mid v)_F\coloneqq F_{u}^w[S{\bf g}_{v}^w]+ F_{v}^w[S{\bf g}_{u}^w]$ only depends on $u$ and $v$, not on the choice of $w\in\mathbb T_n$. 
\end{proposition}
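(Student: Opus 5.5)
The statement is essentially a change of notation, so the plan is to reduce everything to the principal-coefficient algebra $\mathcal A^{\rm pr}$ and then apply Theorem \ref{thm:mutation-inv} and Theorem \ref{thm:F-inv} there.

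\textbf{First equality.} By Definition, ${\bf g}_u^w=({\bf g}_{u^{\rm pr}}^w)^\circ$ and $F_u^w=F_{u^{\rm pr}}^w$, and similarly for $v$. Hence
\[F_u^w[S{\bf g}_v^w]=F_{u^{\rm pr}}^w[S({\bf g}_{v^{\rm pr}}^w)^\circ],\]
and symmetrically with $u,v$ exchanged. Summing the two gives the first equality.

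\textbf{Second equality.} I would invoke Theorem \ref{thm:mutation-inv} (ii), or equivalently Theorem \ref{thm:F-inv} (ii), for $\mathcal A^{\rm pr}$. This requires three checks.

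\begin{itemize}
\item $\mathcal A^{\rm pr}$ has full rank, since the block $I_n$ appears in $\widetilde B$. If one wants the $\mathsf{\Lambda}$-cluster framework of Theorem \ref{thm:mutation-inv}, choose $\Lambda$ to be the skew-symmetric matrix
\[\Lambda=\begin{bmatrix}0 & -S\\ S & -SB\end{bmatrix},\]
or any solution of $\widetilde B^T\Lambda=(S\mid{\bf 0})$. Theorem \ref{thm:F-inv} avoids $\Lambda$ altogether and needs only full rank, so I would cite it directly.
\item $u^{\rm pr}$ and $v^{\rm pr}$ are cluster monomials of $\mathcal A^{\rm pr}$, hence good elements by Proposition \ref{pro:ghkk}.
\item The skew-symmetrizer $S$ of $B$ serves as a skew-symmetrizer for every $B_t$, since mutation preserves skew-symmetrizers.
\end{itemize}

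Theorem \ref{thm:F-inv} (ii) then says that $F_{u^{\rm pr}}^w[S({\bf g}_{v^{\rm pr}}^w)^\circ]+F_{v^{\rm pr}}^w[S({\bf g}_{u^{\rm pr}}^w)^\circ]$ is independent of $w$ and equals $(u^{\rm pr}\mid\mid v^{\rm pr})_F$.

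\textbf{Independence of $w$.} The ``in particular'' claim follows at once: the left-hand side equals a quantity depending only on $u^{\rm pr}$ and $v^{\rm pr}$. These are determined by $u$ and $v$, because a cluster monomial of $\mathcal A$ determines its exponent vector and its cluster up to the seed identification of $\mathcal A$ and $\mathcal A^{\rm pr}$ via $\mathbb T_n$.

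\textbf{Expected obstacle.} The only point needing care is the well-definedness of $u\mapsto u^{\rm pr}$. Distinct vertices $t$ may present the same cluster monomial, and one must know that they yield the same element of $\mathcal A^{\rm pr}$. I would argue this via $g$-vectors. By the separation formula, two presentations of $u$ give principal $g$-vectors that agree, since the $g$-vector of a cluster monomial is determined by its Laurent expansion (its pointed degree with respect to $w$). Cluster monomials of $\mathcal A^{\rm pr}$ are in turn determined by their extended $g$-vectors, which for principal coefficients equal $({\bf g}^\circ,{\bf 0})$ at $t_0$, by \cite{GHKK18}.

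If this is considered standard, I would simply cite it. Everything else is routine bookkeeping.
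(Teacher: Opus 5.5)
Your proposal is correct and follows the paper's route. The paper records this proposition as a direct consequence of the definitions ${\bf g}_u^w=({\bf g}_{u^{\rm pr}}^w)^\circ$, $F_u^w=F_{u^{\rm pr}}^w$ together with the mutation invariance of the $F$-invariant for the full-rank algebra $\mathcal A^{\rm pr}$ (Theorems \ref{thm:mutation-inv} and \ref{thm:F-inv}), and your $\Lambda$ is indeed a compatible partner of $\widetilde B$ since $B^TS=-SB$.

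The paper takes the correspondence $u\mapsto u^{\rm pr}$ as given, but your fallback justification via the ``pointed degree'' of $u$ in $\mathcal A$ fails as stated. When $B_w$ is rank-deficient, the relation $\preceq_w$ need not be antisymmetric: for the oriented $3$-cycle, $B_w(1,1,1)^T=0$, so $B_w{\bf e}_1=-B_w({\bf e}_2+{\bf e}_3)$. If you want a justification, cite instead the standard coefficient-independence fact that $\mathcal A$ and $\mathcal A^{\rm pr}$ have the same exchange graph, so their cluster monomials correspond.
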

The number $(u\mid\mid v)_F=F_{u}^w[S{\bf g}_{v}^w]+ F_{v}^w[S{\bf g}_{u}^w]$ above is called the {\em $F$-invariant} of the ordered pair $(u, v)$ of cluster monomials in $\mathcal A$. The number $F_{u}^w[S{\bf g}_{v}^w]$ is called the {\em partial $F$-invariant} of the ordered pair $(u, v)$ at vertex $w\in\mathbb T_n$.

\subsection{Reading's conjecture on the separation property for cluster variables} In this subsection, we confirm a conjecture of Reading, which says that the non-compatible cluster variables in cluster algebras can be separated by the sign-coherence of $g$-vectors. The mutation formula for the partial $F$-invariant will play an important role in our proof.

Since we only care about cluster monomials and cluster variables in this section, we will work with cluster algebras with trivial coefficients.

\begin{theorem}[{\cite{GHKK18}, sign-coherence of $g$-vectors}] \label{thm:GHKK}
Let $\mathcal A$ be a cluster algebra and ${\bf z}=(z_1,\ldots,z_n)$ a cluster of $\mathcal A$. Then the $i$-th components of the $g$-vectors ${\bf g}_{z_1}^{t},\ldots,{\bf g}_{z_n}^{t}$ with respect to a vertex $t\in\mathbb T_n$ are simultaneously
non-negative or simultaneously non-positive.
\end{theorem}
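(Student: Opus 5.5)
The statement is \cite{GHKK18}'s theorem. I would not try to derive it from the combinatorial material of this section, since row sign-coherence is equivalent to the deep conjecture that $F$-polynomials have constant term $1$. My plan reproduces the scattering-diagram argument of Gross--Hacking--Keel--Kontsevich, organised so that the conclusion falls out of a geometric separation property.

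\textbf{Reduction to $c$-vectors.} First I would reduce to the column version for $c$-vectors, so that one can alternatively work with either family. Let $G^t_w$ be the matrix whose columns are ${\bf g}^t_{z_{1;w}},\ldots,{\bf g}^t_{z_{n;w}}$. Let $C^t_w$ be the $c$-vector matrix of the transposed (Langlands dual) exchange matrix $-B_t^T$. The Nakanishi--Zelevinsky tropical duality gives
\[
  (G^t_w)^T S^{-1}\, C^{t}_w\, S = I_n ,
\]
after twisting by the skew-symmetrizer $S$, and this duality is a purely combinatorial consequence of the mutation rules for tropical points in \eqref{eqn:y-trop}. Granting it, row sign-coherence of $G^t_w$ is equivalent to column sign-coherence of the dual $c$-vectors. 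For the cluster ${\bf z}$ it therefore suffices to show, for the seed $w$ carrying ${\bf z}$, that each $c$-vector of the dual pattern lies in $\mathbb N^n$ or $-\mathbb N^n$.

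\textbf{Scattering diagram and chambers.} Next I would build the cluster scattering diagram $\mathfrak D_t$ in $\mathbb R^n$ attached to the initial seed $t$, by the Kontsevich--Soibelman consistent completion of the incoming walls $({\bf e}_i^\perp, 1+\hat y_i)$. The key structural facts to establish are the following.
\begin{itemize}
\item[(a)] Every wall of $\mathfrak D_t$ has the form $(\mathfrak d, f_{\mathfrak d})$, where $\mathfrak d\subset n^\perp$ for some $n\in\mathbb N^n\setminus\{0\}$ (a positive vector), and $f_{\mathfrak d}$ is a power series in $\hat y^{n}$.
\item[(b)] The coordinate hyperplanes ${\bf e}_i^\perp$ support walls, so no chamber meets them in its interior.
\item[(c)] The cluster complex embeds: for each vertex $w$, the cone spanned by the $g$-vectors of the cluster ${\bf x}_w$ is the closure of a chamber $\sigma_w$.
\end{itemize}
Fact (c) is proved by induction along $\mathbb T_n$. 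Mutation of the scattering diagram, $T_k\mathfrak D_t=\mathfrak D_{\mu_k t}$ (a piecewise linear map that is linear on each side of ${\bf e}_k^\perp$), carries chambers to chambers. It also carries the $g$-vector cones according to the tropical rule \eqref{eqn:y-trop}.

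\textbf{Conclusion.} Given (b) and (c), the conclusion is immediate. The open chamber $\sigma_w$ does not meet ${\bf e}_i^\perp$, so all its points have $i$-th coordinate of one sign. The $g$-vectors of $z_1,\ldots,z_n$ lie in the closure $\overline{\sigma_w}$, so their $i$-th coordinates are simultaneously $\ge 0$ or simultaneously $\le 0$.

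\textbf{Main obstacle.} I expect the hardest step to be (c), together with the consistency and positivity needed to make $T_k$ well defined. The first thing I would try is the GHKK device of the ``positive chamber.'' The all-positive orthant $\mathcal C^+$ is a chamber, because all walls are contained in hyperplanes $n^\perp$ with $n\ge0$. One then shows that $T_k$ identifies $\mathfrak D_t$ on the half-space $\{m_k\ge0\}$ with $\mathfrak D_{\mu_kt}$ on $\{m_k\le0\}$. This is done by changing the seed data so that ${\bf e}_k^\perp$ becomes the unique wall with $f=1+\hat y_k$, and checking uniqueness of consistent completion on both sides.

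In the skew-symmetrizable case this requires the Langlands-dual bookkeeping with $S$. That bookkeeping is where I would use Lemma \ref{lem:S-map} to transport tropical points between the Langlands dual pair and the cluster ensemble.
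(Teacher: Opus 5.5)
Your proposal follows the only proof the paper relies on: the paper gives no argument of its own for this theorem and simply cites \cite{GHKK18}. Your steps (b) and (c) together with the conclusion are exactly the Gross--Hacking--Keel--Kontsevich argument: each chamber $\sigma_w$ of the cluster scattering diagram lies on one side of every wall ${\bf e}_i^\perp$, and the $g$-vectors of the cluster at $w$ span $\overline{\sigma_w}$.

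You should delete the opening ``reduction to $c$-vectors''. Nakanishi--Zelevinsky prove their tropical duality assuming sign-coherence, and the rule \eqref{eqn:y-trop} for $g$-vectors is itself a GHKK consequence, so the duality is not purely combinatorial and invoking it here would be circular. Your chamber argument never uses this reduction, so dropping it costs nothing.
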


\begin{definition}\label{def:sign-coherent}
Let $u$ and $u'$ be two cluster monomials of a cluster algebra $\mathcal A$. 
    Let $\{{\bf g}_{u}^t\mid t\in\mathbb T_n\}$ and   $\{{\bf g}_{u'}^t\mid t\in\mathbb T_n\}$ be the families of $g$-vectors of $u$ and $u'$ with respect to vertices of $\mathbb T_n$. We say that $u$ and $u'$ are {\em sign-coherent}, if for any vertex $t\in\mathbb T_n$ and any $k\in[1,n]$, we have 
    $g_{k;u}^tg_{k;u'}^t\geq 0$.
\end{definition}
In the study of universal geometric cluster algebras, Reading \cite{Reading-2014} conjectured a separation
property for cluster variables of $\mathcal A$, which can be reformulated as follows.
\begin{conjecture}\cite[Conjecture 8.21]{Reading-2014} \label{conj:reading}
Let $u=x_{i;t}$ and $u'=x_{j;t'}$ be two cluster variables of $\mathcal A$. Then $u$ and $u'$ are contained in the same cluster if and only if $u$ and $u'$ are sign-coherent.   
\end{conjecture}
This conjecture implies that if two cluster variables are not contained in any common cluster, then they can be separated by the sign-coherence of $g$-vectors. One side of this conjecture is nothing but the sign-coherence of $g$-vectors in Theorem \ref{thm:GHKK}. To the authors' knowledge, the other side of this conjecture is still open. 

\begin{theorem}\label{thm:sign-coherent}
    Let $\mathcal A$ be a cluster algebra and let $u$ and $u'$ be two cluster monomials of $\mathcal A$. Then 
    the product $uu'$ is still a cluster monomial if and only if  $u$ and $u'$ are sign-coherent. In particular, Reading's conjecture is true.
\end{theorem}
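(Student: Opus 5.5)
The plan is to prove the two implications separately. The ``only if'' direction follows from Theorem \ref{thm:GHKK}. The ``if'' direction uses the mutation formula of Theorem \ref{thm:F-inv}(i) to show that sign-coherence forces the $F$-invariant to vanish, and then uses the characterization of compatibility by the $F$-invariant. Throughout, I work with principal coefficients as in Setting \ref{setting}, so that Theorem \ref{thm:F-inv} applies to $u^{\rm pr},u'^{\rm pr}$. The resulting statements translate back to $\mathcal A$ by the definitions of ${\bf g}_u^w$ and $F_u^w$.

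\emph{Only if.} Suppose $uu'$ is a cluster monomial. Then $u$ and $u'$ are monomials in a common cluster ${\bf z}_w=(z_{1;w},\ldots,z_{n;w})$. Since $g$-vectors of cluster monomials are additive, ${\bf g}_u^t$ and ${\bf g}_{u'}^t$ are non-negative integer combinations of ${\bf g}_{z_{1;w}}^t,\ldots,{\bf g}_{z_{n;w}}^t$. By Theorem \ref{thm:GHKK}, for each $k$ the $k$-th components of these vectors have a common sign. Hence $g_{k;u}^t g_{k;u'}^t\ge 0$ for all $t$ and $k$.

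\emph{If, step 1: the partial $F$-invariants vanish.} Assume $u,u'$ are sign-coherent, and take an edge $t \frac{k}{\quad} t'$. Using $g_{k;u}^{t'}=-g_{k;u}^t$ and $g_{k;u'}^{t'}=-g_{k;u'}^t$, the right-hand side of \eqref{eqn:f-inv} becomes
\[
s_k\bigl([g_{k;u}^{t}]_+[-g_{k;u'}^{t}]_+-[-g_{k;u}^t]_+[g_{k;u'}^{t}]_+\bigr).
\]
Both products vanish because $g_{k;u}^tg_{k;u'}^t\ge 0$. Therefore $F_u^t[S{\bf g}_{u'}^t]$ is constant on $\mathbb T_n$. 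Now choose $w$ such that $u$ is a monomial in the cluster ${\bf z}_w$. Then $F_u^w=1$, so $F_u^w[S{\bf g}_{u'}^w]=0$, and hence this partial $F$-invariant is $0$ at every vertex. Exchanging the roles of $u$ and $u'$ gives the same for the other partial invariant, so $(u\mid\mid u')_F=0$.

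\emph{If, step 2: pass to cluster variables.} Write $u=\prod_i a_i$ and $u'=\prod_j b_j$ as products of cluster variables, where the $a_i$ lie in one cluster and the $b_j$ in another. $F$-polynomials are multiplicative, and they have non-negative coefficients by \cite{GHKK18}, so there is no cancellation and $(FG)[{\bf r}]=F[{\bf r}]+G[{\bf r}]$. $g$-vectors are additive and tropical polynomials are positively homogeneous, so $F[{\bf r}+{\bf r}']\le F[{\bf r}]+F[{\bf r}']$. This gives only an inequality, so I will not argue through $F$-invariants here. Instead I will use sign-coherence directly. Since the $g$-vectors of the $a_i$ are sign-coherent with each other, each nonzero $g_{k;a_i}^t$ has the sign of $g_{k;u}^t$, and likewise for the $b_j$. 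Therefore every pair $(a_i,b_j)$ is sign-coherent, and so is every pair within $\{a_i\}$ or within $\{b_j\}$. Applying step 1 to each such pair gives $(a_i\mid\mid b_j)_F=0$. By \cite[Theorem 4.19]{Cao-2023}, the cluster variables $\{a_i\}\cup\{b_j\}$ are then pairwise compatible. The case $u=x_{i;t}$, $u'=x_{j;t'}$ is exactly Conjecture \ref{conj:reading}.

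\emph{Main obstacle.} The hard point is the final step: pairwise compatibility of a set of cluster variables must imply that they all lie in one cluster. This is the clique (flag) property of the cluster complex. I would invoke it as a known result, e.g. from the compatibility-degree work of Cao--Li \cite{cao-li-2020}. If that is unavailable, I would derive it from \cite[Theorem 4.19]{Cao-2023} together with the standard fact that a maximal set of pairwise compatible cluster variables is a cluster. Granting the clique property, $uu'$ is a monomial in a single cluster, i.e. a cluster monomial.
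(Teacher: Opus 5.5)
Your proof is correct. The ``only if'' direction and your Step 1 are exactly the paper's argument (its Claims (a) and (b)): sign-coherence kills the right-hand side of \eqref{eqn:f-inv}, so each partial $F$-invariant is constant on $\mathbb T_n$, and it vanishes at a vertex where its first argument is a cluster monomial.

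The two routes differ only in the last step. The paper stops once $(u\mid\mid u')_F=0$ and applies \cite[Theorem 4.19]{Cao-2023} directly to the cluster monomials $u,u'$. You instead factor $u=\prod_i a_i$ and $u'=\prod_j b_j$. You then check, correctly via Theorem~\ref{thm:GHKK} inside each cluster and additivity of $g$-vectors, that sign-coherence of $u,u'$ descends to every pair $(a_i,b_j)$. Finally you run Step 1 on each pair, use only the cluster-variable form of Cao's criterion, and finish with the clique property of the cluster complex.

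Your route needs the criterion only in the cluster-variable form stated in the paper's introduction. The cost is an extra external input, the clique property, which is known for skew-symmetrizable cluster algebras (e.g.\ via Cao--Li). The paper's route is a one-line finish but relies on the cluster-monomial version of Cao's theorem.

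Three small remarks:
\begin{itemize}
\item Once you pass to pairs, Step 1 applied to $(u,u')$ itself is never used.
\item For Reading's conjecture proper ($u,u'$ cluster variables), neither Step 2 nor the clique property is needed.
\item Your fallback (``a maximal pairwise compatible set is a cluster'') is essentially a restatement of the clique property, not a more elementary substitute. Cite the clique property directly.
\end{itemize}
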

    
\begin{proof}
``$\Rightarrow$": Suppose that the product $uu'$ is still a cluster monomial of $\mathcal A$. Then there exists a vertex $w\in\mathbb T_n$ such that $u, u'$ and $uu'$ are cluster monomials in ${\bf x}_w$. We can assume that $u=\prod_{i=1}^n x_{i;w}^{v_i}$ and $u'=\prod_{i=1}^n x_{i;w}^{v_i'}$. Let $t\in\mathbb T_n$. By Theorem \ref{thm:GHKK}, we know that the $k$-th components of the $g$-vectors ${\bf g}_{x_{1;w}}^t,\ldots, {\bf g}_{x_{n;w}}^t$ are  simultaneously non-negative or simultaneously non-positive for each $k\in[1,n]$. This implies $$g_{k;u}^tg_{k;u'}^t=(\sum_{i=1}^nv_ig_{k;x_{i;w}}^t)(\sum_{i=1}^nv_i'g_{k;x_{i;w}}^t)\geq 0,$$ where $t\in\mathbb T_n$ and $k\in[1,n]$. So $u$ and $u'$ are sign-coherent.

``$\Leftarrow$": Suppose $u$ and $u'$ are sign-coherent. Let $S$ be a skew-symmetrizer for the exchange matrices of $\mathcal A$. We have the following claims.

Claim (a): The partial $F$-invariants $F_u^t[S{\bf g}_{u'}^t]$ and $F_{u'}^t[S{\bf g}_{u}^t]$ are independent of $t\in\mathbb T_n$ under the assumption that $u$ and $u'$ are sign-coherent.

Claim (b): $F_u^t[S{\bf g}_{u'}^t]=0=F_{u'}^t[S{\bf g}_{u}^t]$ for any vertex $t\in\mathbb T_n$. In particular,  $(u\mid\mid u')_F=0$.

Proof of claim (a): We first show that $F_u^t[S{\bf g}_{u'}^t]$ is independent of $t\in\mathbb T_n$. It suffices to show that $F_u^t[S{\bf g}_{u'}^t]=F_u^{t'}[S{\bf g}_{u'}^{t'}]$ for any edge  \begin{xy}(0,1)*+{t}="A",(10,1)*+{t'}="B",\ar@{-}^k"A";"B" \end{xy}  in $\TT_n$. In this case,
 we have $g_{k;u}^{t'}=-g_{k;u}^t$ and $g_{k;u'}^{t'}=-g_{k;u'}^t$. Since $u$ and $u'$ are sign-coherent, we have $g_{k;u}^tg_{k;u'}^t\geq 0$.
    Thus $$(-g_{k;u}^{t'})(-g_{k;u'}^t)=-g_{k;u}^tg_{k;u'}^t\leq 0\;\;\;\text{and}\;\;\;(-g_{k;u}^t)(-g_{k;u'}^{t'})=-g_{k;u}^tg_{k;u'}^t\leq 0.$$ 
    So we have 
    $$[-g_{k;u}^{t'}]_+[-g_{k;u'}^t]_+=0=[-g_{k;u}^t]_+[-g_{k;u'}^{t'}]_+.$$
Then by Theorem \ref{thm:F-inv} (i), we obtain $F_u^t[S{\bf g}_{u'}^t]=F_u^{t'}[S{\bf g}_{u'}^{t'}]$. By induction, we see that  $F_u^t[S{\bf g}_{u'}^t]$ is independent of $t\in\mathbb T_n$. By the same arguments, we can show that $F_{u'}^t[S{\bf g}_{u}^t]$ is invariant for $t\in\mathbb T_n$.

Proof of claim (b):  Since $u$ is a cluster monomial of $\mathcal A$, we can assume that $u$ is a cluster monomial in ${\bf x}_w$ for some $w\in\mathbb T_n$. In this case, $F_u^w=1$. Thus $F_u^w[{\bf r}]=0$ for any ${\bf r}\in \mathbb Z^n$. In particular, we have $F_u^w[S{\bf g}_{u'}^w]=0$.
For any vertex $t\in\mathbb T_n$, by claim (a), we have  $F_u^t[S{\bf g}_{u'}^t]=F_u^w[S{\bf g}_{u'}^w]=0$. By the same arguments,  we have $F_{u'}^t[S{\bf g}_{u}^t]=0$ for any vertex $t\in\mathbb T_n$. Thus $$(u\mid\mid u')_F=F_u^t[S{\bf g}_{u'}^t]+F_{u'}^t[S{\bf g}_{u}^t]=0.$$
Since  $(u\mid\mid u')_F=0$, we know that  the product $uu'$ is still a cluster monomial by \cite[Theorem 4.19]{Cao-2023}.

Therefore, the product $uu'$ is still a cluster monomial if and only if  $u$ and $u'$ are sign-coherent. In the case that $u$ and $u'$ are cluster variables, this corresponds to the statement in Reading's conjecture.
\end{proof}

\section{Coincidence of the
partial $E$-invariant and the partial $F$-invariant}\label{sec:4}
\subsection{The partial $E$-invariant in the theory of quivers with potentials}\label{subsec:pE-invQP}
Throughout this subsection, let $Q=(Q_0,Q_1)$ be a quiver with a vertex set $Q_0=\{1,\ldots,n\}$ and a finite arrow set $Q_1$. We will assume that $Q$ has no cycles of length $\leq 2$. Such a quiver $Q$ corresponds to a skew-symmetric matrix $B_Q=(b_{ij})$, where
\begin{eqnarray}\label{eqn:Q-B}
   b_{ij}=\#\{j\rightarrow i \text{ in }Q_1\}-\#\{i\rightarrow j \text{ in }Q_1\}. 
\end{eqnarray}
For example, if  $Q=1\leftarrow 2$, then $B_Q=\begin{bmatrix}
    0&1\\
    -1&0
\end{bmatrix}$.

 For $m\in\mathbb Z_{\geq 0}$, let $\mathbb CQ_m$ be a $\mathbb C$-vector space with a $\mathbb C$-basis labeled by the set $Q_m$ of paths
of length $m$ in $Q$. Clearly, $\mathbb CQ_m$  is finite-dimensional. The \emph{path algebra} of $Q$ is defined as a tensor algebra $\mathbb{C}Q \coloneqq \bigoplus_{m \geq 0}\mathbb{C}Q_m$ generated by the $(\mathbb{C}Q_0, \mathbb{C}Q_0)$-bimodule $\mathbb{C}Q_1$, whose multiplication is defined by compositions of paths. Notice that  we read paths from right to left. The space $\mathbb{C}Q_{\geq l}\coloneqq \bigoplus_{m \geq l}\mathbb{C}Q_m$ is a two-sided ideal of $\mathbb{C}Q$.

The {\em completed path algebra} of $Q$ is similarly defined as a topological algebra $\widehat{\mathbb CQ} \coloneqq   \prod_{m\geq 0}\mathbb CQ_m$ equipped with the $\mathfrak{M}_Q$-adic topology, where $\mathfrak{M}_Q$ denotes the Jacobson radical of $\widehat{\mathbb CQ}$. A {\em potential} $W$ of $Q$ is a formal sum of the form $W=\sum_{m\geq 1}w_m$ in $\widehat{\mathbb CQ}$, where $w_m$ is a $\mathbb C$-linear combination of cycles of length $m$ in $Q$. For each arrow $a \in Q_1$, the \emph{cyclic derivation} $\partial_a W \in \widehat{\mathbb CQ}$ is defined by
\begin{eqnarray}\label{eqn:cyc-der}
    \partial_a (a_1a_2\cdots a_m) \coloneqq \sum_{a_i=a}a_{i+1}a_{i+2}\cdots a_{i-1}
\end{eqnarray}
for each cycle $a_1a_2\cdots a_m$ defining $W$ and extended linearly and continuously. 

Let $I'_W\coloneqq \langle \partial_a W\mid a\in Q_1\rangle$ be the  two-sided ideal of $\widehat{\mathbb CQ}$ generated by $\partial_a W$ with $a\in Q_1$ and $I_W\coloneqq \bigcap_{l\geq 1}(\mathfrak{M}_Q^l+I'_W)$ the $\mathfrak{M}_Q$-adic closure  of  $I'_W$.
The ideal $I_W$ is  called the {\em Jacobian ideal}  (see \cite[\S3, Definition~3.1]{DWZ08}) and the quotient algebra $J=J(Q,W)=\widehat{\mathbb CQ}/I_W$ is called the  {\em Jacobian algebra}  associated with $(Q,W)$. A quiver with potential $(Q,W)$ is said to be {\em Jacobi-finite} if the corresponding Jacobian algebra $J(Q,W)=\widehat{\mathbb CQ}/I_W$ is finite-dimensional.

\begin{example}
    Consider the quiver $Q$ as follows:
    $$\xymatrix{&1\ar[rd]^{\beta_3}&\\
3\ar[ru]^{\beta_2}&&2\ar[ll]_{\beta_1}}$$
Then  $Q_2=\{\beta_2\beta_1,\;\beta_3\beta_2,\;\beta_1\beta_3\}$. If we take $W=\beta_3\beta_2\beta_1$, the corresponding Jacobian algebra $J=J(Q,W)$ is given by the quiver $Q$ with relations $\beta_2\beta_1=0,\;\beta_3\beta_2=0,\;\beta_1\beta_3=0$. In this case, $(Q,W)$ is Jacobi-finite.
\end{example}

    \begin{example}\label{ex:QP2}
       Now we assume the potential $W$ is given as a finite sum of cycles in the quiver $Q$. By abuse of notation, we also write \[I'_W \coloneqq \langle \partial_a W\mid a\in Q_1\rangle \subset \mathbb{C}Q\] for the ideal of the path algebra $\mathbb{C}Q$.

            (i) In general, the algebras $J(Q,W)=\widehat{\mathbb CQ}/I_W$ and $\mathbb{C}Q/I'_W$ are not necessarily isomorphic to each other, even when $(Q, W)$ is Jacobi-finite. The following example, taken from \cite{Lab09,Lab09-b}, illustrates this phenomenon. 
       Consider the following quiver $Q$ 
    \[\begin{tikzcd}
& 1 \arrow[dl,shift left=0.3ex,"\alpha_2"] \arrow[dl,shift right=0.3ex,"\alpha_1"'] &
\\
3 \arrow[rr,shift left=0.3ex,"\gamma_2"] \arrow[rr,shift right=0.3ex,"\gamma_1"']& & 2 \arrow[ul,shift left=0.3ex,"\beta_2"] \arrow[ul,shift right=0.3ex,"\beta_1"']
\end{tikzcd}\]
with potential $W=\al_1\be_1\ga_1 + \al_2\be_2\ga_2 - \al_1\be_2\ga_1\al_2\be_1\ga_2$. Then we have $\partial_{\al_1}(W) = \be_1\ga_1 - \be_2\ga_1\al_2\be_1\ga_2$, etc. 
We can directly check that every path of length greater than $7$ can be rewritten, using the relations, as a strictly longer path; hence such paths lie in $\bigcap_{l=0}^\infty \fM_Q^l$. For example, we have
\[\al_1\be_2\ga_1\al_2\be_1\ga_1 = \al_1\be_2\ga_1\al_2\be_2\ga_1\al_2\be_1\ga_2 = \cdots = 0\]
in $J(Q,W)$. In particular, the algebra $J(Q,W)$ is finite-dimensional. But if we work with $\mathbb{C}Q/I'_W$, we cannot kill such paths by relations.

(ii) 
If the condition $\mathbb{C}Q_{\geq l}\subseteq I_W'\subseteq \mathbb{C}Q_{\geq 2}$ holds for some $l\geq 2$, then it follows from the definition of the complete algebra $J(Q,W)$ that $J(Q,W) \cong \mathbb{C}Q/I'_W$ (see \cite[Proposition 3.3]{Pasquali-2020}). 
    \end{example}

A (finite-dimensional) {\em representation} of $Q=(Q_0,Q_1)$ is a tuple $M=(M_i, M_\alpha)_{i\in Q_0,\alpha\in Q_1}$, where each $M_i$ is a finite-dimensional $\mathbb C$-vector space and $M_\alpha\colon M_i\rightarrow M_j$ is a linear map for each arrow $\alpha\colon i\rightarrow j$. Denote by $\underline \dim M=(\dim M_1,\ldots,\dim M_n)^T\in\mathbb N^n$ the {\em dimension vector} of $M$. Notice that in this paper, we always view  vectors as  {\em column vectors}.

Note that every finite-dimensional $J$-module is
nilpotent, and hence its $\widehat{\mathbb CQ}$-action is continuous
(see \cite[\S 10]{DWZ08}).
A {\em representation} of $(Q,W)$ is a finite-dimensional nilpotent representation of $Q$ whose induced $\widehat{\mathbb CQ}$-action is annihilated by the Jacobian ideal $I_W$. We identify the representations of $(Q,W)$ with the finite-dimensional {\em left modules} over $J=J(Q,W)$. 
A {\em decorated representation}
of $(Q,W)$ is a pair $\mathcal M=(M,V)$, where $M$ is a finite-dimensional 
$J$-module and $V$ is a finitely generated semisimple $J$-module. Sometimes, we also view $V$ as a 
tuple of finite-dimensional $\mathbb C$-vector spaces
$V=(V_i)_{i\in Q_0}=(V_1,\ldots,V_n)$. A decorated representation $\mathcal M=(M,V)$ of $(Q,W)$ is said to be {\em negative}, if $M=0$.

    Following \S \ref{subsecA:inj}, we introduce numerical characteristics of decorated representations in terms of (relative) injective objects in the category of locally nilpotent $J$-modules.
Let $\mathcal{K}^b(J\mathchar`-\mathsf{inj})$ be the homotopy category of bounded complexes of finitely cogenerated injective objects in the category $J\Modln$ of locally nilpotent $J$-modules (see Definition~\ref{def:l-nil}). Let $\mathcal{K}^{[0,1]}(J\mathchar`-\mathsf{inj})$ be the subcategory of $\mathcal{K}^b(J\mathchar`-\mathsf{inj})$ consisting of two-term complexes concentrated in cohomological degree $0$ and $1$, and let $\mathcal{K}_{\rm fd}^{[0,1]}(J\mathchar`-\mathsf{inj})$ be the subcategory of $\mathcal{K}^{[0,1]}(J\mathchar`-\mathsf{inj})$ whose $0$-th cohomology is finite-dimensional. Note that if $(Q,W)$ is Jacobi-finite, then $\mathcal{K}_{\rm fd}^{[0,1]}(J\mathchar`-\mathsf{inj})=\mathcal{K}^{[0,1]}(J\mathchar`-\mathsf{inj})$.

There is a bijection between the isomorphism classes of the decorated representations of $(Q,W)$ and those of the two-term complexes in $\mathcal{K}_{\rm fd}^{[0,1]}(J\mathchar`-\mathsf{inj})$ given by
\begin{eqnarray}\label{eqn:s-t-bijection}
    \Phi\colon \mathcal M=(M,V)\mapsto  \Phi(\mathcal M)\coloneqq (I_M^{-}{\to}I_M^+)\oplus (0\to I_V),
\end{eqnarray}
where $0\to M\to I_M^-{\to} I_M^+$ is the minimal injective copresentation of $M$ as an object in $J\Modln$ and $I_V=\oplus_{i=1}^nI_i^{a_i}$ if $V=(\mathbb C^{a_1},\ldots,\mathbb C^{a_n})$. Here $I_i$ denotes the injective envelope of the simple $J$-module $S_i$ at vertex $i\in Q_0$ in the category $J\Modln$.

    The notions of $g$-vectors, $F$-polynomials and $E$-invariants of decorated representations were introduced by Derksen, Weyman and Zelevinsky \cite{DWZ10}. We can reformulate them  in terms of the category $\mathcal{K}^b(J\mathchar`-\mathsf{inj})$ following Theorem~\ref{thm:g-DWZ}.
    Similar reformulations in another categorical setting can also be found in \cite{Plamondon-2011} (see also a related further study \cite{CLS-2015}). 
    \begin{definition}[{$g$-vector, $F$-polynomial and $E$-invariant}] \label{def:g-F-E}
    Let $\mathcal M=(M,V)$ be a decorated representation of $J=J(Q,W)$ and $\Phi(\mathcal M)=(I_M^{-}{\to}I_M^+)\oplus (0\to I_V)$ the corresponding two-term complex in $\mathcal{K}_{\rm fd}^{[0,1]}(J\mathchar`-\mathsf{inj})$. 
    \begin{itemize}
        \item [(i)] The  {\em $g$-vector} ${\bf g}({\mathcal M})\in\mathbb Z^n$ of $\mathcal M$ is defined as follows:
       \[{\bf g}(\mathcal{M}) \coloneqq -[\Phi(\mathcal M)]=[I_V]+[I_M^+] -[I_M^-]\in K_0(\mathcal{K}^b(J\mathchar`-\mathsf{inj}))\cong \mathbb{Z}^n,\]
        where we make the identification  $[I_i] \mapsto {\bf e}_i$ for $i\in Q_0=\{1,\ldots,n\}$.

\item [(ii)] The {\em $F$-polynomial} $F_{\mathcal M}$ of $\mathcal M=(M,V)$ is defined as follows:
\[ F_{\mathcal M}(y_1,\ldots,y_n) \coloneqq   \sum_{{\bf v}\in\mathbb N^n}\chi(\Gr_{\bf v}(M)){\bf y}^{\bf v}\;\;\in\mathbb Z[y_1,\ldots,y_n],
\]
where $\Gr_{\bf v}(M)$ is the submodule Grassmannian of $M$ with dimension vector ${\bf v}$ and $\chi$ is the Euler-Poincar\'{e} characteristic.

\item[(iii)]  The {\em partial $E$-invariant} $ E^{\rm inj}(\mathcal M,\mathcal M')$ and {\em $E$-invariant} $E^{\rm sym}(\mathcal M,\mathcal M')$ of a pair $(\mathcal M,\mathcal M')$ of decorated representations are defined as follows:
\begin{eqnarray}\label{eqn:E-inj}
    E^{\rm inj}(\mathcal M,\mathcal M') &\coloneqq&   \dim_{\mathbb C} \Hom_{\mathcal{K}^{b}(J\mathchar`-\mathsf{inj})}(\Phi(\mathcal M),\Phi(\mathcal M')[1]),\\
    E^{\rm sym}(\mathcal M,\mathcal M') &\coloneqq&    E^{\rm inj}(\mathcal M,\mathcal M')+E^{\rm inj}(\mathcal M',\mathcal M).\label{eqn:E-sym}
\end{eqnarray}
    \end{itemize}
\end{definition}

\begin{remark}\label{rmk:def-DWZ}
(i)  In Theorem~\ref{thm:g-DWZ}, we prove that the definition of $g$-vectors  here is consistent with the original definition used by Derksen, Weyman and Zelevinsky in 
\cite{DWZ10}.

(ii) Denote by $\langle-,-\rangle$ the standard inner product on $\mathbb R^n$. By  Proposition~\ref{pro:E-hom}, we see that the partial $E$-invariant $E^{\rm inj}(\mathcal M,\mathcal M')$ of two decorated representations $\mathcal M=(M,V)$ and $\mathcal M'=(M', V')$ can be written as
\[ E^{\rm inj}(\mathcal M,\mathcal M')=\langle\underline{\dim} M, \mathbf{g}(\mathcal{M}')\rangle + \dim_{\mathbb C} \Hom_J(M, M' ),
\]
which is the formula used in \cite[\S7]{DWZ10} to define the partial $E$-invariant. From the right-hand side of this equality, we see that $E^{\rm inj}(\mathcal M,\mathcal M')$ is always a finite integer.
\end{remark}

\begin{corollary}\label{cor:E-inv}
 Let $\mathcal M=(M,V)$ and  $\mathcal M'=(M',V')$ be two decorated representations of $J=J(Q,W)$. If 
 $\mathcal M=(M,V)$ is negative,  \ie $M=0$ and  $V=(\mathbb C^{a_1},\ldots,\mathbb C^{a_n})$ for some ${\bf a}=(a_1,\ldots,a_n)^T\in\mathbb Z_{\geq 0}^n$, then we have
 \[
E^{\rm inj}(\mathcal M,\mathcal M')=0,\;\;
 E^{\rm inj}(\mathcal M',\mathcal M)=\langle \underline{\dim} M', {\bf a}\rangle=E^{\rm sym}(\mathcal M,\mathcal M').
 \]
\end{corollary}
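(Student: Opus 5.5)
The plan is to compute both partial $E$-invariants directly from the homological definition \eqref{eqn:E-inj}, using the bijection $\Phi$ of \eqref{eqn:s-t-bijection}. If $\mathcal M$ is negative, then $\Phi(\mathcal M)=(0\to I_V)$ with $I_V=\bigoplus_i I_i^{a_i}$ placed in degree $1$. We then split the computation into the two orders.

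For $E^{\rm inj}(\mathcal M,\mathcal M')$, a chain map $\Phi(\mathcal M)\to\Phi(\mathcal M')[1]$ has its only possible nonzero component in degree $1$. There it must be a map from $I_V$ to the degree-$1$ term of $\Phi(\mathcal M')[1]$. That term is the degree-$2$ term of $\Phi(\mathcal M')$, which is zero because $\Phi(\mathcal M')$ is concentrated in degrees $0$ and $1$. Hence the Hom-space in the homotopy category vanishes, and $E^{\rm inj}(\mathcal M,\mathcal M')=0$.

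For $E^{\rm inj}(\mathcal M',\mathcal M)$, the shift $\Phi(\mathcal M)[1]$ is $I_V$ placed in degree $0$. So a chain map $\Phi(\mathcal M')\to\Phi(\mathcal M)[1]$ is a map $I_{M'}^-\oplus 0\to I_V$ in degree $0$, compatible with the differential, which here is automatic since the target has nothing in degree $1$. Null-homotopies are maps $h\colon I_{M'}^+\oplus I_{V'}\to I_V$ precomposed with the differential $d'$. Thus the Hom-space is the cokernel of
\[\Hom(I^+_{M'}\oplus I_{V'},I_V)\xrightarrow{\circ d'}\Hom(I^-_{M'},I_V).\]
By left exactness of $\Hom(-,I_V)$ applied to the exact sequence $0\to M'\to I^-_{M'}\to I^+_{M'}$, together with the injectivity of $I_V$ in $J\Modln$, this cokernel is $\Hom_J(M',I_V)$.

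It remains to evaluate $\dim_{\mathbb C}\Hom_J(M',I_i)$. Since $I_i$ is the injective envelope of $S_i$ in $J\Modln$, this dimension equals the multiplicity $[M':S_i]=(\underline\dim M')_i$. One argues by induction on length, using exactness of $\Hom(-,I_i)$ and $\Hom(S_j,I_i)=\delta_{ij}\mathbb C$, which holds because the socle of $I_i$ is $S_i$. Summing over $i$ gives $\dim\Hom_J(M',I_V)=\langle\underline\dim M',{\bf a}\rangle$. The formula for $E^{\rm sym}$ then follows by adding the two values.

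As a cross-check, one can instead use the formula in Remark~\ref{rmk:def-DWZ}(ii). With $M=0$ it gives $E^{\rm inj}(\mathcal M,\mathcal M')=0$ immediately. With $\mathbf g(\mathcal M)=\mathbf a$ and $\Hom_J(M',0)=0$ it gives $E^{\rm inj}(\mathcal M',\mathcal M)=\langle\underline\dim M',\mathbf a\rangle$. This route is essentially a one-line proof via Proposition~\ref{pro:E-hom}.

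The only delicate point is the identification $\dim\Hom_J(M',I_i)=(\underline\dim M')_i$ for injectives in the locally nilpotent category. I would cite the appendix's description of $I_i$ for this, or simply rely on the Remark's formula.
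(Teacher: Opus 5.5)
Your proposal is correct. Your ``cross-check'' is exactly the paper's proof: the paper notes $\mathbf g(\mathcal M)=\mathbf a$, substitutes $M=0$ into the formula of Remark~\ref{rmk:def-DWZ}(ii) (that is, Proposition~\ref{pro:E-hom}), and adds the two values to get $E^{\rm sym}$.

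Your main argument takes a different, more direct route. It works from the homotopy-category definition \eqref{eqn:E-inj} itself.
\begin{itemize}
\item Since $\Phi(\mathcal M)=(0\to I_V)$, there are no nonzero chain maps $\Phi(\mathcal M)\to\Phi(\mathcal M')[1]$ for degree reasons.
\item The space $\Hom_{\mathcal K^b(J\injc)}(\Phi(\mathcal M'),\Phi(\mathcal M)[1])$ is the cokernel of precomposition with $d'$, which you identify with $\Hom_J(M',I_V)$.
\end{itemize}
This is in effect the proof of Proposition~\ref{pro:E-hom} specialized to the negative case. It avoids the Euler--Poincar\'e pairing and the $t$-structure step. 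It still needs the input that the appendix records as $\langle[S_i],[I_j]\rangle_{\rm EP}=\delta_{ij}$, namely $\dim_{\mathbb C}\Hom_J(M',I_i)=(\underline{\dim}\,M')_i$. Your induction on length supplies this: it uses exactness of $\Hom_J(-,I_i)$ on $J\Modln$ and $\operatorname{soc}I_i=S_i$.

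One wording fix. Identifying the cokernel with $\Hom_J(M',I_V)$ uses full exactness of $\Hom_J(-,I_V)$ on $J\Modln$, which comes from injectivity of $I_V$, not mere left exactness. You apply it to $0\to M'\to I^-_{M'}\to\operatorname{Im}d'\to 0$ and to the inclusion $\operatorname{Im}d'\hookrightarrow I^+_{M'}$.

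Comparing the two: the paper's route is a one-liner once Proposition~\ref{pro:E-hom} is available. Yours is self-contained, and it shows transparently why $E^{\rm inj}(\mathcal M,\mathcal M')$ vanishes purely for degree reasons.
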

\begin{proof}
By definition, we have ${\bf g}(\mathcal M)={\bf a}$. Then the results follow from the formula in Remark \ref{rmk:def-DWZ} (ii) and \eqref{eqn:E-sym}.
\end{proof}

\subsection{The partial $E$-invariant coincides with the partial $F$-invariant} We refer to \cite{DWZ08,DWZ10} for the definitions of  mutations of quivers with potentials and  mutations of decorated representations, which will not be used directly in this paper. Recall that $\mathbb T_n$ is the $n$-regular tree with a rooted vertex $t_0$. 
\begin{setting}
\begin{itemize}
\item[(i)] Let $(Q,W)=(Q_{t_0},W_{t_0})$ be a quiver with  non-degenerate potential \cite[\S7]{DWZ08} and $\mathcal M=\mathcal M^{t_0}=(M^{t_0},V^{t_0})$ a decorated representation of $(Q_{t_0},W_{t_0})$. By applying mutations, we have a family of quivers with potentials and a family of decorated representations
\begin{eqnarray}
   [(Q,W)] \coloneqq   \{(Q_t,W_t)\mid t\in\mathbb T_n\},\;\;\;\;[\mathcal M] \coloneqq   \{\mathcal M^t\mid t\in\mathbb T_n\},\nonumber
\end{eqnarray}
where $\mathcal M^t=(M^t, V^t)$ is a decorated representation of $(Q_t,W_t)$.
\item[(ii)] Let $B_Q$ be the skew-symmetric matrix corresponding to $Q$ as in \eqref{eqn:Q-B}, and $\mathcal A_Q$ the cluster algebra with trivial coefficients such that its exchange matrix at vertex $t_0$ is $B_Q$. 
\item[(iii)] For a cluster monomial $u={\bf x}_w^{\bf a}=\prod_{i=1}^nx_{i;w}^{a_i}$ of $\mathcal A_Q$, let $\{\mathcal M_u^t\mid t\in\mathbb T_n\}$ be the family of decorated representations determined by the following conditions:
\begin{itemize}
    \item[$\bullet$] For the vertex $w\in\mathbb T_n$, set $\mathcal M_u^w=(M_u^w,V_u^w)$ to be the negative decorated representation of 
    $(Q_w,W_w)$ given by
$M_u^w=0$ and $V_u^w=(\mathbb C^{a_1},\ldots,\mathbb C^{a_n})$.
\item[$\bullet$] For any edge \begin{xy}(0,1)*+{t}="A",(10,1)*+{t'}="B",\ar@{-}^k"A";"B" \end{xy}  in ${\mathbb T}_n$, $\mathcal M_u^t$ and $\mathcal M_u^{t'}$ are related by mutation in direction $k$.
\end{itemize}
\end{itemize}
\end{setting}

A decorated representation $\mathcal M=(M,V)$ of $(Q,W)$ is said to be {\em negative-reachable} (or simply {\em reachable}), if in the mutation class $[\mathcal M]=\{\mathcal M^t\mid t\in \mathbb T_n\}$ of $\mathcal M$, there exists a vertex $w\in \mathbb T_n$ such that $\mathcal M^w$ is a negative decorated representation of $(Q_w, W_w)$. 

The following theorem summarizes some results on the categorification of cluster algebras using decorated representations of quivers with potentials
by Derksen, Weyman and Zelevinsky \cite{DWZ10}.

\begin{theorem}[{\cite{DWZ10}}]
\label{thm:DWZ-g-F}
Keep the above setting. The following statements hold.
\begin{itemize}
    \item [(i)]\label{eq:DWZ-g-F1} The family $\{\mathcal M_u^t=(M_u^t,V_u^t)\mid t\in\mathbb T_n\}$  of decorated representations only depends on $u$, not on the choice of $w\in\mathbb T_n$ and ${\bf a}\in\mathbb Z_{\geq 0}^n$ such that $u={\bf x}_w^{\bf a}$.
    \item [(ii)]\label{eq:DWZ-g-F2} The correspondence $u\mapsto \{\mathcal M_u^t\mid t\in\mathbb T_n\}\mapsto \mathcal M_u\coloneqq \mathcal M_u^{t_0}$ induces a bijection from  the set of cluster monomials of $\mathcal A_Q$ to the set of isomorphism classes of negative-reachable decorated representations of $(Q,W)=(Q_{t_0},W_{t_0})$. 
    \item[(iii)]\label{eq:DWZ-g-F3} For any vertex $t\in\mathbb T_n$, we have 
    \[{\bf g}_u^t={\bf g}(\mathcal M_u^t)\;\;\;\text{and}\;\;\;F_u^t(y_1,\ldots,y_n)=F_{\mathcal M_u^t}(y_1,\ldots,y_n).
    \]
\end{itemize}
\end{theorem}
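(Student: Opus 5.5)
The statement is a summary of results of Derksen--Weyman--Zelevinsky, so the plan is to reassemble it from the main results of \cite{DWZ10} rather than prove anything new. The key input is a pair of mutation formulas for decorated representations. First, the $g$-vector ${\bf g}(\mathcal M^t)$ transforms under the mutation $\mathcal M^t\mapsto\mathcal M^{t'}$ (in direction $k$) by the tropical $Y$-mutation rule \eqref{eqn:y-trop} for the Langlands dual pair with trivial coefficients; this is \cite[Lemma~5.2 / Theorem~1.7]{DWZ10}. Second, the $F$-polynomial $F_{\mathcal M^t}$ transforms by the same rule as the Fomin--Zelevinsky $F$-polynomials, which is \eqref{eqn:F-mutation} read with ${\bf y}$ in place of $\widehat{\bf y}$ \cite[Lemma~5.2]{DWZ10}. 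For the $F$-polynomial formula one uses the Euler characteristic identities for the quiver Grassmannians $\Gr_{\bf v}(M^t)$ and $\Gr_{\bf v}(M^{t'})$ proved there. Both formulas rely on $W$ being non-degenerate, so that every $(Q_t,W_t)$ is defined and the mutations are involutive up to right equivalence.

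With these in hand I would prove (iii) first, for a fixed presentation $u={\bf x}_w^{\bf a}$. I would argue by induction on the distance from $w$ in $\mathbb T_n$.
\begin{itemize}
\item At $t=w$, the negative representation $\mathcal M_u^w=(0,\mathbb C^{\bf a})$ has ${\bf g}(\mathcal M_u^w)={\bf a}={\bf g}_u^w$ and $F_{\mathcal M_u^w}=1=F_u^w$. This uses Definition~\ref{def:g-F-E} and the fact that $\Phi(\mathcal M_u^w)=(0\to I_V)$.
\item For an edge $t$---$t'$ labelled $k$, both $\{{\bf g}_u^t\}$ and $\{{\bf g}(\mathcal M_u^t)\}$ are tropical points for the same $Y$-pattern. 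On the cluster side this holds because cluster monomials are good (Proposition~\ref{pro:ghkk} applied to principal coefficients, together with the separation formula). On the representation side it is the first formula above. A tropical point is determined by its value at a single vertex, so the two $g$-vector families agree everywhere.
\item The $F$-polynomials on both sides then obey the same recursion. This recursion is \eqref{eqn:F-mutation} with exponents $[-g_{k}^{t}]_+$ depending only on the $g$-vectors, which have already been matched. The $F$-polynomials therefore agree everywhere too.
\end{itemize}

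For (i), suppose $u={\bf x}_w^{\bf a}={\bf x}_{w'}^{{\bf a}'}$. The two families produced from $w$ and from $w'$ both satisfy (iii), so at every vertex $t$ they have equal $g$-vectors ${\bf g}_u^t$ and equal $F$-polynomials. To conclude that the families coincide up to isomorphism, I would use that reachable decorated representations are rigid, i.e.\ $E^{\rm sym}(\mathcal M,\mathcal M)=0$ \cite[Corollary~7.2]{DWZ10}. By the homological description in Definition~\ref{def:g-F-E}, this means $\Phi(\mathcal M)$ is a rigid two-term complex in $\mathcal K^{[0,1]}_{\rm fd}(J\injc)$. A rigid two-term complex of injectives is determined up to isomorphism by its class in $K_0$, since its orbit is open in the irreducible space of presentations of fixed $g$-vector (the standard argument of \cite{DK-2015}, \cite{Plamondon-2011}). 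This identification of rigid objects by $g$-vectors is the step I expect to be the main obstacle. One has to carry out the open-orbit argument in the locally nilpotent, possibly Jacobi-infinite setting via the framework of Appendix~\ref{sec: appendixA}. If that proves awkward, I would fall back on the more direct route of \cite{DWZ10}: show that if $u={\bf x}_w^{\bf a}={\bf x}_{w'}^{{\bf a}'}$, then $w'$ is reached from $w$ through mutations in directions $k$ with $a_k=0$, composed with a permutation. Along such mutations the negative representation stays negative, since mutating $(0,\mathbb C^{\bf a})$ in a direction with $a_k=0$ returns a negative representation.

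For (ii), surjectivity holds by the definition of negative-reachable. For injectivity, if $\mathcal M_u^{t_0}\cong\mathcal M_{u'}^{t_0}$, then (iii) gives ${\bf g}_u^{t_0}={\bf g}_{u'}^{t_0}$. Cluster monomials are determined by their $g$-vectors, by the Fomin--Zelevinsky conjecture proved in \cite{DWZ10} (alternatively \cite{GHKK18}), so $u=u'$. Finally, well-definedness of $u\mapsto\mathcal M_u$ is exactly (i).
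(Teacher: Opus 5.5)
The paper gives no proof of this statement; it is quoted from \cite{DWZ10}. Your reconstruction therefore has to carry the argument itself, and as written it has a real gap in (iii).

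The representation-side mutation formulas you attribute to \cite{DWZ10} are not the tropical rules \eqref{eqn:y-trop} and \eqref{eqn:F-mutation}. For an arbitrary decorated representation they read $g'_j=g_j+[b_{jk}]_+g_k-b_{jk}h_k(\mathcal M)$ for $j\neq k$, and $(1+y_k)^{h_k(\mathcal M)}F_{\mathcal M}(\mathbf y)=(1+y'_k)^{h_k(\mu_k\mathcal M)}F_{\mu_k\mathcal M}(\mathbf y')$. Here $h_k(\mathcal M)=-\dim\ker\beta_k$ and $g_k=h_k(\mathcal M)-h_k(\mu_k\mathcal M)$. These agree with your recursions exactly when $h_k(\mathcal M)=-[-g_k(\mathcal M)]_+$, i.e.\ when $h_k(\mathcal M)\,h_k(\mu_k\mathcal M)=0$. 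That fails in general. For $\mathcal M=(S_k,S_k)$ one has $g_k=0$ but $h_k(\mathcal M)=h_k(\mu_k\mathcal M)=-1$; also $F_{\mu_k\mathcal M}(\mathbf y')=1+y_k^{-1}\neq 1+y_k=F_{\mathcal M}(\mathbf y)$, contradicting \eqref{eqn:F-mutation} with exponents $0$. So both the $g$-vector step and the $F$-polynomial step of your induction use an identity you have not established for $\{\mathcal M_u^t\}$.

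The identity does hold for this family. It is \cite[(9.1)]{DWZ10}, which the paper itself uses in the proof of Lemma~\ref{lem:E-inv}. You can also derive it from the rigidity you already invoke in (i), which does not depend on (iii):
\begin{itemize}
\item In \cite[Theorem~7.1]{DWZ10} with $\mathcal M'=\mathcal M$ the right-hand side cancels. So $E^{\rm inj}(\mathcal M_u^t,\mathcal M_u^t)$ is independent of $t$, and it vanishes at $t=w$ by Corollary~\ref{cor:E-inv}.
\item By Theorem~\ref{thm:g-DWZ}, $h_k(\mathcal M)=-[I^0_M:I_k]$, hence $h_k(\mu_k\mathcal M)=-[I^1_M:I_k]-\dim V(k)$.
\item Rigidity forbids $I_k$ from occurring in both terms of the minimal copresentation, because its differential is a radical map.
\item Rigidity also forbids $M(k)\neq0\neq V(k)$, because $\Hom_J(M,I_V)$ is a direct summand of $\Hom_{\mathcal K^b(J\injc)}(\Phi(\mathcal M),\Phi(\mathcal M)[1])$.
\end{itemize}
This step must be inserted before your induction.

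For (i) you are working harder than necessary.
\begin{itemize}
\item Your main route needs a Jacobi-infinite version of ``rigid two-term complexes are determined by their $g$-vectors'', which is not supplied here.
\item Your fallback is worse. It needs connectedness of the set of seeds containing prescribed cluster variables, a much deeper statement. It also needs an identification, compatible with decorated representations, of the QPs at two vertices of $\mathbb T_n$ whose seeds differ by a permutation.
\end{itemize}
Instead, once (iii) holds for the family $\{\mathcal M'^t\}$ built from $(w',\mathbf a')$, evaluate it at $t=w$. There $F_{\mathcal M'^w}=F_u^w=1$. On the other hand, $\Gr_{\underline{\dim}\,M'^w}(M'^w)$ is a point, so $F_{\mathcal M'^w}\neq 1$ whenever $M'^w\neq0$. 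Hence $\mathcal M'^w$ is negative with $\underline{\dim}\,V'^w={\bf g}_u^w=\mathbf a$. So $\mathcal M'^w\cong\mathcal M_u^w$, and mutation carries this isomorphism to every $t$.

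The same observation gives injectivity in (ii) without citing that $g$-vectors determine cluster monomials. If $\mathcal M_u^{t_0}\cong\mathcal M_{u'}^{t_0}$, then $\mathcal M_{u'}^w\cong(0,\mathbb C^{\mathbf a})$. So $F_{u'}^w=1$ and ${\bf g}_{u'}^w=\mathbf a$, whence $u'={\bf x}_w^{\mathbf a}=u$.
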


\begin{lemma}[{\cite[Theorem 7.1]{DWZ10}}]
\label{lem:E-inv}
  Let $(Q,W)$ be a quiver with non-degenerate potential, and let $\mathcal A_Q$ be the corresponding cluster algebra with trivial coefficients. 
Let $u, u'$ be two cluster monomials of $\mathcal A_Q$, and let  $\{\mathcal M_u^t\mid t\in\mathbb T_n\}$ and $\{\mathcal M_{u'}^t\mid t\in\mathbb T_n\}$ be the families of decorated representations corresponding to $u$ and $u'$. Then the following statements hold.
\begin{itemize}
\item[(i)] For any edge \begin{xy}(0,1)*+{t}="A",(10,1)*+{t'}="B",\ar@{-}^k"A";"B" \end{xy}  in $\TT_n$, 
  \begin{eqnarray*}\label{eqn:e-inv}
      E^{\rm inj}(\mathcal M_u^{t'},\mathcal M_{u'}^{t'})-E^{\rm inj}(\mathcal M_u^{t},\mathcal M_{u'}^{t})=[-g_{k;u}^{t'}]_+[-g_{k;u'}^{t}]_+-[-g_{k;u}^t]_+[-g_{k;u'}^{t'}]_+.
  \end{eqnarray*}
\item[(ii)] For any two vertices $t,t'\in\mathbb T_n$,
  \begin{eqnarray*}\label{eqn:E-inv}
      E^{\rm sym}(\mathcal M_u^t,\mathcal M_{u'}^t)=E^{\rm sym}(\mathcal M_u^{t'},\mathcal M_{u'}^{t'}).
  \end{eqnarray*}
In particular, the $E$-invariant is mutation-invariant under initial seed mutations.
\end{itemize}
\end{lemma}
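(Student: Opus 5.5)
Part (i) is essentially the mutation formula of Derksen--Weyman--Zelevinsky \cite[Theorem~7.1]{DWZ10}, rewritten in the cluster-algebra notation used here. Part (ii) then follows formally from (i) by symmetrization, exactly as in the proof of Theorem~\ref{thm:F-inv}~(ii).

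For (i), fix an edge \begin{xy}(0,1)*+{t}="A",(10,1)*+{t'}="B",\ar@{-}^k"A";"B" \end{xy} and write $\mathcal M=\mathcal M_u^t$, $\mathcal N=\mathcal M_{u'}^t$. Their mutations in direction $k$ are $\mathcal M_u^{t'}$ and $\mathcal M_{u'}^{t'}$, by the setting above and Theorem~\ref{thm:DWZ-g-F}~(i). The first step is to recall the DWZ formula. Let $h_k(\mathcal M)\coloneqq-\dim\ker(\beta_k)$, where $\beta_k$ is the outgoing map at vertex $k$ in the triangle $M_{\rm in}\xrightarrow{\alpha_k}M_k\xrightarrow{\beta_k}M_{\rm out}$, and let $h_k'$ be the same quantity computed for the mutated representation. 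Then
\[ E^{\rm inj}(\mu_k\mathcal M,\mu_k\mathcal N)-E^{\rm inj}(\mathcal M,\mathcal N)=h_k(\mathcal M)h_k'(\mathcal N)-h_k'(\mathcal M)h_k(\mathcal N).\]
The second step is to express $h_k$ through $g$-vectors. DWZ show that $g_k(\mathcal M)=\dim\ker\gamma_k-\dim\ker\beta_k+\dim V_k$, and moreover that $h_k(\mathcal M)=-[-g_k(\mathcal M)]_+$ for decorated representations obtained from negative ones by mutation. Here one uses that a reachable representation satisfies the relevant rigidity/genericity condition, namely that at each vertex one of $\ker\beta_k$ or the complementary piece vanishes. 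By Theorem~\ref{thm:DWZ-g-F}~(iii), $g_k(\mathcal M_u^t)=g_{k;u}^t$ and $g_k(\mathcal M_u^{t'})=g_{k;u}^{t'}$. Substituting, the two minus signs in each product cancel, which gives exactly
\[ [-g_{k;u}^{t}]_+[-g_{k;u'}^{t'}]_+\cdot(-1)\;+\;[-g_{k;u}^{t'}]_+[-g_{k;u'}^{t}]_+ ,\]
as claimed. I expect the main obstacle to be this identification $h_k=-[-g_k]_+$ together with the sign and orientation conventions: the paper's $g$-vectors are built from injective copresentations, whereas DWZ use their original definition. I would rely on Theorem~\ref{thm:g-DWZ} for the consistency of the two definitions. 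I would then check the conventions on the negative representation at $w$, and on one explicit $A_2$ example (Example~\ref{ex:A2-2}), to make sure no transposition or sign slips in.

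For (ii), apply (i) to the ordered pairs $(u,u')$ and $(u',u)$ across the same edge and add the two identities. The right-hand sides cancel term by term, so $E^{\rm sym}$ is unchanged across every edge. Since $\mathbb T_n$ is connected, induction on the distance between $t$ and $t'$ gives the invariance for any two vertices.
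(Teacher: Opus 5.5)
Your route is the paper's: quote \cite[Theorem~7.1]{DWZ10}, replace $h_k$ by $-[-g_k]_+$ via \cite[(9.1)]{DWZ10} and Theorem~\ref{thm:DWZ-g-F}~(iii), then symmetrize. Part (ii) is fine. The gap is in (i): the identity you display has the wrong sign. Write $h_k'$ for the value on the mutated representation. The correct formula, which is the one the paper uses, is
\[
E^{\rm inj}(\mu_k\mathcal M,\mu_k\mathcal N)-E^{\rm inj}(\mathcal M,\mathcal N)=h_k'(\mathcal M)\,h_k(\mathcal N)-h_k(\mathcal M)\,h_k'(\mathcal N),
\]
and this is the negative of yours.

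The smallest case already tells the two apart. Take $\mathcal M=(S_k,0)$ and $\mathcal N=(0,S_k)$, which $\mu_k$ interchanges; in the lemma's notation these are $u=x_{k;t'}$ and $u'=x_{k;t}$. Then $E^{\rm inj}(\mathcal M,\mathcal N)=\langle{\bf e}_k,{\bf e}_k\rangle=1$ and $E^{\rm inj}(\mu_k\mathcal M,\mu_k\mathcal N)=E^{\rm inj}(\mathcal N,\mathcal M)=0$, so the left-hand side is $-1$. On the other hand, $h_k(\mathcal M)=h_k'(\mathcal N)=-1$, because $M_{\rm out}(k)=0$ for $M=S_k$. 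Also $h_k'(\mathcal M)=h_k(\mathcal N)=0$. So your right-hand side equals $+1$.

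A second slip hides this error. Substitute $h_k=-[-g_k]_+$ into the formula exactly as you wrote it. You get $[-g_{k;u}^{t}]_+[-g_{k;u'}^{t'}]_+-[-g_{k;u}^{t'}]_+[-g_{k;u'}^{t}]_+$, which is minus the claimed right-hand side, not the expression you record. The two minus signs inside each product do cancel, but the difference keeps the order of your formula. So, as written, (i) does not follow from your displayed identity.

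With the corrected identity, the substitution gives exactly $[-g_{k;u}^{t'}]_+[-g_{k;u'}^{t}]_+-[-g_{k;u}^t]_+[-g_{k;u'}^{t'}]_+$. The rest of your argument then coincides with the paper's proof, including your use of Proposition~\ref{pro:E-hom} and Theorem~\ref{thm:g-DWZ} to match the paper's homological $E^{\rm inj}$ and $g$-vectors with those of \cite{DWZ10}.
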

\begin{proof}
(i) By \cite[Theorem 7.1]{DWZ10}, we have 
\begin{eqnarray*}\label{eqn:e0-inv}
    E^{\rm inj}(\mathcal M_u^{t'},\mathcal M_{u'}^{t'})-E^{\rm inj}(\mathcal M_u^{t}, \mathcal M_{u'}^{t})=h_k(\mathcal M_u^{t'})h_k(\mathcal M_{u'}^t)-h_k(\mathcal M_u^{t})h_k(\mathcal M_{u'}^{t'}),
\end{eqnarray*}
where $h_k(\mathcal M_u^{w})=-[-g_{k;u}^{w}]_+$ and $h_k(\mathcal M_{u'}^{w})=-[-g_{k;u'}^{w}]_+$ for any vertex $w\in\mathbb T_n$, by \cite[equality (9.1)]{DWZ10}. Then the result follows.

(ii) This follows from (i) and the definition of $E$-invariant. 
\end{proof}

\begin{corollary}\label{cor:neg-rigid}
    Let $\mathcal M$ be a negative-reachable decorated representation of $(Q,W)$. Then $E^{\rm inj}(\mathcal M,\mathcal M)=0$.
\end{corollary}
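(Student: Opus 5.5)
The plan is to transport the invariance argument used for claim (a) in the proof of Theorem \ref{thm:sign-coherent} from $F$ to $E$, using the mutation formula of Lemma \ref{lem:E-inv} (i). Write $\mathcal M=\mathcal M_u^{t_0}$ for the corresponding cluster monomial $u={\bf x}_w^{\bf a}$; this is possible by Theorem \ref{thm:DWZ-g-F} (ii). Here $\mathcal M_u^w$ is the negative decorated representation $(0,V_u^w)$ with $V_u^w=(\mathbb C^{a_1},\ldots,\mathbb C^{a_n})$. The goal is to show that $E^{\rm inj}(\mathcal M_u^t,\mathcal M_u^t)$ is independent of $t$, and then to evaluate it at $t=w$.

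First, I apply Lemma \ref{lem:E-inv} (i) to the pair $(u,u)$, so that $u'=u$. For an edge $t \overset{k}{-} t'$, the right-hand side becomes
\[[-g_{k;u}^{t'}]_+[-g_{k;u}^{t}]_+-[-g_{k;u}^t]_+[-g_{k;u}^{t'}]_+ ,\]
and these two terms are literally equal, so the difference vanishes. No sign-coherence input is needed here, since the expression is antisymmetric in swapping $t$ and $t'$ and the two factors commute. Induction along the path in $\mathbb T_n$ from $w$ to $t_0$ then gives
\[E^{\rm inj}(\mathcal M_u^{t_0},\mathcal M_u^{t_0})=E^{\rm inj}(\mathcal M_u^w,\mathcal M_u^w).\]

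Second, I evaluate the right-hand side at the negative representation. By Corollary \ref{cor:E-inv}, with both arguments negative, $E^{\rm inj}(\mathcal M_u^w,\mathcal M_u^w)=0$. Alternatively, Remark \ref{rmk:def-DWZ} (ii) gives the same value, since the underlying module is $0$, so $\underline{\dim}\,0=0$ and $\Hom_J(0,0)=0$. Combining the two steps yields $E^{\rm inj}(\mathcal M,\mathcal M)=0$.

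The only point requiring care is bookkeeping rather than mathematics. The family $\{\mathcal M_u^t\}$ must be the mutation class of $\mathcal M$ itself, which is ensured by Theorem \ref{thm:DWZ-g-F} (i),(ii). The $g$-vectors appearing in Lemma \ref{lem:E-inv} must be those of this family, which is ensured by Theorem \ref{thm:DWZ-g-F} (iii). I do not expect a genuine obstacle.
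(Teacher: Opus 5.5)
Your proposal is correct and is essentially the paper's argument: mutation invariance of the self-pairing, followed by evaluation at the negative representation via Corollary~\ref{cor:E-inv}. The only cosmetic difference is that the paper applies Lemma~\ref{lem:E-inv}~(ii) to $E^{\rm sym}(\mathcal M,\mathcal M)=2E^{\rm inj}(\mathcal M,\mathcal M)$, whereas you apply part~(i) directly with $u'=u$, where the right-hand side cancels identically.
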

\begin{proof}
    This follows from Lemma \ref{lem:E-inv} (ii) and Corollary \ref{cor:E-inv}.
\end{proof}

We take $S=I_n$ as the fixed skew-symmetrizer for the exchange matrices of $\mathcal A_Q$, when we consider the (partial) $F$-invariant for cluster monomials in $\mathcal A_Q$.
    The following theorem is a direct consequence of Lemma~\ref{lem:E-inv} and  Theorem~\ref{thm:F-inv}. It will serve as one of the key tools for our discussion in \S \ref{sec: application}.
 \begin{theorem}
 \label{thm:F-E-inv}
Let $(Q,W)$ be a quiver with non-degenerate potential and $\mathcal A_Q$ the corresponding cluster algebra with trivial coefficients. 
Let $u, u'$ be two cluster monomials of $\mathcal A_Q$, and let  $\{\mathcal M_u^t\mid t\in\mathbb T_n\}$ and $\{\mathcal M_{u'}^t\mid t\in\mathbb T_n\}$ be the families of decorated representations corresponding to $u$ and $u'$. Then we have
\[E^{\rm inj}(\mathcal M_u^t,\mathcal M_{u'}^t)=F_u^t[{\bf g}_{u'}^t]\;\;\;\;\text{and}\;\;\;E^{\rm sym}(\mathcal M_u^t,\mathcal M_{u'}^t)=F_u^t[{\bf g}_{u'}^t]+F_{u'}^t[{\bf g}_{u}^t]=(u\mid\mid u')_F,
\]
for any vertex $t\in\mathbb T_n$.
\end{theorem}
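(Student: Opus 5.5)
The plan is to compare the two sides as functions of the vertex $t\in\mathbb T_n$ and show that they satisfy the same recursion along edges. Define
\[
D(t)\coloneqq E^{\rm inj}(\mathcal M_u^t,\mathcal M_{u'}^t)-F_u^t[{\bf g}_{u'}^t].
\]
It then suffices to show two things: $D$ is constant on $\mathbb T_n$, and $D$ vanishes at one suitably chosen vertex.

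\emph{Step 1: $D$ is constant.} Take $S=I_n$ and apply Theorem~\ref{thm:F-inv}(i) to $(u,u')$. In the trivial-coefficient setting this is justified by Subsection~\ref{sec:trivial-coeff}: pass to the principal-coefficient algebra $\mathcal A^{\rm pr}$, which has full rank, and use that $g$-vectors and $F$-polynomials of cluster monomials of $\mathcal A_Q$ are by definition those of the corresponding cluster monomials $u^{\rm pr},u'^{\rm pr}$. For every edge $t \overset{k}{-} t'$ this gives
\[
F_u^{t'}[{\bf g}_{u'}^{t'}]-F_u^{t}[{\bf g}_{u'}^{t}]=[-g_{k;u}^{t'}]_+[-g_{k;u'}^{t}]_+-[-g_{k;u}^t]_+[-g_{k;u'}^{t'}]_+ .
\]
Lemma~\ref{lem:E-inv}(i) gives exactly the same increment for $E^{\rm inj}(\mathcal M_u^{t},\mathcal M_{u'}^{t})$. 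The $g$-vectors appearing in the two formulas are identified by Theorem~\ref{thm:DWZ-g-F}(iii), namely ${\bf g}(\mathcal M_u^t)={\bf g}_u^t$. Hence $D(t')=D(t)$ for every edge, and since $\mathbb T_n$ is connected, $D$ is constant.

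\emph{Step 2: $D$ vanishes at one vertex.} Choose $w$ such that $u={\bf x}_w^{\bf a}$. Then $\mathcal M_u^w$ is negative, and Corollary~\ref{cor:E-inv} gives $E^{\rm inj}(\mathcal M_u^w,\mathcal M_{u'}^w)=0$. On the other hand $F_u^w=1$, so $F_u^w[{\bf g}_{u'}^w]=0$. Thus $D(w)=0$, so $D\equiv 0$, which proves the first equality.

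\emph{Step 3: the symmetric statement.} Applying Steps 1 and 2 with $u$ and $u'$ interchanged gives $E^{\rm inj}(\mathcal M_{u'}^t,\mathcal M_{u}^t)=F_{u'}^t[{\bf g}_{u}^t]$. Summing the two equalities yields $E^{\rm sym}(\mathcal M_u^t,\mathcal M_{u'}^t)=F_u^t[{\bf g}_{u'}^t]+F_{u'}^t[{\bf g}_u^t]$. This sum equals $(u\mid\mid u')_F$ by the definition in Subsection~\ref{sec:trivial-coeff} with $S=I_n$.

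The only step needing care is the bookkeeping in Step 1. One must check that the full-rank hypothesis of Theorem~\ref{thm:F-inv} is legitimately bypassed via $\mathcal A^{\rm pr}$, where cluster monomials are good by Proposition~\ref{pro:ghkk}. One must also check that the principal parts of the extended $g$-vectors of $\mathcal A^{\rm pr}$ agree with the DWZ $g$-vectors. Once both points are confirmed, the two mutation formulas match term by term.
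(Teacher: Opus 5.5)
Your proposal is correct and follows the paper's proof: both fix a vertex $w$ with $u={\bf x}_w^{\bf a}$, where $\mathcal M_u^w$ is negative and $F_u^w=1$, so both sides vanish there. Both then note that Theorem~\ref{thm:F-inv}(i) with $S=I_n$ and Lemma~\ref{lem:E-inv}(i) give the same edge increments, propagate the equality over $\mathbb T_n$, and symmetrize in $(u,u')$; your remark that the full-rank hypothesis is handled by passing to $\mathcal A^{\rm pr}$ (Subsection~\ref{sec:trivial-coeff}) is the same justification the paper uses implicitly.
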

\begin{proof}

Since $u$ is a cluster monomial of $\mathcal A_Q$, we can take a vertex $w\in\mathbb T_n$ such that $u={\bf x}_w^{\bf a}$ for some ${\bf a}=(a_1,\ldots,a_n)^T\in\mathbb Z_{\geq 0}^n$.
 In this case, the decorated representation $\mathcal M_u^w=(M_u^w,V_u^w)$ is a negative decorated representation. More precisely, we have $M_u^w=0$ and $V_u^w=(\mathbb C^{a_1},\ldots, \mathbb C^{a_n})$. Then by Corollary \ref{cor:E-inv}, we have $E^{\rm inj}(\mathcal M_u^w,\mathcal M_{u'}^w)=0$.

Since $u$ is a cluster monomial in ${\bf x}_w$, we have $F_u^w(y_1,\ldots,y_n)=1$. Thus $F_u^w[{\bf r}]=0$ for any ${\bf r}\in\mathbb Z^n$. In particular, we have
\[
F_u^w[{\bf g}_{u'}^w]=0=E^{\rm inj}(\mathcal M_u^w,\mathcal M_{u'}^w).
\]
By Theorem \ref{thm:F-inv} (i)  and Lemma \ref{lem:E-inv} (i), we know that the partial $E$-invariant $E^{\rm inj}(\mathcal M_u^t,\mathcal M_{u'}^t)$ and the partial $F$-invariant $F_u^t[{\bf g}_{u'}^t]$ satisfy the same recurrence relations for $t\in\mathbb T_n$.
Since $E^{\rm inj}(\mathcal M_u^w,\mathcal M_{u'}^w)=F_u^w[{\bf g}_{u'}^w]$ holds at the vertex $w$, we obtain that 
\begin{eqnarray}\label{eqn:e-inj=f-inv}
    E^{\rm inj}(\mathcal M_u^t,\mathcal M_{u'}^t)=F_u^t[{\bf g}_{u'}^t]
\end{eqnarray}
holds for any vertex $t\in\mathbb T_n$.
Similarly, we have $E^{\rm inj}(\mathcal M_{u'}^t,\mathcal M_{u}^t)=F_{u'}^t[{\bf g}_{u}^t]$ for any vertex $t\in\mathbb T_n$. Thus
 \[ 
E^{\rm sym}(\mathcal M_u^t,\mathcal M_{u'}^t)= E^{\rm inj}(\mathcal M_u^t,\mathcal M_{u'}^t)+ E^{\rm inj}(\mathcal M_{u'}^t,\mathcal M_{u}^t) =F_u^t[{\bf g}_{u'}^t]+F_{u'}^t[{\bf g}_{u}^t]=(u\mid\mid u')_F
 \]
holds for any vertex $t\in\mathbb T_n$.
\end{proof}

\begin{remark}\label{rmk-end}
(i) For certain classes of skew-symmetrizable cluster algebras, the $E$-invariant can be defined via mutations of decorated representations of group species with potentials \cite[Theorem~10.1]{Dem10}. In this case, by comparing Theorem~\ref{thm:F-inv}~(i) with \textit{loc.~cit.}, the $E$-invariant and the $F$-invariant again coincide on cluster monomials, as in Theorem~\ref{thm:F-E-inv}. To the best of the authors' knowledge, the $E$-invariant has not been generally defined for arbitrary skew-symmetrizable cluster algebras, in contrast to the $F$-invariant.

        (ii) The equality \eqref{eqn:e-inj=f-inv} was first observed in \cite[Theorem~3.22]{fei_2019b}. In Fei's works \cite{fei_2019a,fei_2019b}, he defined and studied the \emph{tropical $F$-polynomials} of representations via piecewise linear functions on Harder--Narasimhan polytopes, which coincide with the Newton polytopes of the corresponding $F$-polynomials (see \cite[Remark 2.4 (1)] {fei_2019a} or \cite[Corollary 3.3]{BKT-2014}). Although his method does not impose subtraction-free conditions on $F$-polynomials, it gives, for cluster monomials, the same evaluations as our partial $F$-invariant.
\end{remark}

\section{Monoidal categorification and pole orders of normalized $R$-matrices}\label{sec:pole-order}

In this section, we discuss an interpretation of the partial $F$-invariant in the monoidal categorification of cluster algebras arising from finite-dimensional modules over quantum affine algebras. 
Our main result (Theorem \ref{thm:F-o}) presents a sufficient condition under which the partial $F$-invariants coincide with the pole orders of the normalized $R$-matrices between simple modules.
As an application, we deduce a formula expressing the pole orders for reachable simple modules in terms of their (truncated) $q$-characters (Theorem \ref{thm:oF}).

\subsection{Monoidal categorification}

Let $\mathbf{k}$ be a base field and let $(\mathcal C,\otimes)$ be a $\mathbf{k}$-linear monoidal abelian category  such that any object of $\mathcal C$ has finite length. 
We always assume that the tensor product $\otimes$ is bi-exact. 
The Grothendieck ring $K_0(\mathcal{C})$ carries a canonical $\mathbb{Z}$-basis formed by the classes of simple objects, with positive structure constants.
We denote by $[M]$ the class of an object $M\in\mathcal C$ in $K_0(\mathcal{C})$.

\begin{definition} Let $M$ and $N$ be two simple objects in $\mathcal C$.
\begin{itemize}
    \item[(i)] We say that $M$ and $N$ {\em commute}, if $M\otimes N\cong N\otimes M$.  
    \item[(ii)] We say that $M$ and $N$ {\em strongly commute}, if $M$ and $N$ commute and  $M\otimes N$ is simple. 
    \item[(iii)] We say that $M$ is {\em real}, if $M\otimes M$ is simple. 
\end{itemize}
\end{definition}

\begin{definition} Let $\mathcal M=(M_1,\ldots,M_r)$ be an ordered set of simple objects in $\mathcal C$.
\begin{itemize}
    \item [(i)] We say that $\mathcal M$ is a  {\em commuting set}, if $M_i$ and $M_j$ commute for any $i,j\in[1,r]$.
    \item[(ii)] We say that $\mathcal M$ is a {\em strongly commuting set}, if $M_i$ and $M_j$ strongly commute for any $i,j\in[1,r]$. In particular, each $M_i$ is a real simple object.
    \end{itemize}
\end{definition}

Given a commuting  set $\mathcal M=(M_1,\ldots,M_r)$ and a vector ${\bf a}=(a_1,\ldots,a_r)^T\in\mathbb Z_{\geq 0}^r$, 
we denote 
$$\mathcal M({\bf a}) \coloneqq M_1^{\otimes a_1}\otimes M_2^{\otimes a_2}\otimes \ldots \otimes M_r^{\otimes a_r}.$$
Clearly, for any ${\bf a}, {\bf c}\in\mathbb Z_{\geq 0}^r$, we have
 $\mathcal{M}(\mathbf{a}) \otimes \mathcal{M}(\mathbf{c}) \cong \mathcal{M}(\mathbf{a }+ \mathbf{c})$.

\begin{definition}[Monoidal categorification]
  A  $\mathbf{k}$-linear, monoidal abelian length category $\mathcal C$ is a {\em monoidal categorification} of a cluster algebra $\mathcal A$ (with non-invertible frozen variables), if there exists a $\mathbb Z$-algebra isomorphism $$\varphi\colon{K}_0(\mathcal C)\rightarrow \mathcal A$$ such that the cluster monomials of $\mathcal A$ correspond to a subset of simple objects (up to isomorphism) in $\mathcal C$. We denote this monoidal categorification by $(\mathcal A, \mathcal C, \varphi)$.
\end{definition}

The notion of monoidal categorification of cluster algebras was originally introduced by Hernandez and Leclerc \cite{HL_2010}. 
Examples of monoidal categorification of cluster algebras include various monoidal subcategories of finite-dimensional modules over quantum affine algebras and quiver Hecke algebras \cite{HL16, kkko-2018, kkop-2024}, and the category of perverse coherent sheaves on the affine Grassmannian of $GL_n$ \cite{CW19}.

From now on, we fix a monoidal categorification $(\mathcal A, \mathcal C, \varphi)$ and denote \[\varphi(M) \coloneqq \varphi([M])\in\mathcal A\]
for any object $M\in\mathcal C$. Let $({\bf x}_t,\widetilde B_t)$ be a seed of $\mathcal A$ and let $M_{i;t}\in\mathcal C$ be a simple object such that $\varphi(M_{i;t})=x_{i;t}$, which is unique up to isomorphism.  We call $\mathcal M_t \coloneqq (M_{1;t},\ldots,M_{m;t})$ a {\em monoidal cluster} and $(\mathcal M_t,\widetilde B_t)$ a {\em monoidal seed}. Following the terminology in the additive categorification, a simple object $M\in\mathcal C$ is said to be {\em reachable} if $\varphi(M)$ is a cluster monomial.

The following facts can be checked easily.
\begin{itemize}
    \item Each monoidal cluster $\mathcal M_t=(M_{1;t},\ldots,M_{m;t})$ is a strongly commuting set.
    \item Let ${\bf a}\in\mathbb Z_{\geq 0}^m$, then $\varphi(\mathcal M_t({\bf a}))={\bf x}_t^{\bf a}$. In particular, $\mathcal M_t({\bf a})$ is a reachable simple object.
    \item Each reachable simple object of $\mathcal C$ is real.
\end{itemize}

\begin{proposition}[{\cite[Proposition 2.2]{HL_2010}}]
\label{prop:HL_2010}
  Let $(\mathcal A, \mathcal C, \varphi)$ be a monoidal categorification and $M$ an object in $\mathcal C$. Then $\varphi(M)$ is universally positive in $\mathcal A$, that is, for any cluster ${\bf x}=(x_{1},\ldots,x_{m})$ of $\mathcal A$, we have $\varphi(M)\in\mathbb Z_{\geq 0}[x_{1}^{\pm 1},\ldots,x_{m}^{\pm 1}]$.
\end{proposition}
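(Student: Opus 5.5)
The approach is to fix a cluster ${\bf x}=(x_1,\ldots,x_m)={\bf x}_t$ and reduce to a statement about classes of simple objects. Since $\varphi$ is a ring isomorphism and every object of $\mathcal C$ has finite length, $[M]$ is a non-negative integer combination of classes of simple objects. It therefore suffices to show that $\varphi(L)\in\mathbb Z_{\geq 0}[x_1^{\pm1},\ldots,x_m^{\pm1}]$ for every simple object $L$. Let $M_i\coloneqq M_{i;t}$ be the monoidal cluster, and take ${\bf a}=(1,\ldots,1)^T$ or any other vector in $\mathbb Z_{\geq 0}^m$. The object $\mathcal M_t({\bf a})$ is simple and satisfies $\varphi(\mathcal M_t({\bf a}))={\bf x}^{\bf a}$.

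The key step uses the Laurent phenomenon for $\mathcal A$. For $\varphi(L)\in\mathcal A$ it gives a monomial ${\bf x}^{\bf a}$ with ${\bf a}\in\mathbb Z_{\geq0}^m$ such that ${\bf x}^{\bf a}\varphi(L)\in\mathbb Z[x_1,\ldots,x_m]$. Note that the frozen variables already occur with non-negative exponents, since they are not inverted. The class $[\mathcal M_t({\bf a})\otimes L]$ is a non-negative combination $\sum_N c_N[N]$ of simple classes. Hence ${\bf x}^{\bf a}\varphi(L)=\sum_N c_N\varphi(N)$ is a polynomial in ${\bf x}$. We must show that its coefficients are non-negative.

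Following Hernandez--Leclerc, I would show that the classes of simples whose image is a polynomial in ${\bf x}$ are forced to have non-negative coefficients. The idea is to compare ${\bf x}^{\bf b}$, for ${\bf b}\in\mathbb Z^m_{\geq0}$, with the simple objects $\mathcal M_t({\bf b})$. The monomials $x^{\bf b}$ are exactly the images of the simple classes $[\mathcal M_t({\bf b})]$, and distinct ${\bf b}$ give non-isomorphic simples, because their images in $\mathcal A$ differ. Since the cluster monomials ${\bf x}^{\bf b}$ are linearly independent, the polynomial ring $\mathbb Z[{\bf x}]$ is identified under $\varphi$ with the span of these particular simple classes. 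The simple classes form a $\mathbb Z$-basis of $K_0(\mathcal C)$, and $\varphi({\bf x}\text{-monomials})$ is a subset of that basis. Consequently, an element $\sum_N c_N\varphi(N)$ with $c_N\geq0$ that lies in $\mathbb Z[{\bf x}]$ has its unique expansion in the simple basis equal to its expansion in monomials. Uniqueness of the coefficients then gives non-negativity, so ${\bf x}^{\bf a}\varphi(L)\in\mathbb Z_{\geq0}[{\bf x}]$. Dividing by ${\bf x}^{\bf a}$ yields the claim.

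The main obstacle is the identification step. One must check that the polynomial ring $\mathbb Z[{\bf x}]$ lies inside the span of the simple classes $[\mathcal M_t({\bf b})]$, which holds because each monomial is itself such a class. One must also check that an element of this span has its basis coefficients equal to its monomial coefficients; this requires only the linear independence of simple classes in $K_0(\mathcal C)$. With both checks, no further input beyond the Laurent phenomenon and the definition of monoidal categorification is needed.
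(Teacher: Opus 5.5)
Your argument is correct and is essentially the Hernandez--Leclerc proof that the paper cites: clear denominators with the Laurent phenomenon (which applies to every element of $\mathcal A$, since $\mathcal A$ is generated by cluster variables), identify ${\bf x}^{\bf a}\varphi(L)$ with the image of the class of $\mathcal M_t({\bf a})\otimes L$, and compare its non-negative expansion in the basis of simple classes with its expansion in monomials, using that the monomials ${\bf x}^{\bf b}$ are pairwise distinct simple classes. The reduction to simple $L$ is harmless but unnecessary, since the same argument applies verbatim to $M$ itself.
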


\subsection{$\Lambda$-invariant for quantum affine algebras}\label{Ssec:Lqaa}
In what follows, we only consider monoidal categories consisting of  finite-dimensional modules over quantum affine algebras. 
In this subsection, we prepare the necessary notation and recall the definition and some properties of the $\Lambda$-invariant introduced by Kashiwara--Kim--Oh--Park \cite{kkop-2020}.     

Let $q$ be an indeterminate and $\mathbf{k}$ be the algebraic closure of the field $\mathbb{Q}(q)$ inside $\bigcup_{s \in \mathbb{Z}_{>0}}{\mathbb{C}}(\!(q^{1/s})\!)$. 
We consider an affine Kac--Moody algebra $\mathfrak{g}$ of type $\mathrm{X}_{N}^{(r)}$ (in the notation of \cite[Chapter 4]{Kac}) and the quantum affine algebra  $U_q'(\mathfrak{g})$ (without the degree operator).  
See \cite[\S2]{kkop-2020} for its precise definition.
This is a Hopf algebra over $\mathbf{k}$ generated by the Chevalley type generators $e_i, f_i, K_i^{\pm 1}$ for $i \in \mathtt{I}$, where $\mathtt{I}$ denotes the node set of the Dynkin diagram of $\mathfrak{g}$.   
In this paper, we use the coproduct $\Delta$ given by
\[ \Delta(e_i) =  e_i \otimes K_i^{-1} + 1 \otimes e_i, \quad 
\Delta(f_i) = f_i \otimes 1 + K_i \otimes f_i, \quad \Delta(K_i) = K_i \otimes K_i\]
for $i \in \mathtt{I}$.
We choose the affine node $0 \in \mathtt{I}$ as in \cite[Chapter 4]{Kac} except for type $\mathrm{A}_{2l}^{(2)}$, for which we choose $0\in \mathtt{I}$ so that the corresponding simple root $\alpha_0$ is the longest simple root. 
We set $\mathtt{I}_0 \coloneqq \mathtt{I} \setminus \{ 0\}$.
We normalize the standard symmetric bilinear form $(-,-)$ among weights so that the shortest simple roots have  square length $2$.  
Note that this normalization is different from the one in \cite{kkop-2020, kkop-2024} when $r=1$ and $\mathrm{X} \in \{\mathrm{B, C, F, G}\}$, but compatible with \cite{HL16}.
Let $h^\vee$ be the dual Coxeter number for $\mathfrak{g}$ as in \cite[Chapter 6]{Kac}, and $r^\vee \in \{1,2,3 \}$ the number such that the Langlands dual of $\mathfrak{g}$ is of type $\mathrm{Y}_{N^\vee}^{(r^\vee)}$.

Let $\mathscr{C}$ be the category of finite-dimensional (left) $U'_q(\mathfrak{g})$-modules of type $\mathbf{1}$ (\ie the action of $K_i$ is semisimple with eigenvalues in $q^\mathbb{Z}$ for each $i \in \mathtt{I}$).
It carries the natural structure of $\mathbf{k}$-linear rigid monoidal abelian category, whose tensor product is $\otimes = \otimes_{\mathbf{k}}$. 
Any simple object $V$ of the category $\mathscr{C}$ has a non-zero extremal weight vector $v$  satisfying $e_i v= f_0 v = 0$ for all $i \in \mathtt{I}_0$. 
Such a vector $v$ is unique up to multiplication by a scalar in $\mathbf{k}^\times$.
We refer to this vector as a {\em dominant extremal weight vector} of $V$.

Let $z$ be another indeterminate. 
For any $U'_q(\mathfrak{g})$-module $M$, we define its {\em affinization} $M_z$ to be the $\mathbf{k}[z^{\pm 1}]$-module $M_z \coloneqq M \otimes \mathbf{k}[z^{\pm 1}]$ endowed with the $U_q'(\mathfrak{g})$-action by 
\[ e_i (m \otimes a) \coloneqq e_i m \otimes z^{\delta_{i,0}}a, \quad 
f_i (m\otimes a) \coloneqq f_i m \otimes z^{-\delta_{i,0}} a, \quad 
K_i^{\pm 1} (m \otimes a) \coloneqq K_i^{\pm1} m \otimes a\]
for any $m \in M$, $a \in \mathbf{k}[z^{\pm 1}]$ and $i\in \mathtt{I}$.

For an ordered pair $(M,N)$ of objects of $\mathscr{C}$, the universal $R$-matrix of $U_q'(\mathfrak{g})$ induces a canonical isomorphism 
\[ R_{M,N_z}^{\mathrm{univ}} \colon (M \otimes N_z) \otimes_{\mathbf{k}[z^{\pm 1}]} \mathbf{k}(\!(z)\!) \to (N_z \otimes M) \otimes_{\mathbf{k}[z^{\pm 1}]} \mathbf{k}(\!(z)\!)  \]
of $U'_q(\mathfrak{g}) \otimes_{\mathbf{k}}\mathbf{k}(\!(z)\!)$-modules, satisfying the equality
\begin{equation} \label{eq:q-tri} (R^{\mathrm{univ}}_{L, N_z} \otimes \mathsf{id}_M) \circ (\mathsf{id}_L \otimes R^{\mathrm{univ}}_{M, N_z}) = R^{\mathrm{univ}}_{L \otimes M, N_z}   \end{equation}
for any $L, M,N \in \mathscr{C}$.

We say that $R_{M,N_z}^{\mathrm{univ}}$ is {\em rationally renormalizable} if there is a formal Laurent series $c(z) \in \mathbf{k}(\!(z)\!)$ such that \[c(z)R^{\mathrm{univ}}_{M,N_z}(M \otimes N_z) \subset N_z\otimes M.\]
In this case, we can choose $c_{M,N}(z) \in \mathbf{k}(\!(z)\!)$, uniquely up to multiplication by an element of $\bigsqcup_{n \in \mathbb{Z}}\mathbf{k}^\times z^n$, so that 
\[c_{M,N}(z) R^{\mathrm{univ}}_{M,N_z}(M \otimes N_z) \subset N_z\otimes M \quad \text{and} \quad c_{M,N}(z) R^{\mathrm{univ}}_{M,N_z}(M \otimes N_z)  \not \subset (z-x) N_z \otimes M\]
for any $x \in \mathbf{k}^\times$.
We call $c_{M,N}(z)$ the {\em renormalizing coefficient}. Set \[R_{M,N_z}^{\rm ren} \coloneqq   c_{M,N}(z) R^{\mathrm{univ}}_{M,N_z}\colon M\otimes N_z\rightarrow N_z\otimes M.\] Then for any $a\in \mathbf{k}^\times$, the specialization of $R_{M,N_z}^{\rm ren}$ at $z=a$ does not vanish. 
The non-zero homomorphism
\[{\bf r}_{M,N} \coloneqq   R_{M,N_z}^{\rm ren}|_{z=1} \colon M\otimes N\rightarrow N\otimes M\] 
is called the {\em renormalized $r$-matrix} of the pair $(M,N)$, which is well-defined up to multiplication by a scalar in $\mathbf{k}^\times$.

Let $\Omega_{\mathscr{C}}$ be the set consisting of non-zero objects of $\mathscr{C}$ isomorphic to subquotients of tensor products of simple objects.
It is known from \cite[Propositions 2.11 \& 2.12]{kkop-2020} that $R_{M,N_z}^{\mathrm{univ}}$ is rationally renormalizable if both $M$ and $N$ belong to $\Omega_{\mathscr{C}}$.

When $M$ and $N$ are simple objects of $\mathscr{C}$, there is a formal series $a_{M,N}(z) \in \mathbf{k}[\![z]\!]^\times$, called the {\em universal coefficient}, satisfying
\[ R^{\mathrm{univ}}_{M,N_z}(u\otimes v_z) = a_{M,N}(z) v_z \otimes u\]
where $u \in M$ and $v \in N$ are the dominant extremal weight vectors of $M$ and $N$ respectively, and $v_z \coloneqq v \otimes 1 \in N_z = N \otimes \mathbf{k}[z^{\pm 1}]$. 
The isomorphism
\[ R^{\mathrm{norm}}_{M,N_z} \coloneqq a_{M,N}(z)^{-1} R^{\mathrm{univ}}_{M,N_z} \colon (M \otimes N_z) \otimes_{\mathbf{k}[z^{\pm 1}]} \mathbf{k}(z) \to (N_z \otimes M) \otimes_{\mathbf{k}[z^{\pm 1}]} \mathbf{k}(z)  \]
is called the {\em normalized $R$-matrix}. 
The {\em denominator} of $R^{\mathrm{norm}}_{M,N_z}$ is defined to be the monic polynomial $d_{M,N}(z) \in \mathbf{k}[z]$ of the smallest degree such that we have
\[ d_{M,N}(z) R^{\mathrm{norm}}_{M,N_z} (M \otimes N_z) \subset N_z \otimes M.\]  
Thus, when $M, N\in \mathscr{C}$ are simple objects, $R_{M,N_z}^{\mathrm{univ}}$ is  rationally renormalizable and we have 
\begin{equation} \label{eq:c=ad}
c_{M,N}(z) = a_{M,N}(z)^{-1} d_{M,N}(z)
\end{equation}
up to a multiple of $\bigsqcup_{n \in \mathbb{Z}}\mathbf{k}^\times z^n$.

Let $\mathcal{G} \subset \mathbf{k}(\!(z)\!)^\times$ be the multiplicative subgroup generated by $\bigsqcup_{n \in \mathbb{Z}}\mathbf{k}^\times z^n$ and the elements $\varphi(cz)$ for $c \in \mathbf{k}^\times$, where $\varphi(z) \coloneqq \prod_{s \in \mathbb{Z}_{\ge 0}}(1-q^{2sr^\vee h^\vee}z)$.
Note that we have $\mathbf{k}(z)^\times \subset \mathcal{G}$. 
For an element $f(z) = az^n \prod_{b \in \mathbf{k}^\times}\varphi(bz)^{m_b} \in \mathcal{G}$, we set
\[ \mathop{\mathrm{Deg}} f(z) \coloneqq \sum_{s \in \mathbb{Z}_{\le 0}} m_{q^{2sr^\vee h^\vee}} - \sum_{s \in \mathbb{Z}_{>0}}m_{q^{2sr^\vee h^\vee}}. \]
The map $\mathrm{Deg} \colon \mathcal{G} \to \mathbb{Z}$ is a group homomorphism. 
By \cite[Lemma 3.4(i)]{kkop-2020}, we have
\[ \mathop{\mathrm{Deg}} f(z) = 2 \mathop{\mathrm{ord}}_{z=1} f(z) \quad \text{if $f(z) \in \mathbf{k}(z)^\times$}, \]
where $\mathop{\mathrm{ord}}_{z=1} f(z) \in \mathbb{Z}$ denotes the order at $z=1$ of $f(z)$.

It is shown in \cite[Proposition 3.2]{kkop-2020} that the renormalizing coefficient $c_{M,N}(z)$ belongs to $\mathcal{G}$ for any $M,N \in \mathscr{C}$ such that $R^{\mathrm{univ}}_{M,N_z}$ is rationally renormalizable.  
Following \cite{kkop-2020}, we define the map $\Lambda \colon \Omega_{\mathscr{C}} \times \Omega_{\mathscr{C}} \to \mathbb{Z}$, called the \emph{$\Lambda$-invariant},  by
\[ \Lambda(M,N) \coloneqq \mathop{\mathrm{Deg}}(c_{M,N}(z)). \]

For simple objects $M,N$ of $\mathscr{C}$, we set
\begin{eqnarray}\label{eqn:def-N}
    \mathscr{N}(M,N) \coloneqq -\mathop{\mathrm{Deg}}(a_{M,N}(z)), \quad 
\mathfrak{o}(M,N)\coloneqq \mathop{\mathrm{ord}}_{z=1}(d_{M,N}(z)).
\end{eqnarray}
Note that $\mathfrak{o}(M,N)$ is the pole order of the normalized $R$-matrix $R^{\mathrm{norm}}_{M,N_z}$ at $z=1$ and it is a non-negative integer by definition. By \eqref{eq:c=ad}, we have
\begin{equation} \label{eq:L=N+2o}
\Lambda(M,N) = \mathscr{N}(M,N) + 2 \mathfrak{o}(M,N)
\end{equation}
for any simple objects $M,N$ of $\mathscr{C}$. 
By \cite[Proposition 3.16]{kkop-2020}, it follows that
\[\mathfrak{d}(M,N) \coloneqq \frac{\Lambda(M,N) + \Lambda(N,M)}{2} = \mathfrak{o}(M,N)+\mathfrak{o}(N,M) \in \mathbb{Z}_{\ge 0}\]
and hence $\mathscr{N}(M,N) = - \mathscr{N}(N,M)$
for any simple objects $M,N$ of $\mathscr{C}$.

\begin{remark}\label{Rem:spec}
For an object $M$ of $\mathscr{C}$ and a non-zero scalar $c \in \mathbf{k}^\times$, we define 
\[ M_c \coloneqq M_z/(z-c)M_z.\]
This is again an object of $\mathscr{C}$ and is called the spectral parameter shift of $M$ by $c$.
The assignment $M \mapsto M_c$ gives rise to a monoidal autoequivalence of $\mathscr{C}$.
The quantities $\Lambda(M,N)$, $\mathscr{N}(M,N)$ and $\mathfrak{o}(M,N)$ are invariant under the spectral parameter shifts, \ie we have
\[ \Lambda(M_c,N_c) = \Lambda(M,N), \quad 
\mathscr{N}(M_c,N_c) = \mathscr{N}(M,N), \quad \mathfrak{o}(M_c,N_c) = \mathfrak{o}(M,N)\]
for any $c \in \mathbf{k}^\times$. 
\end{remark}

 For an object $M\in\mathscr{C}$, we denote by $\hd(M)$ the head (the largest semisimple quotient) of  $M$. For any two objects $M,N$ in $\mathscr{C}$, we denote by
\[M\nabla N \coloneqq  \hd(M\otimes N)
\]
 the head of the tensor product $M\otimes N$.
\begin{theorem}[{\cite{KKKO-2015}}]
\label{thm:kkko-15}
Let $M$ be a real simple object in $\mathscr{C}$. Then, for any simple object $N \in \mathscr{C}$, both $M \nabla N$ and $N \nabla M$ are simple.
\end{theorem}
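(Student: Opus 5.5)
The plan is to follow the strategy of \cite{KKKO-2015}. I would prove the stronger statement that, for a real simple $M$ and a simple $N$, the image of the renormalized $r$-matrix ${\bf r}_{M,N}\colon M\otimes N\to N\otimes M$ is simple. I would also show that this image is both the head of $M\otimes N$ and the socle of $N\otimes M$. The assertion for $N\nabla M$ then follows by the symmetric argument with ${\bf r}_{N,M}$, or by applying the first case in the opposite monoidal category, which is again a module category of a quantum affine algebra via the antipode. It therefore suffices to prove two claims: (a) every nonzero submodule $S\subset N\otimes M$ contains $\operatorname{Im}{\bf r}_{M,N}$; (b) dually, every proper submodule $K\subsetneq M\otimes N$ is contained in $\ker{\bf r}_{M,N}$. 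Claim (a) makes the image the unique simple submodule of $N\otimes M$. Claim (b) makes $M\otimes N/\ker{\bf r}_{M,N}$ the unique simple quotient of $M\otimes N$.

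For (a), fix a nonzero $S\subset N\otimes M$. The plan is to compare two maps out of $M\otimes N\otimes M$.
\begin{itemize}
\item The first is the $r$-matrix for the pair $(M,\,N\otimes M)$. By the hexagon-type identity \eqref{eq:q-tri} for $R^{\rm univ}$, after renormalization it factors, up to a nonzero scalar and possibly a power of $(z-1)$, as $(\mathsf{id}_N\otimes {\bf r}_{M,M})\circ({\bf r}_{M,N}\otimes \mathsf{id}_M)$.
\item Since $M$ is real, $M\otimes M$ is simple, so ${\bf r}_{M,M}$ is a nonzero scalar multiple of the identity. Hence the composite is essentially ${\bf r}_{M,N}\otimes\mathsf{id}_M$.
\item By functoriality of $R^{\rm univ}_{M,-}$ in the second variable, $R^{\rm ren}_{M,(N\otimes M)_z}$ carries $M\otimes S_z$ into $S_z\otimes M$.
\end{itemize}
Specializing at $z=1$ and tracking the orders of vanishing, I expect to obtain $({\bf r}_{M,N}\otimes \mathsf{id}_M)(M\otimes S)\subset S\otimes M$.

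The main obstacle is the next step, turning this containment into $\operatorname{Im}{\bf r}_{M,N}\subset S$. The difficulty is that $M\otimes S$ sits inside $M\otimes N\otimes M$, not inside $M\otimes N$. I would use rigidity of $\mathscr C$: tensor with the right dual $M^{*}$ and compose with the evaluation $M\otimes M^{*}\to \mathbf 1$. This produces the map $M\otimes N\to M\otimes N\otimes M\otimes M^{*}$ adjoint to the inclusion. Starting from the containment above, one then gets $\operatorname{Im}{\bf r}_{M,N}\subset S\otimes M\otimes M^{*}\to S$, and concludes via the adjunction $\Hom(X\otimes M,Y\otimes M)\cong\Hom(X,Y\otimes M\otimes M^{*})$.

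The delicate point is to ensure that the specialization at $z=1$ does not vanish on $M\otimes S$; otherwise the containment carries no information. Reality of $M$ is exactly what guarantees that ${\bf r}_{M,M}$ is invertible and hence that no extra zero appears at $z=1$. I expect this order-of-vanishing comparison, together with the simplicity of $M\otimes M$, to be the technical heart of the argument. Claim (b) is dual: apply the same argument to the contragredient or dual modules, using $(M\otimes N)^{*}\cong N^{*}\otimes M^{*}$ and the fact that $M^{*}$ is again real.
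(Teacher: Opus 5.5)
The paper does not prove this statement; it only cites \cite{KKKO-2015}. Your architecture is the one used there, and the first half of your plan is sound. You reduce to showing that $\operatorname{Im}\mathbf{r}_{M,N}$ lies in every nonzero submodule $S\subset N\otimes M$. Reality makes $\mathbf{r}_{M,M}$ a nonzero scalar, so $(\mathsf{id}_{N_z}\otimes R^{\rm ren}_{M,M_z})\circ(R^{\rm ren}_{M,N_z}\otimes\mathsf{id}_{M_z})$ specializes at $z=1$ to $c\,(\mathbf{r}_{M,N}\otimes\mathsf{id}_M)$ with $c\neq 0$. Naturality then gives $(\mathbf{r}_{M,N}\otimes\mathsf{id}_M)(M\otimes S)\subset S\otimes M$.

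Two small corrections to that part:
\begin{itemize}
\item The factorization uses the hexagon identity with the tensor product in the second slot, not \eqref{eq:q-tri}.
\item You never need nonvanishing on $M\otimes S$. You only need the composite not to vanish identically at $z=1$.
\end{itemize}

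The genuine gap is the conversion step, which you yourself call the main obstacle. As written it is not an argument. $\operatorname{Im}\mathbf{r}_{M,N}$ is not a subspace of $S\otimes M\otimes M^{*}$. A map $M\otimes N\to M\otimes N\otimes M\otimes M^{*}$ would need a coevaluation, not an evaluation. And a bijection of Hom spaces does not by itself transport a containment of submodules. More decisively, your sketch never uses that $N$ is simple, and the conclusion is false without it. For $M=\mathbf 1$ your containment holds trivially for every $N$ and $S$. Yet $\operatorname{Im}\mathbf{r}_{\mathbf 1,N}=N\subset S$ for all $S\neq 0$ says exactly that $N$ is simple.

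The missing idea is an auxiliary submodule of $N$ to which simplicity is applied.
\begin{itemize}
\item Let $M^{\vee}$ be a dual of $M$ in the rigid category $\mathscr C$, with evaluation $\mathrm{ev}\colon M\otimes M^{\vee}\to\mathbf 1$.
\item Let $N'\subset N$ be the image of $S\otimes M^{\vee}\hookrightarrow N\otimes M\otimes M^{\vee}\xrightarrow{\mathsf{id}_N\otimes\mathrm{ev}} N$.
\item This morphism is adjoint to the inclusion $S\hookrightarrow N\otimes M$, so it is nonzero. Since $N$ is simple, $N'=N$.
\item Tensor your containment on the right with $M^{\vee}$ and apply $\mathsf{id}_{N\otimes M}\otimes\mathrm{ev}$. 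By the interchange law this gives $\mathbf{r}_{M,N}(M\otimes N')\subset S$, that is, $\operatorname{Im}\mathbf{r}_{M,N}\subset S$.
\end{itemize}

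There is an equivalent elementary form, which is the key lemma of \cite{KKKO-2015}. Put $K=\mathbf{r}_{M,N}^{-1}(S)$, so your containment reads $M\otimes S\subset K\otimes M$. Then $N''\coloneqq\{n\in N\mid M\otimes n\subset K\}$ is a submodule, by the inverse antipode. Linear algebra in the last tensor factor gives $S\subset N''\otimes M$. Hence $N''\neq 0$, so $N''=N$ by simplicity.

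With this step supplied, the rest of your plan goes through. Your duality argument for (b) works. So does the mirror argument for $N\nabla M$, which is where \eqref{eq:q-tri} is actually used.
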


\begin{proposition}[{\cite[Corollary 3.17, Lemma 3.10, Lemma 4.3, Proposition 3.9]{kkop-2020}}]\label{pro:kkop-20} The following statements hold.
\begin{itemize}
\item[(i)] Let $M$ be a real simple object. Then for any simple object $N$, the tensor product $M\otimes N$ is simple if and only if $\mathfrak{d}(M,N)=0$. In particular, $\mathfrak{d}(M,M)=0$.
    \item [(ii)]  Let $M,N, L\in\Omega_{\mathscr{C}}$ such that $L$ is a simple object. Then, we have
\[\Lambda(M\otimes N,L)=\Lambda(M,L)+\Lambda(N,L)\;\;\;\text{and}\;\;\; \Lambda(L,M\otimes N)=\Lambda(L,M)+\Lambda(L,N).\]
\item[(iii)]  Let $M,N, L\in\Omega_{\mathscr{C}}$ such that $L$ is a real simple object. If $L$ strongly commutes with $N$, then
\[
\Lambda(M\nabla N,L)=\Lambda(M,L)+\Lambda(N,L).
\]
Dually,
if $L$ strongly commutes with $M$, then
\[ \Lambda(L,M\nabla N)=\Lambda(L,M)+\Lambda(L,N).
\]
\item[(iv)] Let $M,N\in\Omega_{\mathscr{C}}$. Then for any subquotient $0\neq M'$ of $M$ and subquotient $0\neq N'$ of $N$, we have 
$\Lambda(M',N')\leq \Lambda(M,N)$.
\end{itemize}
\end{proposition}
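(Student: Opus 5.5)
The four statements are taken from \cite{kkop-2020}. We would prove them in the order (iv), (ii), (iii), (i), since each of the later parts uses the earlier ones. Throughout, the only tools are the hexagon-type relation \eqref{eq:q-tri}, the definition of $c_{M,N}(z)$ as a minimal renormalizing factor, and the fact that $\mathrm{Deg}$ is a group homomorphism with $\mathrm{Deg} f = 2\,\mathrm{ord}_{z=1}f \ge 0$ for $f\in\mathbf{k}[z]\setminus\{0\}$.

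\textbf{Part (iv).} Let $M'$ be a subquotient of $M$ and $N'$ a subquotient of $N$. The universal $R$-matrix is functorial, so $R^{\mathrm{univ}}_{M',N'_z}$ is induced from $R^{\mathrm{univ}}_{M,N_z}$ on the corresponding subquotient of $M\otimes N_z$. Since $c_{M,N}(z)R^{\mathrm{univ}}_{M,N_z}$ preserves the integral lattices, so does $c_{M,N}(z)R^{\mathrm{univ}}_{M',N'_z}$. By minimality of $c_{M',N'}$ we get $c_{M,N}=c_{M',N'}\cdot f$ with $f\in\mathbf{k}[z]\setminus\{0\}$, up to units in $\bigsqcup_n\mathbf{k}^\times z^n$. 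Applying $\mathrm{Deg}$ gives $\Lambda(M,N)-\Lambda(M',N')=2\,\mathrm{ord}_{z=1}f\ge 0$.

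\textbf{Part (ii).} By \eqref{eq:q-tri}, $c_{M,L}c_{N,L}R^{\mathrm{univ}}_{M\otimes N,L_z}$ equals $(R^{\mathrm{ren}}_{M,L_z}\otimes\mathsf{id})\circ(\mathsf{id}\otimes R^{\mathrm{ren}}_{N,L_z})$, which is integral. Hence $c_{M,L}c_{N,L}=c_{M\otimes N,L}\cdot f$ with $f$ a polynomial, up to units, and the problem reduces to showing that this composite does not vanish modulo $(z-x)$ for any $x\in\mathbf{k}^\times$.

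This is the step I expect to be the main obstacle. The plan is to use simplicity of $L$ in the following way. After specializing at $z=x$, each factor $R^{\mathrm{ren}}|_{z=x}$ is a non-zero homomorphism whose source or target involves the simple module $L_x$. Tensoring with identity maps on $M$ or $N$ keeps these morphisms non-zero, and one would then try to show that their composite is non-zero, for instance by checking that the image of the first factor is not contained in the kernel of the second. This should go via a dominant extremal weight vector / generation argument as in \cite{kkop-2020}. If it works, $f$ is a unit, and applying $\mathrm{Deg}$ gives additivity in the first slot. The second slot is handled symmetrically, using the other hexagon identity.

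\textbf{Part (iii).} By (iv) applied to the quotient $M\otimes N\twoheadrightarrow M\nabla N$, and then by (ii),
\[
\Lambda(M\nabla N,L)\le\Lambda(M\otimes N,L)=\Lambda(M,L)+\Lambda(N,L).
\]
For the reverse inequality, use that $L$ strongly commutes with $N$, so $\mathbf r_{N,L}$ is an isomorphism. Then the renormalized $R$-matrix of $M\otimes N$ with $L$, specialized at $z=1$, is $(\mathbf r_{M,L}\otimes N)\circ(M\otimes\mathbf r_{N,L})$. One checks that this map does not kill the kernel of $M\otimes N\to M\nabla N$ on the head side, so the renormalizing factor does not drop when passing to the quotient; here Theorem \ref{thm:kkko-15} (simplicity of heads) is used. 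The dual statement follows by symmetry.

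\textbf{Part (i).} By \eqref{eq:c=ad} and \cite[Proposition 3.16]{kkop-2020}, $\mathfrak d(M,N)$ is the order at $z=1$ of the zero of the composite $R^{\mathrm{ren}}_{N_z,M}\circ R^{\mathrm{ren}}_{M,N_z}$ relative to a scalar. For $M$ real, $M\otimes N$ is simple if and only if $\mathbf r_{N,M}\circ\mathbf r_{M,N}$ is a non-zero scalar, which by \cite{KKKO-2015} happens exactly when that order vanishes. Finally, $\mathfrak d(M,M)=0$ because $M\otimes M$ is simple.
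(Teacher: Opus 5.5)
The paper itself proves nothing here; it simply cites \cite{kkop-2020} for all four parts. Your reconstruction of (iv) is correct: functoriality of $R^{\mathrm{univ}}$ makes $c_{M,N}(z)R^{\mathrm{univ}}_{M',N'_z}$ integral, so $c_{M,N}=c_{M',N'}f$ with $0\neq f\in\mathbf{k}[z^{\pm1}]$ and $\Lambda(M,N)-\Lambda(M',N')=2\,\mathrm{ord}_{z=1}f\ge 0$. Part (i) is a legitimate reduction to \cite[Proposition 3.16]{kkop-2020} and \cite{KKKO-2015}.

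There is, however, a genuine gap at the step you flag in (ii). That step is the real content of (ii) and also of the reverse inequality in (iii). You need
\[
M\otimes N\otimes L_x\xrightarrow{\;M\otimes g\;}M\otimes L_x\otimes N\xrightarrow{\;f\otimes N\;}L_x\otimes M\otimes N,\qquad f=R^{\mathrm{ren}}_{M,L_z}|_{z=x},\quad g=R^{\mathrm{ren}}_{N,L_z}|_{z=x},
\]
to be non-zero, and the extremal-weight-vector route you propose cannot give this. First, objects of $\Omega_{\mathscr C}$ need not be simple or generated by an extremal vector. Second, even for simple $M,N$ with dominant extremal vectors $u,w$, one has $R^{\mathrm{ren}}_{M,L_z}(u\otimes v_z)=d_{M,L}(z)\,v_z\otimes u$ up to a unit. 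So the composite sends $u\otimes w\otimes v_{L_x}$ to a multiple of $d_{M,L}(x)\,d_{N,L}(x)$. It therefore vanishes exactly at the points $x$ where non-vanishing is in question, notably $x=1$, the only point seen by $\mathrm{Deg}$.

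The missing idea is the rigidity lemma of \cite{KKKO-2015}, used throughout \cite{kkop-2020}. Since $\mathscr C$ is rigid with exact $\otimes$: if $X\subset M\otimes L'$ and $Y\subset L'\otimes N$ satisfy $M\otimes Y\subset X\otimes N$, then there is $K\subset L'$ with $Y\subset K\otimes N$ and $M\otimes K\subset X$. One can take $K$ to be the image of $Y\otimes N^{\vee}\to L'\otimes N\otimes N^{\vee}\to L'$. The mirror statement, with the roles of the two outer factors exchanged, holds as well.

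If your composite vanished, apply the lemma with $L'=L_x$, $X=\ker f$, $Y=\operatorname{Im} g$. Simplicity of $L_x$ forces $K=0$ (so $g=0$) or $K=L_x$ (so $f=0$), a contradiction. This is exactly where simplicity of $L$ enters.

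In (iii), your ``one checks'' is the mirror lemma with $M$ in the middle, not Theorem~\ref{thm:kkko-15}. The map descends to $M\nabla N$ automatically by functoriality. Since $\mathbf r_{N,L}$ is invertible, the image of $(\mathbf r_{M,L}\otimes N)\circ(M\otimes\mathbf r_{N,L})$ is $\operatorname{Im}(\mathbf r_{M,L})\otimes N$. Suppose this lay in $L\otimes K_0$, where $K_0=\ker(M\otimes N\twoheadrightarrow M\nabla N)$. The mirror lemma would then give $M'\subset M$ with $\operatorname{Im}\mathbf r_{M,L}\subset L\otimes M'$ and $M'\otimes N\subset K_0$. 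This is impossible when $M$ is simple, as it is in every application in the paper. Hence $c_{M,L}c_{N,L}=c_{M\nabla N,L}\,h$ with $h\in\mathbf{k}[z^{\pm1}]$ and $h(1)\neq0$, which gives the equality directly without needing the upper bound.
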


\begin{definition}[$\mathsf{\Lambda}$-monoidal categorification] \label{def:L-monoidal}
Let $(\mathcal A,\mathcal C,\varphi)$ be a monoidal categorification of a cluster algebra $\mathcal{A}$ of rank $n$ with $\mathcal C$ being a monoidal subcategory of $\mathscr{C}$. Such a monoidal categorification is called a  {\em ${\mathsf{\Lambda}}$-monoidal categorification} if  there exists a monoidal cluster $\mathcal M_w=(M_{1;w},\ldots, M_{m;w})$ with $w \in \mathbb{T}_n$ such that 
\begin{equation} \label{eq:Lmc}
(\widetilde B_w)^T\Lambda_w=(2I_n\mid{\bf 0}),
\end{equation}
where the $(i,j)$-entry of the matrix $\Lambda_w$ is given by 
$(\Lambda_w)_{ij}\coloneq \Lambda(M_{i;w},M_{j;w})$ for $i,j\in[1,m]$.
\end{definition}

Thanks to Proposition \ref{pro:kkop-20} (i), we know that the matrix $\Lambda_w$ above is skew-symmetric and thus $(\widetilde B_w, \Lambda_w)$ forms a compatible pair of type $2I_n$. For a ${\mathsf{\Lambda}}$-monoidal categorification $(\mathcal A,\mathcal C,\varphi)$, it is known \cite[Lemma 6.6]{kkop-2020} that the equality \eqref{eq:Lmc}
holds for \emph{any} vertex $w\in\mathbb T_n$. Moreover, for any edge \begin{xy}(0,1)*+{t}="A",(10,1)*+{t'}="B",\ar@{-}^k"A";"B" \end{xy} in $\mathbb T_n$, we have  $(\widetilde B_{t'},   \Lambda_{t'})=\mu_k(\widetilde B_{t}, \Lambda_{t})$ as a mutation of compatible pairs. In particular, $\mathcal{A}$ is a $\mathsf{\Lambda}$-cluster algebra, and thus the tropical invariant and the $F$-invariant can be defined for any pair of good elements in $\mathcal A$.
\begin{theorem}[{\cite[Theorem 5.16]{Cao-2023}}]
\label{thm:trop-Lambda}
Let  $(\mathcal A,\mathcal C,\varphi)$ be a $\mathsf{\Lambda}$-monoidal categorification. Let $M,N$ be two simple objects in $\mathcal C$ such that $\varphi(M)$ and $\varphi(N)$ are good elements of $\mathcal A$ in the sense of Definition \ref{def:pointed}. If at least one of $M, N$ is reachable,
then we have
    \[\Lambda(M,N)=\langle \varphi(M), \varphi(N)\rangle_{\rm trop}\;\;\;\text{and}\;\;\;2\mathfrak{d}(M,N) = (\varphi(M)\mid\mid \varphi(N))_F.\] 
 \end{theorem}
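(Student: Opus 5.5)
The plan is to reduce everything to the first equality $\Lambda(M,N)=\langle\varphi(M),\varphi(N)\rangle_{\rm trop}$ in both orders. The second equality then follows formally. Indeed, granting the first equality for $(M,N)$ and $(N,M)$ (the hypothesis ``one of them reachable'' is symmetric), we get
\[2\mathfrak d(M,N)=\Lambda(M,N)+\Lambda(N,M)=\langle\varphi(M),\varphi(N)\rangle_{\rm trop}+\langle\varphi(N),\varphi(M)\rangle_{\rm trop}=(\varphi(M)\mid\mid\varphi(N))_F\]
by Definition \ref{def:f-invariant}. So the work lies in the first equality. For that I would use the explicit formula of Theorem \ref{thm:mutation-inv} (i): it suffices to find a single vertex $w$ at which
\[\Lambda(M,N)=({\bf g}_M^w)^T\Lambda_w{\bf g}_N^w+F_M^w[(2I_n\mid{\bf 0}){\bf g}_N^w].\]
Here the skew-symmetrizer is $S=2I_n$, by \eqref{eq:Lmc} and \cite[Lemma 6.6]{kkop-2020}.

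\emph{Case $M$ reachable.} Choose $w$ with $M\cong\mathcal M_w({\bf a})$. Then $F_M^w=1$ and ${\bf g}_M^w={\bf a}$, so the target becomes $\Lambda(M,N)={\bf a}^T\Lambda_w{\bf g}_N^w$. By Proposition \ref{pro:kkop-20} (ii), $\Lambda(\mathcal M_w({\bf a}),N)=\sum_i a_i\Lambda(M_{i;w},N)$. So it remains to prove, for each cluster variable, that
\[\Lambda(M_{i;w},N)={\bf e}_i^T\Lambda_w{\bf g}_N^w,\]
where ${\bf g}_N^w$ is the degree of $\varphi(N)$ in the seed $w$. I would argue this via the Laurent expansion $\varphi(N)={\bf x}_w^{{\bf g}}F(\hat{\bf y}_w)$, which has non-negative coefficients by Proposition \ref{prop:HL_2010}. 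Each monomial ${\bf x}_w^{{\bf h}}$ with ${\bf h}={\bf g}+\widetilde B_w{\bf v}$ has ``$\Lambda(M_{i;w},-)$-degree'' ${\bf e}_i^T\Lambda_w{\bf h}={\bf e}_i^T\Lambda_w{\bf g}-2v_i$, using the compatibility $\widetilde B_w^T\Lambda_w=(2I_n\mid {\bf 0})$ with the appropriate sign convention. The leading term therefore attains the maximum. To realize this categorically, I would multiply $N$ by a large monomial $\mathcal M_w({\bf c})$ so that $\mathcal M_w({\bf c})\nabla N$ becomes a cluster-monomial-like object whose expansion is controlled. I would then use Proposition \ref{pro:kkop-20} (iii), with $L=M_{i;w}$ strongly commuting with $\mathcal M_w({\bf c})$, to peel the monomial off:
\[\Lambda(M_{i;w},\mathcal M_w({\bf c})\nabla N)=\Lambda(M_{i;w},\mathcal M_w({\bf c}))+\Lambda(M_{i;w},N).\]
Proposition \ref{pro:kkop-20} (iv) supplies the inequality in the direction needed to compare with the top-degree term.

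\emph{Case $N$ reachable.} Here $N\cong\mathcal M_w({\bf b})$ and the target is $\Lambda(M,N)=({\bf g}_M^w)^T\Lambda_w{\bf b}+F_M^w[2{\bf b}^\circ]$. Now the $F$-polynomial term is genuinely present. I would obtain it from the dual version of the previous argument, computing $\Lambda(M,M_{i;w})$. The maximum of $({\bf g}+\widetilde B_w{\bf v})^T\Lambda_w{\bf e}_i$ over the support of $F_M^w$ now picks up $+2v_i$, so it is attained at the extremal exponent rather than at the leading one. Additivity in the second slot by Proposition \ref{pro:kkop-20} (ii) then assembles these into $\max_{\bf v}\bigl(({\bf g}+\widetilde B_w{\bf v})^T\Lambda_w{\bf b}\bigr)$, which is the desired formula provided the maximum over the support is compatible with the sum over $i$. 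Since ${\bf b}\ge 0$, the relevant linear functional is a single one, $2{\bf v}^T{\bf b}^\circ$, so the maximum is taken once for the whole monomial, which is exactly what is needed.

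The main obstacle is the claim in both cases that $\Lambda$ against a cluster variable equals the \emph{tropical} (maximum) evaluation of the Laurent expansion of $\varphi(N)$, respectively $\varphi(M)$. Inequality (iv) gives only one direction, via composition factors. For the reverse inequality I would first try induction along mutations from $w$, using the exchange exact sequences
\[0\to\mathcal M_t({\bf a}^-)\to M_{k;t}\otimes M_{k;t'}\to\mathcal M_t({\bf a}^+)\to 0\]
together with (iii), to show that $t\mapsto(\Lambda(M,M_{i;t}))_i$ satisfies the tropical mutation rule \eqref{eqn:x-trop}. Proposition \ref{pro:beta-map} would then identify it with $\beta_{\varphi(N)}$ evaluated on ${\bf x}_t$, giving the first equality directly as $\beta_{\varphi(N)}(\varphi(M))$.
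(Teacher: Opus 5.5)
Your two reductions are sound. The second equality follows from the first applied to $(M,N)$ and $(N,M)$. By Theorem~\ref{thm:mutation-inv}~(i) with $S=2I_n$, it suffices to verify the formula at one vertex $w$ where the reachable object is a cluster monomial $\mathcal M_w(\cdot)$.

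The gap is that the core identity is never established. That identity is $\Lambda(M_{i;w},N)={\bf e}_i^T\Lambda_w{\bf g}_N^w$, together with its mirror for $\Lambda(M,-)$. You flag it as the main obstacle, and your fallback does not work. Applying Proposition~\ref{pro:kkop-20}~(ii),(iv) to an exchange sequence only yields $\Lambda(M_{k;t},N)+\Lambda(M_{k;t'},N)\ge\max\{\Lambda(\mathcal M_t({\bf a}^+),N),\Lambda(\mathcal M_t({\bf a}^-),N)\}$. The reverse inequality would need $\Lambda$ of the non-split module $M_{k;t}\otimes M_{k;t'}$ to be bounded by the maximum over its composition factors. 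None of the listed properties gives this; in particular (iii) needs a real $L$ commuting with a factor, and $N$ is arbitrary. Even granting the tropical rule \eqref{eqn:x-trop}, identifying the resulting semifield homomorphism with $\beta_{\varphi(N)}$ needs agreement at some vertex. You have that only when $N$ itself is reachable, not in the case ``$M$ reachable''.

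The missing observation makes any induction unnecessary. Take ${\bf c}\in\mathbb Z_{\ge 0}^m$ so large that ${\bf x}_w^{\bf c}\varphi(N)$ is a polynomial in ${\bf x}_w$. Each of its monomials ${\bf x}_w^{{\bf c}'}$ is the class of the simple object $\mathcal M_w({\bf c}')$. Since simple classes form a basis of $K_0(\mathcal C)$, the composition factors of $\mathcal M_w({\bf c})\otimes N$ are exactly these $\mathcal M_w({\bf c}')$; this is the same argument that gives Proposition~\ref{pro:head}. Now combine (iii) and (iv):
\begin{itemize}
\item By (iv), every factor satisfies $\Lambda(M_{i;w},\mathcal M_w({\bf c}'))\le\Lambda(M_{i;w},\mathcal M_w({\bf c})\otimes N)$.
\item By (iii), the simple head $\mathcal M_w({\bf c})\nabla N$, which is one of these factors, attains this bound.
\end{itemize}
Hence $\Lambda(M_{i;w},\mathcal M_w({\bf c})\otimes N)=\max_{{\bf c}'}{\bf e}_i^T\Lambda_w{\bf c}'$. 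By your own computation ${\bf e}_i^T\Lambda_w\widetilde B_w{\bf v}=-2v_i$ for $i\le n$ (and $0$ for $i>n$), this maximum is ${\bf e}_i^T\Lambda_w({\bf c}+{\bf g}_N^w)$. Then (ii) removes ${\bf c}$.

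For $N=\mathcal M_w({\bf b})$ reachable, run the mirror argument on $M\otimes\mathcal M_w({\bf c})$, using the first form of (iii) with $L=\mathcal M_w({\bf b})$, and treat the whole ${\bf b}$ at once. Your assembly step is not valid as written. Additivity gives $\sum_i b_i\max_{\bf v}2v_i$, which equals $\max_{\bf v}2{\bf v}^T{\bf b}^\circ$ only if the support of $F_M^w$ has a componentwise maximal element, and ${\bf b}\ge 0$ does not supply one. In the direct argument, the head's exponent is a single maximizer for every ${\bf b}$, so this property comes out as a consequence rather than an assumption.
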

 
 \begin{remark}
 In a very recent preprint by Kashiwara--Kim--Oh--Park \cite{kkop-2026}, it was shown that the equality $\Lambda(M,N) = \langle \varphi(M), \varphi(N) \rangle_{\rm trop}$ in Theorem \ref{thm:trop-Lambda} holds under the assumption that $N$ is real (not necessarily reachable) while $M$ can be any simple object. See \cite[Theorem 2.13 \& Remark 2.14]{kkop-2026}. Although they work with modules over quiver Hecke algebras, the same proof works for our case of quantum affine algebras. 
 \end{remark}

The following result is a consequence of the Laurent phenomenon and Theorem \ref{thm:kkko-15}. 

\begin{proposition}[{\cite[Lemma 5.14]{Cao-2023}}]
\label{pro:head}
Let  $(\mathcal A,\mathcal C,\varphi)$ be a $\mathsf{\Lambda}$-monoidal categorification and  $M$ a simple object in $\mathcal C$. Then for any monoidal cluster $\mathcal M_w$,  there exist ${\bf d,h,h'} \in \mathbb Z_{\ge 0}^m$ satisfying
\[
\mathcal M_w({\bf d})\nabla M\cong\mathcal M_w({\bf h}) \quad \text{and} \quad 
M \nabla \mathcal M_w({\bf d})\cong\mathcal M_w({\bf h'}). 
 \]
\end{proposition}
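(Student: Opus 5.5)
We prove the first isomorphism; the second is symmetric. Write $u=\varphi(M)$ and expand it in the monoidal cluster $\mathcal M_w$. By the Laurent phenomenon, $u\in\mathbb Z[x_{1;w}^{\pm1},\ldots,x_{n;w}^{\pm1},x_{n+1;w},\ldots,x_{m;w}]$. Proposition~\ref{prop:HL_2010} makes the coefficients non-negative, and there are finitely many terms. Choose ${\bf d}\in\mathbb Z_{\ge0}^m$, supported on $[1,n]$, large enough that ${\bf x}_w^{\bf d}\,u$ is an honest polynomial with non-negative coefficients in $x_{1;w},\ldots,x_{m;w}$. Concretely, take $d_i$ at least the maximal negative exponent of $x_{i;w}$. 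Then
\[
\varphi(\mathcal M_w({\bf d})\otimes M)={\bf x}_w^{\bf d}\,u=\sum_{\bf h} c_{\bf h}\,{\bf x}_w^{\bf h},\qquad c_{\bf h}\in\mathbb Z_{\ge0},\ {\bf h}\in\mathbb Z_{\ge0}^m,
\]
and each ${\bf x}_w^{\bf h}=\varphi(\mathcal M_w({\bf h}))$ is the class of a simple object.

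Since $\varphi$ is a ring isomorphism and the simple classes form a $\mathbb Z$-basis of $K_0(\mathcal C)$, the composition factors of $\mathcal M_w({\bf d})\otimes M$ are exactly the $\mathcal M_w({\bf h})$, with multiplicities $c_{\bf h}$. Here we use that distinct ${\bf h}$ give non-isomorphic simples, because the cluster monomials ${\bf x}_w^{\bf h}$ are distinct elements of $\mathcal A$. The object $\mathcal M_w({\bf d})$ is a reachable simple object, hence real. By Theorem~\ref{thm:kkko-15}, $\mathcal M_w({\bf d})\nabla M$ is therefore simple. Being a quotient of $\mathcal M_w({\bf d})\otimes M$, it is one of its composition factors, so $\mathcal M_w({\bf d})\nabla M\cong\mathcal M_w({\bf h})$ for some ${\bf h}$ with $c_{\bf h}\neq0$.

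For the second isomorphism we use the same ${\bf d}$, since $\varphi$ is multiplicative and $\mathcal A$ is commutative, so $\varphi(M\otimes\mathcal M_w({\bf d}))={\bf x}_w^{\bf d}u$ as well. Theorem~\ref{thm:kkko-15} applied to $N\nabla M$ with $\mathcal M_w({\bf d})$ real then gives $M\nabla\mathcal M_w({\bf d})\cong\mathcal M_w({\bf h}')$ for some ${\bf h}'\in\mathbb Z_{\ge0}^m$. Nothing is needed about ${\bf h}'$ relative to ${\bf h}$, so both statements hold with a common ${\bf d}$.

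The only delicate point is ensuring every term of ${\bf x}_w^{\bf d}u$ is a genuine cluster monomial. Frozen variables are not inverted in this paper, so frozen exponents are already non-negative by the Laurent phenomenon as stated, and unfrozen denominators are cleared by the choice of ${\bf d}$. Reality of $\mathcal M_w({\bf d})$ was noted among the easily checked facts: it is a reachable simple object, hence real.
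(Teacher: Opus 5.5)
Your proof is correct and follows the route the paper indicates. The paper gives no proof of its own: it cites \cite[Lemma 5.14]{Cao-2023} and describes the result as a consequence of the Laurent phenomenon and Theorem~\ref{thm:kkko-15}, which is exactly your argument — clear denominators with $\mathcal M_w({\bf d})$, read off the composition factors of $\mathcal M_w({\bf d})\otimes M$ from the basis of simple classes, and use the reality of $\mathcal M_w({\bf d})$ to make the head a single simple factor. Your appeal to Proposition~\ref{prop:HL_2010} is harmless but unnecessary, since comparing coefficients in the basis of simple classes already forces $c_{\bf h}\ge 0$.
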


\subsection{Pole orders and the partial $F$-invariants}
In this subsection, we give a sufficient condition under which the partial $F$-invariant coincides with the pole order of normalized $R$-matrix (Theorem \ref{thm:F-o}) after preparing a few lemmas. 

\begin{lemma} \label{Lem:alinear}
Let $L,M,N \in \mathscr{C}$ be simple objects. 
If $\mathfrak{o}(M,N) = 0$ and at least one of $M$, $N$ is real, then we have
\[ a_{M\nabla N, L}(z) = a_{M,L}(z) a_{N,L}(z), \quad  a_{L,M\nabla N}(z) = a_{L,M}(z) a_{L,N}(z),\]
and hence
\[
 \mathscr{N}(M \nabla N, L) = \mathscr{N}(M,L)+\mathscr{N}(N,L), \quad \mathscr{N}(L, M \nabla N) = \mathscr{N}(L,M)+\mathscr{N}(L,N).
\]
\end{lemma}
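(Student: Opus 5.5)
Since $\mathfrak{o}(M,N)=0$, the denominator $d_{M,N}(z)$ does not vanish at $z=1$. So the normalized $R$-matrix $R^{\mathrm{norm}}_{M,N_z}$ has no pole at $z=1$. Its specialization $R^{\mathrm{norm}}_{M,N}|_{z=1}$ sends $u\otimes v\mapsto v\otimes u$ on dominant extremal weight vectors. In particular it is non-zero, hence agrees with $\mathbf{r}_{M,N}$ up to a scalar. Since one of $M,N$ is real, the image of $\mathbf{r}_{M,N}$ is simple and equals $M\nabla N$ (Theorem~\ref{thm:kkko-15}, via the standard description of the head as the image of the $r$-matrix). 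Moreover $u\otimes v$ is a non-zero vector of $M\otimes N$ whose image $v\otimes u$ is non-zero. Thus the image $w$ of $u\otimes v$ in $M\nabla N$ is a non-zero vector of dominant extremal weight, i.e.\ a dominant extremal weight vector of $M\nabla N$.

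The main step is to compute $a_{M\nabla N,L}(z)$ via the hexagon-type relation \eqref{eq:q-tri}. We have
\[
R^{\mathrm{univ}}_{M\otimes N,L_z}=(R^{\mathrm{univ}}_{M,L_z}\otimes \mathsf{id}_N)\circ(\mathsf{id}_M\otimes R^{\mathrm{univ}}_{N,L_z}).
\]
Applied to $u\otimes v\otimes x_z$, with $x$ the dominant extremal vector of $L$, this gives $a_{M,L}(z)\,a_{N,L}(z)\, x_z\otimes u\otimes v$. The relation is the restriction, over $\mathbf{k}(\!(z)\!)$, of the element $(\Delta\otimes \mathsf{id})(\mathcal R)$ in the relevant completion. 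By functoriality of the universal $R$-matrix, $R^{\mathrm{univ}}$ commutes with the surjection $p\colon M\otimes N\twoheadrightarrow M\nabla N$:
\[
(\mathsf{id}_{L_z}\otimes p)\circ R^{\mathrm{univ}}_{M\otimes N,L_z}=R^{\mathrm{univ}}_{M\nabla N,L_z}\circ(p\otimes \mathsf{id}_{L_z}).
\]
Evaluating on $u\otimes v\otimes x_z$ yields $R^{\mathrm{univ}}_{M\nabla N,L_z}(w\otimes x_z)=a_{M,L}(z)a_{N,L}(z)\,x_z\otimes w$. By the definition of the universal coefficient for the simple module $M\nabla N$, this gives $a_{M\nabla N,L}(z)=a_{M,L}(z)a_{N,L}(z)$. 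The same argument, using the other relation $R^{\mathrm{univ}}_{L,(M\otimes N)_z}=(\mathsf{id}_M\otimes R^{\mathrm{univ}}_{L,N_z})\circ(R^{\mathrm{univ}}_{L,M_z}\otimes\mathsf{id}_N)$, gives the second identity. That relation is the symmetric counterpart of \eqref{eq:q-tri}, coming from $(\mathsf{id}\otimes\Delta)(\mathcal R)=\mathcal R_{13}\mathcal R_{12}$. Here one must note that $(M\otimes N)_z\cong M_z\otimes_{\mathbf{k}[z^{\pm1}]}N_z$ compatibly with $p$.

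Finally, apply the group homomorphism $\mathrm{Deg}$ to the multiplicative identities. The coefficients lie in $\mathbf{k}[\![z]\!]^\times\cap\mathcal G$, and by \eqref{eqn:def-N} we get the additivity of $\mathscr N$.

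I expect the main obstacle to be the first paragraph's claim that $u\otimes v$ survives in the head $M\nabla N$. This needs the identification of $\mathbf{r}_{M,N}$ with the (non-singular) specialization of $R^{\mathrm{norm}}$, together with the fact, from \cite{KKKO-2015,kkop-2020}, that $\operatorname{Im}\mathbf r_{M,N}\cong M\nabla N$ when one factor is real. A secondary subtlety is justifying the second tensor identity for $R^{\mathrm{univ}}_{L,-}$ with the affinized second factor.
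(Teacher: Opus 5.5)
Your proof is correct and is essentially the paper's argument: the paper also specializes $R^{\mathrm{norm}}_{M,N_z}$ at $z=1$ (nonzero since $\mathfrak{o}(M,N)=0$), identifies its image with $M\nabla N$ via \cite[Theorem 3.12]{KKKO-2015}, and applies \eqref{eq:q-tri}. The only difference is cosmetic: the paper regards $M\nabla N$ as the submodule of $N\otimes M$ generated by $v_N\otimes v_M$ and restricts $R^{\mathrm{univ}}_{N\otimes M,L_z}$ to it, whereas you regard $M\nabla N$ as a quotient of $M\otimes N$ and use naturality along $p$; your derivation of the second identity via $(M\otimes N)_z\cong M_z\otimes_{\mathbf{k}[z^{\pm1}]}N_z$ fills in the case the paper only calls ``similar''.
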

\begin{proof}
 Let $v_L \in L, v_M \in M, v_N \in N$ denote the dominant extremal weight vectors. Since $\mathfrak{o}(M,N)=0$, we can specialize the normalized $R$-matrix $R^{\mathrm{norm}}_{M,N_z}$ at $z=1$ to get the non-zero homomorphism $\mathbf{r}_{M,N} \colon M \otimes N \to N \otimes M$, whose image is isomorphic to the simple head $M \nabla N$ by \cite[Theorem 3.12]{KKKO-2015}.   
As $\mathbf{r}_{M,N}(v_M \otimes v_N) = v_N \otimes v_M$, $M \nabla N$ is isomorphic to the submodule of $N \otimes M$ generated by $v_N \otimes v_M$. 
Moreover, $v_N \otimes v_M$ is the dominant extremal weight vector of $M \nabla N$.
By the property \eqref{eq:q-tri}, we obtain
\begin{align*}
a_{M \nabla N, L}(z) (v_{L_z} \otimes v_N \otimes v_M) &= R^{\mathrm{univ}}_{N \otimes M, L_z} (v_N \otimes v_M \otimes v_{L_z}) \\
&= (R^{\mathrm{univ}}_{N,L_z} \otimes \mathsf{id}_M) \circ (\mathsf{id}_N \otimes R^{\mathrm{univ}}_{M, L_z}) (v_N \otimes v_M \otimes v_{L_z}) \\
&= a_{N,L}(z) a_{M,L}(z)  (v_{L_z} \otimes v_N \otimes v_M)
\end{align*}
and hence the desired equality $a_{M\nabla N, L}(z) = a_{M,L}(z) a_{N,L}(z)$.
The other equality $a_{L,M\nabla N}(z) = a_{L,M}(z) a_{L,N}(z)$ is obtained in a similar way.
\end{proof}

\begin{lemma} \label{Lem:oconv}
Let $L,M,N$ be simple objects of $\mathscr{C}$.
Suppose that either $M$ or $N$ is real. Then, the following statements hold.
\begin{itemize}
    \item [(i)] We have $ \Lambda(M \nabla N, L) \leq  \Lambda(M,L) + \Lambda(N,L)$ and $ \Lambda(L,M\nabla N)\leq   
    \Lambda(L,M) + \Lambda(L,N)$. The equalities hold if $M$ and $N$ strongly commute.
    \item[(ii)] If $\mathfrak{o}(M,N)=0$, we have
\[ \mathfrak{o}(M \nabla N, L) \le \mathfrak{o}(M,L) + \mathfrak{o}(N,L)\quad\text{and}\quad
\mathfrak{o}(L,M\nabla N) \le \mathfrak{o}(L,M) + \mathfrak{o}(L,N).\]
The equalities hold if $M$ and $N$ strongly commute. 
\end{itemize}
\end{lemma}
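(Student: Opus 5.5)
The plan is to derive (ii) from (i) together with Lemma \ref{Lem:alinear}, using the decomposition \eqref{eq:L=N+2o}, $\Lambda = \mathscr{N} + 2\mathfrak{o}$.

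For (i), the inequality $\Lambda(M\nabla N,L)\le \Lambda(M,L)+\Lambda(N,L)$ comes from two earlier results. First, Proposition \ref{pro:kkop-20} (iv) says $\Lambda$ can only decrease when we pass to a subquotient. Since $M\nabla N$ is a quotient of $M\otimes N$, and both $M\otimes N$ and $M\nabla N$ lie in $\Omega_{\mathscr C}$ (it is simple by Theorem \ref{thm:kkko-15}, because one of $M,N$ is real), we get
\[
\Lambda(M\nabla N,L)\le \Lambda(M\otimes N,L).
\]
Second, Proposition \ref{pro:kkop-20} (ii) gives additivity for $L$ simple:
\[
\Lambda(M\otimes N,L)=\Lambda(M,L)+\Lambda(N,L).
\]
Combining the two yields the first inequality, and the second inequality $\Lambda(L,M\nabla N)\le\Lambda(L,M)+\Lambda(L,N)$ is proved symmetrically. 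If $M$ and $N$ strongly commute, then $M\otimes N$ is simple, so $M\nabla N\cong M\otimes N$ and both inequalities become equalities by additivity. Proposition \ref{pro:kkop-20} (iii) is not needed here; direct additivity suffices.

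For (ii), assume $\mathfrak{o}(M,N)=0$. Lemma \ref{Lem:alinear} then applies, since one of $M,N$ is real, and gives
\[
\mathscr{N}(M\nabla N,L)=\mathscr{N}(M,L)+\mathscr{N}(N,L).
\]
Substitute $\Lambda=\mathscr N+2\mathfrak o$ (all arguments are simple) into the inequality from (i). The $\mathscr N$-terms cancel by this additivity, leaving
\[
2\mathfrak{o}(M\nabla N,L)\le 2\mathfrak{o}(M,L)+2\mathfrak{o}(N,L).
\]
Dividing by $2$ gives the first claim, and the other one follows the same way from the second identity of Lemma \ref{Lem:alinear}.

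For the equality case, note that strong commutation makes $\mathbf r_{M,N}$ an isomorphism, which ought to force $\mathfrak o(M,N)=0$ anyway; within (ii) this is already assumed, so nothing extra is required. Equality in $\Lambda$ from (i), combined with the exact additivity of $\mathscr N$, then gives equality for $\mathfrak o$.

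The main point to check is the membership in $\Omega_{\mathscr C}$ and the simplicity of $M\nabla N$ needed to invoke Proposition \ref{pro:kkop-20} (iv) and Lemma \ref{Lem:alinear}; both are supplied by Theorem \ref{thm:kkko-15}. Beyond that the argument is bookkeeping.
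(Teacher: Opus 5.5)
Your proof is correct and follows the paper's own argument. Part (i) comes from Proposition~\ref{pro:kkop-20} (iv) and (ii), with equality via $M\nabla N\cong M\otimes N$, and part (ii) follows by cancelling the additive $\mathscr{N}$-part given by Lemma~\ref{Lem:alinear} using \eqref{eq:L=N+2o}. Your hedged remark that strong commutation forces $\mathfrak{o}(M,N)=0$ can be made precise directly: Proposition~\ref{pro:kkop-20} (i) gives $\mathfrak{d}(M,N)=\mathfrak{o}(M,N)+\mathfrak{o}(N,M)=0$, and both summands are non-negative.
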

\begin{proof}
Since $L$ is simple, Proposition \ref{pro:kkop-20} (ii) (iv) implies that 
\begin{align}
    \Lambda(M \nabla N, L) &\leq \Lambda(M \otimes N, L)=  \Lambda(M,L) + \Lambda(N,L),\nonumber\\
    \Lambda(L,M\nabla N)&\leq   \Lambda(L,M\otimes N) =
    \Lambda(L,M) + \Lambda(L,N).\nonumber
\end{align}
If $M$ and $N$ strongly commute, the equalities hold as $M\nabla N\cong M\otimes N$. This proves (i). 
   The statement  (ii) follows from (i), Lemma \ref{Lem:alinear} and the equation \eqref{eq:L=N+2o}.
\end{proof}

\begin{lemma}\label{Lem:ng}
Let $(\mathcal{A}, \mathcal{C}, \varphi)$ be a $\mathsf{\Lambda}$-monoidal categorification. Let $L$ be a simple object of $\mathcal{C}$ such that $\varphi(L) \in \mathcal{A}$ is a good element in the sense of Definition \ref{def:pointed}.
 For any vertex $w\in\mathbb T_n$, choose ${\bf d},{\bf h} \in \mathbb{Z}_{\ge 0}^m$ so that $\mathcal{M}_w({\bf d}) \nabla L \cong \mathcal{M}_{w}({\bf h})$ (see Proposition \ref{pro:head}).
 Then 
 \[\Lambda_w {\bf g}_L^w = \Lambda_w({\bf h} - {\bf d}),\] where ${\bf g}_L^w\in\mathbb Z^m$ is the extended $g$-vector of $\varphi(L)$ with respect to vertex $w$.  
\end{lemma}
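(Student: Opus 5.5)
The plan is to compare both sides of $\Lambda_w {\bf g}_L^w = \Lambda_w({\bf h} - {\bf d})$ entry by entry. I will express the $i$-th entry of each side as the $\Lambda$-invariant $\Lambda(M_{i;w},-)$ of a suitable module. The left-hand side is handled by the tropical invariant, the right-hand side by the additivity properties of $\Lambda$.

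\emph{Left-hand side.} Fix $i\in[1,m]$. The cluster variable $x_{i;w}=\varphi(M_{i;w})$ is a cluster monomial, so $M_{i;w}$ is reachable, and by Proposition~\ref{pro:ghkk} it is a good element. Since $\varphi(L)$ is good by assumption, Theorem~\ref{thm:trop-Lambda} gives $\Lambda(M_{i;w},L)=\langle x_{i;w},\varphi(L)\rangle_{\rm trop}$. Now evaluate this tropical invariant at the vertex $w$ using Theorem~\ref{thm:mutation-inv}(i). At $w$ we have ${\bf g}_{x_{i;w}}^w={\bf e}_i$ and $F_{x_{i;w}}^w=1$, so the tropical-polynomial term vanishes. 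Therefore
\[
\Lambda(M_{i;w},L)={\bf e}_i^T\Lambda_w{\bf g}_L^w=(\Lambda_w{\bf g}_L^w)_i .
\]

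\emph{Right-hand side.} For any ${\bf a}\in\mathbb Z_{\ge 0}^m$, the module $\mathcal M_w({\bf a})$ is a tensor product of the simple objects $M_{j;w}$. Iterating Proposition~\ref{pro:kkop-20}(ii) gives
\[
\Lambda(M_{i;w},\mathcal M_w({\bf a}))=\sum_j a_j\Lambda(M_{i;w},M_{j;w})=(\Lambda_w{\bf a})_i .
\]
Next, $M_{i;w}$ is real, and it strongly commutes with $\mathcal M_w({\bf d})$: the monoidal cluster is a strongly commuting set, and $M_{i;w}\otimes\mathcal M_w({\bf d})\cong\mathcal M_w({\bf d}+{\bf e}_i)$ is simple. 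So the dual form of Proposition~\ref{pro:kkop-20}(iii) applies and yields
\[
(\Lambda_w{\bf h})_i=\Lambda(M_{i;w},\mathcal M_w({\bf d})\nabla L)=\Lambda(M_{i;w},\mathcal M_w({\bf d}))+\Lambda(M_{i;w},L)=(\Lambda_w{\bf d})_i+(\Lambda_w{\bf g}_L^w)_i .
\]
This holds for every $i$, which gives $\Lambda_w{\bf g}_L^w=\Lambda_w({\bf h}-{\bf d})$.

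\emph{Main obstacle.} The only delicate point is verifying the hypotheses of Proposition~\ref{pro:kkop-20}(iii). All modules involved must lie in $\Omega_{\mathscr C}$; this holds because they are simple, or tensor products of simples. We also need the strong commutation of $M_{i;w}$ with $\mathcal M_w({\bf d})$, which follows from the monoidal cluster being a strongly commuting set together with reachability of $\mathcal M_w({\bf d}+{\bf e}_i)$. Once these are checked, the rest is formal.
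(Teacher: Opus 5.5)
Your proof is correct and follows essentially the same route as the paper's. The paper computes $\Lambda(\mathcal{M}_w({\bf a}),\mathcal{M}_w({\bf d})\nabla L)$ in two ways, once via Proposition~\ref{pro:kkop-20}(iii) and once via $\mathcal{M}_w({\bf d})\nabla L\cong\mathcal{M}_w({\bf h})$, and then identifies $\Lambda(\mathcal{M}_w({\bf a}),L)$ with the tropical invariant ${\bf a}^T\Lambda_w{\bf g}_L^w$ using Theorem~\ref{thm:trop-Lambda} and $F^w_{{\bf x}_w^{\bf a}}=1$; it does this for arbitrary ${\bf a}$ and concludes by arbitrariness, whereas you take ${\bf a}={\bf e}_i$ for each $i$, which is equivalent.
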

\begin{proof}
Take any ${\bf a} \in \mathbb{Z}_{\ge 0}^m$. As $\mathcal{M}_w({\bf a})$ strongly commutes with $\mathcal{M}_w({\bf d})$ and by Proposition \ref{pro:kkop-20} (iii), we have
\[ \Lambda(\mathcal{M}_w({\bf a}), \mathcal{M}_w({\bf d}) \nabla L)=\Lambda(\mathcal{M}_w({\bf a}), \mathcal{M}_w({\bf d}))+\Lambda(\mathcal{M}_w({\bf a}),  L)={\bf a}^T\Lambda_w{\bf d}+\Lambda(\mathcal{M}_w({\bf a}),  L).\]
On the other hand, we know that
\[ \Lambda(\mathcal{M}_w({\bf a}), \mathcal{M}_w({\bf d}) \nabla L)=\Lambda(\mathcal{M}_w({\bf a}), \mathcal{M}_w({\bf h}))={\bf a}^T\Lambda_w{\bf h}.\]
From the two equalities above, we obtain $\Lambda(\mathcal{M}_w({\bf a}),  L)={\bf a}^T\Lambda_w({\bf h}-{\bf d})$ and hence
\[{\bf a}^T\Lambda_w({\bf h}-{\bf d})=\Lambda(\mathcal{M}_w({\bf a}),  L) = \langle {\bf x}_w^{\bf a}, \varphi(L) \rangle_{\rm trop}={\bf a}^T\Lambda_w{\bf g}_L^w,
\] 
where the second equality is due to Theorem \ref{thm:trop-Lambda}, and the last one is due to $F_{{\bf x}_w^{\bf a}}^w=1$. Since ${\bf a}$ is an arbitrary element of $\mathbb Z_{\geq 0}^m$, the desired equality follows.
\end{proof}

\begin{theorem} 
\label{thm:F-o} Let $(\mathcal{A}, \mathcal{C}, \varphi)$ be a $\mathsf{\Lambda}$-monoidal categorification.
Let $M, N\in\mathcal C$ be simple objects such that $\varphi(M)$ and $\varphi(N)$ are good elements in the sense of Definition \ref{def:pointed}. 
Suppose that there exists a monoidal cluster $\mathcal M_w$ satisfying 
\begin{equation}\label{eq:assump-o=0}
\mathfrak{o}(M_{i;w}, M) =\mathfrak{o}(M_{i;w}, N)=0 \quad \text{for all $i\in[1,m]$}. 
\end{equation}
Then, the following statements hold.
\begin{itemize}
    \item [(i)] We have $
\mathscr{N}(M,N) = ({\bf g}^{w}_M)^T \Lambda_{w} {\bf g}^{w}_N$.
\item[(ii)] If either $M$ or $N$ is reachable, we have $\mathfrak{o}(M,N) = F_{M}^{w}[({\bf g}^{w}_N)^\circ]$, where $F_M^w\coloneqq F_{\varphi(M)}^w$, ${\bf g}^{w}_N\coloneqq {\bf g}_{\varphi(N)}^w$ and  $({\bf g}^{w}_N)^\circ\in\mathbb Z^n$ denotes the principal part of ${\bf g}^{w}_N\in\mathbb Z^m$.
\end{itemize} 
\end{theorem}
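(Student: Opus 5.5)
The plan is to prove (i) first and then get (ii) from the decomposition $\Lambda=\mathscr{N}+2\mathfrak{o}$ in \eqref{eq:L=N+2o} together with Theorem~\ref{thm:trop-Lambda} and Theorem~\ref{thm:mutation-inv}(i).

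For (i), I use Proposition~\ref{pro:head} to choose ${\bf d},{\bf h}\in\mathbb Z_{\ge0}^m$ with $\mathcal M_w({\bf d})\nabla N\cong\mathcal M_w({\bf h})$. Since $\mathcal M_w({\bf d})$ is real and $\mathfrak{o}(M_{i;w},N)=0$ for all $i$, I want $\mathfrak{o}(\mathcal M_w({\bf d}),N)=0$. For this I apply Lemma~\ref{Lem:oconv}(ii) repeatedly with $L=N$ in the first slot, building $\mathcal M_w({\bf d})$ one tensor factor at a time. The factors strongly commute, so $\nabla=\otimes$ and the equality case applies, which gives $\mathfrak{o}(\mathcal M_w({\bf d}),N)=\sum d_i\,\mathfrak{o}(M_{i;w},N)=0$. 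Then Lemma~\ref{Lem:alinear}, applied with $L=M$ in the second slot, gives
\[\mathscr{N}(M,\mathcal M_w({\bf h}))=\mathscr{N}(M,\mathcal M_w({\bf d}))+\mathscr{N}(M,N).\]
So $\mathscr{N}(M,N)=\mathscr{N}(M,\mathcal M_w({\bf h}))-\mathscr{N}(M,\mathcal M_w({\bf d}))$, and by additivity (Lemma~\ref{Lem:alinear} again) this equals $\sum_j (h_j-d_j)\mathscr{N}(M,M_{j;w})$.

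It remains to compute $\mathscr{N}(M,M_{j;w})=-\mathscr{N}(M_{j;w},M)$. From \eqref{eq:L=N+2o} and the hypothesis $\mathfrak{o}(M_{j;w},M)=0$ we get $\mathscr{N}(M_{j;w},M)=\Lambda(M_{j;w},M)$. Lemma~\ref{Lem:ng}, applied to $L=M$ via the $\mathcal M_w({\bf a})$-argument, gives $\Lambda(M_{j;w},M)={\bf e}_j^T\Lambda_w{\bf g}_M^w$. Hence $\mathscr{N}(M,M_{j;w})=({\bf g}_M^w)^T\Lambda_w{\bf e}_j$ by skew-symmetry of $\Lambda_w$. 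Lemma~\ref{Lem:ng} for $L=N$ gives $\Lambda_w({\bf h}-{\bf d})=\Lambda_w{\bf g}_N^w$. Combining,
\[\mathscr{N}(M,N)=({\bf g}_M^w)^T\Lambda_w({\bf h}-{\bf d})=({\bf g}_M^w)^T\Lambda_w{\bf g}_N^w.\]
The main care points are the order of slots in Lemma~\ref{Lem:alinear} and the fact that Lemma~\ref{Lem:ng} is stated in the form $\mathcal M_w({\bf d})\nabla L$.

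For (ii), assume one of $M,N$ is reachable. Theorem~\ref{thm:trop-Lambda} gives $\Lambda(M,N)=\langle\varphi(M),\varphi(N)\rangle_{\rm trop}$, and Theorem~\ref{thm:mutation-inv}(i) with $S=2I_n$ rewrites this as
\[\Lambda(M,N)=({\bf g}_M^w)^T\Lambda_w{\bf g}_N^w+F_M^w[2({\bf g}_N^w)^\circ].\]
Subtracting (i) and using \eqref{eq:L=N+2o} yields $2\mathfrak{o}(M,N)=F_M^w[2({\bf g}_N^w)^\circ]=2F_M^w[({\bf g}_N^w)^\circ]$, since tropical polynomials are positively homogeneous. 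Dividing by $2$ gives the claim.
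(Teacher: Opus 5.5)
Your proof is correct and follows the paper's argument. The only difference is in (i): you take a head decomposition $\mathcal M_w({\bf d})\nabla N\cong\mathcal M_w({\bf h})$ only for $N$ and handle the $M$-side directly via $\mathscr{N}(M_{j;w},M)=\Lambda(M_{j;w},M)={\bf e}_j^T\Lambda_w{\bf g}_M^w$, whereas the paper decomposes both $M$ and $N$, obtains $\mathscr{N}(M,N)=({\bf h}-{\bf d})^T\Lambda_w({\bf h}'-{\bf d}')$, and then applies Lemma~\ref{Lem:ng} twice; part (ii) is identical to the paper's.
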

\begin{proof}
(i) By Proposition \ref{pro:head},  for the vertex $w\in\mathbb T_n$, we have 
\[
\mathcal{M}_{w}({\bf d}) \nabla M \cong \mathcal{M}_{w}({\bf h}) \;\; \text{and}\;\;\mathcal{M}_w({\bf d}') \nabla N \cong \mathcal{M}_{w}({\bf h}')
\]
for some ${\bf d,h,d',h'} \in \mathbb{Z}_{\ge 0}^m$.
The assumption \eqref{eq:assump-o=0} and Lemma \ref{Lem:oconv} imply \[\mathfrak{o}(\mathcal M_w({\bf a}), M)=0=\mathfrak{o}(\mathcal M_w({\bf a}), N)\] for any ${\bf a}\in\mathbb Z_{\geq 0}^m$.  Then by Lemma \ref{Lem:alinear}, we have
\begin{align}
  \mathscr{N}(\mathcal{M}_{w}({\bf h}),N) &=\mathscr{N}(\mathcal{M}_{w}({\bf d}) \nabla M,N)= \mathscr{N}(\mathcal{M}_{w}({\bf d}),N)+ \mathscr{N}(M,N),\nonumber\\
    \mathscr{N}(\mathcal{M}_{w}({\bf h}),\mathcal M_w({\bf h'})) &= \mathscr{N}(\mathcal{M}_{w}({\bf h}),\mathcal M_w({\bf d'})\nabla N)= \mathscr{N}(\mathcal{M}_{w}({\bf h}), \mathcal M_w({\bf d'}))+ \mathscr{N}(\mathcal{M}_{w}({\bf h}), N),\nonumber\\
     \mathscr{N}(\mathcal{M}_{w}({\bf d}),\mathcal M_w({\bf h'})) &= \mathscr{N}(\mathcal{M}_{w}({\bf d}),\mathcal M_w({\bf d'})\nabla N)= \mathscr{N}(\mathcal{M}_{w}({\bf d}), \mathcal M_w({\bf d'}))+ \mathscr{N}(\mathcal{M}_{w}({\bf d}), N).\nonumber
\end{align}
By the three equalities above, we obtain
\begin{align}
    \mathscr{N}(M,N) &= \mathscr{N}(\mathcal{M}_{w}({\bf h}), \mathcal{M}_{w}({\bf h'})) - \mathscr{N}(\mathcal{M}_{w}({\bf d}), \mathcal{M}_{w}({\bf h'})) \nonumber\\
    & \qquad -\mathscr{N}(\mathcal{M}_{w}({\bf h}), \mathcal{M}_{w}({\bf d'})) + \mathscr{N}(\mathcal{M}_{w}({\bf d}), \mathcal{M}_{w}({\bf d'})).\nonumber
\end{align}
For any ${\bf a,b} \in \mathbb{Z}_{\ge 0}^m$, the equalities $\mathfrak{o}(\mathcal{M}_{w}({\bf a}), \mathcal{M}_{w}({\bf b}))=0$ and \eqref{eq:L=N+2o} give us
\[\mathscr{N}(\mathcal{M}_{w}({\bf a}), \mathcal{M}_{w}({\bf b})) =\Lambda(\mathcal M_w({\bf a}),\mathcal M_w({\bf b}))={\bf a}^T \Lambda_w{\bf b}.
\]
Applying it to the above equation, we get
\[ \mathscr{N}(M,N) =  {\bf h}^T \Lambda_{w} {\bf h'} -{\bf d}^T \Lambda_{w} {\bf h'}
-{\bf h}^T \Lambda_{w} {\bf d'} +{\bf d}^T \Lambda_{w} {\bf d'} = ({\bf h} - {\bf d})^T \Lambda_{w} ({\bf h'} - {\bf d'}).\]
Since $\mathcal{M}_w({\bf d'}) \nabla N \cong \mathcal{M}_{w}({\bf h'})$, Lemma \ref{Lem:ng} shows $\Lambda_w{\bf g}_N^w=\Lambda_w({\bf h'}-{\bf d'})$. Similarly, we have $\Lambda_w{\bf g}_M^w=\Lambda_w({\bf h}-{\bf d})$, or equivalently $({\bf g}_M^w)^T\Lambda_w=({\bf h}-{\bf d})^T\Lambda_w$.
Therefore, we finally obtain
\begin{align}
    \mathscr{N}(M,N)&=({\bf h} - {\bf d})^T \Lambda_{w} ({\bf h'} - {\bf d'})=\left(({\bf h} - {\bf d})^T \Lambda_{w}\right) ({\bf h'} - {\bf d'})\nonumber\\
    &=({\bf g}_M^w)^T\Lambda_w({\bf h'}-{\bf d'})=({\bf g}_M^w)^T\left(\Lambda_w({\bf h'}-{\bf d'})\right)=({\bf g}_M^w)^T\Lambda_w{\bf g}_N^w.\nonumber
\end{align}

(ii) Suppose that either $M$ or $N$ is reachable. Then, by Theorem \ref{thm:trop-Lambda}, we have $\Lambda(M,N)=\langle \varphi(M),\varphi(N)\rangle_{\rm trop}$.
Comparing the equality
\[
\langle \varphi(M),\varphi(N)\rangle_{\rm trop}=({\bf g}_M^w)^T\Lambda_w{\bf g}_N^w+F_{M}^{w}[(S\mid {\bf 0}){\bf g}^{w}_N]= \mathscr{N}(M,N) + F_{M}^{w}[(S\mid {\bf 0}){\bf g}^{w}_N]\]
with \eqref{eq:L=N+2o},  we obtain $2\mathfrak{o}(M,N)=F_{M}^{w}[(S\mid {\bf 0}){\bf g}^{w}_N]$.
Since $S = 2 I_n$, we have 
\[F_M^{w}[(S\mid {\bf 0}){\bf g}^{w}_N] = F_M^{w}[2 ({\bf g}^w_N)^\circ] = 2 F_M^{w}[({\bf g}^w_N)^\circ].\]
Thus, the assertion follows.
\end{proof}

\begin{corollary} \label{Cor:o=Fg=E}
Let $(\mathcal{A}, \mathcal{C}, \varphi)$ be a $\mathsf{\Lambda}$-monoidal categorification. Suppose that there exists a vertex $t_0 \in \mathbb{T}_n$ such that the corresponding monoidal cluster $\mathcal{M}_{t_0}$ satisfies
\begin{equation}\label{assump:t_0}
\mathfrak{o}(M_{i;t_0}, L) =0 
\quad \text{for all $1 \le i \le m$ and any simple object $L$ of $\mathcal{C}$.}
\end{equation}
Then, for any simple objects $M, N \in \mathcal{C}$ such that $\varphi(M)$ and $\varphi(N)$ are good elements of $\mathcal{A}$ and at least one of them is a cluster monomial, we have
\begin{equation} \label{eq:o=Fg}
\mathfrak{o}(M,N) = F_{M}^{t_0}[({\bf g}^{t_0}_N)^\circ].
\end{equation}
\end{corollary}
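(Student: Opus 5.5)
This is a direct specialization of Theorem \ref{thm:F-o} to the vertex $w=t_0$. We take $M,N\in\mathcal C$ simple with $\varphi(M)$, $\varphi(N)$ good elements, and assume one of them is a cluster monomial, i.e.\ $M$ or $N$ is reachable in the sense of the monoidal categorification. The plan is to check the hypotheses of Theorem \ref{thm:F-o} for $w=t_0$ and then read off part (ii).

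First, we verify condition \eqref{eq:assump-o=0} at $w=t_0$. Assumption \eqref{assump:t_0} states that $\mathfrak{o}(M_{i;t_0},L)=0$ for every $i\in[1,m]$ and every simple object $L$ of $\mathcal C$. Both $M$ and $N$ are simple objects of $\mathcal C$, so we may specialize $L=M$ and $L=N$. This gives
\[
\mathfrak{o}(M_{i;t_0},M)=\mathfrak{o}(M_{i;t_0},N)=0 \quad\text{for all } i,
\]
which is exactly \eqref{eq:assump-o=0} for the monoidal cluster $\mathcal M_{t_0}$.

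Second, the remaining hypotheses of Theorem \ref{thm:F-o} hold by assumption: $(\mathcal A,\mathcal C,\varphi)$ is a $\mathsf\Lambda$-monoidal categorification, and $\varphi(M)$, $\varphi(N)$ are good elements. Since one of $M,N$ is reachable, part (ii) of Theorem \ref{thm:F-o} applies with $w=t_0$. It gives $\mathfrak{o}(M,N)=F_M^{t_0}[({\bf g}_N^{t_0})^\circ]$, which is \eqref{eq:o=Fg}.

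There is no real obstacle; all the content is in Theorem \ref{thm:F-o}. The one point to state explicitly is that the skew-symmetrizer for a $\mathsf\Lambda$-monoidal categorification is $S=2I_n$. This is why the factor $2$ disappears in Theorem \ref{thm:F-o} (ii), so the formula carries no $S$.
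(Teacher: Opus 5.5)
Your proposal is correct and matches the paper: it states this corollary without a separate proof, as the specialization of Theorem~\ref{thm:F-o}~(ii) to $w=t_0$, with hypothesis \eqref{eq:assump-o=0} obtained by taking $L=M$ and $L=N$ in \eqref{assump:t_0}. Your remark about $S=2I_n$ is harmless, but that factor is already absorbed into the proof of Theorem~\ref{thm:F-o}~(ii), so nothing further needs to be checked here.
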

In the next subsection, we will see some examples satisfying the assumption in \eqref{assump:t_0}, \confer Proposition \ref{pro:standard-cor}. 
\subsection{Pole orders and $q$-characters}\label{subsec:staex}
In this subsection, we apply Corollary \ref{Cor:o=Fg=E} to the standard examples of monoidal categorification introduced by Hernandez--Leclerc \cite{HL_2010, HL16} to reveal a connection between the pole orders of normalized $R$-matrices and $q$-characters of simple modules.

Henceforth, we assume that $\mathfrak{g}$ is of untwisted type $\mathrm{X}_N^{(1)}$,  \ie $r=1$. (The twisted case can be treated similarly, \confer Remark \ref{rem:twisted-case}.)
Note that the restriction of the Dynkin diagram of $\mathfrak{g}$ to $\mathtt{I}_0$ is identical to the Dynkin diagram of the finite-dimensional simple Lie algebra $\mathfrak{g}_0$ of type $\mathrm{X}_N$. 
For $i \in \mathtt{I}_0$, let $\alpha_i$ denote the $i$-th simple root and set $d_i \coloneqq (\alpha_i, \alpha_i)/2 \in \{1,r^\vee\}$. 

Recall that the well-known classification result due to Chari--Pressley \cite[Chapter 12]{CP-1994} gives a bijection between the set of isomorphism classes of simple modules in the category $\mathscr{C}$ and the set $(1+z\mathbf{k}[z])^{\mathtt{I}_0}$ of $\mathtt{I}_0$-tuples of polynomials in a variable $z$ with constant terms $1$, called {\em Drinfeld polynomials}.
For each $\boldsymbol{\pi} = (\pi_{i}(z))_{i \in \mathtt{I}_0} \in(1+z\mathbf{k}[z])^{\mathtt{I}_0}$, we fix a simple module $L( \boldsymbol{\pi}) \in \mathscr{C}$ which gives a representative of the simple isomorphism class corresponding to $ \boldsymbol{\pi}$ under the above bijection.

\begin{remark}
The set $(1+z\mathbf{k}[z])^{\mathtt{I}_0}$ forms a commutative monoid with respect to the multiplication.
Since it is freely generated by 
\[ \boldsymbol{\pi}_{i,c} \coloneqq \left((1-cz)^{\delta_{ij}}\right)_{j \in \mathtt{I}_0}\] for $i \in \mathtt{I}_0$ and $c \in \mathbf{k}^\times$, any $ \boldsymbol{\pi}\in (1+z\mathbf{k}[z])^{\mathtt{I}_0}$ has a unique factorization (up to permutation) into a finite product of $ \boldsymbol{\pi}_{i,c}$ with $i \in \mathtt{I}_0$ and $c \in \mathbf{k}^\times$. 
\end{remark}

  A simple module isomorphic to 
\begin{equation} \label{eq:KR} 
W^{(i)}_{k,c} \coloneqq L( \boldsymbol{\pi}_{i,c} \boldsymbol{\pi}_{i,cq^{2d_i}} \cdots  \boldsymbol{\pi}_{i,cq^{2(k-1)d_i}})
\end{equation}
for some $i \in \mathtt{I}_0$, $c \in \mathbf{k}^\times$ and $k \in \mathbb{Z}_{> 0}$ is called a {\em Kirillov--Reshetikhin  (KR) module}.
If $k=1$,
it is called a  {\em fundamental module}.
For later use, we recall the following basic fact.

\begin{lemma} \label{Lem:pi}
Let $ \boldsymbol{\pi},  \boldsymbol{\pi}' \in (1+z\mathbf{k}[z])^{\mathtt{I}_0}$.
Assume that $c'/c \not\in q^{\mathbb Z_{> 0}}$ holds whenever $ \boldsymbol{\pi}_{i,c}$ is a factor of $ \boldsymbol{\pi}$ and $ \boldsymbol{\pi}_{i',c'}$ is a factor of $ \boldsymbol{\pi}'$.
Then, we have  
\[L( \boldsymbol{\pi}  \boldsymbol{\pi}') \cong L( \boldsymbol{\pi}) \nabla L( \boldsymbol{\pi}') \quad \text{and} \quad \mathfrak{o}(L( \boldsymbol{\pi}), L( \boldsymbol{\pi}'))=0. \]
\end{lemma}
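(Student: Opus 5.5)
The plan is to reduce everything to fundamental modules and to the known location of the poles of normalized $R$-matrices between them.

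First, factor $\boldsymbol{\pi}=\prod_{a}\boldsymbol{\pi}_{i_a,c_a}$ and $\boldsymbol{\pi}'=\prod_b\boldsymbol{\pi}_{j_b,c'_b}$, and write $V(\varpi_i)_c\coloneqq L(\boldsymbol{\pi}_{i,c})$ for the fundamental modules. We use the classical facts of Kashiwara and Chari on tensor products of fundamental modules.
\begin{itemize}
\item The denominator $d_{V(\varpi_i),V(\varpi_j)}(z)$ has all its roots in $q^{\mathbb Z_{>0}}$.
\item A tensor product of fundamental modules, ordered suitably by spectral parameters, is cyclic on the tensor product of dominant extremal weight vectors. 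It then has simple head, namely the simple module whose Drinfeld polynomial is the product of the factors.
\item Dually, in the opposite order it is cocyclic with simple socle.
\end{itemize}
Choose such an ordering to obtain a surjection $T\twoheadrightarrow L(\boldsymbol{\pi})$ from an ordered tensor product $T$ of the $V(\varpi_{i_a})_{c_a}$. Choose the opposite-type ordering to obtain an injection $L(\boldsymbol{\pi}')\hookrightarrow T'$ into a tensor product $T'$ of the $V(\varpi_{j_b})_{c'_b}$. Both maps send dominant extremal weight vectors to dominant extremal weight vectors.

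Second, show that $\mathfrak{o}(L(\boldsymbol{\pi}),L(\boldsymbol{\pi}'))=0$. By \eqref{eq:q-tri}, $R^{\mathrm{univ}}_{T,T'_z}$ is a composite of the $R^{\mathrm{univ}}$ between individual fundamental factors. Normalizing on the tensor of dominant vectors, $R^{\mathrm{norm}}_{T,T'_z}$ is the composite of the normalized $R$-matrices $R^{\mathrm{norm}}_{V(\varpi_{i_a})_{c_a},V(\varpi_{j_b})_{c'_bz}}$. The pole of the $(a,b)$ factor at $z=1$ would require $c'_b/c_a\in q^{\mathbb Z_{>0}}$, which is excluded by hypothesis. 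Hence $R^{\mathrm{norm}}_{T,T'_z}$ is regular at $z=1$.

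By naturality of the universal $R$-matrix, the square formed by $R^{\mathrm{norm}}_{T,T'_z}$, $R^{\mathrm{norm}}_{L(\boldsymbol{\pi}),L(\boldsymbol{\pi}')_z}$ and the maps $T\twoheadrightarrow L(\boldsymbol{\pi})$, $L(\boldsymbol{\pi}')\hookrightarrow T'$ commutes. The universal coefficients match, since dominant vectors go to dominant vectors. Restrict $R^{\mathrm{norm}}_{T,T'_z}$ to $T\otimes L(\boldsymbol{\pi}')_z$, and compose it with the projection $T'_z\otimes T\to T'_z\otimes L(\boldsymbol{\pi})$. Regularity at $z=1$ is preserved by this restriction and projection. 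The image of $T\otimes L(\boldsymbol{\pi}')_z$ lands in $L(\boldsymbol{\pi}')_z\otimes T$, by naturality and injectivity of $L(\boldsymbol{\pi}')_z\otimes T\hookrightarrow T'_z\otimes T$ after base change to $\mathbf{k}(z)$. Together these give regularity of $R^{\mathrm{norm}}_{L(\boldsymbol{\pi}),L(\boldsymbol{\pi}')_z}$ at $z=1$.

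Third, identify the head. Since $\mathfrak{o}=0$, specializing $R^{\mathrm{norm}}$ at $z=1$ gives the $r$-matrix $\mathbf{r}\colon L(\boldsymbol{\pi})\otimes L(\boldsymbol{\pi}')\to L(\boldsymbol{\pi}')\otimes L(\boldsymbol{\pi})$, with $v\otimes v'\mapsto v'\otimes v$. Its image is $L(\boldsymbol{\pi})\nabla L(\boldsymbol{\pi}')$, by \cite[Theorem 3.12]{KKKO-2015}, as in the proof of Lemma~\ref{Lem:alinear}. To apply that result we need one factor to be real, or else we use directly that the image of the $r$-matrix is generated by $v'\otimes v$; I would check which version is available. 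The image is a highest $\ell$-weight module generated by $v'\otimes v$, whose $\ell$-weight is the product, with Drinfeld polynomial $\boldsymbol{\pi}\boldsymbol{\pi}'$. Being a simple head, it is therefore $L(\boldsymbol{\pi}\boldsymbol{\pi}')$.

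The main obstacle is the second step: transferring pole-freeness from the tensor products of fundamentals to the simple subquotients, and making sure the normalizations agree. The third step's use of simplicity of the head without a reality assumption is the other point needing care. There I would instead argue directly that the head of the cyclic module $L(\boldsymbol{\pi})\otimes L(\boldsymbol{\pi}')$ is simple, since it is a quotient of the cyclic module $T\otimes T'$, which has simple head.
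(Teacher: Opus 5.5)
Your proof of $\mathfrak{o}(L(\boldsymbol{\pi}),L(\boldsymbol{\pi}'))=0$ is correct, and it takes a genuinely different route from the paper. The paper first proves the head statement. It then gets $\mathfrak{o}=0$ by induction on the total degree of $\boldsymbol{\pi}\boldsymbol{\pi}'$: it writes $L(\boldsymbol{\pi})\cong L(\boldsymbol{\pi}_{i_1,c_1})\nabla L(\boldsymbol{\pi}_{\mathrm{rest}})$ with a real fundamental first factor and applies the subadditivity of Lemma~\ref{Lem:oconv}~(ii). That lemma rests on the $\Lambda$-invariant results of \cite{kkop-2020} and on Lemma~\ref{Lem:alinear}.

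You avoid this machinery. By quasi-triangularity, $R^{\mathrm{norm}}_{T,T'_z}$ is a composite of normalized $R$-matrices between fundamentals, each regular at $z=1$ by the hypothesis. Naturality of $R^{\mathrm{univ}}$, plus the matching of universal coefficients, then transports regularity down to $L(\boldsymbol{\pi})\otimes L(\boldsymbol{\pi}')_z$. The only bookkeeping is that $L(\boldsymbol{\pi}')_z\otimes L(\boldsymbol{\pi})$ is a $\mathbf{k}[z^{\pm1}]$-direct summand of $T'_z\otimes L(\boldsymbol{\pi})$ and that $\phi\otimes\mathrm{id}$ is surjective. This is more elementary and never uses reality.

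You can also drop the socle-type embedding. A head-type surjection $\phi'\colon T''\twoheadrightarrow L(\boldsymbol{\pi}')$ works equally well in the second slot, because $R^{\mathrm{norm}}_{L(\boldsymbol{\pi}),L(\boldsymbol{\pi}')_z}\circ(\mathrm{id}\otimes\phi'_z)=(\phi'_z\otimes\mathrm{id})\circ R^{\mathrm{norm}}_{L(\boldsymbol{\pi}),T''_z}$ and $\mathrm{id}\otimes\phi'_z$ is surjective.

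The head statement is where your write-up has a gap. \cite[Theorem~3.12]{KKKO-2015} needs one of the two modules to be real, and $L(\boldsymbol{\pi})$, $L(\boldsymbol{\pi}')$ are arbitrary simples. The alternative you mention does not help. Saying the image of $\mathbf{r}$ is generated by $v'\otimes v$ already presupposes that $L(\boldsymbol{\pi})\otimes L(\boldsymbol{\pi}')$ is generated by $v\otimes v'$. Once you have that cyclicity, the module is of highest $\ell$-weight $\boldsymbol{\pi}\boldsymbol{\pi}'$ and its head is $L(\boldsymbol{\pi}\boldsymbol{\pi}')$, with no $r$-matrix needed. So everything hinges on cyclicity, which is what your fallback should supply; that fallback is the paper's argument.

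As written, the fallback uses the wrong $T'$. For your socle-ordered $T'$, $L(\boldsymbol{\pi})\otimes L(\boldsymbol{\pi}')$ is not a quotient of $T\otimes T'$. Take the head-ordered $T''\twoheadrightarrow L(\boldsymbol{\pi}')$ instead, and state the point you left implicit. The concatenated list of spectral parameters (those of $\boldsymbol{\pi}$, then those of $\boldsymbol{\pi}'$) is again admissible exactly because $c'/c\notin q^{\mathbb Z_{>0}}$. So $T\otimes T''$ is a suitably ordered tensor product of fundamentals for $\boldsymbol{\pi}\boldsymbol{\pi}'$, and it is cyclic with simple head $L(\boldsymbol{\pi}\boldsymbol{\pi}')$. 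A nonzero quotient of a finite-length module with simple head has the same simple head, which gives $L(\boldsymbol{\pi})\nabla L(\boldsymbol{\pi}')\cong L(\boldsymbol{\pi}\boldsymbol{\pi}')$.
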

\begin{proof}
It is well-known from \cite{chari, kas} that a simple module $L( \boldsymbol{\pi})$ is isomorphic to the head of a suitably ordered tensor product of fundamental modules. 
More precisely, choosing a factorization $ \boldsymbol{\pi} =  \boldsymbol{\pi}_{i_1,c_1} \boldsymbol{\pi}_{i_2,c_2} \cdots  \boldsymbol{\pi}_{i_d,c_d}$ so that $c_k / c_l \not\in q^{\mathbb Z_{> 0}}$ holds for any $1 \le l < k \le d$, we have
\[
L( \boldsymbol{\pi}) \cong \mathsf{hd}\left(L( \boldsymbol{\pi}_{i_1,c_1}) \otimes L( \boldsymbol{\pi}_{i_2,c_2}) \otimes \cdots \otimes L( \boldsymbol{\pi}_{i_d,c_d})\right).
\]
The desired isomorphism $L( \boldsymbol{\pi}  \boldsymbol{\pi}') \cong L( \boldsymbol{\pi}) \nabla L( \boldsymbol{\pi}')$ follows from this fact.
It is also known from \cite{kas} that $\mathfrak{o}(L( \boldsymbol{\pi}_{i,c}), L( \boldsymbol{\pi}_{i',c'})) = 0$ holds if $c'/c \not\in q^{\mathbb Z_{> 0}}$.
Applying Lemma \ref{Lem:oconv}, we get the desired equality $\mathfrak{o}(L( \boldsymbol{\pi}), L( \boldsymbol{\pi}'))=0$ by induction on the total degree of $ \boldsymbol{\pi} \boldsymbol{\pi}'$. 
\end{proof}

In what follows, we restrict our attention to the simple modules whose roots of Drinfeld polynomials are some integer powers of $q$.
With such a simple module $L$, we associate a unique tuple of non-negative integers $(u_{i,r}(L))_{i \in \mathtt{I}_0, r \in \mathbb{Z}}$ satisfying
\begin{equation} \label{eq:notation_u}
L \cong L(\boldsymbol{\pi}) \quad \text{with}\quad \boldsymbol{\pi} = \prod_{i \in \mathtt{I}_0, r \in \mathbb{Z}} \boldsymbol{\pi}_{i,q^r}^{u_{i,r}(L)}.
\end{equation}

Let $\epsilon \colon \mathtt{I}_0 \to \{ 0,1\}$ be a function satisfying 
\[\epsilon(i) \equiv \epsilon(j) + \min(d_i,d_j) \pmod 2 \quad \text{if $(\alpha_i,\alpha_j)<0$}. \]
There are exactly two choices of such an $\epsilon$. We fix such a parity function $\epsilon$ once and for all.
For a finite integer interval $[a,b] \subset \mathbb{Z}$, we make the following definitions:
\begin{itemize}
    \item $\mathtt{K}\coloneqq \{(i,p)\in \mathtt I_0\times \mathbb Z\mid p+d_i\in \epsilon(i)+2\mathbb{Z}\}$.
    \item $\mathtt{K}_{[a,b]}\coloneqq \{(i,p)\in \mathtt{K}    \mid p+d_i\in [a,b]\}$.
    \item $\mathtt{K}_{[a,b]}^{\rm uf}\coloneqq \{(i,p)\in \mathtt{K}_{[a,b]} \mid a\leq p-d_i\}=\{(i,p)\in \mathtt{K} \mid a\leq p-d_i< p+d_i\leq b\}$.
    \item $\mathcal{C}_\mathbb{Z}$: the Serre subcategory of $\mathscr{C}$ 
 whose simple modules are labelled by the elements of the submonoid of $(1+z\mathbf{k}[z])^{\mathtt{I}_0}$ generated by $$\{  \boldsymbol{\pi}_{i,q^{p+d_i}} \mid (i,p)\in \mathtt K\}=\{  \boldsymbol{\pi}_{i,q^{s}} \mid s\in \epsilon(i)+2\mathbb{Z}\}.$$ 
 \item $\mathcal{C}_{[a,b]}$: the Serre subcategory of $\mathscr{C}$ 
 whose simple modules are labelled by the elements of the submonoid of $(1+z\mathbf{k}[z])^{\mathtt{I}_0}$ generated by $$\{  \boldsymbol{\pi}_{i,q^{p+d_i}} \mid (i,p)\in \mathtt K_{[a,b]}\}=\{  \boldsymbol{\pi}_{i,q^{s}} \mid s\in [a,b]\cap(\epsilon(i)+2\mathbb{Z})\}.$$ 
\end{itemize}
Note that both $\mathcal C_{\mathbb Z}$ and $\mathcal C_{[a,b]}$ are monoidal subcategories of $\mathscr{C}$, \confer \cites{HL_2010, kkop-2024}.

\begin{remark}
    The categories $\mathcal{C}_\mathbb{Z}$ and $\mathcal C_{\ell}\coloneqq \mathcal C_{[0,2\ell +1]}$ were originally introduced by Hernandez--Leclerc \cite[\S 3]{HL_2010}. For a general integer interval $[a,b]$, the category $\mathcal{C}_{[a,b]}$ is a special case of the category $\mathscr{C}_{\mathfrak{g}}^{[a',b'],\mathfrak{s}}$ for  an admissible sequence $\mathfrak{s}$ in $\mathtt{I}_0 \times \mathbb{Z}$ and a finite interval $[a',b'] \subset \mathbb{Z}$ (different from $[a,b]$ in general) in the sense of \cite[\S6.3]{kkop-2024} (up to spectral parameter shift). 
\end{remark}

\begin{remark}\label{Rem:CZ}
Recall the spectral parameter shift from Remark \ref{Rem:spec}.
For any $\boldsymbol{\pi} = (\pi_i(z))_{i\in \mathtt I_0} \in (1+z\mathbf{k}[z])^{\mathtt{I}_0}$ and $c \in \mathbf{k}^\times$, we have $L(\boldsymbol{\pi})_c \cong L(\boldsymbol{\pi}_c)$, where $\boldsymbol{\pi}_c \coloneqq (\pi_i(cz))_{i \in \mathtt{I}_0}$. 
Therefore, the category $\mathcal{C}_\mathbb{Z}$ is stable under the spectral parameter shifts by elements of $q^{2\mathbb{Z}}$.
It is known that any prime simple module (\ie a simple module without non-trivial tensor factorizations) in $\mathscr{C}$ is isomorphic to a spectral parameter shift of a prime simple module in $\mathcal{C}_\mathbb{Z}$. 
Moreover, for any simple modules $M,N \in \mathcal{C}_\mathbb{Z}$, we have
$\mathfrak{o}(M,N_c) = 0$ if $c \not \in q^{2\mathbb{Z}}$.   
Thus, the study of the multiplicative nature of simple modules in $\mathscr{C}$ reduces to that in $\mathcal{C}_\mathbb{Z}$.
\end{remark}

\begin{remark}
For any simple modules $M,N$ in $\mathcal{C}_\mathbb{Z}$, there is an integer interval $[a,b]$ large enough so that both $M$ and $N$ belong to $\mathcal{C}_{[a,b]}$.
Combined with Remarks \ref{Rem:spec} and \ref{Rem:CZ}, it reduces the study of the multiplicative nature of simple modules in the category $\mathscr{C}$ to that of simple modules in the subcategories $\mathcal{C}_{[a,b]}$ for finite integer intervals $[a,b]$.   
\end{remark}

The category $\mathcal{C}_{[a,b]}$ gives a monoidal categorification of a cluster algebra $\mathcal{A}_{[a,b]}$. 
Let us describe its standard initial seed.
We first define an infinite quiver $\Gamma$ whose vertex set is 
\begin{equation}\label{eq:Gamma0}
\Gamma_0 \coloneqq \mathtt{K}=\{(i,p)\in \mathtt I_0\times \mathbb Z\mid p+d_i\in \epsilon(i)+2\mathbb{Z}\}
\end{equation}
and whose arrow set $\Gamma_1$ satisfies
\begin{equation}\label{eq:Gamma1} 
\# \{ \text{$(i,p) \to (j,s)$ in $\Gamma_1$}\} = \begin{cases} 
1, & \text{if $(\alpha_i,\alpha_j) \neq 0$ and $s = p + (\alpha_i,\alpha_j)$},\\
0, & \text{otherwise.}
\end{cases}
\end{equation}
Then, we consider the full subquiver $\Gamma_{[a,b]}$ of $\Gamma$ whose vertex set is 
\[ (\Gamma_{[a,b]})_0 \coloneqq \mathtt{K}_{[a,b]}=\{(i,p)\in \mathtt{K}    \mid p+d_i\in [a,b]\}.\]
We make $\Gamma_{[a,b]}$ into an ice quiver by declaring that a vertex $(i,p)\in (\Gamma_{[a,b]})_0=\mathtt{K}_{[a,b]}$ is frozen if $(i,p)\in \mathtt{K}_{[a,b]}\setminus \mathtt{K}_{[a,b]}^{\rm uf}$. 
We denote by $\Gamma_{[a,b]}^\circ$ the principal part of $\Gamma_{[a,b]}$, \ie the full subquiver of $\Gamma_{[a,b]}$ supported on the set $\mathtt{K}_{[a,b]}^{\rm uf}$ of unfrozen vertices.
\begin{example}\label{ex:Gammaab}
When $[a,b] = [-5,5]$, we give two examples of the quiver $\Gamma_{[-5,5]}$ for type $\mathrm{X}_N = \mathrm{A}_5$ and $\mathrm{B}_3$. The vertices $\text{\tiny $\square$}$ are frozen. 
\begin{itemize}
\item[(i)] When $\mathrm{X}_N = \mathrm{A}_5$, we identify the set $\mathtt{I}_0$ with $\{1,2,3,4,5 \}$ so that we have $(\alpha_i, \alpha_j) < 0$ if and only if $|i-j|=1$. 
If we choose the parity function $\epsilon \colon \mathtt{I}_0 \to \{ 0,1\}$ with $\epsilon(1) =0$,
then we have $\epsilon(1)=\epsilon(3)=\epsilon(5)=0$ and $\epsilon(2)=\epsilon(4)=1$. The ice quiver $\Gamma_{[-5,5]}$ is depicted as:
\[
\xymatrix@!C=3mm@R=2mm{
(i\setminus p) &-6&-5&-4& -3 & -2 & -1 &0&1 &2& 3 &4 \\
1&&\text{\tiny $\square$} \ar@{<-}[dr]&&\circ \ar@{<-}[ll]\ar@{<-}[dr]&& \circ \ar@{<-}[ll]\ar@{<-}[dr] &&\circ \ar@{<-}[ll]\ar@{<-}[dr]
&& \ar@{<-}[ll]\circ \ar@{<-}[dr]&\\ 
2&\text{\tiny $\square$}\ar@{<-}[dr]\ar@{<-}[ur]&&\circ \ar@{<-}[ll]\ar@{<-}[dr]\ar@{<-}[ur]&& \circ \ar@{<-}[ll]\ar@{<-}[dr] \ar@{<-}[ur]&&\circ \ar@{<-}[ll]\ar@{<-}[dr]\ar@{<-}[ur]
&&\circ \ar@{<-}[ll]\ar@{<-}[dr]\ar@{<-}[ur] && \circ \ar@{<-}[ll]\\
3&&\text{\tiny $\square$}\ar@{<-}[dr]\ar@{<-}[ur]&&\circ \ar@{<-}[ll]\ar@{<-}[dr]\ar@{<-}[ur]&& \circ \ar@{<-}[ll]\ar@{<-}[dr]\ar@{<-}[ur] &&\circ \ar@{<-}[ll]\ar@{<-}[dr]\ar@{<-}[ur]
&& \ar@{<-}[ll]\circ \ar@{<-}[dr]\ar@{<-}[ur]&\\ 
4&\text{\tiny $\square$}\ar@{<-}[ur]\ar@{<-}[dr]&&\circ \ar@{<-}[ll]\ar@{<-}[ur]\ar@{<-}[dr]&& \circ \ar@{<-}[ll]\ar@{<-}[ur] \ar@{<-}[dr] &&\circ\ar@{<-}[ll]\ar@{<-}[ur] \ar@{<-}[dr]&&\circ \ar@{<-}[ll]\ar@{<-}[ur] \ar@{<-}[dr]&& \circ \ar@{<-}[ll] \\
5&&\text{\tiny $\square$}\ar@{<-}[ur]&&\circ \ar@{<-}[ll]\ar@{<-}[ur]&& \circ \ar@{<-}[ll]\ar@{<-}[ur] &&\circ \ar@{<-}[ll]\ar@{<-}[ur]
&& \ar@{<-}[ll]\circ \ar@{<-}[ur]&}
\]
\item[(ii)] When $\mathrm{X}_N = \mathrm{B}_3$, we identify the set $\mathtt{I}_0$ with $\{1,2,3 \}$ so that we have $(d_1,d_2,d_3)=(2,2,1)$ and $(\alpha_i, \alpha_j) < 0$ if and only if $|i-j|=1$. 
If we choose the parity function $\epsilon \colon \mathtt{I}_0 \to \{ 0,1\}$ with $\epsilon(1) =1$, then we have $\epsilon(1)=\epsilon(2)=1$ and $\epsilon(3)=0$. The ice quiver $\Gamma_{[-5,5]}$ is depicted as:
\[
\xymatrix@!C=3mm@R=2mm{
(i\setminus p) &-7&-6&-5& -4 & -3 & -2 &-1&0&1& 2 &3 \\
1&&&\text{\tiny $\square$}  \ar@{<-}[drr]&&&&\circ\ar@{<-}[llll]\ar@{<-}[drr]&&&& \circ \ar@{<-}[llll] \\
2&\text{\tiny $\square$} \ar@{<-}[drr]\ar@{<-}[urr]&&&& \circ \ar@{<-}[llll]\ar@{<-}[drr] \ar@{<-}[urr]&&&&\circ \ar@{<-}[llll]\ar@{<-}[drr] \ar@{<-}[urr]&&\\
3&&&\text{\tiny $\square$} \ar@{<-}[urr]&&\circ \ar@{<-}[ll]\ar@{<-}[drr]&& \circ \ar@{<-}[ll]\ar@{<-}[urr] &&\circ \ar@{<-}[ll]\ar@{<-}[drr]
&& \ar@{<-}[ll]\circ \\ 
2&&&\text{\tiny $\square$}  \ar@{<-}[urr]\ar@{<-}[drr]&&&&\circ\ar@{<-}[llll]\ar@{<-}[urr] \ar@{<-}[drr]&&&& \circ \ar@{<-}[llll] \\
1&\text{\tiny $\square$} \ar@{<-}[urr]&&&& \circ \ar@{<-}[llll] \ar@{<-}[urr]&&&&\circ \ar@{<-}[llll] \ar@{<-}[urr]&&
}
\]
\end{itemize}
\end{example}

\vspace{3mm}
To each vertex $(i,p) \in \mathtt K_{[a,b]}$ of the quiver $\Gamma_{[a,b]}$, we assign a KR module 
\begin{eqnarray}\label{eqn:M-ip}
    M_{(i,p)}^{[a,b]} \coloneqq W^{(i)}_{l, q^{p+d_i}} 
    \quad \text{with}\quad l &\coloneqq& 1+\max\{ k\in \mathbb N\mid (i,p+2kd_i)\in \mathtt K_{[a,b]}\}\\
    &=&1+\max\{k\in \mathbb N\mid p+(2k+1)d_i\leq b\}.\nonumber
\end{eqnarray}
Note that $M_{(i,p)}^{[a,b]}$ belongs to the subcategory $\mathcal{C}_{[a,b]}$ and
the definition of $M_{(i,p)}^{[a,b]}$ depends on the integer $b$, but does not depend on $a\in \mathbb Z$ such that $(i,p)\in \mathtt K_{[a,b]}$. In particular,
 we have $M_{(i,p)}^{[a,b]}=M_{(i,p)}^{[a',b]}$ for any integer $a'$ with $a'\leq a$. 

We define a mutation matrix  $\widetilde B_{[a,b]}=(b_{{(i,p),(j,s)}})$ indexed by the set  $\mathtt{K}_{[a,b]} \times \mathtt{K}_{[a,b]}^{\rm uf}$ by
 \[ b_{(i,p),(j,s)} \coloneqq \# \{\text{$(j,s) \to (i,p)$ in $(\Gamma_{[a,b]})_1$}\} - \# \{\text{$(i,p) \to (j,s)$ in $(\Gamma_{[a,b]})_1$}\}.\]
 Let $\mathcal A_{[a,b]}$ be the cluster algebra whose initial exchange matrix is $\widetilde B_{t_0}\coloneqq\widetilde B_{[a,b]}$. The cluster of $\mathcal A_{[a,b]}$ at a vertex $t$ is denoted by ${\bf x}_t\coloneqq (x_{(i,p);t})_{(i,p)\in \mathtt{K}_{[a,b]}}$. When we discuss several different integer intervals, we also denote the initial vertex by $t_0^{[a,b]}$ to clarify it. (If there is no confusion, we abbreviate it simply as $t_0$).

By \cite[Theorem 5.1]{HL16}, the assignment $[M_{(i,p)}^{[a,b]}]\mapsto x_{(i,p);t_0}$, $(i,p)\in \mathtt{K}_{[a,b]}=(\Gamma_{[a,b]})_0$, extends to a ring isomorphism \[\varphi_{[a,b]} \colon {K}_0(\mathcal{C}_{[a,b]}) \to \mathcal{A}_{[a,b]}.\] 
By \cite{kkop-2024}, $(\mathcal A_{[a,b]}, \mathcal C_{[a,b]}, \varphi_{[a,b]})$ is a $\mathsf{\Lambda}$-monoidal categorification.  In particular, the initial monoidal cluster $\mathcal M_{t_0}$ of $\mathcal C_{[a,b]}$ is given by the KR modules $\{ M^{[a,b]}_{(i,p)} \mid (i,p) \in \mathtt{K}_{[a,b]}\}$.

\begin{proposition}\label{pro:standard-cor}
The initial monoidal cluster $\mathcal{M}_{t_0}$ defined above satisfies the condition \eqref{assump:t_0}.
Namely, for any $(i,p) \in \mathtt{K}_{[a,b]}=(\Gamma_{[a,b]})_0$ and any simple module $L \in \mathcal{C}_{[a,b]}$, we have 
\begin{equation} \label{eq:oMN=0}
\mathfrak{o}(M_{(i,p)}^{[a,b]}, L) = 0.
\end{equation}
\end{proposition}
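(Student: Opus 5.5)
We reduce \eqref{eq:oMN=0} to the case where $L$ is a fundamental module, and then handle that case with known results on normalized $R$-matrices between KR modules and fundamental modules.

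\emph{Step 1 (reduction to fundamental modules).} Write the Drinfeld polynomial of $L$ as a product of factors $\boldsymbol{\pi}_{j,q^{s}}$ with $s\in[a,b]$. Order these factors as in the proof of Lemma \ref{Lem:pi}, so that $L\cong \hd(L_1\otimes\cdots\otimes L_d)$ with each $L_r$ fundamental in $\mathcal C_{[a,b]}$. Applying Lemma \ref{Lem:pi} repeatedly, we can write $L\cong L'\nabla L_d$, where $L'$ is the simple module attached to the remaining factors and $\mathfrak{o}(L',L_d)=0$. Fundamental modules are real, so Lemma \ref{Lem:oconv}(ii) applies and gives
\[
\mathfrak{o}(M^{[a,b]}_{(i,p)},L)\le \mathfrak{o}(M^{[a,b]}_{(i,p)},L')+\mathfrak{o}(M^{[a,b]}_{(i,p)},L_d).
\]
By induction on the degree of the Drinfeld polynomial, it therefore suffices to prove $\mathfrak{o}(W^{(i)}_{l,q^{p+d_i}},W^{(j)}_{1,q^s})=0$ for every $s\in[a,b]\cap(\epsilon(j)+2\mathbb Z)$, where $l$ is as in \eqref{eqn:M-ip}.

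\emph{Step 2 (the fundamental case).} The KR module $M=W^{(i)}_{l,q^{p+d_i}}$ has Drinfeld roots $q^{p+d_i},q^{p+3d_i},\dots,q^{p+(2l-1)d_i}$. By maximality of $l$, the top root satisfies $p+(2l-1)d_i\le b<p+(2l+1)d_i$. Thus $M$ is the maximal-length KR module in the $i$-th string ending at the right edge of the window. I would use the known denominator formulas $d_{W^{(i)}_{l},W^{(j)}_1}(z)$ to show that $z=1$ is not a root when $s\le b$. These formulas are available via the inductive system of T-systems, or via \cite{kas} and the $R$-matrix factorization $W^{(i)}_l\cong W^{(i)}_{1,q^{p+d_i}}\nabla\cdots$. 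Equivalently, and more conceptually, I would use the criterion that $\mathfrak{o}(M,N)=0$ whenever $M\otimes N$ is generated by the tensor product of dominant extremal vectors. Writing $M=\hd(W^{(i)}_{l-1,q^{p+d_i}}\otimes W^{(i)}_{1,q^{p+(2l-1)d_i}})$ and reducing via Lemma \ref{Lem:oconv}(ii) once more, we are left with pole computations between two fundamental modules. In that situation, the poles of $R^{\rm norm}$ at $z=1$ occur only when $q^{s}/q^{c}$ lies in an explicit finite subset of $q^{\mathbb Z_{>0}}$ bounded above by $q^{r^\vee h^\vee}$.

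\emph{Main obstacle.} The difficulty is that the naive splitting of $M$ into fundamental modules can introduce poles which cancel in $M$ itself. So in Step 2 one cannot simply bound termwise; one needs an exact criterion. I would first try to show directly that no root of $d_{W^{(i)}_l,W^{(j)}_1}$ lies in $q^{s-(p+d_i)}\cdot$(allowed range) when $s\le b<p+(2l+1)d_i$. For this I would use the fact that the roots of $d_{W^{(i)}_l,W^{(j)}_1}(z)$ are of the form $q^{(2l-1)d_i+t}$ with $t$ a root exponent of $d_{i,j}$, all of which push beyond $b$. If this is unavailable in the required generality (non-simply-laced types), I would instead invoke the $\mathsf\Lambda$-invariant computation: by Proposition \ref{pro:kkop-20} and Theorem \ref{thm:trop-Lambda}, $\Lambda(M_{(i,p)},L)$ can be compared with $\mathscr N$ through the known formula for $a_{M,N}(z)$ in terms of the inverse quantum Cartan matrix. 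Vanishing of $\mathfrak{o}$ then amounts to a support condition on its coefficients, which follows from the position of $M$ at the right boundary.
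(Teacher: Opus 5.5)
\textbf{Step 1 is fine; Step 2 has a genuine gap.} Step~1 is exactly the paper's reduction. Use the ordering from Lemma~\ref{Lem:pi} to write $L\cong L'\nabla L_d$ with $\mathfrak o(L',L_d)=0$, then apply Lemma~\ref{Lem:oconv}(ii) in the second variable. This reduces everything to a fundamental $L$ with Drinfeld root $q^{s}$, $s\in[a,b]$.

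Step~2, however, is not proved.
\begin{itemize}
\item The termwise route fails, as you observe yourself. Individual fundamental constituents of $M=M^{[a,b]}_{(i,p)}$ do have poles against fundamentals inside the window. For instance, $\mathfrak o(W^{(i)}_{1,q^{p+d_i}},W^{(i)}_{1,q^{p+3d_i}})>0$ when $l\ge 2$. So subadditivity can never give $0$.
\item The head decomposition you wrote is also in the wrong order. With the conventions of Lemma~\ref{Lem:pi} one has $M\cong W^{(i)}_{1,q^{p+(2l-1)d_i}}\nabla W^{(i)}_{l-1,q^{p+d_i}}$. In your order the pole order between the two factors is non-zero, so Lemma~\ref{Lem:oconv}(ii) does not even apply.
\item Fallback (a) rests on a claimed shape of the roots of $d_{W^{(i)}_l,W^{(j)}_1}(z)$. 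This is not established in all untwisted types, as you concede. Moreover, in this paper the general KR pole-order formula (Theorem~\ref{thm:oKR}) is deduced \emph{from} the present proposition, so it cannot be used here.
\item Fallback (b) leaves the decisive ``support condition'' unspecified, so it is not yet an argument.
\end{itemize}

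\textbf{The missing idea is a case split that avoids all denominator computations.}
\begin{itemize}
\item If $s\le p+d_i$: every Drinfeld root $c=q^{p+(2k+1)d_i}$ of $M$ satisfies $q^{s}/c\in q^{\mathbb Z_{\le 0}}$. Lemma~\ref{Lem:pi} then gives $\mathfrak o(M,L)=0$ at once. This is the cyclicity criterion you mention, already packaged.
\item If $p+d_i<s\le b$: the paper invokes \cite[Proposition~4.19]{kkop-2024}. Because $l$ is maximal, i.e.\ the KR module $M$ reaches the right edge $b$ of the window, $M$ and $L$ strongly commute. Hence $\mathfrak d(M,L)=\mathfrak o(M,L)+\mathfrak o(L,M)=0$ by Proposition~\ref{pro:kkop-20}(i). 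Since pole orders are non-negative, this forces $\mathfrak o(M,L)=0$.
\end{itemize}
It is this strong-commutation input, not explicit knowledge of $R$-matrix denominators, that uses the right-boundary position of $M$. Your proposal does not supply a substitute for it.
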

\begin{proof}
By Lemma \ref{Lem:pi}, it suffices to consider the case when $L$ is a fundamental module, say $L \cong L( \boldsymbol{\pi}_{j,q^{s+d_j}})$ with $(j,s) \in (\Gamma_{[a,b]})_0$. Recall the definition of $M^{[a,b]}_{(i,p)}$ from \eqref{eqn:M-ip}. 
If $s+d_j \le p+d_i$, we apply Lemma \ref{Lem:pi} to obtain \eqref{eq:oMN=0}.
If $s+d_j > p+d_i$, it is known from \cite[Proposition 4.19]{kkop-2024} that $M^{[a,b]}_{(i,p)}$ and $L$ strongly commute, which implies \eqref{eq:oMN=0}.
\end{proof}

\begin{proposition}\label{prop:good}
For any simple module $M \in \mathcal{C}_{[a,b]}$, the element $\varphi_{[a,b]}(M)\in\mathcal A_{[a,b]}$ is a good element in the sense of Definition \ref{def:pointed}.
\end{proposition}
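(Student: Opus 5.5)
Two properties have to be established: universal positivity, and compatible pointedness with the degrees forming a tropical point in $\mathcal S_Y(\mathbb Z^{\max})$. Positivity is immediate. Since $(\mathcal A_{[a,b]},\mathcal C_{[a,b]},\varphi_{[a,b]})$ is a monoidal categorification, Proposition \ref{prop:HL_2010} gives $\varphi_{[a,b]}(M)\in\mathbb Z_{\ge 0}[x_{1;t}^{\pm1},\ldots,x_{m;t}^{\pm1}]$ for every seed $t$. Full rank holds because $(\widetilde B_{t_0},\Lambda_{t_0})$ is a compatible pair (Proposition \ref{pro:fullrank}), so the notion of good element makes sense.

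For pointedness I would not compute Laurent expansions directly. Instead I would use the known structure of the basis of simple classes. The classes of simple modules in $\mathcal C_{[a,b]}$ form, under $\varphi_{[a,b]}$, the (specialization at $q=1$ of the) dual canonical basis / common triangular basis of $\mathcal A_{[a,b]}$ in the sense of Qin \cite{qin_2019}. Qin's theorem for the Hernandez--Leclerc categories (the quantum Grothendieck ring of $\mathcal C_{[a,b]}$ with its $(q,t)$-character basis) asserts exactly that each simple class is compatibly pointed for all seeds of the cluster algebra. That is, it is pointed at every seed $t$, and the degrees $\{\deg^t\}$ transform by the tropical $Y$-mutation rule \eqref{eqn:y-trop} with $\widehat B_t=-\widetilde B_t^T$. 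The plan is therefore to identify $\{\varphi_{[a,b]}[M]\}$ with Qin's common triangular basis and cite its compatible pointedness. This identification is the step that requires care. It needs the fact that the seed $t_0$ with KR modules $M^{[a,b]}_{(i,p)}$ is, up to the spectral shift and reindexing noted after the definition of $\mathcal C_{[a,b]}$, the standard seed used by Qin/KKOP in $\mathscr C^{[a',b'],\mathfrak s}_{\mathfrak g}$. It also needs care with the passage from the quantum to the classical setting: specializing $t\to1$ preserves the leading term and the mutation rule of degrees.

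A more intrinsic route uses only results in the paper and would serve as a backup for pointedness at each seed $w$. Take the monoidal cluster $\mathcal M_w$ and use Proposition \ref{pro:head} to get $\mathcal M_w({\bf d})\nabla M\cong\mathcal M_w({\bf h})$. Degree considerations in the Laurent expansion of $[\mathcal M_w({\bf d})\otimes M]$ then show that $\varphi(M)$ has unique maximal term ${\bf x}_w^{{\bf h}-{\bf d}}$ with coefficient $1$ in the dominance order. Identifying this maximal term requires a triangularity statement: in $[\mathcal M_w({\bf d})][M]$ all other simple constituents have strictly smaller degree. I would try to extract this from the $\Lambda$-invariant as in Lemma \ref{Lem:ng}, using $\Lambda(\mathcal M_w({\bf a}),-)$ as a family of linear functionals detecting dominance. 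Full rank of $\widetilde B_w$ and the compatibility $\widetilde B_w^T\Lambda_w=(2I_n\mid{\bf 0})$ make these functionals separate ${\bf x}_w\widehat{\bf y}_w$-shifts.

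The tropical-point property of the degrees across an edge $t$ --- $t'$ would follow by comparing the leading terms in the two seeds via Lemma \ref{lem:1-mutation} applied to monomials. I expect this triangularity/leading-term step, or equivalently the precise matching with Qin's framework, to be the main obstacle.
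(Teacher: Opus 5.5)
Your positivity step (Proposition \ref{prop:HL_2010}) and the full-rank remark are fine. Your main route for pointedness is a genuinely different, citation-based proof, and it is viable.

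The paper never passes to quantum Grothendieck rings. It transports Kashiwara--Kim's argument \cite{kk_2019}, \S\S3.2--3.5, to $\mathcal C_{[a,b]}$. There the $\Lambda$-invariant, together with the weights of \cite{kkop-2022}, characterizes $\preceq_t$ categorically (the analog of their Prop.~3.3). Pointedness and positivity at every seed then come from the analog of their Lemma~3.6(ii), and tropical compatibility of degrees from the analog of their Theorem~3.16.

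You trade this for an identification with the common triangular basis. That is heavier and more type-sensitive than you indicate. The relevant theorem of \cite{Qin_2017} is proved for simply-laced types, while the statement also covers $\mathrm{B},\mathrm{C},\mathrm{F},\mathrm{G}$. For those types you would need the non-simply-laced theory of $(q,t)$-characters. In particular you need the Kazhdan--Lusztig type fact that the $t=1$ specializations of simple $(q,t)$-characters are $q$-characters of simples. It is this fact, not the preservation of leading terms, that makes the specialized basis equal to $\{\varphi_{[a,b]}[M]\}$, and in those types it rests on much later work. The paper's route avoids all such quantum or geometric input and works uniformly in type.

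Your fallback is closer to the paper, but as written it has two genuine gaps.

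First, the functionals $\Lambda(\mathcal M_w({\bf a}),-)$ only recover ${\bf v}$ from a vector already known to be of the form $\widetilde B_w{\bf v}$. They cannot show that ${\bf c}-{\bf h}\in\widetilde B_w\mathbb Z^n$ for the other composition factors $\mathcal M_w({\bf c})$ of $\mathcal M_w({\bf d})\otimes M$. Since $\widetilde B_w^T\Lambda_w=(2I_n\mid{\bf 0})$, $\Lambda_w$ is injective on $\widetilde B_w\mathbb Q^n$. However, $\Lambda_w$ is often degenerate: for $\mathfrak g_0=\mathrm A_1$ and $[a,b]=[0,5]$ one has $m=3$. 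A nonzero $\ker\Lambda_w$ then consists of directions transverse to $\widetilde B_w\mathbb Q^n$ that these functionals cannot see. The weights, which are constant on the composition factors of a tensor product of simples, are exactly what supplies this missing coset information.

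Second, the tropical-point property does not follow from comparing leading terms. Lemma \ref{lem:1-mutation} presupposes a tropical point, and pointedness plus positivity at two adjacent seeds do not force tropically related degrees. Take $n=1$, $m=2$, $\widetilde B_t=(0,1)^T$. The element $u=x_{1;t}x_{1;t'}=1+x_2$ equals both $1+\hat y_{1;t}$ and $x_2(1+\hat y_{1;t'})$. So its degrees are $(0,0)^T$ at $t$ and $(0,1)^T$ at $t'$, whereas \eqref{eqn:y-trop} fixes $(0,0)^T$. This $u$ is the class of the non-simple object $M_{1;t}\otimes M_{1;t'}$. Hence simplicity of $M$ must enter through a categorical argument, which is what the analog of Theorem~3.16 of \cite{kk_2019} provides.
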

\begin{proof}
The proof can be parallel to the discussion in \cite[\S\S3.2--3.5]{kk_2019} for quiver Hecke algebras, where the
key ingredients are the $\Lambda$-invariant and a notion of weights. Then the dominance order $\preceq_t$ can be reformulated in terms of the $\Lambda$-invariant and weights, \confer \cite[Proposition 3.3]{kk_2019}.
For quantum affine algebras, in addition to the $\Lambda$-invariant discussed above, we also have a notion of weights due to \cite{kkop-2022}. With these ingredients, the same discussion as in \cite[\S\S3.2--3.5]{kk_2019} works for our category $\mathcal{C}_{[a,b]}$ as well.
In particular, the assertion follows from the analog of \cite[Lemma 3.6(ii)]{kk_2019}, which shows that $\varphi_{[a,b]}(M)$ is pointed and positive with respect to any seed, and the analog of \cite[Theorem 3.16]{kk_2019}, which ensures the required compatibility among the degrees.
Note that our $g$-vector ${\bf g}^t_M$ (resp.\ $\preceq_t$) corresponds to ${\bf g}_{\mathscr{S}}^{L}(M)$ (resp.\  the opposite of $\preceq_{\mathscr{S}}$) in \cite{kk_2019}.
\end{proof}

Now, we recall that the extended $g$-vector ${\bf g}_M^{t_0}$ and $F$-polynomial $F_M^{t_0}$ of the good element $\varphi_{[a,b]}(M)\in \mathcal A_{[a,b]}$ given by a simple object $M\in\mathcal{C}_{[a,b]}$ can be read off from the $q$-character of $M$ according to \cite{HL16}.

For an object $M\in\mathcal C_{\mathbb Z}$, its {\em $q$-character} $\chi_q(M)$ in the sense of Frenkel--Reshetikhin \cite{FR99} is a Laurent polynomial in variables $Y_{i,p+d_i}$ for $(i,p)\in \mathtt K=\{(i,p)\in \mathtt{I}_0\times \mathbb Z\mid p+d_i\in \epsilon(i)+2\mathbb{Z}\}$
with non-negative integer coefficients encoding the $\ell$-weight space decomposition of $M$ (\ie the spectral decomposition with respect to the action of the imaginary root vectors of $U_q'(\mathfrak{g})$).

By \cite[Lemma 3.8]{HL16}, if $M \in \mathcal{C}_{\mathbb Z}$ is simple, its $q$-character $\chi_q(M)$ is of the form (with the notation in \eqref{eq:notation_u})
\[ \chi_q(M) = \left( \prod_{(i,p) \in \mathtt K} Y_{i,p+d_i}^{u_{i,p+d_i}(M)} \right) F_M({\bf A}^{-1}),\]
where $F_M({\bf A}^{-1}) \in \mathbb{Z}_{\ge 0}[A_{i,p}^{-1} \mid (i,p) \in \mathtt{K}]$ with constant term one, and
\[ A_{i,p} \coloneqq Y_{i,p+d_i}Y_{i,p-d_i} \prod_{j \in \mathtt{I}_0, c_{ji}=-1} Y_{j,p}^{-1} \prod_{j \in \mathtt{I}_0, c_{ji}=-2} Y_{j,p-1}^{-1}Y_{j,p+1}^{-1} \prod_{j \in \mathtt{I}_0, c_{ji}=-3}Y_{j,p-2}^{-1}Y_{j,p}^{-1}Y_{j,p+2}^{-1}\]
with $c_{ji} \coloneqq 2(\alpha_j,\alpha_i)/(\alpha_j,\alpha_j)$ being the Cartan integer.
Note that $A_{i,p} \in \mathbb Z[Y_{j,s+d_j}^{\pm 1}\mid (j,s)\in \mathtt{K}]$ for any $(i,p)\in \mathtt{K}$, and the set $\{ A_{i,p} \mid (i,p) \in  \mathtt{K}\}$ is algebraically independent.

Fix a finite integer interval $[a,b]$ as before. For an object $M\in \mathcal C_{[a,b]}\subseteq \mathcal {C}_{\mathbb Z}$, we define its {\em truncated $q$-character} $\chi^{-, b}_q(M)$ by deleting all the terms containing $Y_{i,p+d_i}^{\pm 1}$ with $p+d_i > b$ as a non-trivial factor from $\chi_q(M)$. 
By \cite[Proposition 3.10]{HL16},
the assignment $M \mapsto \chi^{-, b}_q(M)$ gives rise to an injective ring homomorphism 
\[ \chi_q^{-, b}(-) \colon {K}_0(\mathcal{C}_{[a,b]}) \to \mathbb{Z}[Y_{i,p+d_i}^{\pm 1} \mid (i,p) \in \mathtt{K}_{[a,b]}].\]
If $M \in \mathcal{C}_{[a,b]}$ is simple, its truncated $q$-character $\chi^{-, b}
_q(M)$ is of the form 
\[ \chi^{-, b}_q(M) = \left( \prod_{(i,p) \in \mathtt K_{[a,b]}} Y_{i,p+d_i}^{u_{i,p+d_i}(M)} \right) F_M^{-, b}({\bf A}^{-1}),\]
where $F_M^{-, b}({\bf A}^{-1}) \in \mathbb{Z}_{\ge 0}[A_{i,p}^{-1} \mid (i,p) \in \mathtt{K}_{[a,b]}^{\rm uf}]$ is obtained from $F_M({\bf A}^{-1})$ by deleting all the terms containing $A_{j,s}^{-1}$ with $(j,s) \not \in \mathtt{K}_{[a,b]}^{\rm uf}$. (Note that $A_{j,s}$ belongs to $\mathbb{Z}[Y_{i,p+d_i}^{\pm 1} \mid (i,p) \in \mathtt{K}_{[a,b]}]$ if and only if $(j,s) \in \mathtt{K}^{\rm uf}_{[a,b]}$.)
Moreover, we have $F^{{-, b}}_{M}({\bf A}^{-1})=1$ if $M$ belongs to  the initial monoidal cluster
    $\{M_{(i,p)}^{[a,b]}\mid (i,p)\in \mathtt{K}_{[a,b]} \}$  of $\mathcal C_{[a,b]}$.

\begin{lemma}[{\confer\cite{HL16}}]\label{lem:gFab}
  Keep the above notation.   Let $M$ be a simple object in $\mathcal C_{[a,b]}$. Let ${\bf g}_M^{t_0}$ and $F_M^{t_0}({\bf y})$ be the extended $g$-vector and
    $F$-polynomial of the good element $\varphi_{[a,b]}(M)\in \mathcal A_{[a,b]}$ with respect to the standard initial seed. Then
    \begin{itemize}
\item[(i)] ${\bf g}_M^{t_0} = (g_{(i,p)})_{(i,p)\in \mathtt{K}_{[a,b]}}$ is given by $g_{(i,p)} \coloneqq u_{i,p+d_i}(M) - u_{i,p-d_i}(M)$ in the notation of \eqref{eq:notation_u}.
\item[(ii)] $F_M^{t_0}({\bf y})$ is obtained from $F^{-, b}_M(\mathbf{A}^{-1})$ by substituting $A_{i,p}^{-1}$ with ${y}_{(i,p)}$ for $(i,p)\in \mathtt{K}_{[a,b]}^{\rm uf}$.
\end{itemize}
\end{lemma}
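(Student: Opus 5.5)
The plan is to compare the Laurent expansion of $\varphi_{[a,b]}(M)$ in the standard initial seed with the truncated $q$-character, which we know from \cite{HL16} is the image of $[M]$ under an injective ring homomorphism. First I would set up the dictionary on the initial cluster. Each initial cluster variable $x_{(i,p);t_0}$ is sent by $\chi_q^{-,b}\circ\varphi_{[a,b]}^{-1}$ to the truncated $q$-character of the KR module $M^{[a,b]}_{(i,p)}$. Since $F^{-,b}=1$ for the initial monoidal cluster, this image is the single monomial $\prod_{k=0}^{l-1}Y_{i,p+d_i+2kd_i}$. Inverting these monomials gives a ring embedding of the Laurent ring $\mathbb Z[x_{(i,p);t_0}^{\pm1}]$ into $\mathbb Z[Y^{\pm1}_{i,p+d_i}\mid (i,p)\in\mathtt K_{[a,b]}]$. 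The relations are triangular: consecutive KR modules along a row differ by one $Y$. So $Y_{i,p+d_i}$ corresponds to $x_{(i,p)}x_{(i,p+2d_i)}^{-1}$, with the convention $x_{(i,p+2d_i)}=1$ when $(i,p+2d_i)\notin\mathtt K_{[a,b]}$. This map is an isomorphism of Laurent rings, and by the Laurent phenomenon it intertwines $\varphi_{[a,b]}$ with $\chi^{-,b}_q$.

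Next I would compute the images of the $\hat y$-variables. Using the definition of $\widetilde B_{[a,b]}$ from the arrows of $\Gamma_{[a,b]}$, I would check directly that for $(i,p)\in\mathtt K^{\rm uf}_{[a,b]}$
\[
\hat y_{(i,p);t_0}=\prod_{(j,s)} x_{(j,s);t_0}^{b_{(j,s),(i,p)}}\ \longmapsto\ A_{i,p}^{-1}.
\]
This is the key combinatorial verification, and I expect it to be the main obstacle. The arrows at $(i,p)$ are the horizontal ones to $(i,p\pm 2d_i)$ and the slanted ones to $(j,p\pm(\alpha_i,\alpha_j))$. One must telescope the row products and see that the result reproduces exactly the factors $Y_{i,p\pm d_i}^{-1}$ and $Y_{j,\cdot}$ in $A_{i,p}^{-1}$, including the non-simply-laced cases $c_{ji}=-2,-3$. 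Those cases need the boundary convention at $b$. They also need the fact that the arrow $(i,p)\to(j,p+(\alpha_i,\alpha_j))$ pairs with the reversed arrow to give the right shifts $p+c_{ji}+2k-1$. I would carry this out by cases on $c_{ji}$, or simply cite \cite[\S5]{HL16}, where this identification underlies Theorem 5.1.

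Granting this, the truncated $q$-character $\chi^{-,b}_q(M)=\mathbf Y^{u(M)}F^{-,b}_M(\mathbf A^{-1})$ transports to an expression $\mathbf x_{t_0}^{\mathbf g}F(\hat{\mathbf y}_{t_0})$. Here $\mathbf g$ is obtained by rewriting $\prod Y_{i,p+d_i}^{u_{i,p+d_i}(M)}$ via $Y_{i,p+d_i}\mapsto x_{(i,p)}x_{(i,p+2d_i)}^{-1}$. Collecting exponents gives $g_{(i,p)}=u_{i,p+d_i}(M)-u_{i,p-d_i}(M)$, which is (i). The polynomial is $F=F^{-,b}_M$ with $A_{i,p}^{-1}\mapsto y_{(i,p)}$, which is (ii). Finally, I would invoke uniqueness of the canonical expression \eqref{eqn:upointed}. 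The element $\varphi_{[a,b]}(M)$ is good by Proposition \ref{prop:good}, so it is pointed. Because $\widetilde B_{t_0}$ has full rank, the monomials $\hat{\mathbf y}^{\mathbf v}$ are distinct. Since $F^{-,b}_M$ has constant term $1$, the degree is exactly $\mathbf g$ and the $F$-polynomial is exactly $F$. Hence $\mathbf g^{t_0}_M=\mathbf g$ and $F^{t_0}_M=F$.
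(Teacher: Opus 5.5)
Your proposal is correct and follows the paper's proof. The paper also uses the monomial map $\psi\colon x_{(i,p);t_0}\mapsto\prod_{k} Y_{i,p+(2k+1)d_i}$, cites \cite[Theorem 5.1]{HL16} for $\psi\circ\varphi_{[a,b]}=\chi^{-,b}_q$ and \cite[Lemma 4.15]{HL16} for $\psi(\hat y_{(i,p);t_0})=A_{i,p}^{-1}$, and then reads off (i) and (ii). Your only departures are minor: you derive the intertwining from the values on the initial KR modules plus the Laurent phenomenon, which is valid since $\varphi_{[a,b]}$ is an isomorphism and both sides are ring maps into a domain, and you spell out the uniqueness of the canonical expression that the paper leaves implicit.
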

\begin{proof}
    We give a sketch for the reader's convenience. 
    Consider the monomial transformation 
\[ \psi \colon \mathbb{Z}[x_{(i,p);t_0}^{\pm 1}\mid (i,p)\in \mathtt{K}_{[a,b]}] \to \mathbb{Z}[Y_{i,p+d_i}^{\pm 1} \mid (i,p) \in \mathtt{K}_{[a,b]}] \]
given by 
\[\psi(x_{(i,p);t_0}) \coloneqq \prod_{k=0}^{\lfloor (b-p-d_i)/2d_i\rfloor} Y_{i,p+(2k+1)d_i}.\]
Note that $\lfloor (b-p-d_i)/2d_i\rfloor=\max\{k\in \mathbb N\mid  p+(2k+1)d_i \leq b \}=\max\{k\in \mathbb N\mid (i,p+2kd_i)\in \mathtt K_{[a,b]} 
\}$ and $x_{(i,p);t_0}/x_{(i,p+2d_i);t_0}$ is mapped to $Y_{i,p+d_i}$ under $\psi$, where we understand $x_{(i,p+2d_i);t_0} = 1$ if $(i,p+2d_i) \not \in \mathtt{K}_{[a,b]}$. 
    Thanks to \cite[Theorem 5.1]{HL16}, there is an equality of maps ${K}_0(\mathcal{C}_{[a,b]}) \to \mathbb{Z}[Y_{i,p+d_i}^{\pm 1} \mid (i,p) \in \mathtt{K}_{[a,b]}]$
\begin{equation}\label{eq:chi^-}
\psi \circ \varphi_{[a,b]} = \chi^{-, b}_q(-).
\end{equation}   For $(i,p) \in \mathtt{K}_{[a,b]}^{\rm uf}$, the Laurent monomial $\hat{y}_{(i,p);t_0} \coloneqq 
\prod_{(j,s)\in \mathtt{K}_{[a,b]}} x_{(j,s); t_0}^{b_{(j,s),(i,p)}^{t_0}}$ is transformed under $\psi$ to
\begin{equation}\label{eq:y=A}
\psi(\hat{y}_{(i,p);t_0}) = A_{i,p}^{-1},
\end{equation}
by \cite[Lemma 4.15]{HL16}. Thanks to \eqref{eq:chi^-} and \eqref{eq:y=A}, we observe our assertions.
\end{proof}

\begin{theorem}\label{thm:oF}
Let $[a,b]$ be a finite integer interval and $M,N$ simple objects of $\mathcal{C}_{[a,b]}$ such that at least one of them is reachable in $\mathcal{C}_{[a,b]}$ (\ie either $\varphi_{[a,b]}(M)$ or $\varphi_{[a,b]}(N)$ is a cluster monomial). 
Then we have
\[ \mathfrak{o}(M,N) = F^{-, b}_{M}[u_{i,p+d_i}(N)-u_{i,p-d_i}(N) \mid (i,p) \in \mathtt{K}_{[a,b]}^{\rm uf}],\]
where the right-hand side denotes the tropical polynomial $F_M^{-, b}[-]$ evaluated at $A_{i,p}^{-1} = u_{i,p+d_i}(N)- u_{i,p-d_i}(N)$ for each $(i,p) \in \mathtt{K}_{[a,b]}^{\rm uf}$.
\end{theorem}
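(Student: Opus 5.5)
The plan is to deduce the statement directly from Corollary \ref{Cor:o=Fg=E}, applied to the $\mathsf{\Lambda}$-monoidal categorification $(\mathcal A_{[a,b]}, \mathcal C_{[a,b]}, \varphi_{[a,b]})$ with the standard initial vertex $t_0$. We then translate the resulting partial $F$-invariant into $q$-character data by means of Lemma \ref{lem:gFab}.

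First we check the hypotheses of Corollary \ref{Cor:o=Fg=E}. By \cite{kkop-2024}, $(\mathcal A_{[a,b]}, \mathcal C_{[a,b]}, \varphi_{[a,b]})$ is a $\mathsf{\Lambda}$-monoidal categorification whose monoidal cluster at $t_0$ is $\{M^{[a,b]}_{(i,p)}\}$. Proposition \ref{pro:standard-cor} gives the vanishing condition \eqref{assump:t_0} for every simple $L\in\mathcal C_{[a,b]}$. Proposition \ref{prop:good} shows that $\varphi_{[a,b]}(M)$ and $\varphi_{[a,b]}(N)$ are good elements. By assumption, at least one of them is a cluster monomial. Hence \eqref{eq:o=Fg} yields
\[\mathfrak{o}(M,N)=F_M^{t_0}[({\bf g}_N^{t_0})^\circ].\]

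Next we identify the two ingredients of this formula. By Lemma \ref{lem:gFab}(i), the principal part $({\bf g}_N^{t_0})^\circ$ is the vector indexed by $\mathtt K^{\rm uf}_{[a,b]}$ with entries $u_{i,p+d_i}(N)-u_{i,p-d_i}(N)$. By Lemma \ref{lem:gFab}(ii), $F_M^{t_0}({\bf y})$ is obtained from $F_M^{-,b}({\bf A}^{-1})$ by the substitution $A_{i,p}^{-1}\mapsto y_{(i,p)}$. This substitution is a bijection on monomials, since the $A_{i,p}$ are algebraically independent. So the two polynomials have the same support, and their tropicalizations coincide once the variables are matched. Evaluating at the $g$-vector therefore gives exactly the stated right-hand side.

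The only point needing care is bookkeeping: the $g$-vector coordinates in $\mathtt K^{\rm uf}_{[a,b]}$ must match the variables $A_{i,p}^{-1}$ with $(i,p)\in\mathtt K^{\rm uf}_{[a,b]}$. The frozen coordinates do not enter, because $F$-polynomials involve only unfrozen $y$'s. With this matching checked, there is no real obstacle.
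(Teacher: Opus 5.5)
Your proposal is correct and follows the paper's proof: Propositions \ref{pro:standard-cor} and \ref{prop:good} supply the hypotheses of Corollary \ref{Cor:o=Fg=E}, which gives $\mathfrak{o}(M,N)=F_M^{t_0}[({\bf g}_N^{t_0})^\circ]$. Lemma \ref{lem:gFab} then converts this into the stated $q$-character expression. Your extra remark that the substitution $A_{i,p}^{-1}\mapsto y_{(i,p)}$ preserves supports is the bookkeeping the paper leaves implicit.
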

\begin{proof}
By Propositions \ref{pro:standard-cor} and \ref{prop:good}, we can apply Corollary \ref{Cor:o=Fg=E} to get the equality
\[ \mathfrak{o}(M,N) = F_{M}^{t_0}[({\bf g}_N^{t_0})^\circ],\]
which is identical to the desired equality by Lemma \ref{lem:gFab}.
\end{proof}

It would be useful to reformulate Theorem \ref{thm:oF} in such a way that it does not refer to the choice of integer interval $[a,b]$.
For this purpose, we introduce the following terminology.

\begin{definition} \label{def:reachableCZ}
Let $M$ be a simple module in $\mathcal{C}_{\mathbb Z}$.
We say that $M$ is \emph{reachable} in $\mathcal{C}_{\mathbb Z}$ if there exists a finite integer interval $[a,b]$
such that $M$ belongs to  $\mathcal{C}_{[a,b]}$ and $\varphi_{[a,b]}(M)$ is a cluster monomial of $\mathcal A_{[a,b]}$.
\end{definition}

For example, the KR module $W^{(i)}_{k,q^{p+d_i}}$ is reachable in $\mathcal{C}_{\mathbb Z}$ for any $(i,p) \in \mathtt{K}$ and $k \in \mathbb{Z}_{>0}$. 

\begin{remark} \label{rem:reachableCZ}
Actually, the category $\mathcal{C}_\mathbb{Z}$ is identical to the category $\mathcal{C}^{[-\infty,+\infty], \mathfrak{s}}_{\mathfrak{g}}$ for an admissible sequence $\mathfrak{s}$ in the sense of \cite{kkop-2024}, and it is a $\mathsf{\Lambda}$-monoidal categorification of a certain cluster algebra $\mathcal{A}_{\mathbb{Z}}$ of infinite rank by \cite[Theorem 8.1]{kkop-2024}.
In particular, there is a ring isomorphism $\varphi_\mathbb{Z} \colon K_0(\mathcal{C}_{\mathbb Z}) \to \mathcal{A}_{\mathbb Z}$ under which the set of cluster monomials in $\mathcal{A}_{\mathbb Z}$ gets identified with a subset of the set of simple classes in $\mathcal{C}_{\mathbb Z}$. By the proof of \cite[Theorem 8.1]{kkop-2024},  we see that a simple module $M$ in $\mathcal{C}_\mathbb{Z}$ is reachable in the sense of Definition \ref{def:reachableCZ} if and only if $\varphi_{\mathbb Z}(M)$ is a cluster monomial of $\mathcal{A}_{\mathbb Z}$.
\end{remark}

\begin{corollary}\label{cor:oF2}
Let $M,N$ be simple objects of $\mathcal{C}_{\mathbb Z}$ such that at least one of them is reachable in $\mathcal{C}_{\mathbb Z}$ in the sense of Definition \ref{def:reachableCZ}. 
Then we have
\[ \mathfrak{o}(M,N) = F_{M}[u_{i,p+d_i}(N)-u_{i,p-d_i}(N) \mid (i,p) \in \mathtt{K}],\]
where the right-hand side denotes the tropical polynomial $F_M[-]$ evaluated at $A_{i,p}^{-1} = u_{i,p+d_i}(N)- u_{i,p-d_i}(N)$ for each $(i,p) \in \mathtt{K}$.
\end{corollary}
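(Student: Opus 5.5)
The plan is to reduce Corollary \ref{cor:oF2} to Theorem \ref{thm:oF} by choosing a large enough interval $[a,b]$, and then to check that the truncated tropical polynomial over $\mathtt{K}^{\rm uf}_{[a,b]}$ agrees with the full one over $\mathtt{K}$.

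First I choose $[a,b]$. By Definition \ref{def:reachableCZ}, one of $M,N$, say $M$ for definiteness, lies in some $\mathcal{C}_{[a_0,b_0]}$ with $\varphi_{[a_0,b_0]}(M)$ a cluster monomial. I then take $[a,b]\supseteq[a_0,b_0]$ containing the spectral supports of both $M$ and $N$. I also want $b$ larger than the largest exponent appearing among the $Y$-variables of $\chi_q(M)$ and $\chi_q(N)$; this is possible since $q$-characters are finite Laurent polynomials.

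Second, I need reachability to persist after enlarging the interval, so that $\varphi_{[a,b]}(M)$ is still a cluster monomial. For this I would use Remark \ref{rem:reachableCZ}: reachability in $\mathcal{C}_{[a_0,b_0]}$ is equivalent to $\varphi_{\mathbb Z}(M)$ being a cluster monomial of $\mathcal{A}_{\mathbb Z}$, and the proof of \cite[Theorem 8.1]{kkop-2024} is exhausted by the intervals $[a,b]$. Alternatively, the seed of $\mathcal{A}_{[a_0,b_0]}$ embeds into that of $\mathcal{A}_{[a,b]}$ by freezing, compatibly with the monoidal clusters. This is the step I expect to be the main obstacle, since it rests on the compatibility of the standard seeds of different intervals rather than on a formal argument. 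Once it is granted, Theorem \ref{thm:oF} gives
\[
\mathfrak{o}(M,N)=F^{-,b}_M\bigl[u_{i,p+d_i}(N)-u_{i,p-d_i}(N)\mid (i,p)\in\mathtt{K}^{\rm uf}_{[a,b]}\bigr].
\]

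Third, I compare $F^{-,b}_M$ with $F_M$. Since $b$ exceeds every exponent appearing in $\chi_q(M)$, truncation removes nothing, so $F_M^{-,b}=F_M$ as polynomials. This also shows that every $A_{j,s}^{-1}$ occurring in $F_M$ has $(j,s)\in\mathtt{K}^{\rm uf}_{[a,b]}$, using the lower bound $a$: monomials of $\chi_q(M)$ only involve $Y_{i,r}$ with $r$ at least the minimal spectral parameter of $M$. Hence the remaining variables $(i,p)\in\mathtt{K}\setminus\mathtt{K}^{\rm uf}_{[a,b]}$ do not occur in $F_M$, and their evaluations do not affect the tropical polynomial. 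The two tropical evaluations therefore coincide.

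If instead $N$ is the reachable one, the same argument applies verbatim, with $M$ only required to lie in $\mathcal{C}_{[a,b]}$. This gives the stated formula independently of the interval.
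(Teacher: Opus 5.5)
Your reduction is the paper's own: enlarge the interval so that $M,N\in\mathcal{C}_{[a,b]}$, the reachable one stays reachable, and $b$ is large enough that $F^{-,b}_M=F_M$, then apply Theorem~\ref{thm:oF}. Your third step is fine: nothing is truncated, so the variables indexed by $\mathtt{K}\setminus\mathtt{K}^{\rm uf}_{[a,b]}$ do not occur in $F_M$ and do not affect the tropical evaluation. The gap is the step you flag yourself. The paper isolates it as Lemma~\ref{lem:clust_ab}, and neither of your two justifications actually proves it.

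\textbf{Why the first justification fails.} Remark~\ref{rem:reachableCZ} only says that reachability in \emph{some} interval is equivalent to $\varphi_{\mathbb Z}(M)$ being a cluster monomial of $\mathcal{A}_{\mathbb Z}$. It does not say that $\varphi_{[a,b]}(M)$ is a cluster monomial for \emph{every} $[a,b]\supseteq[a_0,b_0]$. That stronger statement is what you need, since you must pass to an interval containing $N$ with $b$ large.

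\textbf{Why the second justification fails.} You claim the standard seed of $\mathcal{A}_{[a_0,b_0]}$ embeds into the standard seed of $\mathcal{A}_{[a,b]}$ by freezing, compatibly with the monoidal clusters.
\begin{itemize}
\item This is correct when you enlarge on the left. Indeed $M^{[a,b]}_{(i,p)}=M^{[a-1,b]}_{(i,p)}$, $\Gamma_{[a,b]}$ is a full subquiver of $\Gamma_{[a-1,b]}$, and $\mathtt{K}^{\rm uf}_{[a,b]}\subset\mathtt{K}^{\rm uf}_{[a-1,b]}$.
\item It is false when you enlarge on the right. The initial KR modules $M^{[a,b]}_{(i,p)}=W^{(i)}_{l,q^{p+d_i}}$ have length $l$ depending on $b$. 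For $i\in\mathtt{I}_0(b+1)$ and $p\in b+1-d_i+2d_i\mathbb{Z}$, one has $M^{[a,b+1]}_{(i,p)}=W^{(i)}_{l+1,q^{p+d_i}}$. So $M^{[a,b]}_{(i,p)}$ does not belong to the initial monoidal cluster of $\mathcal{C}_{[a,b+1]}$ at all.
\end{itemize}

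\textbf{The fix.} The paper first applies the mutation sequence $\boldsymbol{\mu}=\boldsymbol{\mu}_{i_1}\circ\cdots\circ\boldsymbol{\mu}_{i_l}$ to the initial seed of $\mathcal{A}_{[a,b+1]}$, obtaining a seed $t$. By \cite[\S3.2.3]{HL16}, this seed satisfies $\varphi_{[a,b+1]}(M^{[a,b]}_{(i,p)})=x_{\iota(i,p);t}$. Moreover, its restriction to $\iota(\mathtt{K}_{[a,b]})$, with the vertices $(i,a_i)$ frozen, is the initial seed of $\mathcal{A}_{[a,b]}$. It is this mutated seed, not the standard one, that embeds by freezing. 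With this correction, and induction on the length of the interval, your argument goes through and coincides with the paper's.
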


\begin{example}
Let $\mathfrak{g}_0$ be of type $\mathrm{A}_2$.
We identify $\mathtt{I}_0$ with $\{1,2\}$ and let $(\epsilon(1), \epsilon(2)) \coloneqq (1,0)$.
When $M$ is a fundamental module in $\mathcal{C}_{\mathbb Z}$, say $M= L(\boldsymbol{\pi}_{1,q})$,
it is reachable and its $q$-character is
\[ \chi_q(L(\boldsymbol{\pi}_{1,q})) = Y_{1,1} + Y_{2,2}Y^{-1}_{1,3} + Y^{-1}_{2,4} 
= Y_{1,1}(1+A^{-1}_{1,2}(1+A^{-1}_{2,3})).\]
Therefore, we have $F_{L(\boldsymbol{\pi}_{1,q})}({\bf A}^{-1}) = 1+A^{-1}_{1,2}(1+A^{-1}_{2,3})$.
By Corollary \ref{cor:oF2}, we have
\[ \mathfrak{o}(L(\boldsymbol{\pi}_{1,q}),N) = \max( 0, u_{1,3}(N)-u_{1,1}(N)+\max(0,u_{2,4}(N)-u_{2,2}(N)) )\]
for any simple module $N \in \mathcal{C}_{\mathbb Z}$ (possibly neither reachable nor real).
For example, when $N = L(\boldsymbol{\pi}_{1,q^p}\boldsymbol{\pi}_{2,q^s})$ $(p\in1+2\mathbb Z,\ s\in2\mathbb Z)$, we get
\begin{align*} \mathfrak{o}(L(\boldsymbol{\pi}_{1,q}),L(\boldsymbol{\pi}_{1,q^p}\boldsymbol{\pi}_{2,q^s})) &= \max( 0, \delta_{p,3}-\delta_{p,1}+\max(0,\delta_{s,4}-\delta_{s,2}) ) \\
&= \begin{cases}
2 & \text{if $(p,s) = (3,4)$}, \\
1 & \text{if ($p=3$ \& $s \neq 4$) or ($s = 4$ \& $p \not \in \{1,3 \}$)}, \\
0 & \text{otherwise}.
\end{cases}
\end{align*}
\end{example}

\begin{proof}[Proof of Corollary \ref{cor:oF2}]
Assume that $M$ is reachable.  
By definition, we can find an interval $[a,b] \subset \mathbb{Z}$ such that $M$ belongs to $\mathcal{C}_{[a,b]}$ and $\varphi_{[a,b]}(M) \in \mathcal{A}_{[a,b]}$ is a cluster monomial.
Thanks to Lemma \ref{lem:clust_ab} below, by enlarging $[a,b]$ if necessary, we may assume that $N$ also belongs to $\mathcal{C}_{[a,b]}$ and that $F_{M}^{-, b}({\bf A}^{-1})$ equals $F_{M}({\bf A}^{-1})$.   
Then, the desired equality is identical to the equality in Theorem \ref{thm:oF}. 
The proof for the case when $N$ is reachable is similar.
\end{proof}

\begin{lemma}[{\confer \cite{HL16}}]\label{lem:clust_ab}
Let $[a,b]$ be a finite integer interval and $M$ a simple object in $\mathcal C_{[a,b]}$. If $M$ is reachable in $\mathcal C_{[a,b]}$ (\ie $\varphi_{[a,b]}(M)$ is a cluster monomial of $\mathcal{A}_{[a,b]}$), then $M$ is also reachable in  $\mathcal C_{[a',b']}$ for any integer interval $[a',b']$ containing $[a,b]$. 
\end{lemma}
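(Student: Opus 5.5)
Since $M$ is reachable in $\mathcal C_{[a,b]}$, the element $\varphi_{[a,b]}(M)$ is a cluster monomial in some cluster ${\bf x}_w$ of $\mathcal A_{[a,b]}$. Here $w$ is obtained from $t_0^{[a,b]}$ by a finite mutation sequence $\mu_{k_r}\cdots\mu_{k_1}$ at unfrozen vertices $k_j\in\mathtt K_{[a,b]}^{\rm uf}$. The plan is to transport this mutation sequence to $\mathcal A_{[a',b']}$ and check that it produces the same simple module there. By induction it suffices to enlarge the interval by one step at a time, treating $[a',b]$ with $a'<a$ and $[a,b']$ with $b'>b$ separately.

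\emph{Enlarging on the left ($a'<a$, same $b$).} The initial KR modules $M^{[a,b]}_{(i,p)}=M^{[a',b]}_{(i,p)}$ depend only on $b$. The quiver $\Gamma_{[a,b]}$ is the full subquiver of $\Gamma_{[a',b]}$ on $\mathtt K_{[a,b]}$, and vertices frozen in $\Gamma_{[a,b]}$ may become unfrozen. I would use the inclusion $\mathcal C_{[a,b]}\subset\mathcal C_{[a',b]}$ as a monoidal Serre subcategory, together with the compatibility $\psi\circ\varphi=\chi_q^{-,b}$ from the proof of Lemma \ref{lem:gFab}. The truncation $\chi_q^{-,b}$ is the same for both intervals, and $\psi$ is given by the same formula on $\mathtt K_{[a,b]}$. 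Thus $\varphi_{[a,b]}$ is the restriction of $\varphi_{[a',b]}$ along the inclusion $\mathcal A_{[a,b]}\hookrightarrow\mathcal A_{[a',b]}$ sending $x_{(i,p);t_0}\mapsto x_{(i,p);t_0}$. The exchange relations of $\mathcal A_{[a,b]}$ at mutable vertices involve only neighbours lying in $\Gamma_{[a,b]}$, because there are no arrows between $\mathtt K^{\rm uf}_{[a,b]}$ and $\mathtt K_{[a',b]}\setminus\mathtt K_{[a,b]}$. Hence mutating along the same sequence in $\mathcal A_{[a',b]}$ yields the same cluster variables, and $\varphi_{[a',b]}(M)$ is a cluster monomial. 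The one point to verify is this absence of arrows, which follows from the shape of $\Gamma$ in \eqref{eq:Gamma1}: arrows change $p$ by at most $2d_i$, and unfrozen vertices satisfy $a\le p-d_i$.

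\emph{Enlarging on the right ($b'>b$).} This is the main obstacle, because the initial KR modules change: $M^{[a,b']}_{(i,p)}$ is a longer KR module than $M^{[a,b]}_{(i,p)}$. My plan is to follow Hernandez--Leclerc \cite{HL16} and find an explicit mutation sequence in $\mathcal A_{[a,b']}$, consisting of the standard sequence of T-system mutations, that sends $t_0^{[a,b']}$ to a seed $t_1$. At $t_1$ the monoidal cluster should contain $M^{[a,b]}_{(i,p)}$ for all $(i,p)\in\mathtt K_{[a,b]}$, and the full subquiver on these vertices should be $\Gamma_{[a,b]}$, with the vertices of $\mathtt K^{\rm uf}_{[a,b]}$ again having no arrows to vertices outside $\mathtt K_{[a,b]}$. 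A cleaner alternative is to shift spectral parameters (Remark \ref{Rem:CZ}) so that one reduces to the left-enlargement case for the interval $[a-(b'-b),b]$. Since $\mathcal C_{[a,b']}\cong\mathcal C_{[a-(b'-b),b]}$ via a shift by a power of $q^{2}$ (choosing $b'-b$ even, which suffices after further enlargement), this would apply provided one has a seed of $\mathcal A_{[a,b']}$ that realizes the subinterval $[b'-b+a,b']$ on the left. In either route, once such a seed $t_1$ is in hand, the argument of the first paragraph applies to the subquiver: mutating along the original sequence $\mu_{k_r}\cdots\mu_{k_1}$ from $t_1$ reproduces the cluster ${\bf x}_w$ under the identification $\varphi_{[a,b']}|_{\mathcal C_{[a,b]}}=\varphi_{[a,b]}$ on cluster variables. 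Therefore $\varphi_{[a',b']}(M)$ is a cluster monomial.
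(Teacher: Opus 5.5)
Your reduction to one-step enlargements and your treatment of $[a,b]\subset[a',b]$ match the paper's proof. In fact you spell out two things the paper leaves implicit. First, $\varphi_{[a,b]}$ is the restriction of $\varphi_{[a',b]}$, since both become $\chi^{-,b}_q$ after applying the same injective monomial map $\psi$. Second, the mutable vertices of $\Gamma_{[a,b]}$ have no neighbours in $\mathtt K_{[a',b]}\setminus\mathtt K_{[a,b]}$. Your reason for the second point, ``arrows change $p$ by at most $2d_i$'', is not quite right: it fails for $\mathrm G_2$ and is too weak when $d_j<d_i$. Use instead that the neighbours of $(i,p)$ are $(i,p\pm 2d_i)$ and $(j,p\pm\max(d_i,d_j))$, so every neighbour $(j,s)$ satisfies $s+d_j\ge p-d_i\ge a$.

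The genuine gap is the right enlargement $[a,b]\subset[a,b+1]$. This is the only non-formal part of the lemma, and you leave it as a list of properties a seed $t_1$ ``should'' have. The paper supplies this seed explicitly.
\begin{itemize}
\item Let $\mathtt I_0(b+1)$ be the set of $i$ with $(i,b+1-d_i)\in\mathtt K^{\rm uf}_{[a,b+1]}$. For each such $i$, mutate at $(i,b+1-d_i),(i,b+1-3d_i),\dots,(i,a_i)$ in this order, where $(i,a_i)$ is the lowest mutable vertex in the residue class $b+1-d_i+2d_i\mathbb Z$.
\item Each step is a T-system exchange replacing $W^{(i)}_{m,cq^{2d_i}}$ by $W^{(i)}_{m,c}$. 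By \cite[\S3.2.3]{HL16}, the resulting seed $t$ satisfies $\varphi_{[a,b+1]}(M^{[a,b]}_{(i,p)})=x_{\iota(i,p);t}$, where $\iota(i,p)=(i,p+2d_i)$ on those residue classes and $\iota(i,p)=(i,p)$ otherwise.
\item The modules therefore sit at the shifted vertices $\iota(\mathtt K_{[a,b]})$, not at $\mathtt K_{[a,b]}$. The restriction of $t$ to $\iota(\mathtt K_{[a,b]})$ is the initial seed of $\mathcal A_{[a,b]}$ only after declaring the vertices $(i,a_i)$ frozen. These vertices are mutable in $\mathcal A_{[a,b+1]}$, but they carry the frozen KR modules $M^{[a,b]}_{(i,a_i-2d_i)}$ of $\Gamma_{[a,b]}$.
\end{itemize}
With this relabelling and refreezing, your first-paragraph argument goes through.

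Your spectral-shift shortcut does not replace this step. Shifting by $q^{-\delta}$ with $\delta=b'-b$ sends $\mathcal C_{[a,b]}\subset\mathcal C_{[a,b']}$ to $\mathcal C_{[a-\delta,b-\delta]}\subset\mathcal C_{[a-\delta,b]}$, which is again a right enlargement. Combining the left case for $[a,b]\subset[a-\delta,b]$ with the shift back only shows that $M_{q^{\delta}}$, not $M$, is reachable in $\mathcal C_{[a,b']}$. Because the initial KR modules $M^{[a,b]}_{(i,p)}$ depend on the right endpoint, some mutation sequence relating the two standard seeds, such as the one above, is unavoidable.
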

\begin{proof}
The assertion follows from the discussion in \cite[\S3]{HL16}. We shall give some details for the sake of clarity.  
By induction on the length of the interval $[a',b']$, it suffices to consider the case when $(a',b') = (a-1,b)$ or $(a,b+1)$. 

First, we consider the case when $(a',b') = (a-1,b)$. 
We have $M^{[a,b]}_{(i,p)} = M^{[a-1,b]}_{(i,p)}$ for any $(i,p) \in \mathtt{K}_{[a,b]} \subset \mathtt{K}_{[a-1,b]}$ by definition.
Moreover, the quiver $\Gamma_{[a,b]}$ is the full subquiver of $\Gamma_{[a-1,b]}$ whose vertex set is $\mathtt{K}_{[a,b]}$, and $\mathtt{K}^{\rm uf}_{[a,b]} \subset \mathtt{K}^{\rm uf}_{[a-1,b]}$ holds.
The assertion follows from these facts.

Next, we consider the case when $(a',b') = (a,b+1)$. Let $\mathtt{I}_0(b+1) \coloneqq \{i \in \mathtt{I}_0 \mid (i, b+1-d_i) \in \mathtt{K}_{[a,b+1]}^{\rm uf}\}$. For each $i \in \mathtt{I}_0(b+1)$, we define a mutation sequence 
\[ \boldsymbol{\mu}_{i} \coloneqq \mu_{(i,a_i)} \circ \mu_{(i,a_i + 2d_i)} \circ \cdots \circ \mu_{(i,b+1-3d_i)} \circ \mu_{(i,b+1-d_i)}\]
for the cluster algebra $\mathcal{A}_{[a,b+1]}$, where $a_i \coloneqq  \min\{p\in b+1-d_i + 2d_i\mathbb Z\mid (i,p)\in \mathtt K_{[a,b+1]}^{\rm uf}\}$. 
Choose a total ordering $\mathtt{I}_0(b+1) = \{ i_1, i_2, \ldots, i_l \}$ and set $\boldsymbol{\mu} \coloneqq  \boldsymbol{\mu}_{i_1}  \circ  \boldsymbol{\mu}_{i_2} \circ \cdots \circ \boldsymbol{\mu}_{i_l}$. Applying the mutation sequence $\boldsymbol{\mu}$ to the initial seed $t_0 = ({\bf x}_{t_0}, \widetilde B_{[a,b+1]})= ({\bf x}_{t_0}, \Gamma_{[a,b+1]})$ of the cluster algebra $\mathcal{A}_{[a,b+1]}$, we get a new seed $t = \boldsymbol{\mu}(t_0) = ({\bf x}_t, \boldsymbol{\mu}(\Gamma_{[a,b+1]}))$.
The seed $t$ does not depend on the choice of total ordering of $\mathtt{I}_0(b+1)$.
By \cite[\S3.2.3]{HL16}, we have $\varphi_{[a,b+1]}(M^{[a,b]}_{(i,p)}) = x_{\iota(i,p); t}$ for any $(i,p) \in \mathtt{K}_{[a,b]}$,
where $\iota \colon \mathtt{K}_{[a,b]} \to \mathtt{K}_{[a,b+1]}$ is the injective map defined by 
\[ \iota(i,p) \coloneqq \begin{cases}
(i,p+2d_i) & \text{if $i \in \mathtt{I}_0(b+1)$ and $p \in b+1-d_i + 2d_i\mathbb Z$}, \\
(i,p) & \text{otherwise}.
\end{cases} \]
Moreover, the map $\iota$ identifies the initial seed for $\mathcal{C}_{[a,b]}$ with the restriction of the seed $t$ to $\iota(\mathtt{K}_{[a,b]})$ with $\{(i,a_i) \mid i \in \mathtt{I}_0(b+1)\}$ frozen. 
The assertion follows from these facts.
\end{proof}

\section{Infinite quivers with potentials and their applications to reachable modules} \label{sec: application}
\subsection{Pole orders and the partial $E$-invariants}\label{sec:poE}

Keep the quivers $\Gamma, \Gamma_{[a,b]}$ and $\Gamma_{[a,b]}^\circ$ used in the previous section. We fix a choice of sign: $$\omega\colon\{(i,j)\in \mathtt I_0\times \mathtt I_0\mid c_{ij}<0\}\to \{\pm 1\},\quad \text{such that}\;\;\omega_{ij}=-\omega_{ji}.$$  Let $\alpha_{ij}(q)$ denote the arrow from $(j,q)$ to $(i,q+d_ic_{ij})$ and let $\varepsilon_i(p)$ denote the arrow from $(i,p)$ to $(i,p+2d_i)$ in the infinite quiver $\Gamma$. In the quiver $\Gamma$, there are oriented cycles
\[w_{ij}(p)\coloneqq\varepsilon_i(p-2d_i(c_{ij}+1))\cdots \varepsilon_i(p)\alpha_{ij}(p-d_i c_{ij})\alpha_{ji}(p-2d_i c_{ij})\] 
of the form
\begin{equation} \label{eq:cycles}
\vcenter{\xymatrix@C=2em{
(i,p) \ar[r] & (i,p+2d_i) \ar[r] & \cdots \ar[r] & (i,p-2d_ic_{ij}) \ar[dl]\\
& & (j,p-d_ic_{ij}) \ar[llu]& 
}}
\end{equation}
when we have $c_{ij}\leq -1$. 
Using such cycles, we introduce the following potentials:
\begin{itemize}
    \item $W_\Gamma$: the formal sum of all the signed cycles $\omega_{ij}w_{ij}(p)$ in the infinite quiver $\Gamma$;
    \item $W_{[a,b]}$: the sum of all the signed cycles $\omega_{ij}w_{ij}(p)$ contained in the full subquiver $\Gamma_{[a,b]}^\circ$ of $\Gamma$.
\end{itemize}
Now we define a non-negative grading on the infinite quiver $\Gamma$ via the degree map $\deg_{\Gamma}\colon \Gamma_1\to \mathbb Z$ given  by
    \begin{align}\label{eq:deggam}
\deg_{\Gamma}(\varepsilon_i(p))\coloneqq 0, \quad \deg_{\Gamma}(\alpha_{ij}(q))\coloneqq 1.
\end{align}
We find each cycle in $\Gamma$ has a strictly positive degree by inspection. 
 All the cycles of the form \eqref{eq:cycles} in $\Gamma$ defining the potentials $W_\Gamma$ and $W_{[a,b]}$ have degree $2$, and so each cyclic derivation of them is a homogeneous element with respect to these gradings.

We consider the following algebra: 
\begin{eqnarray}\label{eqn:A_ab}
A_{[a,b]}\coloneqq \mathbb{C}\Gamma_{[a,b]}^{\circ}/I'_{W_{[a,b]}}\cong  \mathbb C\Gamma/(\langle e_{k,r} \mid (k,r)\in \left(\Gamma_0\setminus (\Gamma_{[a,b]}^{\circ})_0\right)\rangle+ I'_{W_{\Gamma}}),
\end{eqnarray}
where $I'_{W_{[a,b]}}$ (resp.\ $I'_{W_\Gamma}$) is the two-sided ideal of $\mathbb{C}\Gamma_{[a,b]}^{\circ}$  (resp.\ $\mathbb C\Gamma$) generated by cyclic derivations of the potential $W_{[a,b]}$ (resp.\ $W_{\Gamma}$), \confer \eqref{eqn:cyc-der}. Notice that each cyclic derivation of the potential $W_\Gamma$ is a linear combination of finitely many paths in the infinite quiver $\Gamma$, since each arrow in $\Gamma$ is contained in finitely many cycles appearing in $W_\Gamma$.
\begin{proposition}[{\confer \cite[Proposition 4.17]{HL16}}]\label{pro:HL16-4.17}
    Keep the above notation. The following statements hold.
\begin{itemize}
    \item [(i)] The  algebra $A_{[a,b]}$ is finite-dimensional.
    \item[(ii)] The quiver with potential $(\Gamma_{[a,b]}^\circ, W_{[a,b]})$ is rigid in the sense of \cite[(8.1)]{DWZ08}. In particular, it is non-degenerate.
    \item[(iii)]  For each integer interval $[a,b]$, the ideal $I'_{W_{[a,b]}}$ of $\mathbb{C}Q\coloneq \mathbb C\Gamma^\circ_{[a,b]}$ satisfies the following condition for some $l \geq 2$:
    \[\mathbb{C}Q_{\geq l} \subseteq I'_{W_{[a,b]}} \subseteq \mathbb{C}Q_{\geq 2}.\]
    In particular, we have an algebra isomorphism $A_{[a,b]} \cong J(\Gamma_{[a,b]}^\circ, W_{[a,b]})$.
\end{itemize}
\end{proposition}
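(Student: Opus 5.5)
I will establish (iii) first by a degree argument, since it gives the identification $A_{[a,b]}\cong J(\Gamma_{[a,b]}^\circ,W_{[a,b]})$ via Example~\ref{ex:QP2}~(ii). Once that is in place, (i) follows at once: $\mathbb{C}Q/I'_{W_{[a,b]}}$ is a quotient of $\mathbb{C}Q/\mathbb{C}Q_{\geq l}$, which is finite-dimensional because the quiver $Q=\Gamma^\circ_{[a,b]}$ is finite. For (ii), I will appeal to \cite[Proposition 4.17]{HL16}, after checking that our $(\Gamma^\circ_{[a,b]},W_{[a,b]})$ agrees, up to a shift of the spectral labels and the choice of signs $\omega$, with the quiver with potential used there. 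Rigidity implies non-degeneracy by \cite[Corollary 8.2]{DWZ08}.

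The inclusion $I'_{W_{[a,b]}}\subseteq\mathbb{C}Q_{\geq 2}$ should be immediate. Every cycle in \eqref{eq:cycles} has length $\geq 3$, since it contains two $\alpha$-arrows and at least one $\varepsilon$-arrow. Hence each cyclic derivative is a combination of paths of length $\geq 2$.

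The main obstacle is the inclusion $\mathbb{C}Q_{\geq l}\subseteq I'_{W_{[a,b]}}$, namely that all sufficiently long paths vanish modulo the cyclic derivatives and not merely in the completion. I would exploit the grading $\deg_\Gamma$ together with the ``height'' function $p$: every arrow strictly increases $p$, by $2d_i$ for $\varepsilon_i$ and by $-d_ic_{ij}>0$ for $\alpha_{ij}$. Since $Q$ is finite with $p$ bounded in $[a,b]$, every path has length at most about $b-a$. So $\mathbb{C}Q$ is itself finite-dimensional and $\mathbb{C}Q_{\geq l}=0$ for $l>b-a$, which makes the inclusion trivial.

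I therefore expect the real content to lie in matching the data with \cite{HL16}. Their Jacobian algebra is defined via a possibly different sign convention, so I would check that a rescaling of arrows by $\pm1$ transforms one potential into the other. Since the quiver is acyclic in this sense, such a rescaling does not affect rigidity. Rigidity could also be verified directly: because $Q$ has no oriented cycles in the relevant sense once heights are accounted for, I would need to reconsider here, since $W$ does contain cycles, so $p$ cannot strictly increase along every arrow. I will recheck the orientation: an $\alpha$-arrow goes from $(j,q)$ to $(i,q+d_ic_{ij})$, so it decreases $p$. The length bound then fails, and I will instead use \cite[Proposition 4.17]{HL16} for finite-dimensionality, deducing $\mathbb{C}Q_{\geq l}\subseteq I'_W$ from a graded Nakayama-type argument. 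The grading $\deg_\Gamma$ is positive on cycles, and $p$ gives a second grading. For a path of large length, either its $\deg_\Gamma$ is large or it contains a long $\varepsilon$-string, which exceeds the interval bound. Large $\deg_\Gamma$ is then reduced using the commutation relations $\partial_{\alpha}W$ to push $\varepsilon$'s past $\alpha$'s until a length bound is violated.
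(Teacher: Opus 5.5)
Once you drop your opening argument, your plan has the same skeleton as the paper's proof. That opening argument is false: $\alpha_{ij}(q)\colon(j,q)\to(i,q+d_ic_{ij})$ lowers $p$, so $\Gamma^\circ_{[a,b]}$ contains the oriented cycles $w_{ij}(p)$ and $\mathbb{C}\Gamma^\circ_{[a,b]}$ is infinite-dimensional. What remains matches the paper: finite-dimensionality and rigidity come from \cite[Proposition 4.17]{HL16}, non-degeneracy from \cite{DWZ08}, $I'_{W_{[a,b]}}\subseteq\mathbb{C}Q_{\geq 2}$ from the cycles having length $\geq 3$, and the isomorphism from Example~\ref{ex:QP2}~(ii).

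The paper simply asserts that the proof in \cite{HL16} yields $\mathbb{C}Q_{\geq l}\subseteq I'_{W_{[a,b]}}$. You try to derive this, and your final step does not achieve it. Every term of $\partial_{\alpha}W_{[a,b]}$ contains exactly one $\alpha$-arrow, so these relations can only rewrite a path or kill it at the boundary of the interval. You would then need to show that the rewriting always ends in a path that cannot exist in $\Gamma^\circ_{[a,b]}$.

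In simply-laced type this does hold. Every $\varepsilon$ can be pushed to the start of the path, leaving a bounded $\varepsilon$-run followed by a $p$-decreasing $\alpha$-run. In general, however, $\partial_{\alpha_{ij}}W$ only exchanges the block $\varepsilon_i^{-c_{ij}}$ on one side of an $\alpha$-arrow with $\varepsilon_j^{-c_{ji}}$ on the other. Single short-root $\varepsilon$'s can therefore get stuck, and you give no argument for types $\mathrm{B},\mathrm{C},\mathrm{F},\mathrm{G}$.

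Separately, your reason that a $\pm1$ rescaling of arrows preserves rigidity (``the quiver is acyclic'') is wrong. The correct reason is that any rescaling of arrows is a right-equivalence, and the sign issue itself is settled in Remark~\ref{rem:HL}.

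Once you cite \cite{HL16}, you do not need the rewriting at all; the grading you already have closes the gap.
\begin{itemize}
\item Every cycle of $W_{[a,b]}$ contains exactly two $\alpha$-arrows, so $I'_{W_{[a,b]}}$ is $\deg_\Gamma$-homogeneous.
\item By your own observation, a maximal $\varepsilon$-run stays in one row and has length less than $(b-a)/2$. Hence a path with $k$ $\alpha$-arrows has length at most $k+(k+1)(b-a)/2$.
\item It follows that each $\deg_\Gamma$-component of $\mathbb{C}Q$ is finite-dimensional and that the $\mathfrak{M}$-adic and $\deg_\Gamma$-adic topologies coincide. Writing $(A_{[a,b]})_k$ for the degree-$k$ part, this gives $J(\Gamma^\circ_{[a,b]},W_{[a,b]})\cong\prod_{k\geq 0}(A_{[a,b]})_k$ as vector spaces.
\item Finite-dimensionality from \cite{HL16}, of either the completed or the uncompleted algebra, then forces $(A_{[a,b]})_k=0$ for all $k\geq K$.
\item Every path of length at least $K+K(b-a)/2$ has at least $K$ $\alpha$-arrows, so it lies in $I'_{W_{[a,b]}}$.
\end{itemize}
This proves (iii), and (i) follows as you said. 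With this fix, your argument is a more explicit version of the paper's.
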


\begin{proof} 
Assertions (i) and (ii) follow easily from \cite[Proposition~4.17]{HL16} and \cite[Corollary~8.2, Proposition~8.9]{DWZ08}. The proof there essentially implies that $\mathbb{C}Q_{\geq l} \subseteq I'_{W_{[a,b]}} \subseteq \mathbb{C}Q_{\geq 2}$ for some $l\geq 2$. Assertion (iii) then follows from the discussion in Example~\ref{ex:QP2}~(ii).
\end{proof}

\begin{remark}\label{rem:HL}
(i) The potential $W'_{[a,b]}$ used
in  \cite{HL16} is the sum of cycles $w_{ij}(p)$ without signs, but the same proof in \cite[Proposition 4.17]{HL16} works for our signed potential $W_{[a,b]}$ and the algebra $A_{[a,b]}$ as well.

(ii) The same calculation as in \cite[Remark 4.3]{FM} shows that two quivers with potentials $(\Gamma_{[a,b]}^\circ, W'_{[a,b]})$ and $(\Gamma_{[a,b]}^\circ, W_{[a,b]})$ are right equivalent in the sense of \cite[Definition 4.2]{DWZ08}. In particular, there is an 
isomorphism $J(\Gamma_{[a,b]}^\circ, W_{[a,b]}) \cong J(\Gamma_{[a,b]}^\circ, W'_{[a,b]})$.
\end{remark}

Now we view the algebras $\mathbb C\Gamma\footnote{Notice that this is a non-unital algebra. }$ and $A_{[a,b]}$ as $\mathbb Z$-graded algebras via the degree map  $\deg_{\Gamma}\colon \Gamma_1\to \mathbb Z$. Let $\mathbb{C}\mathchar`-\mathsf{cat}$  be the category of $\mathbb{Z}$-graded $\mathbb{C}$-linear categories (a.k.a.\ enriched categories on the category of $\mathbb{Z}$-graded $\mathbb{C}$-vector spaces). Following Remark~\ref{rmk:R-C-cat} below, we know that the algebras $\mathbb C\Gamma$ and $A_{[a,b]}$ can be viewed as objects in $\mathbb{C}\mathchar`-\mathsf{cat}$. In particular, $A_{[a,b]}$ can be viewed as an ideal quotient of $\mathbb C\Gamma$ in $\mathbb{C}\mathchar`-\mathsf{cat}$.

\begin{remark}\label{rmk:R-C-cat}
    If  $R$ is a non-unital $\mathbb Z$-graded algebra given by a graded quiver $Q=(Q_0,Q_1)$ with homogeneous relations, then $R$ can be identified with a small graded $\mathbb C$-linear category whose object set is $Q_0$ and the morphism space from $j\in Q_0$ to $i\in Q_0$ is given by the $\mathbb{Z}$-graded $\mathbb{C}$-vector space $e_{i}R e_{j}$, where $e_{i}$ is the idempotent associated with the vertex $i\in Q_0$. The composition of morphisms is naturally given by the multiplication of $R$.
\end{remark}

When $A_{[a,b]}$ is regarded as a $\mathbb C$-linear category
indexed by $\Gamma_0$, every vertex
$v\notin(\Gamma_{[a,b]}^\circ)_0$ is regarded as a zero object.
Now we introduce a ``limit'' of the family of algebras $A_{[a,b]}$ indexed by the finite integer intervals $[a,b]$ in the following sense:
\begin{definition}\label{def:Lambdainf}
    Let $\Psi$ be the directed partially ordered set of all intervals $[a,b]\subset \mathbb{Z}$ with natural inclusion. We view each algebra $A_{[a,b]}=\mathbb{C}\Gamma_{[a,b]}^{\circ}/I'_{W_{[a,b]}}$ as an object in $\mathbb{C}\mathchar`-\mathsf{cat}$.
    For each inclusion $[a,b]\subset [a',b']$, the natural projection $A_{[a',b']}\to A_{[a,b]}$ can be identified with a morphism in  $\mathbb{C}\mathchar`-\mathsf{cat}$. We define $\Lambda(\infty)$ to be the projective limit \[\Lambda(\infty)\coloneqq\varprojlim_{[a,b]\in \Psi}A_{[a,b]}\] of the projective system $\{A_{[a,b]}\mid [a,b]\in \Psi\}$ in the category $\mathbb{C}\mathchar`-\mathsf{cat}$ of $\mathbb{Z}$-graded $\mathbb{C}$-linear categories. 
\end{definition}

We have the following explicit description of $\Lambda(\infty)$:
\begin{proposition}\label{prop:Lambdainfty}
    Let $I'_{W_{\Gamma}}$ be the two-sided ideal of the path algebra $\mathbb{C}\Gamma$ generated by all the cyclic derivations of $W_{\Gamma}$ and let $\mathfrak{M}_{\Gamma}$ be the two-sided ideal generated by all the arrows of $\Gamma$. Then we have the algebra isomorphism
    \[
    \Lambda(\infty) \cong \mathbb{C}\Gamma/ \bigcap_{l\geq 1}(\mathfrak{M}_{\Gamma}^l+I'_{W_{\Gamma}}).
    \]
\end{proposition}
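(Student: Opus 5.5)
The plan is to compute the projective limit in $\mathbb{C}\mathchar`-\mathsf{cat}$ objectwise and degreewise, and then to identify the answer with the completed quotient on the right-hand side. Since $\Gamma_0$ is the common object set, with vertices outside $(\Gamma^\circ_{[a,b]})_0$ regarded as zero objects, the limit $\Lambda(\infty)$ is the category with object set $\Gamma_0$ and morphism spaces
\[ e_{y}\Lambda(\infty)e_{x} = \varprojlim_{[a,b]} e_y A_{[a,b]} e_x . \]
Here the limit is taken in graded vector spaces, degree by degree, so it suffices to fix vertices $x,y$ and a degree $d$.

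The first step is a finiteness observation. For fixed $x,y$ and $d$, the paths from $x$ to $y$ of $\deg_\Gamma$-degree $d$ are finite in number. Indeed, degree-$0$ arrows $\varepsilon_i(p)$ raise $p$ by $2d_i$, and each $\alpha$-arrow changes the first index along a Dynkin edge and shifts $p$ by a bounded amount. So the number of $\alpha$'s is $d$, and the total $p$-displacement $p_y-p_x$ bounds the number of $\varepsilon$'s.

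Next we write $(\mathbb{C}\Gamma)_d(x,y)$ for this finite-dimensional space. By \eqref{eqn:A_ab} we have
\[ e_yA_{[a,b]}e_x = (\mathbb{C}\Gamma)_d(x,y)\big/\big(J_{[a,b]}+I'_{W_\Gamma}\big)_d(x,y), \]
where $J_{[a,b]}$ is the ideal generated by the idempotents outside $\Gamma^\circ_{[a,b]}$. The subspaces $K_{[a,b]}\coloneqq (J_{[a,b]}+I'_{W_\Gamma})_d(x,y)$ decrease as $[a,b]$ grows. By finite-dimensionality they stabilize, and the limit equals the quotient by $K_\infty\coloneqq \bigcap_{[a,b]}K_{[a,b]}$. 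Since $J_{[a,b]}$ in degree $d$ is spanned by paths through a vertex outside $[a,b]$, and for large $[a,b]$ every path $x\to y$ of degree $d$ stays inside, one expects $K_\infty=(I'_{W_\Gamma})_d(x,y)$.

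There is a subtlety here. An element of $I'_{W_\Gamma}$ restricted to degree $d$ is a finite sum $\sum u\,\partial_a W_\Gamma\, v$. Its individual terms might pass through far-away vertices, or have arbitrary degree split, while the sum is homogeneous. Homogeneity is handled because each $\partial_aW_\Gamma$ is homogeneous of degree $2-\deg a$, so $I'_{W_\Gamma}$ is a graded ideal and the terms can be projected degreewise. To handle far-away vertices, I would show that the terms can be chosen from paths of degree $d$ between $x$ and $y$, which are automatically confined, so $K_\infty=(I'_{W_\Gamma})_d(x,y)$.

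The last step compares with the right-hand side. For the $\mathfrak{M}_\Gamma$-adic closure, degreewise, $\mathfrak{M}_\Gamma^l$ in degree $d$ from $x$ to $y$ consists of paths of length at least $l$. By the finiteness above this space vanishes for $l$ large, since there are only finitely many paths. Hence $\bigcap_l(\mathfrak{M}_\Gamma^l+I'_{W_\Gamma})$ agrees degreewise with $I'_{W_\Gamma}$. Also, $\mathbb{C}\Gamma$ should be read as the graded-completed, that is degreewise, path category; this agrees with the ordinary path algebra in each degree by the same finiteness. Therefore both sides equal $\bigoplus_d(\mathbb{C}\Gamma)_d/(I'_{W_\Gamma})_d$, and the compositions match because all maps are induced from $\mathbb{C}\Gamma$.

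The main obstacle is the confinement claim $K_\infty=(I'_{W_\Gamma})_d(x,y)$: we need that $I'_{W_\Gamma}$ restricted to a fixed degree and endpoints is generated by relations supported near $x$ and $y$. I would prove it using the grading together with the quiver's monotonicity in $p$. Every arrow is $p$-nondecreasing up to bounded shifts, and degree controls the $\alpha$-steps, so any path from $x$ to $y$ of degree $d$ lies in a fixed finite window.
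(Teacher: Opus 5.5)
Your argument is correct, but it compares the two intersections differently from the paper.

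\textbf{The paper's route.} It first writes $\Lambda(\infty)\cong\mathbb{C}\Gamma/\bigcap_{[a,b]}(\langle e_{k,r}\rangle+I'_{W_\Gamma})$. It then shows that the families $\{\langle e_{k,r}\rangle+I'_{W_\Gamma}\}_{[a,b]}$ and $\{\mathfrak{M}_\Gamma^l+I'_{W_\Gamma}\}_{l}$ are mutually cofinal on each $e_{i,p}(-)e_{j,q}$. One containment uses that the arrow ideal of $A_{[a,b]}$ is nilpotent (Proposition~\ref{pro:HL16-4.17}~(iii), which rests on Hernandez--Leclerc). The other uses that paths through far-away vertices are long. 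This is the standard comparison of two linear topologies, and it never needs to identify either intersection.

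\textbf{Your route.} You compute both intersections outright on each component $(\mathbb{C}\Gamma)_d(x,y)$. You use only the finite-dimensionality of that space and the confinement of degree-$d$ paths, and find that both families stabilize at $(I'_{W_\Gamma})_d(x,y)$. This bypasses the nilpotency input entirely. It also gives a sharper statement: the closure is superfluous, $\bigcap_{l}(\mathfrak{M}_\Gamma^l+I'_{W_\Gamma})=I'_{W_\Gamma}$, and hence $\Lambda(\infty)\cong\mathbb{C}\Gamma/I'_{W_\Gamma}$.

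\textbf{On your ``main obstacle.''} It is not really an obstacle. $J_{[a,b]}$ and $I'_{W_\Gamma}$ are both graded two-sided ideals, so $(J_{[a,b]}+I'_{W_\Gamma})_d(x,y)=(J_{[a,b]})_d(x,y)+(I'_{W_\Gamma})_d(x,y)$. The first summand vanishes once $[a,b]$ contains the finite window of vertices visited by degree-$d$ paths from $x$ to $y$. So you never need to control the individual terms $u\,\partial_aW_\Gamma\,v$, although your confinement argument for them is also valid.
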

\begin{proof}
    We work with the non-negative grading induced by \eqref{eq:deggam}. For fixed source and target vertices and fixed degree, the lengths of paths are bounded. That is, the number of arrows $\alpha$ is fixed by the degree, and then the number of degree $0$ arrows $\varepsilon$ is bounded by the second coordinates. In particular, every homogeneous morphism space of $\mathbb C\Gamma$ is finite-dimensional. We have the following isomorphisms of projective limits in the category $\mathbb{C}\mathchar`-\mathsf{cat}$:
    \begin{align*}
     \varprojlim_{[a,b]\in \Psi}\mathbb C\Gamma/ (\langle e_{k,r} \mid (k,r)\in \left(\Gamma_0\setminus (\Gamma_{[a,b]}^{\circ})_0\right)\rangle)\cong \mathbb{C}\Gamma \cong \varprojlim_{l\geq 1} \mathbb{C}\Gamma/\mathfrak{M}_{\Gamma}^l.
    \end{align*}
We regard $A_{[a,b]}$ as an object in $\mathbb{C}\mathchar`-\mathsf{cat}$ by \eqref{eqn:A_ab}, and thus we have
\[ \Lambda(\infty)= \varprojlim_{[a,b]\in \Psi}A_{[a,b]}\cong \mathbb C\Gamma/\bigcap_{[a,b] \in \Psi} (\langle e_{k,r} \mid (k,r)\in \left(\Gamma_0\setminus (\Gamma_{[a,b]}^{\circ})_0\right)\rangle+ I'_{W_{\Gamma}}).
\]
    It is enough to show the following equality between ideals of $\mathbb{Z}$-graded $\mathbb{C}$-linear category $\mathbb{C}\Gamma$ under the identification in Remark~\ref{rmk:R-C-cat}: \[\mathtt{LHS}\coloneqq \bigcap_{[a,b] \in \Psi} (\langle e_{k,r} \mid (k,r)\in \left(\Gamma_0\setminus (\Gamma_{[a,b]}^{\circ})_0\right)\rangle+ I'_{W_{\Gamma}})=\bigcap_{l\geq1}(\mathfrak{M}_{\Gamma}^l+ I'_{W_{\Gamma}}) \eqqcolon \mathtt{RHS}.\]
This is equivalent to showing an equality $e_{i,p}(\mathtt{LHS})e_{j,q}=e_{i,p}(\mathtt{RHS})e_{j,q}$ for any $(i,p), (j,q)\in\Gamma_0$.

Now we fix $(i,p), (j,q)\in\Gamma_0$. We take any interval $[a,b]$ such that $(i,p), (j,q) \in (\Gamma_{[a,b]}^{\circ})_0$. Every subspace of the form $$e_{i,p}(\langle e_{k,r} \mid (k,r)\in \left(\Gamma_0\setminus (\Gamma_{[a,b]}^{\circ})_0\right)\rangle+ I'_{W_{\Gamma}})e_{j,q}$$ contains $e_{i,p}(\mathfrak{M}_{\Gamma}^l+ I'_{W_{\Gamma}})e_{j,q}$ for all sufficiently large $l$ since the ideal of $A_{[a,b]}$ generated by all the arrows in $\Gamma_{[a,b]}^\circ$ is nilpotent thanks to Proposition~\ref{pro:HL16-4.17}~(iii). Conversely, for each $l\geq1$, it is clear that  $e_{i,p}(\mathfrak{M}_{\Gamma}^l+ I'_{W_{\Gamma}})e_{j,q}$ contains the subspace $e_{i,p}(\langle e_{k,r} \mid (k,r)\in \left(\Gamma_0\setminus (\Gamma_{[a,b]}^{\circ})_0\right)\rangle+ I'_{W_{\Gamma}})e_{j,q}$ for
sufficiently large $[a,b]$ such that $(i,p), (j,q)\in (\Gamma^\circ_{[a,b]})_0$. Hence, we have 
$e_{i,p}(\mathtt{LHS})e_{j,q}=e_{i,p}(\mathtt{RHS})e_{j,q}$ for any $(i,p), (j,q)\in\Gamma_0$. Since our ideals are determined by homogeneous subspaces with respect to the $\Gamma_0\times \Gamma_0$-grading, this yields the desired equality.  
\end{proof}

We give a connection between the algebra $\Lambda(\infty)$ and the $\mathbb{Z}$-graded algebras $\Pi(l)\,(l\in \mathbb Z_{\geq 1})$ and $\Pi(\infty)$ studied by Gei\ss--Leclerc--Schr\"oer in \cite{GLS-2017} and by the second and third named authors in \cite[\S\S4--5]{FM}.

Let $C = (c_{ij})_{i,j \in \mathtt{I}_0}$ be the Cartan matrix 
associated to the finite-dimensional simple Lie algebra $\mathfrak{g}_0$.
For $i,j\in \mathtt{I}_0$, we write $i\sim j$ if $c_{ij}<0$.

\begin{definition}[{\cite{GLS-2017,FM}}]\label{def:GLS1}
Keep the above notation. Associated with the finite-dimensional simple Lie algebra $\mathfrak{g}_0$, we define a quiver $\widetilde{Q}$ and a degree map $\deg \colon \widetilde{Q}_1 \to \mathbb{Z}$
as follows:
\begin{itemize}
    \item [(i)] The quiver $\widetilde{Q} \coloneqq \widetilde{Q}(\mathfrak{g}_0)$ is defined by
    \begin{gather*}
    \widetilde{Q}_0=\mathtt{I}_0, \quad \widetilde{Q}_1= \{ \alpha_{ij}\mid (i, j)\in \mathtt{I}_0 \times \mathtt{I}_0, c_{ij}<0\}\cup \{\varepsilon_i \mid i\in \mathtt{I}_0\}\\
s(\alpha_{ij})=j,\quad t(\alpha_{ij})=i,\quad s(\varepsilon_i)=t(\varepsilon_i)=i.
\end{gather*}
\item[(ii)] The degree map $\deg\colon \widetilde{Q}_1 \to \mathbb{Z}$  
on $\widetilde{Q}$ is defined by \begin{align*}
    \deg(\alpha_{ij})=-\mathop{\mathrm{max}}(d_i, d_j),\quad \deg(\varepsilon_i)=2d_i,
\end{align*}
where $D=\diag(d_i\mid i\in \mathtt{I}_0)$ is the minimal left symmetrizer of $C=(c_{ij})$.
\end{itemize}
\end{definition}

The path algebra $\mathbb{C}\widetilde{Q}$ is regarded as a $\mathbb{Z}$-graded algebra with respect to the degree map  $\deg\colon \widetilde{Q}_1 \to \mathbb{Z}$. It is easy to see that
\begin{eqnarray}\label{eqn:silon}
   \varepsilon\coloneqq \sum_{i\in \mathtt{I}_0}\varepsilon_i^{r^\vee /d_i}  
\end{eqnarray}
is a homogeneous element in $\mathbb{C}\widetilde{Q}$ of degree $2r^\vee$.

\begin{definition}\label{def:GLS2}
    Keep the above notation.  Let $\omega_{ij}\in \{1, -1\}$ be a choice of sign such that $\omega_{ij}= -\omega_{ji}$ if $i\sim j$. We define the following $\mathbb{Z}$-graded algebras $\Pi(l)\,(l\in \mathbb{Z}_{\geq 1})$ and $\Pi(\infty)$:
    \begin{enumerate}
        \item[(i)]\label{eq:GLS2_1} For each $l \in \mathbb{Z}_{\geq 1}$, the $\mathbb{Z}$-graded algebra $\Pi(l)$ is defined by
    \[\Pi(l)\coloneqq \mathbb{C}\widetilde{Q}/( I'_{\GLS} +\langle\varepsilon^l\rangle),\]
    where $I'_{\GLS}\coloneqq I'_{\GLS}(\mathfrak{g}_0)$ is the ideal of $\mathbb{C}\widetilde{Q}$ generated by the following homogeneous relations:
    \begin{itemize}
        \item[(R1)] $\varepsilon_i^{-c_{ij}}\alpha_{ij}-\alpha_{ij}\varepsilon_j^{-c_{ji}}$ for each $i,j \in \mathtt{I}_0$ with $i\sim j$;
        \item[(R2)] $\sum_{j\in \mathtt{I}_0; j\sim i}\sum_{k=0}^{-c_{ij}-1}\omega_{ij}\varepsilon_i^k\alpha_{ij}\alpha_{ji}\varepsilon_i^{-c_{ij}-1-k}$ for each $i\in \mathtt{I}_0$.
    \end{itemize}
    \item[(ii)]\label{eq:GLS2_2} The $\mathbb{Z}$-graded algebra $\Pi(\infty)$ is defined by
    \[\Pi(\infty)\coloneqq \varprojlim_{l\geq 1} \Pi(l)\]
    as the projective limit with respect to the projective system given by natural projections in the category of $\mathbb{Z}$-graded algebras.
    \end{enumerate}
\end{definition}
\begin{remark} \label{rmk:gls-potential}
    \begin{enumerate}
        \item[(i)]  
        We caution that the algebra $\Pi(l)$ and the associated simple Lie algebra $\mathfrak{g}_0$ here are of Langlands dual type to the ones studied in \cite{GLS-2017} (see \cite[Remark~2.1]{FM}). The algebras $\Pi(l)$ and $\Pi(\infty)$ are referred to as the generalized preprojective algebras in \cite{GLS-2017,FM}.
The algebra $\Pi(1)$ for $\mathfrak{g}_0$ of simply-laced type is identical to the classical preprojective algebra of a Dynkin quiver.
        \item[(ii)]  It is known that each algebra $\Pi(l)$ associated with a simple Lie algebra in Definition~\ref{def:GLS2}~(i) is finite-dimensional, \confer \cite[Corollary~11.13]{GLS-2017}.
        Note that the generalized preprojective algebras in \cite{GLS-2017} are defined for arbitrary symmetrizable Kac-Moody algebras and they are infinite-dimensional in general.
        \item[(iii)] One can easily observe that the homogeneous generators of the ideal $I'_{\GLS}$ in Definition~\ref{def:GLS2}~(i) are given by derivations of the ``potential'' $W_\GLS \coloneqq W_\GLS(\mathfrak{g}_0)$\footnote{Potentials similar to those defining $\Pi(\infty)$ and $\Pi(l)$ have previously appeared in the study of McKay correspondences for simply-laced types (see, e.g., \cite{BGK02,Moz11}), as well as in the context of theoretical physics and generalizations of quiver varieties for non-simply-laced types \cite{CD12,Yam10}.} defined by
\[
W_\GLS = \sum_{i, j\in \mathtt{I}_0; i\sim j} \omega_{ij}\varepsilon_i^{-c_{ij}} \alpha_{ij}\alpha_{ji}.
\]
Since $d_ic_{ij}+\max(d_i,d_j)=0$  for any $i\sim j$, we see that the potential $W_\GLS$ is homogeneous and $\deg(W_\GLS)=0$ (see \cite[\S 4.1, 4.3]{FM} and \cite[\S 1.7.3]{GLS-2017} for more details).
\item[(iv)] By the construction of $\Pi(\infty)$ (see the discussion in Proposition~\ref{Prop:KS} below), we can easily see that the canonical homomorphism $\mathbb C \widetilde{Q}/I'_\GLS \to \Pi(\infty)$ is surjective and there is an isomorphism
    \[\Pi(\infty)\cong \mathbb{C}\widetilde{Q}/\bigcap_{l\geq 1}(I'_\GLS + \langle \varepsilon^l\rangle).\]
    \end{enumerate}
\end{remark}

Following Cohen--Montgomery~\cite{CM84}, it is of classical interest in Morita theory to find some (non-unital) algebra whose category of modules is equivalent to the category of graded modules over a given graded algebra. We relate $\Lambda(\infty)$ and $\Pi(\infty)$ in this context. 

Recall that the infinite quiver $\Gamma$ has exactly two possible choices depending on the parity functions $\epsilon\colon \mathtt{I}_0 \to \{0, 1\}$. Let $\widetilde{\Gamma}$ be the infinite quiver which has two connected components given by these two possible choices of $\Gamma$. The vertex set $\widetilde{\Gamma}_0$ of $\widetilde{\Gamma}$ is given by
$$\widetilde{\Gamma}_0=\mathtt{I}_0\times \mathbb Z.$$
Let $W_{\widetilde{\Gamma}}$ be the formal sum of all the signed oriented cycles $\omega_{ij}w_{ij}(p)$ of the form \eqref{eq:cycles} with signs $\omega_{ij}$ in the infinite quiver $\widetilde{\Gamma}$. We define the algebra $\widetilde{\Lambda}(\infty)$ as the projective limit similarly to the definition of $\Lambda(\infty)$ in Definition~\ref{def:Lambdainf}.
 \begin{example}\label{ex:quiver}
    The quivers $\widetilde{Q}$ and $\Gamma$ of type $B_3$ are illustrated in Figure \ref{fig:1} and Figure \ref{fig:2}. The quiver $\widetilde{\Gamma}$ has exactly two connected components isomorphic to $\Gamma$ (up to shift). Here we take the same parity function as Example~\ref{ex:Gammaab}~(ii):
    \begin{figure}[htbp]
        \centering
        \[\begin{tikzcd}
	1 && 2 && 3
	\arrow["{\varepsilon_1}", from=1-1, to=1-1, loop, in=55, out=125, distance=10mm]
	\arrow["{\alpha_{21}}", from=1-1, to=1-3]
	\arrow["{\alpha_{12}}", shift left=3, from=1-3, to=1-1]
	\arrow["{\varepsilon_2}", from=1-3, to=1-3, loop, in=55, out=125, distance=10mm]
	\arrow["{\alpha_{32}}", from=1-3, to=1-5]
	\arrow["{\alpha_{23}}", shift left=3, from=1-5, to=1-3]
	\arrow["{\varepsilon_3}", from=1-5, to=1-5, loop, in=55, out=125, distance=10mm]
\end{tikzcd}\]
\caption{the quiver $\widetilde{Q}$ of type $B_3$}
\label{fig:1}
    \end{figure}
    \begin{figure}[htbp]
        \centering
        \[\begin{tikzcd}[column sep={11mm, between origins}, row sep=2mm]
(i\setminus p) & \cdots & -7 & -6 & -5 & -4 & -3 & -2 & -1 & 0 & 1 & 2 & 3 & \cdots \\
1 &\cdots& & & \circ\arrow[lll, <-] \arrow[drr, <-] & & & & \circ \arrow[llll, <-] \arrow[drr, <-] & & & & \circ \arrow[llll, <-] &\cdots \arrow[l,-]\\
2 &\cdots& \circ\arrow[l,<-] \arrow[drr, <-] \arrow[urr, <-] & & & & \circ \arrow[llll, <-] \arrow[drr, <-] \arrow[urr, <-] & & & & \circ \arrow[llll, <-] \arrow[drr, <-] \arrow[urr, <-] & & &\cdots\arrow[lll,-]\\
3 &\cdots& \circ\arrow[l,<-]\arrow[drr, <-]& & \circ\arrow[ll,<-] \arrow[urr, <-] & & \circ \arrow[ll, <-] \arrow[drr, <-] & & \circ \arrow[ll, <-] \arrow[urr, <-] & & \circ \arrow[ll, <-] \arrow[drr, <-] & & \circ \arrow[ll, <-] &\cdots \arrow[l,-]\\ 
2 &\cdots& & & \circ\arrow[lll, <-] \arrow[urr, <-] \arrow[drr, <-] & & & & \circ \arrow[llll, <-] \arrow[urr, <-] \arrow[drr, <-] & & & & \circ \arrow[llll, <-] &\cdots \arrow[l,-]\\
1 &\cdots& \circ\arrow[l,<-] \arrow[urr, <-] & & & & \circ \arrow[llll, <-] \arrow[urr, <-] & & & & \circ \arrow[llll, <-] \arrow[urr, <-] & & &\cdots\arrow[lll,-]
\end{tikzcd}\]
\caption{the quiver $\Gamma$ of type $B_3$}
\label{fig:2}
    \end{figure}
 \end{example}
If we have a $\mathbb Z$-graded vector space $V=\bigoplus_{d\in \mathbb Z}V_d$, we define its $p$-th grading shift $V\langle p \rangle$ to be the graded vector space whose $d$-th graded component is given by $(V \langle p \rangle)_d = V_{d-p}$ for each $p\in \mathbb Z$.

A non-unital algebra $A$ is said to have {\em local units} if each finite subset of $A$ is contained in some subring $wAw$, where $w\in A$ is an idempotent. By definition, the algebra $\Lambda(\infty)$ (resp.\ $\widetilde{\Lambda}(\infty)$) has local units, which are given by a finite sum of idempotents in $\{e_{(i,p)}\}_{(i,p) \in \Gamma_0}$ (resp.\ in $\{e_{(j,q)}\}_{(j,q)\in \widetilde{\Gamma}_0}$). 
A left $\Lambda(\infty)$-module $M$ is called a \emph{unital left $\Lambda(\infty)$-module} 
if $\Lambda(\infty)M=M$, equivalently,
 $M = \bigoplus_{(i,p) \in \Gamma_0} e_{i,p}M$ as a $\mathbb{C}$-vector space. Let $\Lambda(\infty)\mathchar`-\mathsf{Mod}$ denote the category of unital left $\Lambda(\infty)$-modules. We similarly define unital left $\widetilde{\Lambda}(\infty)$-modules, and let $\widetilde{\Lambda}(\infty)\mathchar`-\mathsf{Mod}$ denote the category of unital left $\widetilde{\Lambda}(\infty)$-modules. Let $\Pi(\infty)\mathchar`-\mathsf{Mod}^\mathbb{Z}$ denote the category of $\mathbb{Z}$-graded left $\Pi(\infty)$-modules. We define similar notions for right modules dually.
\begin{proposition}[{\confer \cite[Remark 4.3 \& Proposition 4.4]{FM}}] \label{rem:compare-gpa}
    There is a categorical isomorphism $\widetilde{\Lambda}(\infty)\mathchar`-\mathsf{Mod} \cong \Pi(\infty)\mathchar`-\mathsf{Mod}^\mathbb{Z}$. There is a similar isomorphism for right modules as well.
\end{proposition}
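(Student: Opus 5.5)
The plan is a Cohen--Montgomery style smash-product argument. We identify $\widetilde{\Lambda}(\infty)$ with the ``covering'' (smash product) category of the graded algebra $\Pi(\infty)$. Concretely, we use the grading of Definition~\ref{def:GLS1} to build, from $\Pi(\infty)$, the $\mathbb{C}$-linear category $\Pi(\infty)\#\mathbb{Z}$. Its objects are the pairs $(i,d)\in \mathtt{I}_0\times\mathbb{Z}$, and its morphism space from $(j,d')$ to $(i,d)$ is the homogeneous piece $(e_i\Pi(\infty)e_j)_{d-d'}$. It is tautological that unital modules over this category are the same as $\mathbb{Z}$-graded $\Pi(\infty)$-modules: send a graded module $M$ to $\bigoplus_{(i,d)}(e_iM)_d$. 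So the whole statement reduces to an isomorphism of $\mathbb{Z}$-graded $\mathbb{C}$-linear categories $\widetilde{\Lambda}(\infty)\cong \Pi(\infty)\#\mathbb{Z}$, with $\widetilde{\Gamma}_0=\mathtt{I}_0\times\mathbb{Z}$ matching the object set after a fixed affine reindexing of the second coordinate.

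The first step is at the level of quivers. The covering quiver $\widetilde{Q}\#\mathbb{Z}$ has vertices $(i,d)$, arrows $\varepsilon_i\colon (i,d)\to(i,d+2d_i)$ and $\alpha_{ij}\colon (j,d)\to(i,d-\max(d_i,d_j))$. We check that this is exactly $\widetilde{\Gamma}$. The $\varepsilon$-arrows match directly. For $\alpha_{ij}(q)$ going from $(j,q)$ to $(i,q+d_ic_{ij})$, we use $d_ic_{ij}=-\max(d_i,d_j)$ for $i\sim j$ (noted in Remark~\ref{rmk:gls-potential}~(iii)) together with $(\alpha_i,\alpha_j)=d_ic_{ij}$. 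The two parity classes of $\epsilon$ give the two connected components, which is why $\widetilde{\Gamma}$, and not $\Gamma$, appears. This yields $\mathbb{C}\widetilde{\Gamma}\cong \mathbb{C}\widetilde{Q}\#\mathbb{Z}$. The second step is to match relations. The lift of $W_{\GLS}$ is a sum over all its degree-shifted copies; since $W_{\GLS}$ is homogeneous of degree $0$, these copies are precisely the cycles $\omega_{ij}w_{ij}(p)$, with $\varepsilon_i^{-c_{ij}}$ lifting to the chain of $\varepsilon_i(\cdot)$ arrows in \eqref{eq:cycles}. Cyclic derivatives commute with lifting, so the ideal $I'_{W_{\widetilde{\Gamma}}}$ corresponds to the lift of $I'_{\GLS}$ generated by (R1) and (R2). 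Hence $\mathbb{C}\widetilde{\Gamma}/I'_{W_{\widetilde{\Gamma}}}\cong(\mathbb{C}\widetilde{Q}/I'_{\GLS})\#\mathbb{Z}$.

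The third step, which I expect to be the main obstacle, is matching the two completions. Proposition~\ref{prop:Lambdainfty} (for $\widetilde{\Gamma}$) gives $\widetilde{\Lambda}(\infty)$ as the quotient by $\bigcap_l(\mathfrak{M}^l+I'_{W})$. Remark~\ref{rmk:gls-potential}~(iv) gives $\Pi(\infty)$ as the quotient by $\bigcap_l(I'_{\GLS}+\langle\varepsilon^l\rangle)$. We must show these two filtrations induce the same closure on each homogeneous piece $e_{(i,d)}(-)e_{(j,d')}$. Each such piece is finite-dimensional, as in the proof of Proposition~\ref{prop:Lambdainfty}, so it suffices to compare the two filtrations piecewise. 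In one direction, $\langle\varepsilon^l\rangle\subseteq\mathfrak{M}^l$ is immediate. For the converse, in a fixed homogeneous piece the number of $\alpha$-arrows in a path is bounded, because the second coordinate of the vertices is controlled. Hence a sufficiently long path contains a long run of $\varepsilon$'s, and (R1) lets us commute $\varepsilon$'s past $\alpha$'s to collect a factor $\varepsilon^l$. This puts $\mathfrak{M}^{L}$ inside $\langle\varepsilon^l\rangle+I'$ piecewise for large $L$. With the categories identified, the module isomorphism follows, and the statement for right modules is the same argument applied to opposite categories.
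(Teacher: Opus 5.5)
Your overall route is the paper's: the smash product $\Pi(\infty)\#\mathbb{Z}$, the arrow-to-arrow isomorphism $\psi\colon\mathbb{C}\widetilde{\Gamma}\cong\mathbb{C}\widetilde{Q}\#\mathbb{Z}$, lifting $W_{\GLS}$ so that $I'_{W_{\widetilde\Gamma}}$ corresponds to $\bigoplus_m I'_{\GLS}p_m$, and comparing the two closures via $\langle\varepsilon^l\rangle\subseteq\mathfrak{M}^l$ plus a converse inclusion. Steps 1 and 2 are fine.

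The gap is your justification of the converse inclusion in Step 3, which rests on two false claims. The pieces $e_{(i,d)}\mathbb{C}\widetilde{\Gamma}e_{(j,d')}$ are \emph{not} finite-dimensional, and the number of $\alpha$-arrows in them is \emph{not} bounded. In the proof of Proposition~\ref{prop:Lambdainfty}, finiteness holds only after also fixing $\deg_{\Gamma}$, i.e.\ the number of $\alpha$'s. The second coordinates then bound the number of $\varepsilon$'s, which is the reverse of what you wrote. Concretely, the lifted potential cycle $w_{ij}(p)$ of \eqref{eq:cycles} is a cycle at $(i,p)$, so all its powers $w_{ij}(p)^k$ lie in $e_{(i,p)}\mathbb{C}\widetilde{\Gamma}e_{(i,p)}$. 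Such a power contains $2k$ arrows $\alpha$, yet all its runs of $\varepsilon$'s have length $-c_{ij}\le 3$ (length $1$ in simply-laced type). So ``a long path contains a long run of $\varepsilon$'s'' fails, and as written you have not shown $\mathfrak{M}^L\subseteq\langle\varepsilon^l\rangle+I'_{\GLS}$.

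The paper obtains this inclusion as the nilpotency of the arrow ideal $\mathfrak{M}_{\widetilde{Q}}$ in $\Pi(l)$. This follows from the finite-dimensionality of $\Pi(l)$ \cite[Corollary~11.13]{GLS-2017} together with the shape of the relations. Then $\psi$ matches $\mathfrak{M}_{\widetilde\Gamma}^l+I'_{W_{\widetilde\Gamma}}$ with $\bigoplus_m(\mathfrak{M}_{\widetilde Q}^l+I'_{\GLS})p_m$ on the nose.

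If you want to keep a combinatorial argument, count $\varepsilon$'s rather than runs. In a fixed $\widetilde{Q}$-degree the $\alpha$'s have negative and the $\varepsilon$'s positive degree, so the number of $\varepsilon$'s grows linearly with the length, and $\varepsilon$ is central modulo (R1). However, in non-simply-laced types a lone $\varepsilon_i$ at a short vertex cannot cross an arrow to a long vertex (only $\varepsilon_i^{-c_{ij}}$ can). So you would still need a genuine degree estimate to extract a large power of $\varepsilon$.

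Simplest of all: every ideal in sight is homogeneous for the $\alpha$-count grading (the transport of $\deg_{\Gamma}$). On a piece with fixed vertex pair, fixed $\widetilde{Q}$-degree and fixed $\alpha$-count, paths have bounded length. Hence $\bigcap_l(\mathfrak{M}^l+I'_{\GLS})=I'_{\GLS}=\bigcap_l(\langle\varepsilon^l\rangle+I'_{\GLS})$. So Step 3 is formal once Proposition~\ref{prop:Lambdainfty} and Remark~\ref{rmk:gls-potential}~(iv) are granted; the real content lies in those two identifications.
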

\begin{proof}
    We follow a general construction \cite{CM84,Bea88} of the smash product of a group graded algebra. In particular, we construct an algebra isomorphism $\widetilde{\Lambda}(\infty)\cong \Pi(\infty)\#\mathbb{Z}$, and this yields the desired categorical isomorphism thanks to \cite[\S 2, Theorem~2.6]{Bea88}. Here the (non-unital) ring $R\# \mathbb{Z}$ for a unital $\mathbb{Z}$-graded ring $R$ is defined as a free left $R$-module $\bigoplus_{m \in \mathbb{Z}} R p_m$ with a basis $\{p_m \mid m \in \mathbb{Z}\}$. The multiplication is given by the formula $(\lambda p_m)(\mu p_n) = \lambda \mu_{m-n}p_n$ for $\lambda, \mu \in R$ and $n,m \in \mathbb{Z}$, where $\mu_{m-n}$ is the homogeneous component of $\mu$ of degree $m-n$.

 We define a ring homomorphism $\psi\colon \mathbb{C}\widetilde{\Gamma} \to \mathbb{C}\widetilde{Q}\#\mathbb{Z}$ by generators as follows:
\begin{align*}
e_{i,m} \mapsto e_i p_m, \quad
\alpha_{ij}(m) \mapsto \alpha_{ij} p_m, \quad
\varepsilon_i(m) \mapsto \varepsilon_i p_m.
\end{align*}
It is easy to check that $\psi\colon \mathbb{C}\widetilde{\Gamma} \to \mathbb{C}\widetilde{Q}\#\mathbb{Z}$ is a $\mathbb C$-algebra isomorphism. 

We can show the following isomorphisms:
\[
   \widetilde \Lambda(\infty)  \cong \mathbb{C} \widetilde \Gamma/ \bigcap_{l\geq 1}(\mathfrak{M}_{\widetilde \Gamma}^l+I'_{W_{\widetilde \Gamma}})\quad\text{and}\quad
   \Pi(\infty)\coloneqq \varprojlim_{l\geq 1} \Pi(l)\cong \mathbb{C}\widetilde{Q}/\bigcap_{l\geq 1}(\mathfrak{M}_{\widetilde{Q}}^l+I'_{\mathsf{GLS}}).
\]
The first isomorphism follows from a similar argument to that of Proposition~\ref{prop:Lambdainfty}.
The second isomorphism also follows from a similar argument thanks to Remark~\ref{rmk:gls-potential}~(iv) and the fact that $\Pi(l)$ is a finite-dimensional algebra \cite[Corollary~11.13]{GLS-2017}. In particular, we can easily see from the relations of $\Pi(l)$ that $\fM_{\widetilde{Q}}$ is nilpotent in $\Pi(l)$  (\confer \cite[Theorem~2.2~(2)]{FM}).

Since the isomorphism $\psi$ identifies the ideal $\mathfrak{M}_{\widetilde \Gamma}^l+I'_{W_{\widetilde \Gamma}}$ of
$\mathbb{C} \widetilde \Gamma$ with the ideal $\bigoplus_{m\in\mathbb Z}(\mathfrak{M}_{\widetilde{Q}}^l+I'_{\mathsf{GLS}})p_m$ of $\mathbb{C}\widetilde{Q}\#\mathbb{Z}$ for any $l\geq 1$, we know that  $\psi$ induces the $\mathbb C$-algebra isomorphism $\widetilde{\Lambda}(\infty)\cong \Pi(\infty)\#\mathbb{Z}$. Thus the assertion holds.
\end{proof}

Since ${\Lambda}(\infty)\mathchar`-\mathsf{Mod}$ can be viewed as a full subcategory of $\widetilde{\Lambda}(\infty)\mathchar`-\mathsf{Mod}$, each object in ${\Lambda}(\infty)\mathchar`-\mathsf{Mod}$ can be identified with a $\mathbb{Z}$-graded module over $\Pi(\infty)$ via the categorical isomorphism in Proposition~\ref{rem:compare-gpa}.
\begin{remark}
    \begin{enumerate}
        \item[(i)] The projective objects $\Lambda(\infty)e_{i,p}$ and the $p$-th grading shift of $\Pi(\infty)e_i$ are identified under the categorical isomorphism in Proposition~\ref{rem:compare-gpa}. Under the categorical isomorphism for right modules, the projective object $e_{i,p}\Lambda(\infty)$ is identified with $(-p)$-th grading shift of $e_i \Pi(\infty)$ in our convention.
        \item[(ii)] By construction, there is a canonical isomorphism of $\mathbb{C}$-vector spaces,
\[ e_{i,p} \Lambda(\infty) e_{j,s} \cong (e_i \Pi(\infty) e_j)_{p-s}, \qquad \forall (i,p), (j,s) \in \Gamma_0,\]
 where $V_{u}$ denotes the $u$-th homogeneous component of a $\mathbb{Z}$-graded vector space $V$. 
        \item[(iii)] The smash product used in the proof of Proposition~\ref{rem:compare-gpa} is sometimes referred to as the Galois covering in the sense of Gabriel~\cite{Gab81} with an interest in Auslander--Reiten theory. The relationship between the smash product and the Galois covering was studied by Cibils--Marcos~\cite{CM06} in a more general context (see also \cite{Asas22} for a textbook of modern perspectives).
    \end{enumerate}
\end{remark}

Let $A$ be either $\Lambda(\infty)$ or $\widetilde{\Lambda}(\infty)$.
Analogously to \cite[\S 10]{AF92}, we give the following definitions. 
A unital $A$-module $M$ is \emph{finitely generated} if it satisfies the following condition: for any epimorphism $\bigoplus_{m\in H} U_m \twoheadrightarrow M$ in $A\Mod$ with an arbitrary index set $H$, there exists a finite subset $F\subseteq H$ such that the canonical injection induces an epimorphism $\bigoplus_{m\in F} U_{m} \twoheadrightarrow M$.
A unital $A$-module $M$ is \emph{finitely cogenerated} if it satisfies the following condition: for any monomorphism $M \hookrightarrow \prod_{n\in H} V_n$ in $A\Mod$ with an arbitrary index set $H$,
there exists a finite subset $F\subseteq H$ such that the canonical projection induces a monomorphism $M \hookrightarrow \prod_{n\in F} V_n$.

     \vspace{1.5mm}
Let $\widetilde \Lambda(\infty)\mathchar`-\mathsf{mod}_\mathsf{fcg}$ (resp.\ $\widetilde \Lambda(\infty)\mathchar`-\mathsf{mod}_\mathsf{fg}$) denote the full subcategory of $\widetilde \Lambda(\infty)\mathchar`-\mathsf{Mod}$ consisting of unital finitely cogenerated (resp.\ finitely generated) modules. The categorical isomorphism $\widetilde{\Lambda}(\infty)\Mod \cong \Pi(\infty)\Mod^\mathbb{Z}$ induces, by restriction, the following isomorphisms:
\[
\widetilde{\Lambda}(\infty)\modfg\cong \Pi(\infty)\modfg^{\mathbb{Z}}\quad \text{and}\quad \widetilde{\Lambda}(\infty)\modfcg\cong \Pi(\infty)\modfcg^{\mathbb{Z}}.
\]

Let $\Lambda(\infty)\injc$ (resp.\ $\Lambda(\infty)\projc$) denote the full subcategory of injective objects in $\Lambda(\infty)\mathchar`-\mathsf{mod}_\mathsf{fcg}$ (resp.\ projective objects in $\Lambda(\infty)\mathchar`-\mathsf{mod}_\mathsf{fg}$).
\begin{proposition}\label{Prop:KS}
    \begin{enumerate}
        \item[(i)]\label{eq:KS1} The category $\Lambda(\infty)\mathchar`-\mathsf{mod}_\mathsf{fcg}$ is a $\Hom$-finite abelian category and each object in $\Lambda(\infty)\mathchar`-\mathsf{mod}_\mathsf{fcg}$ admits an injective resolution in this category.
        \item[(ii)]\label{eq:KS2} The triangulated category $\mathcal{K}^b(\Lambda(\infty)\injc)$ is $\Hom$-finite and Krull--Schmidt (\ie any object is isomorphic to a finite direct sum of objects whose endomorphism rings are local rings).
    \end{enumerate}
\end{proposition}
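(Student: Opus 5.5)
The plan is to translate both statements into facts about finitely cogenerated unital $\Lambda(\infty)$-modules. By Proposition~\ref{rem:compare-gpa}, such modules correspond to graded $\Pi(\infty)$-modules, and the key property to use is that every homogeneous piece $e_{i,p}\Lambda(\infty)e_{j,s}\cong(e_i\Pi(\infty)e_j)_{p-s}$ is finite-dimensional. This is the bound observed in the proof of Proposition~\ref{prop:Lambdainfty}: for fixed endpoints and fixed degree, path lengths are bounded. As a preliminary step, I would show that the indecomposable injectives of the category are exactly
\[
I_{i,p}\coloneqq D(e_{i,p}\Lambda(\infty))=\bigoplus_{(j,s)}D(e_{i,p}\Lambda(\infty)e_{j,s}),
\]
the graded duals of the indecomposable right projectives. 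Each $I_{i,p}$ is unital and has every weight space $e_{j,s}I_{i,p}$ finite-dimensional. I would check that $I_{i,p}$ is injective in $\Lambda(\infty)\Mod$ using the adjunction $\Hom_{\Lambda(\infty)}(M,I_{i,p})\cong D(e_{i,p}M)$, which is exact in $M$. I would also check that the socle of $I_{i,p}$ is the simple module $S_{i,p}$ and that it is essential. The latter holds because $\Lambda(\infty)$ is complete with respect to arrow length, so every element of $I_{i,p}$ is killed by a high power of $\mathfrak M_\Gamma$ and generates a finite-dimensional submodule.

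Next I would describe the category. I claim a unital module $M$ is finitely cogenerated if and only if its socle is essential and finite-dimensional, and that this happens if and only if $M$ embeds into a finite direct sum of the $I_{i,p}$. One direction uses the argument of \cite[\S10]{AF92} applied to the family of socle complements. For the other, I need that every submodule is locally finite (each element generates a finite-dimensional submodule, by the nilpotence argument above). Then every nonzero submodule meets the socle, so the socle is essential, and it is finite-dimensional by the finite cogeneration condition. Embedding $M$ into the injective hull $\bigoplus I_{i,p}^{a_{i,p}}$ of its socle then gives the embedding. From this description the category is closed under submodules and finite direct sums. Closure under quotients, which I must verify, goes as follows. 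A quotient $M/N$ with $M\subset I$ has socle controlled by $\Hom(S_{j,s},I/N)$. This space is finite-dimensional because $\Ext^1(S_{j,s},N)$ is finite-dimensional, which follows from the minimal projective presentation of $S_{j,s}$: it has finitely many arrows out of $(j,s)$, and each $N$ has finite-dimensional weight spaces. So the category is an abelian subcategory. It is $\Hom$-finite because
\[
\Hom(M,I_{i,p})\cong D(e_{i,p}M)
\]
is finite-dimensional and $\Hom(M,N)\hookrightarrow\Hom(M,\bigoplus I_{i,p}^{a})$. Injective resolutions are then built by iterating injective hulls of cokernels, which stay in the category by the closure under quotients. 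This proves (i).

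For (ii), the $\Hom$-spaces of $\mathcal K^b(\Lambda(\infty)\injc)$ are subquotients of finite direct sums of $\Hom(I,I')$. These are finite-dimensional since $\Hom(I_{j,s},I_{i,p})\cong D(e_{i,p}I_{j,s})$. Idempotents split in $\mathcal K^b$ of an additive category whose idempotents split: here every finite sum of $I_{i,p}$ decomposes uniquely because $\End(I_{i,p})$ is local (it is finite-dimensional and $I_{i,p}$ is indecomposable). With idempotents splitting, a standard argument (for instance, by Balmer--Schlichting, or via Karoubianity of bounded homotopy categories of Krull--Schmidt categories) shows that a $\Hom$-finite triangulated category with split idempotents is Krull--Schmidt, and the Krull--Schmidt property follows. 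The main obstacle I expect is the closure of $\Lambda(\infty)\modfcg$ under quotients, together with the local finiteness argument. It is exactly there that completeness of $\Lambda(\infty)$ (the $\bigcap_l$ in Proposition~\ref{prop:Lambdainfty}) and the finite-dimensionality of the weight spaces must be used carefully, since the quiver is infinite.
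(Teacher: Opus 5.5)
Your argument rests on two finiteness properties of $\Lambda(\infty)$, and neither is established.

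\textbf{Finite-dimensional weight spaces.} The finite-dimensionality of $e_{i,p}\Lambda(\infty)e_{j,s}$ does not follow from the bound in the proof of Proposition~\ref{prop:Lambdainfty}. That bound refers to the grading $\deg_\Gamma$ of \eqref{eq:deggam}, in which $\varepsilon$-arrows have degree $0$. It says that paths with fixed endpoints \emph{and} a fixed number of $\alpha$-arrows have bounded length. Fixing the endpoints alone only fixes the $\Pi(\infty)$-degree $p-s$. But $\Gamma$ has oriented cycles of $\Pi(\infty)$-degree $0$, for instance the cycles $w_{ij}(p)$ of \eqref{eq:cycles}. So $e_{i,p}\mathbb{C}\Gamma e_{j,s}$ is infinite-dimensional, with paths of unbounded length. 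That all sufficiently long such paths vanish in $\Lambda(\infty)$ is a genuinely finite-type consequence of the relations. The paper obtains it from \cite[Theorem~11.12]{GLS-2017}: $\Pi(\infty)$ is graded free of finite rank over the central subalgebra $\mathbb{C}[\varepsilon]$ with $\deg\varepsilon=2r^\vee>0$, so its homogeneous components are finite-dimensional. Your local finiteness of $I_{i,p}$, your $\Hom$-finiteness and your $\Ext^1$ estimate all depend on this.

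\textbf{Closure under quotients.} Even granting finite-dimensional weight spaces, the step you flagged as the main obstacle does not go through. Your $\Ext^1$ estimate shows that each multiplicity $\dim\Hom(S_{j,s},I/N)$ is finite. Finite cogeneration, however, needs $\operatorname{soc}(M/N)$ to have finite length, i.e.\ only finitely many $(j,s)$ may contribute. Since $I_{i,p}$ has nonzero weight spaces at infinitely many vertices, so can $N$, and then your bound $\sum_{a}\dim e_{t(a)}N$ is nonzero for infinitely many $(j,s)$.

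Finite weight spaces and finitely many arrows at each vertex do not suffice in general. Take the quiver with arrows $x_1\to v$, $x_{n+1}\to x_n$ and $y_n\to x_n$ ($n\ge 1$), with no relations. The injective hull $I_v$ of $S_v$ has one-dimensional weight spaces. Yet its quotient by the submodule supported on $\{v,x_1,x_2,\ldots\}$ is $\bigoplus_{n\ge1}S_{y_n}$, which is not finitely cogenerated.

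What is missing is noetherianity, in dual form. Quotients of $I=\mathbb{D}(P)$ correspond to submodules of the finitely generated projective right module $P$, and you need these to be finitely generated. The paper gets this from the same finiteness of $\Pi(\infty)$ over $\mathbb{C}[\varepsilon]$. It then transfers the standard properties of finitely generated graded modules over a noetherian algebra (abelian, $\Hom$-finite, projective resolutions) through the duality $\mathbb{D}\colon\Lambda(\infty)\modfcg\simeq\Lambda(\infty)^{\op}\modfg$.

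Once both inputs are supplied, your remaining steps are a legitimate alternative to that duality argument. These are: injectivity of $I_{i,p}$ via $\Hom(M,I_{i,p})\cong\Hom_{\mathbb{C}}(e_{i,p}M,\mathbb{C})$, the socle characterization of finitely cogenerated modules, and the Krull--Schmidt deduction for $\mathcal{K}^b$.
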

\begin{proof}
By Proposition~\ref{rem:compare-gpa}, the category $\Lambda(\infty)\mathchar`-\mathsf{mod}_\mathsf{fcg}\subseteq \widetilde\Lambda(\infty)\mathchar`-\mathsf{mod}_\mathsf{fcg}$ is equivalent to an abelian subcategory of the category $\Pi(\infty)\modfcg^{\mathbb{Z}}$ of finitely cogenerated $\mathbb{Z}$-graded $\Pi(\infty)$-modules. We know that $\Pi(\infty)$ is a graded free module of finite rank over a central subalgebra isomorphic to a $\mathbb{Z}$-graded polynomial ring $\mathbb{C}[\varepsilon]$ with $\deg \varepsilon = 2r^\vee> 0$ (and hence $\Pi(\infty)$ is a left and right noetherian algebra). This is because we have $e_i\Pi(\infty)e_j\cong \varprojlim_{l\geq 1} e_i\Pi(l)e_j$ and each $e_i\Pi(l)e_j$ is a $\mathbb{Z}$-graded free $\mathbb{C}[\varepsilon_i]/\langle \varepsilon_i^{r^\vee l/d_i} \rangle$-module of finite rank by left multiplication and is a $\mathbb{Z}$-graded free $\mathbb{C}[\varepsilon_j]/\langle \varepsilon_j^{r^\vee l/d_j} \rangle$-module of finite rank by right multiplication thanks to \cite[Theorem~11.12]{GLS-2017} and the symmetry of each $\Pi(l)$. 
The corresponding homogeneous bases can be chosen compatibly
with the natural projections $\Pi(l+1)\twoheadrightarrow\Pi(l)\,(l\in \mathbb{Z}_{>0})$.
Hence the inverse systems stabilize degreewise, and passing to the
projective limit shows that each $e_i\Pi(\infty)e_j$ is $\mathbb{Z}$-graded free of
finite rank over $\mathbb C[\varepsilon]$, since $\mathbb{C}[\varepsilon_i](\cong \varprojlim_{l\geq 1} \mathbb{C}[\varepsilon_i]/\langle \varepsilon_i^{r^\vee l/d_i} \rangle)$ and $\mathbb{C}[\varepsilon_j](\cong \varprojlim_{l\geq 1} \mathbb{C}[\varepsilon_j]/\langle \varepsilon_j^{r^\vee l/d_j} \rangle)$ are $\mathbb{Z}$-graded free over $\mathbb{C}[\varepsilon]$. This also yields the fact that the $\mathbb Z$-gradation of $\Pi(\infty)$ has a lower bound and each degree component is finite-dimensional. We have the (restricted) $\mathbb{C}$-duality $\mathbb{D}\colon \Lambda(\infty)\mathchar`-\mathsf{mod}_\mathsf{fcg} \xrightarrow{\sim} \Lambda(\infty)^{\text{op}}\mathchar`-\mathsf{mod}_\mathsf{fg}$ thanks to Proposition~\ref{rem:compare-gpa}. All the assertions in (i) follow from these facts.
The assertion (ii) is a consequence of the assertion (i) and the well-known fact that the bounded homotopy category of a $\Hom$-finite Krull--Schmidt linear category is $\Hom$-finite and Krull--Schmidt as well.
\end{proof}
\begin{remark}
    The discussions in Proposition~\ref{Prop:KS} show that the Grothendieck groups $K_0(\mathcal{K}^b(\Pi(\infty)\injc^\mathbb{Z}))\cong K_0(\mathcal{K}^b(\widetilde{\Lambda}(\infty)\injc))$ and $K_0(\mathcal{K}^b(\Lambda(\infty)\injc))$ are free abelian groups.
\end{remark}
For each right unital $\Lambda(\infty)$-module $M=\bigoplus_{(i,p)\in \Gamma_0} Me_{i,p}$, we naturally associate an object $\mathbb{D}(M)\coloneqq \bigoplus_{(i,p)\in \Gamma_0}\Hom_{\mathbb{C}}(Me_{i,p},\mathbb{C})$ in $\Lambda(\infty)\mathchar`-\mathsf{Mod}$, and this yields a $\mathbb{C}$-duality $\Lambda(\infty)^{\text{op}}\mathchar`-\mathsf{mod}_\mathsf{fg}\to \Lambda(\infty)\mathchar`-\mathsf{mod}_\mathsf{fcg}$  as seen in the proof of Proposition~\ref{Prop:KS}.
 An indecomposable injective left $\Lambda(\infty)$-module is given by
\begin{equation}\label{eq:injdef}
    I_{i,p} \coloneqq \mathbb{D}(e_{i,p}\Lambda(\infty))
\end{equation}
for each vertex $(i,p) \in \Gamma_{0}$, and the set $\{ I_{i,p} \mid (i,p) \in \Gamma_0\}$ gives a complete system of isomorphism classes of indecomposable objects in $\Lambda(\infty)\injc$.
In what follows, we regard a homomorphism $f \colon I_0 \to I_1$ in $\Lambda(\infty)\injc$ as a two-term complex in $\mathcal{K}^{[0,1]}(\Lambda(\infty)\injc)$ concentrated in cohomological degree $0$ and $1$. One can regard it as an object in the bounded homotopy category $\mathcal{K}^b(\Lambda(\infty)\injc)$.

The algebra $A_{[a,b]}$ is finite-dimensional by Proposition~\ref{pro:HL16-4.17}~(i). The algebra $\Lambda(\infty)$ is an algebra with local units. Any object $P$ in $\Lambda(\infty)\projc$ satisfies $P=\bigoplus_{(i,p)\in \Gamma_0} e_{i,p}P$ as a $\mathbb{C}$-vector space and $\dim_{\mathbb{C}}(e_{i,p}P) <\infty$ for each $(i,p)\in \Gamma_0$ thanks to Proposition~\ref{Prop:KS}. In particular, we have the \emph{Nakayama functors}, which give linear equivalences:
   \begin{eqnarray*}
      \nu_{[a,b]}(-)&\coloneqq& \mathbb{D}\circ\Hom_{A_{[a,b]}}(-, A_{[a,b]})\colon A_{[a,b]}\projc \xrightarrow{\sim} A_{[a,b]}\injc,\\ 
      \nu(-)&\coloneqq& \mathbb{D}\circ\Hom_{\Lambda(\infty)}(-, \Lambda(\infty))\colon \Lambda(\infty)\projc \xrightarrow{\sim} \Lambda(\infty)\injc.
   \end{eqnarray*}
To relate the two categories $\Lambda(\infty)\injc$ and $A_{[a,b]}\injc$, we consider the following functor
\[T_{[a,b]}\coloneq \Hom_{\Lambda(\infty)}(A_{[a,b]}, -) \colon \Lambda(\infty)\injc \to A_{[a,b]}\injc.\]
It is easy to see that 
\begin{eqnarray}\label{eqn:nu}
T_{[a,b]}\circ \nu \cong \nu_{[a,b]}\circ (A_{[a,b]}\otimes_{\Lambda(\infty)}(-))
\end{eqnarray}
as a functor  $\Lambda(\infty)\projc\to A_{[a,b]}\injc$, and we have \[T_{[a,b]}(I_{i,p}) \cong \begin{cases}
\mathbb{D}(e_{i,p}A_{[a,b]}) & \text{if $(i,p) \in (\Gamma_{[a,b]}^\circ)_0$}, \\
\{ 0 \} & \text{otherwise.}
\end{cases}\]
Note that there is a natural isomorphism
\begin{equation}\label{eq:HomII}
\Hom_{\Lambda(\infty)}(I_{i,p}, I_{j,s}) \cong e_{i,p} \Lambda(\infty) e_{j,s}.
\end{equation}
\begin{definition}
    Let $A$ denote the algebra $\Lambda(\infty)$ or $A_{[a,b]}$.
    A two-term complex $f\in \mathcal{K}^{[0,1]}(A\injc)$ is referred to as {\em rigid}, if  $\Hom_{\mathcal{K}^{b}(A\injc)}(f,f[1])=0$.
\end{definition}
\begin{remark}\label{rem:T_X2}
    Thanks to \eqref{eqn:nu}, the functor $T_{[a,b]}$ and its derived functor are identical to the truncation functors associated to factor algebras. We can easily see that if $f\in \mathcal{K}^{[0,1]}(\Lambda(\infty)\injc)$ is rigid, then $T_{[a,b]}f \in \mathcal{K}^{[0,1]}(A_{[a,b]}\injc)$ is also rigid for any integer interval $[a,b]$ (\confer \cite[Lemma~2.13]{KI22} for a study of unital rings).
\end{remark}
\begin{proposition} \label{prop:TX}
Let $f \colon I_0 \to I_1$ and $f' \colon I_0' \to I_1'$ be morphisms in the category $\Lambda(\infty)\injc$.
Then, for sufficiently large $[a,b]$, the derived functor of $T_{[a,b]}$ induces an isomorphism
\[ \Hom_{\mathcal{K}^b(\Lambda(\infty)\injc)}(f,f'[1]) \cong \Hom_{\mathcal{K}^b(A_{[a,b]}\injc)}(T_{[a,b]}f, T_{[a,b]}f'[1]).\]
(For simplicity, we also denote the derived functor by $T_{[a,b]}$).
\end{proposition}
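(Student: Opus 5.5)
The plan is to compute both Hom spaces explicitly in terms of the bimodules $e_{i,p}\Lambda(\infty)e_{j,s}$ and show that, for $[a,b]$ large, the truncation $\Lambda(\infty)\to A_{[a,b]}$ is bijective on every piece that actually enters. Decompose $I_0=\bigoplus I_{i,p}$, $I_1=\bigoplus I_{i,p}$, and similarly for $I_0',I_1'$; these are finite direct sums, so only finitely many vertices $(i,p)$ occur. A morphism of complexes $f\to f'[1]$ is a single map $h\colon I_0\to I_1'$, and the homotopies are pairs $(s_0,s_1)$ with $s_0\colon I_0\to I_0'$, $s_1\colon I_1\to I_1'$, acting by $h\mapsto h+f's_0+s_1f$ (up to signs). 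Hence
\[
\Hom_{\mathcal{K}^b(\Lambda(\infty)\injc)}(f,f'[1])\cong \operatorname{coker}\bigl(\Hom(I_0,I_0')\oplus\Hom(I_1,I_1')\xrightarrow{\;(f'\circ-)+(-\circ f)\;}\Hom(I_0,I_1')\bigr),
\]
and the same presentation holds over $A_{[a,b]}$ once we apply $T_{[a,b]}$. This uses that $T_{[a,b]}$ is additive and sends $I_{i,p}$ to $\mathbb{D}(e_{i,p}A_{[a,b]})$ or to $0$.

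By \eqref{eq:HomII}, each term of the sequence is a finite direct sum of spaces $e_{i,p}\Lambda(\infty)e_{j,s}$, and the corresponding terms over $A_{[a,b]}$ are $e_{i,p}A_{[a,b]}e_{j,s}$ by the analogous isomorphism for the finite-dimensional algebra. The induced map is the natural projection $\Lambda(\infty)\to A_{[a,b]}$, which is compatible with composition. The compatibility of \eqref{eq:HomII} with $T_{[a,b]}$ follows from \eqref{eqn:nu}, after transporting everything through the Nakayama functor $\nu$ to projectives. So the claim reduces to one statement: for the finitely many pairs $((i,p),(j,s))$ involved, the projection $e_{i,p}\Lambda(\infty)e_{j,s}\to e_{i,p}A_{[a,b]}e_{j,s}$ is an isomorphism when $[a,b]$ is large. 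If this holds, the two cokernel diagrams are identified termwise and compatibly, which gives the asserted isomorphism via the derived functor.

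The reduced statement is the main obstacle. By Proposition~\ref{prop:Lambdainfty} and \eqref{eqn:A_ab}, the projection is surjective, and its kernel is
\[
e_{i,p}\bigl(\langle e_{k,r}\mid (k,r)\notin(\Gamma^\circ_{[a,b]})_0\rangle+I'_{W_\Gamma}\bigr)e_{j,s}
\]
modulo $e_{i,p}\bigl(\bigcap_l(\mathfrak M_\Gamma^l+I'_{W_\Gamma})\bigr)e_{j,s}$. The proof of Proposition~\ref{prop:Lambdainfty} shows these ideals agree after intersecting over $[a,b]$, but we need stabilization at a finite stage. For this I would use finite-dimensionality: by Proposition~\ref{Prop:KS}, $e_{i,p}\Lambda(\infty)e_{j,s}\cong(e_i\Pi(\infty)e_j)_{p-s}$ is finite-dimensional. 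Any path from $(j,s)$ to $(i,p)$ through a vertex far outside a window has large length, since the second coordinate must travel far. Such paths lie in $\mathfrak M_\Gamma^l$ for $l$ large, and hence map to zero in the finite-dimensional quotient $e_{i,p}\Lambda(\infty)e_{j,s}$. The degreewise nilpotence coming from the finite-rank freeness over $\mathbb C[\varepsilon]$ in Proposition~\ref{Prop:KS}, together with the grading bound, would make this precise. It follows that the descending chain of kernels stabilizes at zero for a sufficiently large window, and taking $[a,b]$ larger than all the finitely many required windows finishes the proof.
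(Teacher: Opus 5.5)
Your proposal is correct and is essentially the paper's proof. Both arguments write the Hom space in the homotopy category as the cokernel of $\Hom(I_0,I_0')\oplus\Hom(I_1,I_1')\to\Hom(I_0,I_1')$, compare terms via \eqref{eq:HomII}, and use finite-dimensionality of the spaces $e_{i,p}\Lambda(\infty)e_{j,s}$ to make the truncation maps isomorphisms for large $[a,b]$; the paper then finishes with the five lemma. The path-length detour is unnecessary: the kernels of $e_{i,p}\Lambda(\infty)e_{j,s}\to e_{i,p}A_{[a,b]}e_{j,s}$ form a decreasing directed family of subspaces of a finite-dimensional space with zero intersection, so they vanish for large $[a,b]$ — and this same observation is what actually justifies your claim that $e_{i,p}\mathfrak M_\Gamma^l e_{j,s}$ dies in $\Lambda(\infty)$ for large $l$.
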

\begin{proof}
By definition of homotopy relations, Definition~\ref{def:Lambdainf}, and the identification~\eqref{eq:HomII}, 
the functor $T_{[a,b]}$ induces the following commutative diagram with exact rows:
\[\begin{tikzcd}[scale cd=0.83]
	{\displaystyle{\prod_{m\in\{0,1\}}\Hom_{\Lambda(\infty)}(I_m,I'_m)}} & {\Hom_{\Lambda(\infty)}(I_0, I_1')} & {\Hom_{\mathcal{K}^b(\Lambda(\infty)\injc)}(f, f'[1])} & 0 && {} \\
	\\
	{\displaystyle{\prod_{m\in\{0,1\}}\Hom_{A_{[a,b]}}(T_{[a,b]}I_m,T_{[a,b]}I'_m)}} & {\Hom_{A_{[a,b]}}(T_{[a,b]}I_0, T_{[a,b]}I_1')} & {\Hom_{\mathcal{K}^b(A_{[a,b]}\injc)}(T_{[a,b]}f,T_{[a,b]} f'[1])} & 0
	\arrow[from=1-1, to=1-2]
	\arrow["{\alpha_1}", from=1-1, to=3-1]
	\arrow[from=1-2, to=1-3]
	\arrow["{\alpha_2}", from=1-2, to=3-2]
	\arrow[from=1-3, to=1-4]
	\arrow["{\alpha_3}", from=1-3, to=3-3]
	\arrow[from=3-1, to=3-2]
	\arrow[from=3-2, to=3-3]
	\arrow[from=3-3, to=3-4]
\end{tikzcd}
\]
Here $\alpha_1$ and $\alpha_2$ are natural maps obtained from the projective system in Definition~\ref{def:Lambdainf} and the identification~\eqref{eq:HomII}. The map $\alpha_3$ is an induced map.
Since the projective system in Definition~\ref{def:Lambdainf} is given by natural projections and satisfies the Mittag-Leffler condition, we see that the morphisms $\alpha_1$ and $\alpha_2$ are surjective. This implies that $\alpha_3$ is also surjective.

Since the spaces $\prod_{m\in\{0,1\}}\Hom_{\Lambda(\infty)}(I_m,I'_m)$ and $\Hom_{\Lambda(\infty)}(I_0, I'_1)$
are finite-dimensional, the morphisms $\alpha_1$ and $\alpha_2$ must be isomorphisms for any sufficiently large $[a,b]$ by construction of $\Lambda(\infty)$. Thus we conclude that $\alpha_3$ is an isomorphism by the five lemma.
\end{proof}

\begin{remark}\label{rem:field}
    The arguments in this subsection, up to this point, hold over an arbitrary base field. Those in the remainder of this subsection and in \S\ref{subsec:PKR} are valid over any algebraically closed field. However, we take the ground field to be $\mathbb C$ for simplicity and to maintain consistency with \S \ref{sec:4}.
\end{remark}

\begin{definition}\label{def:gvec}
(i)  Let $A$ denote the algebra $\Lambda(\infty)$ or $A_{[a,b]}$. For a two-term complex $f\colon I_0 \to I_1$ in $\mathcal{K}^{[0,1]}(A\injc)$, its  {\em $g$-vector} $\mathbf{g}(f)$ 
is defined to be
 \[\mathbf{g}(f)\coloneqq [I_1]-[I_0],\]
viewed as an element of the Grothendieck group $K_0(\mathcal{K}^{b}(A\injc))$.

(ii)  For a simple module $M \in \mathcal{C}_{\mathbb{Z}}$, we define the vector ${\bf g}_M^\infty = (g_{(i,p)})_{(i,p) \in \Gamma_0}$ by the formula
\[ g_{(i,p)} \coloneqq u_{i,p+d_i}(M) - u_{i,p-d_i}(M)\]
in the notation of \eqref{eq:notation_u}, for all $(i,p) \in \Gamma_0$. 
\end{definition}
\begin{remark}\label{rem:reach-g-vector}
If a simple module $M\in\mathcal C_{\mathbb Z}$ is reachable in $\mathcal C_{[a,b]}$, then by Lemma \ref{lem:gFab}, we know that the components of ${\bf g}_M^\infty= (g_{(i,p)})_{(i,p) \in \Gamma_0}$ supported on the vertex set $\mathtt K_{[a,b]}=(\Gamma_{[a,b]})_0$ give the extended $g$-vector ${\bf g}_M^{[a,b]}=(g_{(i,p)})_{(i,p) \in \mathtt K_{[a,b]}}$ of the cluster monomial $\varphi_{[a,b]}(M)$ of $\mathcal A_{[a,b]}$ with respect to its standard initial seed. 
\end{remark}

\begin{lemma}\label{lem:gvec-rigid}
 Let $A$ denote the algebra $\Lambda(\infty)$ or $A_{[a,b]}$.
Let $f$ and $f'$ be two-term complexes in $\mathcal{K}^{[0,1]}(A\injc)$. If both $f$ and $f'$ are rigid and $\mathbf{g}(f) = \mathbf{g}(f')$, then $f \cong f'$ as objects in $\mathcal{K}^{[0,1]}(A\injc)$.
\end{lemma}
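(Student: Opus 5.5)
The plan is to transfer the classical statement for finite-dimensional algebras, namely that two-term rigid presilting complexes are determined by their $g$-vectors (Adachi--Iyama--Reiten, Demonet--Iyama--Jasso), to the injective side. For $\Lambda(\infty)$ we reduce to a finite truncation $A_{[a,b]}$ using Proposition~\ref{prop:TX}.

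We begin with $A=A_{[a,b]}$, which is finite-dimensional by Proposition~\ref{pro:HL16-4.17}~(i). The Nakayama functor $\nu_{[a,b]}$ is an equivalence $A\projc\simeq A\injc$. It induces a triangle equivalence $\mathcal K^b(A\projc)\simeq\mathcal K^b(A\injc)$ which preserves two-term complexes and shifts, and which sends $[P_i]\mapsto[I_i]$ on Grothendieck groups. Under this equivalence, rigid two-term complexes of injectives correspond to two-term presilting complexes of projectives, and the $g$-vectors match up to the fixed sign convention $\mathbf g(f)=[I_1]-[I_0]$. We then invoke the known result (\cite{air_2014}, or \cite{DK-2015} via generic presentations) that two-term presilting complexes with equal classes in $K_0(\mathcal K^b(A\projc))$ are isomorphic. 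This settles the case $A=A_{[a,b]}$.

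For $A=\Lambda(\infty)$, write $f\colon I_0\to I_1$ and $f'\colon I_0'\to I_1'$. Since $\mathbf g(f)=\mathbf g(f')$ and $\mathcal K^b(\Lambda(\infty)\injc)$ is Krull--Schmidt (Proposition~\ref{Prop:KS}), we may first remove common indecomposable summands $I_{i,p}$ of the terms up to homotopy. More simply, we choose $[a,b]$ large enough that every indecomposable summand $I_{i,p}$ of $I_0,I_1,I_0',I_1'$ satisfies $(i,p)\in(\Gamma^\circ_{[a,b]})_0$. We also require that Proposition~\ref{prop:TX} gives isomorphisms on $\Hom(-,-[1])$ for all four pairs among $f,f'$, and that the maps $\alpha_1,\alpha_2$ in its proof are isomorphisms for the degree-$0$ Hom spaces between the terms. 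By Remark~\ref{rem:T_X2}, both $T_{[a,b]}f$ and $T_{[a,b]}f'$ are rigid. Since $T_{[a,b]}I_{i,p}=\mathbb D(e_{i,p}A_{[a,b]})$ for these vertices, they also have equal $g$-vectors. The finite case then gives an isomorphism $T_{[a,b]}f\cong T_{[a,b]}f'$ in $\mathcal K^b(A_{[a,b]}\injc)$.

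The remaining step, which I expect to be the main obstacle, is lifting this isomorphism back to $\Lambda(\infty)$. The approach is to show that $T_{[a,b]}$ is fully faithful on $\Hom_{\mathcal K^b}(f,f')$ and $\Hom_{\mathcal K^b}(f',f)$ for $[a,b]$ large. The argument imitates the proof of Proposition~\ref{prop:TX}: a morphism of two-term complexes modulo homotopy is a cokernel of finite-dimensional Hom spaces between the $I_m$ and $I'_m$, namely $\Hom(I_1,I_0')\to\Hom(I_0,I_0')\oplus\Hom(I_1,I_1')$ together with the chain condition in $\Hom(I_0,I_1')$. All these spaces are identified with their truncations via \eqref{eq:HomII} once $[a,b]$ is large, since only finitely many finite-dimensional spaces $e_{i,p}\Lambda(\infty)e_{j,s}$ are involved. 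Given this, choose mutually inverse morphisms $\phi$ and $\psi$ between $T_{[a,b]}f$ and $T_{[a,b]}f'$, and lift them to $\tilde\phi\colon f\to f'$ and $\tilde\psi\colon f'\to f$. The composites $\tilde\psi\tilde\phi$ and $\tilde\phi\tilde\psi$ map to identities, so by faithfulness they equal the identities, and hence $f\cong f'$. Alternatively, by Krull--Schmidt it suffices to know that $\tilde\psi\tilde\phi$ is invertible modulo the radical of the local endomorphism rings, and this follows from the same identification.
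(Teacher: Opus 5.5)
Your proof is correct, but it takes a different route from the paper's.

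\textbf{The paper's route.} The paper's terse argument treats $A_{[a,b]}$ and $\Lambda(\infty)$ uniformly and never passes through truncations. It appeals to the general principle (Adachi--Iyama--Reiten, Dehy--Keller, Eisele--Janssens--Raedschelders) that in a $\Hom$-finite Krull--Schmidt setting rigid two-term complexes are determined by their classes. The inputs are Proposition~\ref{Prop:KS}~(ii) and the orbit description of Remark~\ref{rem:T_X}. After passing to minimal representatives, whose terms are then fixed by the $g$-vector, $f$ and $f'$ lie in the same irreducible affine space $\Hom(I_0,I_1)$. Both have open dense orbits there, so the orbits meet.

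\textbf{Your route.} You settle the finite-dimensional case by transporting the Adachi--Iyama--Reiten theorem along the Nakayama functor. You then deduce the $\Lambda(\infty)$ case formally. For $[a,b]$ large, the projections $e_{i,p}\Lambda(\infty)e_{j,s}\to e_{i,p}A_{[a,b]}e_{j,s}$ are isomorphisms for the finitely many vertices involved. Hence $T_{[a,b]}$ is bijective on each three-term complex $\Hom(I_1,I_0')\to\Hom(I_0,I_0')\oplus\Hom(I_1,I_1')\to\Hom(I_0,I_1')$ computing $\Hom_{\mathcal K^b}(f,f')$, and likewise for the other three pairs. So $T_{[a,b]}$ is fully faithful on $\{f,f'\}$ and reflects the isomorphism $T_{[a,b]}f\cong T_{[a,b]}f'$. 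This is the stabilization mechanism of Proposition~\ref{prop:TX}, applied to degree-$0$ morphisms rather than to $\Hom(-,-[1])$. It is also exactly how the paper lifts $d_{M;L}$ in the proof of Lemma~\ref{Lem:KM}.

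\textbf{What each approach buys.} Your route avoids redoing anything over the non-unital, infinite-dimensional algebra $\Lambda(\infty)$. In particular, you need neither the fact that minimal rigid complexes there have terms without common summands nor the algebraic-group and tangent-space description of orbits. The paper's route is intrinsic and interval-free. It also identifies $f$ as the generic point of $\Hom(I_0,I_1)$, a viewpoint in line with the generic kernels used later.
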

\begin{proof}
    This type of statement about (split) Grothendieck groups is well-known in various kinds of general settings (see e.g., \cite[Theorem~10]{EJR18}, \cite[Theorem~5.5]{air_2014} in the setting of finite-dimensional algebras, \cite[Theorem~2.3]{DK08} in that of suitable $2$-Calabi-Yau categories). Following the discussions in Proposition~\ref{Prop:KS} (ii) and Remark~\ref{rem:T_X} below, the result stated in the lemma holds as well.
\end{proof}

\begin{remark}\label{rem:T_X}
Let $f\in \Hom_{\Lambda(\infty)}(I_0, I_1)$ with  $I_0, I_1\in \Lambda(\infty)\injc$. It is  known that the orbit $\mathcal{O}_f$ of $f$ under the natural action of the connected algebraic group $\mathsf{Aut}_{\Lambda(\infty)}(I_0) \times \mathsf{Aut}_{\Lambda(\infty)}(I_1)$ is a smooth locally closed subvariety in the affine space $\Hom_{\Lambda(\infty)}(I_0, I_1)$ with respect to the Zariski topology, and there is a $\mathbb{C}$-linear isomorphism
$$\Hom_{\mathcal{K}^b(\Lambda(\infty)\injc)}(f, f[1])\cong T_f(\Hom_{\Lambda(\infty)}(I_0,I_1))/T_f(\mathcal{O}_f)$$ given by the Zariski tangent spaces. In particular, if $f$ is rigid, then the orbit $\mathcal{O}_f$ is Zariski open dense in the affine space $\Hom_{\Lambda(\infty)}(I_0, I_1)$.
\end{remark}

Let $\mathcal A_{[a,b]}^\circ$ be the cluster algebra with trivial
coefficients whose initial quiver is $\Gamma_{[a,b]}^\circ$, and let
$\operatorname{sp}_{[a,b]}\colon
\mathcal A_{[a,b]}\longrightarrow \mathcal A_{[a,b]}^\circ$
be the coefficient specialization sending any frozen variable to $1$.
We remark that $\overline u\coloneqq\operatorname{sp}_{[a,b]}(u)$
is a cluster monomial of $\mathcal A_{[a,b]}^\circ$ for every cluster monomial $u$ of $\mathcal A_{[a,b]}$. With
respect to the corresponding seeds, the separation formula gives
$F_{\overline u}^{t}=F_u^{t}$ and
${\bf g}_{\overline u}^{t}=({\bf g}_u^{t})^\circ$.
\begin{lemma}\label{lem:f-Mab}
 Let $M$ be a reachable simple object in $\mathcal{C}_{[a,b]}$ and let ${\bf g}_M^{[a,b]}\in\mathbb Z^{\mathtt K_{[a,b]}}$ be the extended $g$-vector of $\varphi_{[a,b]}(M)$ with respect to the standard initial seed of $\mathcal A_{[a,b]}$. Then there exists a unique rigid two-term complex $f_M^{[a,b]}\in\mathcal{K}^{[0,1]}(A_{[a,b]}\injc)$  (up to isomorphism) such that  $\mathbf{g}(f_M^{[a,b]}) = (\mathbf{g}_M^{{[a,b]}})^\circ$ in $\mathbb Z^{\mathtt K_{[a,b]}^{\rm uf}}$.
\end{lemma}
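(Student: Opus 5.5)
Uniqueness is immediate from Lemma~\ref{lem:gvec-rigid} applied with $A=A_{[a,b]}$: two rigid two-term complexes with the same $g$-vector are isomorphic. So the work is existence, and the plan is to pass through the additive categorification of Section~\ref{sec:4}.

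Set $u\coloneqq\varphi_{[a,b]}(M)$, a cluster monomial of $\mathcal A_{[a,b]}$, and $\overline u\coloneqq\operatorname{sp}_{[a,b]}(u)$, which is a cluster monomial of $\mathcal A^\circ_{[a,b]}$. The separation formula gives ${\bf g}_{\overline u}^{t_0}=({\bf g}_M^{[a,b]})^\circ$ at the standard initial seed. By Proposition~\ref{pro:HL16-4.17}, $(\Gamma^\circ_{[a,b]},W_{[a,b]})$ is a non-degenerate quiver with potential and $A_{[a,b]}\cong J(\Gamma^\circ_{[a,b]},W_{[a,b]})$ is finite-dimensional; its exchange matrix agrees with the principal part of $\widetilde B_{[a,b]}$.

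By Theorem~\ref{thm:DWZ-g-F}, $\overline u$ corresponds to a negative-reachable decorated representation $\mathcal M\coloneqq\mathcal M^{t_0}_{\overline u}$ of $A_{[a,b]}$ with ${\bf g}(\mathcal M)={\bf g}^{t_0}_{\overline u}$. I then take $f_M^{[a,b]}\coloneqq\Phi(\mathcal M)\in\mathcal K^{[0,1]}_{\rm fd}(A_{[a,b]}\injc)=\mathcal K^{[0,1]}(A_{[a,b]}\injc)$, using Jacobi-finiteness for the equality.

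Rigidity follows from Definition~\ref{def:g-F-E}(iii) and Corollary~\ref{cor:neg-rigid}:
\[\dim\Hom(\Phi(\mathcal M),\Phi(\mathcal M)[1])=E^{\rm inj}(\mathcal M,\mathcal M)=0.\]
Alternatively, Theorem~\ref{thm:F-E-inv} gives $E^{\rm inj}(\mathcal M,\mathcal M)=F_{\overline u}[{\bf g}_{\overline u}]$, which vanishes by Theorem~\ref{thm:mutation-inv}(iii).

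For the $g$-vector, the sign conventions need care. Definition~\ref{def:g-F-E}(i) sets ${\bf g}(\mathcal M)=-[\Phi(\mathcal M)]$, while Definition~\ref{def:gvec}(i) sets ${\bf g}(f)=[I_1]-[I_0]$. With $I_0=I_M^-$ and $I_1=I_M^+\oplus I_V$, both give $[I_V]+[I_M^+]-[I_M^-]$, so ${\bf g}(f_M^{[a,b]})={\bf g}(\mathcal M)=({\bf g}^{[a,b]}_M)^\circ$.

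The main obstacle I expect is confirming that the homological $g$-vector of Definition~\ref{def:g-F-E} matches the DWZ $g$-vector used in Theorem~\ref{thm:DWZ-g-F}. I would rely on Theorem~\ref{thm:g-DWZ} from the appendix for this. A second, related point is checking that the identification of $A_{[a,b]}$ with the Jacobian algebra respects the vertex labelling of the cluster algebra $\mathcal A^\circ_{[a,b]}$.
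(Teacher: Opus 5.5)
Your proposal is correct and follows essentially the same route as the paper. You pass from $u=\varphi_{[a,b]}(M)$ to $\overline u=\operatorname{sp}_{[a,b]}(u)$, take the negative-reachable decorated representation from Theorem~\ref{thm:DWZ-g-F}, transport it via $\Phi$ to a two-term complex that is rigid by Corollary~\ref{cor:neg-rigid}, and get uniqueness from Lemma~\ref{lem:gvec-rigid}, exactly as the paper does. Your sign check between Definition~\ref{def:g-F-E}(i) and Definition~\ref{def:gvec}(i), and your appeal to Theorem~\ref{thm:g-DWZ}, only spell out compatibilities the paper uses implicitly.
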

\begin{proof}
Since $M$ is reachable in $\mathcal C_{[a,b]}$, we know that $u\coloneq \varphi_{[a,b]}(M)$ is a cluster monomial of $\mathcal A_{[a,b]}$. Set $\overline u\coloneqq\operatorname{sp}_{[a,b]}(u)\in \mathcal A_{[a,b]}^\circ$. By Theorem \ref{thm:DWZ-g-F}, there is a negative-reachable decorated representation $\mathcal M_{\overline u}^{[a,b]}$ of $(\Gamma_{[a,b]}^\circ, W_{[a,b]})$, unique up to isomorphism, such that
\[
{\bf g}(\mathcal M_{\overline u}^{[a,b]})
={\bf g}_{\overline u}^{t_0}
=({\bf g}_u^{t_0})^\circ
=({\bf g}_M^{[a,b]})^\circ.
\]
By the bijection in \eqref{eqn:s-t-bijection}, the decorated representation $\mathcal M_{\overline u}^{[a,b]}$ corresponds to a two-term complex $f_M^{[a,b]}\in \mathcal{K}^{[0,1]}(A_{[a,b]}\injc)$ with $\mathbf{g}(f_M^{[a,b]}) = {\bf g}(\mathcal M_{\overline u}^{[a,b]})=(\mathbf{g}_M^{{[a,b]}})^\circ$. Since $\mathcal M_{\overline u}^{[a,b]}$ is negative-reachable, Corollary \ref{cor:neg-rigid} implies that the two-term complex $f_M^{[a,b]}$ is rigid. This proves the existence. The uniqueness of $f_M^{[a,b]}$ follows from Lemma \ref{lem:gvec-rigid}.
\end{proof}

\begin{lemma}\label{Lem:KM}
Keep the previous notation.
Let $M$ be a reachable simple module in $\mathcal{C}_\mathbb{Z}$. Then there is a unique two-term rigid complex $f_M\in \mathcal{K}^{[0,1]}(\Lambda(\infty)\injc)$ (up to isomorphism) such that ${\bf g}(f_M)={\bf g}_M^\infty$ in $\bigoplus_{\Gamma_0}\mathbb Z$. 
\end{lemma}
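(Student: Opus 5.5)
Uniqueness and existence are handled separately; existence is built from the finite truncations $f_M^{[a,b]}$ of Lemma~\ref{lem:f-Mab}. Uniqueness is immediate from Lemma~\ref{lem:gvec-rigid} applied with $A=\Lambda(\infty)$: two rigid two-term complexes with the same $g$-vector ${\bf g}_M^\infty$ are isomorphic.

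For existence, we first choose the data. Since $M$ is reachable in $\mathcal C_{\mathbb Z}$, there is an interval $[a_0,b_0]$ with $M$ reachable in $\mathcal C_{[a_0,b_0]}$. By Lemma~\ref{lem:clust_ab}, $M$ stays reachable in $\mathcal C_{[a,b]}$ for every $[a,b]\supseteq[a_0,b_0]$. For each such interval, Lemma~\ref{lem:f-Mab} together with Remark~\ref{rem:reach-g-vector} gives a unique rigid complex $f_M^{[a,b]}$ whose $g$-vector is the restriction of ${\bf g}_M^\infty$ to $\mathtt K_{[a,b]}^{\rm uf}$. Only finitely many entries of ${\bf g}_M^\infty$ are nonzero, since $u_{i,r}(M)\neq 0$ for finitely many $(i,r)$. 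We may therefore enlarge $[a_0,b_0]$ so that the support of ${\bf g}_M^\infty$ lies in $\mathtt K^{\rm uf}_{[a_0,b_0]}$.

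Next we define a candidate on $\Lambda(\infty)$. Set $I_1=\bigoplus I_{i,p}^{[g_{(i,p)}]_+}$ and $I_0=\bigoplus I_{i,p}^{[-g_{(i,p)}]_+}$. Then any $f\colon I_0\to I_1$ has ${\bf g}(f)={\bf g}_M^\infty$, and $T_{[a,b]}f$ has $g$-vector $({\bf g}_M^{[a,b]})^\circ$ for $[a,b]\supseteq[a_0,b_0]$. By Proposition~\ref{prop:TX} applied to $f'=f$, for sufficiently large $[a,b]$ we have $\Hom(f,f[1])\cong\Hom(T_{[a,b]}f,T_{[a,b]}f[1])$. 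It therefore suffices to find $f\in\Hom_{\Lambda(\infty)}(I_0,I_1)$ such that $T_{[a,b]}f$ is rigid for one sufficiently large $[a,b]$.

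For this we use genericity. The space $\Hom_{\Lambda(\infty)}(I_0,I_1)$ is finite-dimensional by Proposition~\ref{Prop:KS}. For $[a,b]$ large enough (the same range as in Proposition~\ref{prop:TX}), the map $\alpha_2$ in that proof gives an isomorphism $\Hom_{\Lambda(\infty)}(I_0,I_1)\cong\Hom_{A_{[a,b]}}(T_{[a,b]}I_0,T_{[a,b]}I_1)$. The rigid complex $f_M^{[a,b]}$ has $g$-vector equal to $[T I_1]-[T I_0]$. Because $f_M^{[a,b]}$ is rigid, the orbit argument of Remark~\ref{rem:T_X} shows that its orbit is open dense in this Hom space, in the generic-presentation sense, possibly after adding a contractible summand $I\xrightarrow{\rm id}I$. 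The delicate point is to make sure the terms can be taken to be exactly $T I_0$ and $T I_1$, with no cancelling common summand. I expect to handle it by noting that a rigid two-term complex of minimal form has no common summand in both degrees, and our $I_0,I_1$ have disjoint supports by construction. Hence $f_M^{[a,b]}\cong(T I_0\to T I_1)$ for a generic, hence some, morphism. Pulling it back via the isomorphism $\alpha_2$ gives $f$ with $T_{[a,b]}f\cong f_M^{[a,b]}$. This $f$ is rigid, and we take $f_M\coloneqq f$.

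The main obstacle I expect is this last identification: the terms of $f_M^{[a,b]}$ are precisely $T I_0$ and $T I_1$, and a single $[a,b]$ serves both for $\alpha_2$ being bijective and for Proposition~\ref{prop:TX}. Both reduce to choosing $[a,b]$ large relative to the finitely many vertices involved.
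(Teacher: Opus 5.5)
Your proposal is correct and follows essentially the same route as the paper. Both arguments proceed in the same steps: fix $I_M^0,I_M^1$ from the signs of ${\bf g}_M^\infty$; choose a single interval $L$ large enough that $M$ is reachable in $\mathcal C_L$, the support lies in $(\Gamma_L^\circ)_0$, and the restriction maps on $\Hom(I_M^r,I_M^s)$ are bijective; use the no-common-summand property of a minimal rigid representative to write $f_M^L$ as $(T_LI_M^0\to T_LI_M^1)$; lift its differential; and conclude rigidity and uniqueness from Proposition~\ref{prop:TX} and Lemma~\ref{lem:gvec-rigid}. The genericity/orbit detour is unnecessary, since the paper simply lifts the specific differential $d_{M;L}$, and the quantifier issue you flag is resolved exactly as you suggest, because the required size of $L$ depends only on the fixed terms $I_M^0,I_M^1$.
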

\begin{proof}
Write ${\bf g}_M^\infty=(g_v)_{v\in\Gamma_0}$ and set
\[
 I_M^0\coloneqq\bigoplus_{v\in\Gamma_0}I_v^{[-g_v]_+},
 \qquad
 I_M^1\coloneqq\bigoplus_{v\in\Gamma_0}I_v^{[g_v]_+},
\]
where we put $I_v\coloneqq I_{i,p}$ for $v=(i,p)\in \Gamma_0$.
Since ${\bf g}_M^\infty$ has finite support, both $I_M^0$ and
$I_M^1$ are objects in $\Lambda(\infty)\injc$.

Choose a sufficiently large interval $L=[a,b]$ such that $M$ is
reachable in $\mathcal C_L$;
the non-zero components of ${\bf g}_M^\infty$ are supported on $(\Gamma_L^\circ)_0$;
and for all $(r,s)\in \{(0,0),(1,1),(0,1)\}$, the natural maps
\[
\rho_L^{rs}\colon
\Hom_{\Lambda(\infty)}(I_M^r,I_M^s)
\xrightarrow{\sim}
\Hom_{A_L}(T_LI_M^r,T_LI_M^s)
\]
are isomorphisms as in Proposition~\ref{prop:TX}. Such an interval exists by the
argument in the proof of Proposition~\ref{prop:TX}.

It is well-known that each rigid two-term object in $\cK^{[0,1]}(A_L\injc)$ has a minimal representative whose two terms have no common indecomposable direct summand (\confer \cite[Proposition~2.5]{air_2014} in the dual setting).
Thus we see the two terms of such a complex are determined by its $g$-vector. Recall that the rigid object $f_M^L \in \cK^{[0,1]}(A_L\injc)$ from Lemma~\ref{lem:f-Mab} satisfies ${\bf g}(f_M^L)=({\bf g}_M^L)^\circ={\bf g}_M^\infty|_{(\Gamma_{L}^\circ)_0}$ (see Remark~\ref{rem:reach-g-vector}).

After choosing a minimal representative of $f_M^L$,  it is represented by
\[
\left(T_LI_M^0\xrightarrow{d_{M;L}}T_LI_M^1\right).
\]
Let $d_M\colon I_M^0\to I_M^1$ be the unique lift of $d_{M;L}$
under the isomorphism $\rho_L^{01}$, and define
\[
 f_M\coloneqq\left(I_M^0\xrightarrow{d_M}I_M^1\right).
\]
The above isomorphisms identify the corresponding homotopy quotients as in Proposition~\ref{prop:TX},
and hence
\[
 \Hom_{\mathcal K^b(\Lambda(\infty)\injc)}(f_M,f_M[1])
 \cong
 \Hom_{\mathcal K^b(A_L\injc)}(f_M^L,f_M^L[1])
 =0.
\]
Thus $f_M$ is rigid and we have
$\mathbf g(f_M)={\bf g}_M^\infty$. This proves the existence of $f_M$. By Lemma~\ref{lem:gvec-rigid}, the isomorphism class of $f_M$ is unique and independent of the choice of $L$ (see also the discussion concerning $T_{L'}f_M\cong f_M^{L'}$ for general intervals $L'$ in the proof of Theorem~\ref{Thm:oMN=Ext1}).
\end{proof}

\begin{theorem}\label{Thm:oMN=Ext1}
Let $M$ and $N$ be two reachable simple modules in $\mathcal{C}_{\mathbb{Z}}$, and let $f_M$ and $f_N$ be the corresponding two-term rigid complexes in $\mathcal{K}^{[0,1]}(\Lambda(\infty)\injc)$. Then we have
\begin{eqnarray}\label{eqn:o-E-infty}
    \mathfrak{o}(M,N) = \dim_{\mathbb{C}}\Hom_{\mathcal{K}^b(\Lambda(\infty)\injc)}(f_M, f_N[1]).
\end{eqnarray}
\end{theorem}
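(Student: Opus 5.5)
The plan is to reduce to a finite interval $L=[a,b]$ and chain together the results already established. First choose $L$ large enough that four conditions hold simultaneously. Both $M$ and $N$ should be reachable in $\mathcal C_L$; Lemma~\ref{lem:clust_ab} allows us to enlarge an interval witnessing reachability of $M$ so that it also witnesses $N$. The supports of ${\bf g}_M^\infty$ and ${\bf g}_N^\infty$ should lie in $(\Gamma_L^\circ)_0$. Proposition~\ref{prop:TX} should apply to the pair $(f_M,f_N)$, giving
\[
\Hom_{\mathcal K^b(\Lambda(\infty)\injc)}(f_M,f_N[1])\cong\Hom_{\mathcal K^b(A_L\injc)}(T_Lf_M,T_Lf_N[1]).
\]
Finally, the construction in Lemma~\ref{Lem:KM} should be valid for $L$.

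The second step is to identify $T_Lf_M\cong f_M^L$ and $T_Lf_N\cong f_N^L$. By Remark~\ref{rem:T_X2}, $T_Lf_M$ is rigid in $\mathcal K^{[0,1]}(A_L\injc)$. Its $g$-vector is the restriction of ${\bf g}_M^\infty$ to $(\Gamma_L^\circ)_0$, because $T_L I_{i,p}$ vanishes outside $(\Gamma_L^\circ)_0$ and is the indecomposable injective otherwise. By Remark~\ref{rem:reach-g-vector} this restriction equals $({\bf g}_M^L)^\circ$. The uniqueness in Lemma~\ref{lem:f-Mab} (via Lemma~\ref{lem:gvec-rigid}) then gives $T_Lf_M\cong f_M^L$, and the same argument applies to $N$.

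Third, translate to decorated representations. Under the bijection $\Phi$ of \eqref{eqn:s-t-bijection}, $f_M^L$ and $f_N^L$ correspond to the negative-reachable decorated representations $\mathcal M_{\overline u}$ and $\mathcal M_{\overline u'}$ of $(\Gamma_L^\circ,W_L)$, where $u=\varphi_L(M)$ and $u'=\varphi_L(N)$. The quiver with potential is non-degenerate by Proposition~\ref{pro:HL16-4.17}, and $J(\Gamma_L^\circ,W_L)\cong A_L$. By Definition~\ref{def:g-F-E}, the right-hand side above is $E^{\rm inj}(\mathcal M_{\overline u}^{t_0},\mathcal M_{\overline u'}^{t_0})$. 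Theorem~\ref{thm:F-E-inv} then identifies this with $F_{\overline u}^{t_0}[{\bf g}_{\overline u'}^{t_0}]=F_u^{t_0}[({\bf g}_u'^{t_0})^\circ]$, using the separation-formula compatibility under $\operatorname{sp}_L$. Here ${\bf g}_u'^{t_0}$ denotes ${\bf g}_{u'}^{t_0}$, that is, ${\bf g}_N^{t_0}$.

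Finally, Corollary~\ref{Cor:o=Fg=E} applies: its hypotheses are supplied by Propositions~\ref{pro:standard-cor} and \ref{prop:good}, and in fact it holds with both $M$ and $N$ reachable. It gives $\mathfrak{o}(M,N)=F_M^{t_0}[({\bf g}_N^{t_0})^\circ]$, which closes the chain.

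The main obstacle is the second step. It requires checking carefully that the derived functor $T_L$ sends the two-term complex $f_M$ to a complex whose $g$-vector is exactly the truncation, so that uniqueness of rigid complexes with a given $g$-vector can be invoked. The construction of $f_M$ in Lemma~\ref{Lem:KM} at a possibly smaller interval must be compatible with the chosen $L$, and this compatibility is what produces the independence of $L$ noted there.
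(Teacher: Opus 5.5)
Your proposal is correct and follows essentially the same route as the paper. You reduce to an interval $L$ where both modules are reachable and Proposition~\ref{prop:TX} applies, identify $T_Lf_M\cong f_M^L$ and $T_Lf_N\cong f_N^L$ via rigidity (Remark~\ref{rem:T_X2}), $g$-vectors and uniqueness, and then chain Theorem~\ref{thm:F-E-inv}, the separation formula under $\operatorname{sp}_L$, and Corollary~\ref{Cor:o=Fg=E}.

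Your extra condition (d), and the ``main obstacle'' about compatibility with the construction in Lemma~\ref{Lem:KM}, are unnecessary. Remark~\ref{rem:T_X2} gives rigidity of $T_Lf_M$ for every interval $L$, so the uniqueness argument works for any $L$ in which $M$ and $N$ are both reachable.
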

\begin{proof}
Since $M$ and $N$ are reachable in $\mathcal C_{\mathbb Z}$, there exists an integer interval $[a,b]$ such that 
$M$ and $N$ are reachable in $\mathcal{C}_{[a,b]}$. 
Set
\[
u_M\coloneqq\varphi_{[a,b]}(M),\qquad
u_N\coloneqq\varphi_{[a,b]}(N),\qquad
\overline{u}_M\coloneqq\operatorname{sp}_{[a,b]}(u_M),\qquad
\overline{u}_N\coloneqq\operatorname{sp}_{[a,b]}(u_N).
\]

For every interval $[a,b]$ such that the objects $M, N$ are reachable in $\mathcal C_{[a,b]}$, the complexes $T_{[a,b]}f_M$ and $T_{[a,b]}f_N$ are rigid (see Remark~\ref{rem:T_X2}) and have $g$-vectors ${\bf g}(T_{[a,b]}f_M)={\bf g}_M^\infty|_{(\Gamma_{[a,b]}^\circ)_0}=({\bf g}_M^{[a,b]})^\circ$ and ${\bf g}(T_{[a,b]}f_N)={\bf g}_N^\infty|_{(\Gamma_{[a,b]}^\circ)_0}=({\bf g}_N^{[a,b]})^\circ$, respectively (see Remark~\ref{rem:reach-g-vector}). Hence $T_{[a,b]}f_M\cong f_M^{[a,b]}$ and $T_{[a,b]}f_N\cong f_N^{[a,b]}$ by Lemma~\ref{lem:f-Mab} and Lemma~\ref{lem:gvec-rigid}.

Let $\mathcal M_{\overline{u}_M}^{[a,b]}$ and $\mathcal M_{\overline{u}_N}^{[a,b]}$ be the negative-reachable decorated representations of $(\Gamma_{[a,b]}^\circ, W_{[a,b]})$ corresponding to $\overline{u}_M$ and $\overline{u}_N$, respectively.
Under the bijection in \eqref{eqn:s-t-bijection}, they are identified with the two-term complexes $T_{[a,b]}f_M\cong f_M^{[a,b]}$ and $T_{[a,b]}f_N\cong f_N^{[a,b]}$ in $\mathcal K^b(A_{[a,b]}\injc)$, respectively.
By the definition of the partial $E$-invariant, we have
\[
\dim_{\mathbb{C}}\Hom_{\mathcal K^b(A_{[a,b]}\injc)}(T_{[a,b]}f_M,T_{[a,b]}f_N[1])
=E^{\mathrm{inj}}(\mathcal M_{\overline{u}_M}^{[a,b]},\mathcal M_{\overline{u}_N}^{[a,b]}).
\]
Since \begin{eqnarray*}
   E^{\mathrm{inj}}(\mathcal M_{\overline{u}_M}^{[a,b]},\mathcal M_{\overline{u}_N}^{[a,b]})
\overset{\text{Thm.~\ref{thm:F-E-inv}}}{=}F_{\overline{u}_M}^{t_0}[{\bf g}_{\overline{u}_N}^{t_0}]
=F_{u_M}^{t_0}[({\bf g}_{u_N}^{t_0})^\circ]
=F_M^{t_0}[({\bf g}_N^{t_0})^\circ]
\overset{\text{Cor.~\ref{Cor:o=Fg=E}}}{=}
\mathfrak{o}(M,N),
\end{eqnarray*}
we obtain
\[
\dim_{\mathbb{C}}\Hom_{\mathcal K^b(A_{[a,b]}\injc)}(T_{[a,b]}f_M,T_{[a,b]}f_N[1])=\mathfrak{o}(M,N),
\]
where the right-hand side is independent of the choice of $[a,b]$.
By enlarging $[a,b]$ if necessary, we may apply Proposition~\ref{prop:TX} to the pair $(f_M,f_N)$. Thus we obtain
\[\dim_{\mathbb{C}} \Hom_{\mathcal{K}^b(\Lambda(\infty)\injc)}(f_M, f_N[1]) =\mathfrak{o}(M,N).\qedhere\]
\end{proof}
\begin{remark}\label{rem:oKR2}
Let $f,f'\in \mathcal{K}^{[0,1]}(\Lambda(\infty)\injc)$. In Proposition~\ref{prop:TX}, 
   the interval $[a,b]$ is assumed to be
   sufficiently large to get the 
 following isomorphism 
 \[ \Hom_{\mathcal{K}^b(\Lambda(\infty)\injc)}(f,f'[1]) \cong \Hom_{\mathcal{K}^b(A_{[a,b]}\injc)}(T_{[a,b]}f, T_{[a,b]}f'[1]).\]
Now let $f_M$ and $f_N$ be the two-term complexes in $\mathcal{K}^{[0,1]}(\Lambda(\infty)\injc)$ corresponding to two reachable simple modules $M$ and $N$ in $\mathcal C_{\mathbb Z}$, respectively.
 
From the proof of Theorem \ref{Thm:oMN=Ext1},
 we have the equality:  \[\dim_{\mathbb{C}} \Hom_{\mathcal{K}^b(\Lambda(\infty)\injc)}(f_M, f_N[1]) = \dim_{\mathbb{C}} \Hom_{\mathcal{K}^b(A_{[a,b]}\injc)}(T_{[a,b]}f_M,T_{[a,b]}f_N[1])\]
 for any interval $[a,b]$ such that $M,N\in\mathcal C_{[a,b]}$ and both $M$ and $N$ are reachable in $\mathcal{C}_{[a,b]}$. Thus the epimorphism $\alpha_3$ in the proof of Proposition~\ref{prop:TX} induces the following isomorphism
\[ \Hom_{\mathcal{K}^b(\Lambda(\infty)\injc)}(f_M,f_N[1]) \cong \Hom_{\mathcal{K}^b(A_{[a,b]}\injc)}(T_{[a,b]}f_M, T_{[a,b]}f_N[1])\]
 for any such interval $[a,b]$.
\end{remark}
\begin{remark}\label{rem:shifted}
In general, not every two-term rigid complex in $\mathcal{K}^{[0,1]}(\Lambda(\infty)\injc)$ arises in the manner described in Lemma~\ref{Lem:KM}.
    We will return to such general cases in forthcoming work with Bernard Leclerc, approaching them from the viewpoint of real simple representations of shifted quantum affine algebras (including infinite-dimensional representations).

The right-hand side of \eqref{eqn:o-E-infty} can be viewed as the partial $F$-invariant of a certain infinite-rank cluster algebra in the following sense.
To the infinite quiver $\Gamma$, we can associate an infinite-rank cluster algebra $\mathcal{A}_\Gamma$ categorified by a certain category $\mathcal{C}^{\mathrm{shift}}_{\mathbb{Z}}$ of finite-dimensional representations of shifted quantum affine algebras \cite{Her23,HL16b}.
One can show that $\mathbf{g}_M^\infty$ for any reachable simple object $M$ in $\mathcal{C}_\mathbb{Z}$ appears as the $g$-vector of a cluster monomial in the infinite-rank cluster algebra $\mathcal{A}_\Gamma$, whose initial seed is given by the standard seed associated with $\Gamma$. By developing the theory of partial $F$-invariants for infinite-rank cluster algebras, we can interpret the right-hand side of \eqref{eqn:o-E-infty} as the partial $F$-invariant between cluster monomials in $\mathcal{A}_\Gamma$.
To the best of the authors' knowledge, the theories of $\Lambda$-invariants and (normalized) $R$-matrices for such categories have not yet been fully developed (see \cite{HZ24} for a study in the context of shifted Yangians).
\end{remark}

 \subsection{Explicit formulas for pole orders for KR modules}\label{subsec:PKR}
 We retain the notation from the previous subsections.
In this subsection, we apply the above results to verify a conjecture from \cite{FM}, which gives an explicit expression of the pole orders of the normalized $R$-matrices between Kirillov--Reshetikhin modules in terms of the $q$-deformed Cartan matrix of the finite-dimensional Lie algebra $\mathfrak{g}_0$ of type $\mathrm{X}_N$. 

  The $q$-deformed Cartan matrix  $C(q) = (C_{ij}(q))_{i,j \in \mathtt{I}_0}$ of $\mathfrak{g}_0$ is defined by
\[ C_{ij}(q) \coloneqq \begin{cases}
  q^{d_i}+q^{-d_i}, &i=j;\\
  [c_{ij}]_q,&i\neq j,
\end{cases}  \quad \quad\text{where}\quad [k]_q\coloneqq\frac{q^k-q^{-k}}{q-q^{-1}}\;\;\text{for } k\in\mathbb Z.\]
We regard $C(q) \in GL(\mathbb{Q}(q))$ and take its inverse $\widetilde{C}(q) \coloneqq C(q)^{-1} \in GL(\mathbb{Q}(q))$.
Let 
\[ \widetilde{C}_{ij}(q) = \sum_{u \in \mathbb{Z}} \widetilde{c}_{ij}(u) q^u \quad \in \mathbb{Q}(\!(q)\!) \]
denote the Laurent expansion at $q=0$ of the $(i,j)$-entry of $\widetilde{C}(q)$.
It turns out that the coefficients $\widetilde{c}_{ij}(u)$ are all integers and satisfy the following properties (\confer \cite[\S4]{Fujita-Oh-2021}):
\begin{itemize}
\item $\widetilde{c}_{ij}(u) = 0$ if $u < d_i$;
\item $\widetilde{c}_{ij}(u) = \widetilde{c}_{ij^*}(r^\vee h^\vee - u)\ge 0$ if $0 \le u \le r^\vee h^\vee$;
\item $\widetilde{c}_{ij}(u+r^\vee h^\vee) = -\widetilde{c}_{ij^*}(u)$ if $ u \ge 0$.
\end{itemize} 
Here, $i \mapsto i^*$ is the involution of $\mathtt{I}_0$ satisfying $w_0 \alpha_i = -\alpha_{i^*}$, where $w_0$ is the longest Weyl group element and $\alpha_i$ is the $i$-th simple root.
Explicit values of these coefficients $\widetilde{c}_{ij}(u)$ ($i,j \in \mathtt{I}_0, 0 \le u \le r^\vee h^\vee$) have been computed in the literature. See \cite[Appendix A]{GTL}, \cite[Appendix C]{FR-W}, \cite[\S4.3]{Fujita-Oh-2021} for instance.

In what follows, for a formal Laurent series $a(q) = \sum_{u \in \mathbb{Z}} a_u q^u\in\mathbb Q(\!(q)\!)$, we write $[a(q)]_u$ for the coefficient $a_u$ of $q^u$ in $a(q)$.
With this notation, we have $\widetilde{c}_{ij}(u) = [\widetilde{C}_{ij}(q)]_u$ by definition.

 Recall that,  for each $(i,p) \in \Gamma_0=\mathtt K=\{(i,p)\in \mathtt I_0\times \mathbb Z\mid p+d_i\in \epsilon(i)+2\mathbb{Z}\}$ and $k \in \mathbb{Z}_{>0}$, we have the KR module  $W^{(i)}_{k,q^{p+d_i}} \in \mathcal{C}_{\mathbb{Z}}$ defined in \eqref{eq:KR}.
 Note that any KR module in the category $\mathcal{C}_{\mathbb{Z}}$ is of this form.
 
 One of the main results of this subsection is the following theorem, which will be proved at the end of the subsection.
 
\begin{theorem}[{cf.\ \cite[Conjecture 5.17]{FM}, a refinement of \cite[Conjecture 6.7]{Fujita-Oh-2021}}] \label{thm:oKR}
For any $(i,p), (j,s) \in \Gamma_0$ and $k,l \in \mathbb{Z}_{>0}$, we have
\begin{equation}
\mathfrak{o}(W^{(i)}_{k,q^{p+d_i}}, W^{(j)}_{l,q^{s+d_j}}) = 
\left[ F_{i,k;j,l}(q)\right]_{s-p+ld_j-kd_i}.
\end{equation}
Here, $F_{i,k; j,l}(q) \in \mathbb{Z}[q]$ is a polynomial in $q$ explicitly given by 
\begin{equation} \label{eq:Fikjl}
F_{i,k; j,l}(q) = F_{j,l; i,k}(q) = q^{kd_i}[l]_{q^{d_j}} \sum_{u=0}^{r^\vee h^\vee }\widetilde{c}_{ij}(u)q^u - \delta(\clubsuit) \Delta_{ij}(q),
\quad \text{if $kd_i \ge l d_j$},
\end{equation}
where $\delta(\clubsuit)$ is $1$ or $0$ depending on whether the following condition $(\clubsuit)$ is true or not:
\[ (\clubsuit): \text{$\mathrm{X} \in \{ \mathrm{C,F,G}\}$, $d_i=d_j=1$, and $k=l \not \in r^\vee \mathbb{Z}$,}\]
and $\Delta_{ij}(q) \in \mathbb{Z}[q]$ is a polynomial, depending only on $i$ and $j$ (not on $k$ and $l$), given case by case as follows:
\begin{itemize}
\item If $\mathrm{X}_N = \mathrm{C}_N$, we have
\[ 
\Delta_{ij}(q) = \sum_{a=1}^{i+j-N}q^{2N-i-j+2a+2} \qquad (1 \le i,j <N),
\]
where we identify the set $\mathtt{I}_0$ with $\{1,2,\ldots,N\}$ so that the Dynkin diagram is depicted as:
\begin{center}
\begin{tikzpicture}
\foreach \x in {1,2,3}
{\node[dynkdot,label={above:\footnotesize$\x$}] (C\x) at (\x,0) {};}
\node (Cm) at (4, 0) {$\cdots$};
\node[dynkdot] (C4) at (5,0) {};
\node[dynkdot,label={above:\footnotesize$N-1$}] (C5) at (6,0) {};
\node[dynkdot,label={above:\footnotesize$N$}] (C6) at (7,0) {};
\draw[-] (C1) -- (C2);
\draw[-] (C2) -- (C3);
\draw[-] (C3) -- (Cm);
\draw[-] (Cm) -- (C4);
\draw[-] (C4) -- (C5);
\draw[-] (C5.30) -- (C6.150);
\draw[-] (C5.330) -- (C6.210);
\draw[-] (6.6,0+.1) -- (6.4,0) -- (6.6,0-.1);
\end{tikzpicture}
\end{center}
\item If $\mathrm{X}_N = \mathrm{F}_4$, we have 
\[ \Delta_{33}(q) = q^4+q^8+q^{10}+q^{14}, \qquad \Delta_{34}(q)=\Delta_{43}(q)=q^9, \qquad \Delta_{44}(q)=0,\]
where we identify the set $\mathtt{I}_0$ with $\{1,2,3,4\}$ so that the Dynkin diagram is depicted as:
\begin{center}
\begin{tikzpicture}
\foreach \x in {1,2}
{\node[dynkdot, label={above:\footnotesize$\x$}] (F\x) at (\x,0) {};}
\foreach \x in {3,4}
{\node[dynkdot, label={above:\footnotesize$\x$}] (F\x) at (\x,0) {};}
\draw[-] (F1.east) -- (F2.west);
\draw[-] (F3) -- (F4);
\draw[-] (F2.30) -- (F3.150);
\draw[-] (F2.330) -- (F3.210);
\draw[-] (2.6,0) -- (2.4,0+.1);
\draw[-] (2.6,0) -- (2.4,0-.1);
\end{tikzpicture} 
\end{center}
\item If $\mathrm{X}_N = \mathrm{G}_2$, we have
\[ \Delta_{22}(q) = q^6, \]
where we identify the set $\mathtt{I}_0$ with $\{1,2\}$ so that the Dynkin diagram is depicted as:
\begin{center}
\begin{tikzpicture}
\node[dynkdot, label={above:\footnotesize$1$}] (G1) at (1,0){};
\node[dynkdot,  label={above:\footnotesize$2$}] (G2) at (2,0) {};
\draw[-] (G1) -- (G2);
\draw[-] (G1.40) -- (G2.140);
\draw[-] (G1.320) -- (G2.220);
\draw[-] (1.4,0+.1) -- (1.6,0) -- (1.4,0-.1);
\end{tikzpicture}
\end{center}
\end{itemize}
\end{theorem}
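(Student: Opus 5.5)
The plan is to reduce the statement to Theorem~\ref{Thm:oMN=Ext1} and then to the explicit computation of the relevant Hom-space carried out in \cite[\S5 \& Appendix A]{FM}, which the introduction refers to as Proposition~\ref{Prop:HomF}. Set $M=W^{(i)}_{k,q^{p+d_i}}$ and $N=W^{(j)}_{l,q^{s+d_j}}$. As noted after Definition~\ref{def:reachableCZ}, every KR module in $\mathcal{C}_\mathbb{Z}$ is reachable. So Lemma~\ref{Lem:KM} provides unique rigid two-term complexes $f_M,f_N\in\mathcal{K}^{[0,1]}(\Lambda(\infty)\injc)$ with $\mathbf g(f_M)=\mathbf g_M^\infty$ and $\mathbf g(f_N)=\mathbf g_N^\infty$, and Theorem~\ref{Thm:oMN=Ext1} gives
\[\mathfrak{o}(M,N)=\dim_{\mathbb C}\Hom_{\mathcal{K}^b(\Lambda(\infty)\injc)}(f_M,f_N[1]).\]
The theorem therefore becomes the statement that this dimension equals $[F_{i,k;j,l}(q)]_{s-p+ld_j-kd_i}$.

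The first step is to identify $f_M$ with the complex considered in \cite{FM}. By Definition~\ref{def:gvec}, the vector $\mathbf g_M^\infty$ is supported at only two vertices: it equals $+1$ at $(i,p+2(k-1)d_i+d_i-d_i)$, i.e.\ where $u_{i,\cdot+d_i}=1$ and $u_{i,\cdot-d_i}=0$, and $-1$ at the vertex just beyond the top of the string. Consequently $f_M$ is a morphism $I_{i,p'}\to I_{i,p}$ between two indecomposable injectives on the same row $i$, with the index $p'$ shifted from $p$ by $2kd_i$. Using \eqref{eq:HomII} and the identification $e_{i,p}\Lambda(\infty)e_{j,s}\cong(e_i\Pi(\infty)e_j)_{p-s}$, such a morphism is a homogeneous element of $e_i\Pi(\infty)e_i$ of degree $2kd_i$, and the only candidate is $\varepsilon_i^k$ up to scalar. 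Rigidity together with Lemma~\ref{lem:gvec-rigid} then shows that $f_M$ is $I\xrightarrow{\varepsilon_i^k}I\langle\cdot\rangle$ viewed in $\mathcal K^b(\Pi(\infty)\injc^{\mathbb Z})$, which is exactly the complex attached to $W^{(i)}_{k}$ in \cite{FM}. This requires matching the grading conventions and shifts of \cite{FM} with those of Proposition~\ref{rem:compare-gpa}.

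The second step is the Hom computation. The space $\Hom_{\mathcal K^b}(f_M,f_N[1])$ is the cokernel of
\[\Hom(I_M^0,I_N^0)\oplus\Hom(I_M^1,I_N^1)\to\Hom(I_M^0,I_N^1),\qquad (a,b)\mapsto d_N a - b\,d_M.\]
In graded terms, this is a degree-$(s-p+\cdots)$ component of $e_j\Pi(\infty)e_i/(\varepsilon_j^l\,e_j\Pi e_i+e_j\Pi e_i\,\varepsilon_i^k)$. The structure theorem of \cite{GLS-2017} (Theorem~11.12, used in Proposition~\ref{Prop:KS}) gives the graded Hilbert series of $e_j\Pi(\infty)e_i$ as a free $\mathbb C[\varepsilon]$-module; in \cite{FM} this series is expressed through $\widetilde C_{ij}(q)$. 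For this step I intend to invoke Proposition~\ref{Prop:HomF}, which is \cite{FM}'s explicit evaluation in terms of $q^{kd_i}[l]_{q^{d_j}}\sum_u\widetilde c_{ij}(u)q^u$ with the correction $\Delta_{ij}$ in case $(\clubsuit)$. Taking the coefficient of the appropriate power $q^{s-p+ld_j-kd_i}$ yields the formula. The symmetry $F_{i,k;j,l}=F_{j,l;i,k}$ together with the case $kd_i<ld_j$ then follows by applying the same argument with the roles of $(i,k)$ and $(j,l)$ exchanged.

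The hard part will be the bookkeeping in the first step: making sure that the $g$-vector normalisation ($[I_1]-[I_0]$), the injective-versus-projective conventions, and the grading shifts agree exactly with those in \cite{FM}, so that the degree index $s-p+ld_j-kd_i$ comes out correctly. If Proposition~\ref{Prop:HomF} is stated for projective presentations, I would first transport it using the Nakayama functor $\nu$ and the duality $\mathbb D$, and only then compare indices.
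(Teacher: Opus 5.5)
Your reduction follows the paper's own proof. You use reachability of KR modules, Lemma~\ref{Lem:KM} and Theorem~\ref{Thm:oMN=Ext1} to get $\mathfrak{o}(M,N)=\dim_{\mathbb C}\Hom(f_M,f_N[1])$. You then identify $f_M\cong f^{(i)}_{k,p}$ and $f_N\cong f^{(j)}_{l,s}$ via $g$-vectors, and finish with Proposition~\ref{Prop:HomF}.

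The gap is in your justification of $f_M\cong f^{(i)}_{k,p}$. You assert that $\Hom_{\Lambda(\infty)}(I_{i,p+2kd_i},I_{i,p})\cong(e_i\Pi(\infty)e_i)_{2kd_i}$ is spanned by $\varepsilon_i^k$, and this is false in general. In simply-laced type $\Pi(\infty)\cong\Pi(1)\otimes\mathbb{C}[\varepsilon]$. For $\mathrm{A}_3$ with $i=2$, the element $\varepsilon_2^{k+1}\alpha_{21}\alpha_{12}$ has degree $2k$, is nonzero because $\alpha_{21}\alpha_{12}\neq 0$ in the classical preprojective algebra, and is not proportional to $\varepsilon_2^k$. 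So this Hom-space is two-dimensional. Without your claim, ``rigidity together with Lemma~\ref{lem:gvec-rigid}'' does not close the argument. Rigidity of $f_M$ only places $f_M$ in the open dense orbit of $\Hom(I_{i,p+2kd_i},I_{i,p})$. Lemma~\ref{lem:gvec-rigid} applies only once you know that the complex $I_{i,p+2kd_i}\xrightarrow{\varepsilon_i^k}I_{i,p}$ is itself rigid, and your argument never shows this.

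The missing ingredient is Lemma~\ref{lem:KKR}~(i). The module $e_i\Pi(\infty)e_i$ is graded free over $\mathbb{C}[\varepsilon_i]$ with a basis of non-positive degree (from \cite[Theorem~11.12]{GLS-2017}, as in Proposition~\ref{Prop:KS}). Hence $(e_i\Pi(\infty)e_i)_{2kd_i}=\varepsilon_i^k\,(e_i\Pi(\infty)e_i)_0$. So every morphism $I_{i,p+2kd_i}\to I_{i,p}$ factors through $f^{(i)}_{k,p}$, and $\Hom(f^{(i)}_{k,p},f^{(i)}_{k,p}[1])=0$. With this, the uniqueness in Lemma~\ref{Lem:KM} gives $f_M\cong f^{(i)}_{k,p}$, and the rest of your plan goes through.

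Smaller points:
\begin{itemize}
\item ${\bf g}^\infty_M={\bf e}_{i,p}-{\bf e}_{i,p+2kd_i}$, so the $+1$ entry sits at $(i,p)$, not at $(i,p+2(k-1)d_i)$.
\item Proposition~\ref{Prop:HomF} is already stated for the $\Lambda(\infty)$-complexes $f^{(i)}_{k,p}$ and for all $(i,p),(j,s),k,l$. So no Nakayama transport is needed.
\item No separate treatment of $kd_i<ld_j$ is needed either. Exchanging the roles of $M$ and $N$ would compute $\mathfrak{o}(N,M)$ rather than $\mathfrak{o}(M,N)$, and $F_{i,k;j,l}=F_{j,l;i,k}$ is part of the definition of $F$.
\end{itemize}
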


Now we study the following specific two-term complexes in $\mathcal{K}^b(\Lambda(\infty)\injc)$ which explain several aspects of KR modules:
For each $(i,p) \in \Gamma_0$ and $k \in \mathbb{Z}_{>0}$, let 
\[f^{(i)}_{k,p} \colon I_{i,p+2kd_i} \to I_{i,p}
\] denote the $\Lambda(\infty)$-homomorphism corresponding under \eqref{eq:HomII} to the path
\begin{equation} \label{eq:ep^k}
 (i,p) \to (i, p+2d_i) \to \cdots \to (i,p+2(k-1)d_i)\to (i,p+2kd_i).
\end{equation}
\begin{lemma}[{\confer \cite[\S5]{FM}}] \label{lem:KKR}
For any $(i,p) \in \Gamma_0$ and $k \in \mathbb{Z}_{>0}$, the following statements hold.
\begin{enumerate}
\item[(i)] \label{eq:KKR1} The object $f^{(i)}_{k,p} \in \mathcal{K}^{[0,1]}(\Lambda(\infty)\injc)$ is \emph{rigid}.
\item[(ii)] \label{eeq:KKR2}
The $0$-th cohomology $H^0(f^{(i)}_{k,p})$ is finite-dimensional, and for each $(j,s) \in \Gamma_0$, we have
\[ \dim_{\mathbb{C}} e_{j,s}H^0(f^{(i)}_{k,p}) = \left[ q^{kd_i} [k]_{q^{d_i}} \sum_{u=d_j}^{r^\vee h^\vee -d_j} \widetilde{c}_{ji}(u)q^u\right]_{s-p}. \]
\item[(iii)] \label{eq:KKR3} The first cohomology $H^1(f^{(i)}_{k,p})$ is equal to $\{0\}$.
\end{enumerate}
\end{lemma}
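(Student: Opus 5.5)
The plan is to transport everything through the smash-product isomorphism of Proposition~\ref{rem:compare-gpa}. That isomorphism identifies $\Lambda(\infty)\Mod$ with graded $\Pi(\infty)$-modules, $I_{i,p}$ with a grading shift of $\mathbb{D}(e_i\Pi(\infty))$, and $f^{(i)}_{k,p}$ with right multiplication by $\varepsilon_i^k$, suitably shifted. Under this identification the statements become assertions about the graded map $\varepsilon_i^k\colon \mathbb{D}(e_i\Pi(\infty))\langle\ast\rangle\to\mathbb{D}(e_i\Pi(\infty))$. Dualizing with $\mathbb{D}$, they are equivalent to statements about right multiplication by $\varepsilon_i^k$ on the projective $e_i\Pi(\infty)$ (or left multiplication on $\Pi(\infty)e_i$, by the symmetry used in Proposition~\ref{Prop:KS}).

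For (iii), $H^1(f^{(i)}_{k,p})$ is dual to the kernel of multiplication by $\varepsilon_i^k$ on $e_i\Pi(\infty)$. By the freeness statement in the proof of Proposition~\ref{Prop:KS}, $e_i\Pi(\infty)e_j$ is graded free of finite rank over $\mathbb{C}[\varepsilon_i]$ under multiplication by $\varepsilon_i$. Hence $\varepsilon_i^k$ is injective, and $H^1=0$.

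For (ii), $H^0(f^{(i)}_{k,p})$ is dual to $e_i\Pi(\infty)/\varepsilon_i^k e_i\Pi(\infty)$, with components $e_i\Pi(\infty)e_j/\varepsilon_i^k$. Freeness again shows that this quotient is finite-dimensional. Its graded dimension equals $(1-q^{2kd_i})/(1-q^{2d_i})$ times the graded rank of $e_i\Pi(\infty)e_j$ over $\mathbb{C}[\varepsilon_i]$. I would take the graded dimension of $e_j\Pi(\infty)e_i$ from \cite{GLS-2017} and \cite[\S4]{FM}, where the Hilbert series of $\Pi(\infty)$ is expressed through the inverse quantum Cartan matrix $\widetilde C(q)$. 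Multiplying by $(1-q^{2d_i})$ to pass to ranks, and then by $(1-q^{2kd_i})/(1-q^{2d_i})$, should produce the factor $q^{kd_i}[k]_{q^{d_i}}$ and the truncated sum over $d_j\le u\le r^\vee h^\vee-d_j$. The main bookkeeping obstacle here is matching the grading shifts in the identification $e_{j,s}\Lambda(\infty)e_{i,p}\cong(e_j\Pi(\infty)e_i)_{s-p}$ so that the exponent comes out as $s-p$. The truncation of the sum uses the periodicity and antisymmetry properties of $\widetilde c_{ij}(u)$ listed before Theorem~\ref{thm:oKR}.

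For (i), rigidity asks that every map $I_{i,p+2kd_i}\to I_{i,p}$ (equivalently, every element of $e_{i,p}\Lambda(\infty)e_{i,p+2kd_i}\cong(e_i\Pi(\infty)e_i)_{-2kd_i}$, up to sign conventions) be of the form $h_1\circ f+f\circ h_0$ with $h_0,h_1\in\mathrm{End}$. In the $\Pi(\infty)$ picture, this says that every degree-$2kd_i$ element of $e_i\Pi(\infty)e_i$ lies in $\varepsilon_i^k\,e_i\Pi(\infty)_0e_i+e_i\Pi(\infty)_0e_i\,\varepsilon_i^k$. Since $\Pi(\infty)_0e_i$ contains $\mathbb{C}e_i$, it suffices to show $(e_i\Pi(\infty)e_i)_{2kd_i}=\mathbb{C}\varepsilon_i^k$. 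This is the step I expect to be hardest. I would argue from the grading: paths through $\alpha$-arrows change degree by $-\max(d_i,d_j)$, and by (R2) and the degree-$0$ potential, the relevant graded pieces at degree $2kd_i$ of a loop at $i$ are spanned by powers of $\varepsilon_i$. Concretely, the degree-$2kd_i$ coefficient of the Hilbert series from (ii) should equal $1$, and I would check this via the positivity and vanishing properties of $\widetilde c_{ii}(u)$. As an alternative route, I would deduce rigidity from Lemma~\ref{Lem:KM} by identifying $f^{(i)}_{k,p}$ with $f_M$ for the KR module $M=W^{(i)}_{k,q^{p+d_i}}$. Their $g$-vectors agree, and truncating by $T_{[a,b]}$ and comparing with $f_M^{[a,b]}$ would then give rigidity via Proposition~\ref{prop:TX}.
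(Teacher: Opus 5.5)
Your treatment of (ii) and (iii) is in line with the paper. For (iii), $H^1=0$ amounts to injectivity of left multiplication by $\varepsilon_i^k$ on $e_i\Pi(\infty)$, which follows from graded freeness over $\mathbb{C}[\varepsilon_i]$. Note that it is left multiplication, not right, that gives the relevant right-module map. For (ii), the paper likewise defers the graded dimension count to \cite[Lemma~5.3]{FM}.

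The gap is in (i). Your reformulation of rigidity is correct, but the sufficient condition you reduce it to, $(e_i\Pi(\infty)e_i)_{2kd_i}=\mathbb{C}\varepsilon_i^k$, is false in general. So the Hilbert-series check you propose would fail. Take type $\mathrm{A}_3$ with $i=2$:
\begin{itemize}
\item There $\Pi(l)\cong\Pi(1)\otimes\mathbb{C}[\varepsilon]/(\varepsilon^l)$, with $\Pi(1)$ the classical preprojective algebra.
\item $\alpha_{21}\alpha_{12}\neq 0$ in $\Pi(1)$, since the relation at vertex $2$ only identifies it with $\pm\alpha_{23}\alpha_{32}$.
\item Hence $\varepsilon_2\alpha_{21}\alpha_{12}$ is a nonzero degree-$0$ loop; in $\Gamma$ it is the loop $(2,p)\to(1,p-1)\to(2,p-2)\to(2,p)$.
\item Therefore $\varepsilon_2^{k+1}\alpha_{21}\alpha_{12}$ is a nonzero element of degree $2k$ lying outside $\mathbb{C}\varepsilon_2^k$.
\end{itemize}

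What is true, and what the paper uses, is
\[
(e_i\Pi(\infty)e_i)_{2kd_i}=\varepsilon_i^k\,(e_i\Pi(\infty)e_i)_0 .
\]
This suffices because $(e_i\Pi(\infty)e_i)_0\cong e_{i,p}\Lambda(\infty)e_{i,p}$ is the full endomorphism ring of $I_{i,p}$, not just $\mathbb{C}e_i$. So every morphism $I_{i,p+2kd_i}\to I_{i,p}$ factors through $f^{(i)}_{k,p}$.

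The ingredient you are missing is a degree bound on the basis. The free $\mathbb{C}[\varepsilon_i]$-module $e_i\Pi(\infty)e_i$ (under left multiplication) has a homogeneous basis $\{b_\lambda\}$ in non-positive degrees, which can be read off from the defining relations. Then any homogeneous element of degree $2kd_i$ has the form $\sum_\lambda c_\lambda\varepsilon_i^{m_\lambda}b_\lambda$ with $m_\lambda\ge k$, and so lies in $\varepsilon_i^k(e_i\Pi(\infty)e_i)_0$. You already invoke freeness for (iii); for (i) you need this extra bound.

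Your fallback via Lemma~\ref{Lem:KM} does not close the gap. The uniqueness in Lemma~\ref{lem:gvec-rigid} requires both complexes to be rigid. In the paper, part (i) of this very lemma is what yields $f_M\cong f^{(i)}_{k,p}$ in the proof of Theorem~\ref{thm:oKR2}, so using that identification here is circular. Equal $g$-vectors only tell you that $f_M$ and $f^{(i)}_{k,p}$ are maps between the same two injectives. To place $f^{(i)}_{k,p}$ in the dense orbit of Remark~\ref{rem:T_X}, you would again need the factorization displayed above.
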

\begin{proof}
    All the assertions have been essentially proved in \cite[\S5]{FM}. For the reader's convenience, we give an alternative explanation of (i) in our current setting, and give a guide to \cite{FM} for (ii) 
    and (iii).

    Note that the object $f^{(i)}_{k,p}$ is rigid if and only if for any morphism $\psi\in\Hom_{\Lambda(\infty)}(I_{i,p+2kd_i}, I_{i,p})$ there exist morphisms $g\in\Hom_{\Lambda(\infty)} (I_{i, p+2kd_i}, I_{i,p+2kd_i})$ and $h\in\Hom_{\Lambda(\infty)} (I_{i,p}, I_{i,p})$ such that $\psi = f^{(i)}_{k,p}\circ g + h\circ f^{(i)}_{k,p}$. It is sufficient to prove that any morphism $\psi\in \Hom_{\Lambda(\infty)}(I_{i,p+2kd_i}, I_{i,p})$ factors through $f^{(i)}_{k,p}$. By the identification \eqref{eq:HomII} and Proposition~\ref{rem:compare-gpa}, we have  \[\Hom_{\Lambda(\infty)}(I_{i,p+2kd_i}, I_{i,p})\cong e_{i,p+2kd_i} \Lambda(\infty) e_{i,p} \cong (e_i\Pi(\infty)e_i)_{2kd_i}.\] The space $e_i\Pi(\infty)e_i$ is a graded free $\mathbb{C}[\varepsilon_i]$-module of finite rank by left multiplication as seen in the proof of Proposition~\ref{Prop:KS}. Note that we can see this free module has a free basis given by elements of non-positive degree from the defining relations of $\Pi(\infty)$, and there is an isomorphism $(e_i\Pi(\infty)e_i)_{2kd_i} \cong \varepsilon_i^k(e_i\Pi(\infty)e_i)_{0}$. Since $f^{(i)}_{k,p}\in \Hom_{\Lambda(\infty)}(I_{i,p+2kd_i}, I_{i,p})$ corresponds to $\varepsilon_i^k$ via our isomorphisms, this shows that any morphism $\psi\in \Hom_{\Lambda(\infty)}(I_{i,p+2kd_i}, I_{i,p})$ factors through $f^{(i)}_{k,p}$, and we obtain the assertion (i). The assertion (iii) also follows from the fact that $e_i\Pi(\infty)e_i$ is a graded free $\mathbb{C}[\varepsilon_i]$-module of finite rank by left multiplication (see \cite[(5.1)]{FM}). The assertion (ii) was proved in \cite[Lemma~5.3, \S 3]{FM} based on analyzing a $\mathbb{Z}$-graded structure of bimodule resolution of $\Pi(l)$ (see also \cite[Proposition~12.1]{GLS-2017}).
\end{proof}
\begin{remark} \label{rem:KKR}
        The $0$-th cohomology $H^0(f^{(i)}_{k,p})$ is identical to (under the equivalence explained in Proposition~\ref{rem:compare-gpa})  what we denoted $q^{p+kd_i}K^{(i)}_k$ and referred to as \emph{generic kernel} in \cite[\S5]{FM} (see also \cite[Definition~4.5]{HL16}).
   
\end{remark}

Let us consider the space $\Hom_{\mathcal{K}^b(\Lambda(\infty)\injc)}(f^{(i)}_{k,p}, f^{(j)}_{l,s}[1])$ for $(i,p), (j,s) \in \Gamma_0$ and $k,l \in \mathbb{Z}_{> 0}$. 
In \cite[\S5 \& Appendix A]{FM}, the dimensions of these spaces have been computed explicitly in terms of the inverse of the $q$-deformed Cartan matrix:
\begin{proposition}[{\cite[Propositions 5.6 \& A.1]{FM}}]\label{Prop:HomF}
We have the equality
\begin{equation}\label{eq:HomF}
    \dim_{\mathbb{C}} \Hom_{\mathcal{K}^b(\Lambda(\infty)\injc)}(f^{(i)}_{k,p}, f^{(j)}_{l,s}[1]) = [F_{i,k;j,l}(q)]_{s-p+ld_j-kd_i},
\end{equation}
for any $(i,p), (j,s) \in \Gamma_0$ and $k,l \in \mathbb{Z}_{> 0}$, 
where $F_{i,k;j,l}(q)$ is the polynomial given in \eqref{eq:Fikjl}.
\end{proposition}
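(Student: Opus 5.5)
The statement to prove is Proposition~\ref{Prop:HomF}, attributed to \cite[Propositions 5.6 \& A.1]{FM}. I would derive it by a direct homotopy-category computation with the graded algebra $\Pi(\infty)$. The three inputs are the identification \eqref{eq:HomII}, the smash-product isomorphism of Proposition~\ref{rem:compare-gpa}, and the cohomology description in Lemma~\ref{lem:KKR}.

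First, I reduce the homotopy Hom to a module Hom. By Lemma~\ref{lem:KKR}(iii), the complex $f^{(i)}_{k,p}$ is quasi-isomorphic to $K\coloneqq H^0(f^{(i)}_{k,p})$, and it is an injective copresentation of $K$. The same holds for $f^{(j)}_{l,s}$ with $K'\coloneqq H^0(f^{(j)}_{l,s})$. The argument of Remark~\ref{rmk:def-DWZ}(ii) and Proposition~\ref{pro:E-hom}, carried over to $\Lambda(\infty)$, then gives
\[
\dim \Hom_{\mathcal K^b}(f^{(i)}_{k,p},f^{(j)}_{l,s}[1]) = \langle \underline{\dim} K, \mathbf g(f^{(j)}_{l,s})\rangle + \dim\Hom_{\Lambda(\infty)}(K,K').
\]
The pairing term is explicit. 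Since $\mathbf g(f^{(j)}_{l,s})=[I_{j,s}]-[I_{j,s+2ld_j}]$, it equals $\dim e_{j,s}K-\dim e_{j,s+2ld_j}K$, and this can be read off from Lemma~\ref{lem:KKR}(ii).

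Second, I compute $\dim\Hom(K,K')$. Passing to graded $\Pi(\infty)$-modules, $K'$ is the kernel of $\varepsilon_j^l$ acting on a shift of $\mathbb D(e_j\Pi(\infty))$. Hence $\Hom(K,K')$ is the subspace of $\Hom(K,\mathbb D(e_j\Pi(\infty))\langle\cdot\rangle)=\mathbb D(e_jK)_{\text{deg}}$ killed by $\varepsilon_j^l$. Its dimension is a difference of graded dimensions of $e_jK$ and $\varepsilon_j^l e_jK$. To evaluate this, I would use that $e_j\Pi(\infty)e_i$ is graded free over $\mathbb C[\varepsilon]$, together with the generating function for $\dim e_{j,\bullet}K$ in Lemma~\ref{lem:KKR}(ii), namely $q^{kd_i}[k]_{q^{d_i}}\sum_u \widetilde c_{ji}(u)q^u$. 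Multiplying by $\varepsilon_j^l$ on a free module shifts degrees by $2ld_j$, so the count becomes a truncated convolution, and the factor $[l]_{q^{d_j}}$ should appear.

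Third, I add the two contributions. Using the symmetries of $\widetilde c_{ij}(u)$ listed before Theorem~\ref{thm:oKR}, namely the reflection $u\mapsto r^\vee h^\vee-u$, antiperiodicity, and vanishing for $u<d_i$, I would simplify the sum to $[F_{i,k;j,l}(q)]_{s-p+ld_j-kd_i}$ in the case $kd_i\ge ld_j$. The opposite case then follows from the symmetry $F_{i,k;j,l}=F_{j,l;i,k}$ and a duality exchanging the two arguments.

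The main obstacle is the correction term $\delta(\clubsuit)\Delta_{ij}(q)$. It arises because $e_jK$ is \emph{not} free over $\mathbb C[\varepsilon_j]$ when $d_i=d_j=1<r^\vee$ and $k=l\notin r^\vee\mathbb Z$. In that situation $\varepsilon_j^l$ is not a power of the central element $\varepsilon$, so the naive kernel count overshoots. I would handle this case by case for types $\mathrm C,\mathrm F,\mathrm G$, computing the torsion of $\varepsilon_j^l$ on the explicit bases of $e_j\Pi(\infty)e_i$ from \cite[Theorem~11.12]{GLS-2017}. I expect this computation to be the content of the appendix \cite[Appendix A]{FM}.
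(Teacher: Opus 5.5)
Your argument is sound in outline, but it takes the $\Ext^1$-route of Remark~\ref{rem:HomF}. Your step~1 is exactly the computation of $\Ext^1_{\Lambda(\infty)}(K,K')$ from the injective copresentation $f^{(j)}_{l,s}$ of $K'$. The paper's own explanation instead goes straight to the homotopy quotient. A chain map $f^{(i)}_{k,p}\to f^{(j)}_{l,s}[1]$ is a single map $I_{i,p+2kd_i}\to I_{j,s}$, and the null-homotopic ones are the sums of maps factoring through $f^{(i)}_{k,p}$ or $f^{(j)}_{l,s}$. By \eqref{eq:HomII} and Proposition~\ref{rem:compare-gpa}, the space is therefore
\[
\Bigl(e_i\Pi(\infty)e_j\big/\bigl(e_i\varepsilon_i^k\Pi(\infty)e_j+e_i\Pi(\infty)\varepsilon_j^le_j\bigr)\Bigr)_{p+2kd_i-s}.
\]
The count is then quoted from \cite[Propositions 5.6 \& A.1]{FM}, just as you quote the exceptional cases.

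The two routes meet at once. Your pairing term $\dim e_{j,s}K-\dim e_{j,s+2ld_j}K$ and your Hom term $\dim e_{j,s+2ld_j}K-\dim \varepsilon_j^l e_{j,s}K$ telescope to $\dim\ker(\varepsilon_j^l\colon e_{j,s}K\to e_{j,s+2ld_j}K)$. Since $e_{j,t}K$ is dual to $(e_i\Pi(\infty)e_j/\varepsilon_i^ke_i\Pi(\infty)e_j)_{p+2kd_i-t}$, this kernel is the $\mathbb{C}$-dual of the displayed quotient. So the separate bookkeeping in your step~3 is unnecessary.

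The quotient form also makes your ``duality exchanging the two arguments'' visible. The arrow-reversing anti-involution of $\Pi(\infty)$ ($\alpha_{ij}\leftrightarrow\alpha_{ji}$, $\varepsilon_i\mapsto\varepsilon_i$) preserves (R1), (R2) and $\varepsilon$. It therefore identifies the quotients for $(i,k;j,l)$ and $(j,l;i,k)$ in the same degree.

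Your sketch is too loose in step~2. Freeness of $e_j\Pi(\infty)e_i$ over $\mathbb{C}[\varepsilon]$, together with the Hilbert series from Lemma~\ref{lem:KKR}~(ii), does not by itself fix the rank of $\varepsilon_j^l$ on $e_jK$. The reason is that $K$ is cut out by a \emph{left} power of $\varepsilon_i$, while $\varepsilon_j^l$ acts on the \emph{right} of $e_i\Pi(\infty)e_j$.

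The missing ingredient is centrality of $\varepsilon$, i.e.\ $\varepsilon_i^{r^\vee/d_i}x=x\varepsilon_j^{r^\vee/d_j}$ for $x\in e_i\Pi(\infty)e_j$. When $d_i=r^\vee$ or $d_j=r^\vee$, this moves one of the two operators to the other side. One-sided freeness over $\mathbb{C}[\varepsilon_i]$ or $\mathbb{C}[\varepsilon_j]$ (as in the proof of Proposition~\ref{Prop:KS}) then gives your truncated count.

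Only the case $d_i=d_j=1<r^\vee$ remains, and there the shape of \eqref{eq:Fikjl} shows that the correction $\delta(\clubsuit)\Delta_{ij}(q)$ enters only when $k=l$. So ``$\varepsilon_j^l$ is not a power of $\varepsilon$'', which also holds for many $k\neq l$, cannot be the whole diagnosis. Likewise, the non-freeness you have in mind is a property of the bimodule $e_i\Pi(\infty)e_j$, not of the finite-dimensional space $e_jK$.
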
  
\begin{proof}
    This equality has been proved in \cite[\S5 \& Appendix A]{FM} as explained in Remark \ref{rem:HomF}.
    For convenience, we give an alternative explanation here.
    By definition of the homotopy relation, the identification \eqref{eq:HomII}, and the construction in Proposition~\ref{rem:compare-gpa}, there is an isomorphism
    \begin{equation}\label{eq:ePie}
        \Hom_{\mathcal{K}^b(\Lambda(\infty)\injc)}(f^{(i)}_{k,p}, f^{(j)}_{l,s}[1]) \cong \left(\frac{e_i \Pi(\infty) e_j}{e_i\varepsilon_i^k\Pi(\infty)e_j + e_i\Pi(\infty)\varepsilon_j^l e_j}\right)_{p+2kd_i-s}.
    \end{equation}
The following equality
\[\dim_{\mathbb{C}}\left(\frac{e_i \Pi(\infty) e_j}{e_i\varepsilon_i^k\Pi(\infty)e_j + e_i\Pi(\infty)\varepsilon_j^l e_j}\right)_{p+2kd_i-s} = [F_{i,k;j,l}(q)]_{s-p+ld_j-kd_i}\]
is obtained by a case-by-case calculation in \cite[Propositions 5.6 \& A.1]{FM}. Note that the calculation in most cases reduces to understanding Lemma~\ref{lem:KKR} (ii) (see \textit{loc.~cit.}). Thus the desired equality holds.
\end{proof}
\begin{remark}\label{rem:HomF}
    We see that $f^{(i)}_{k,p}$ and $f^{(j)}_{l,s}$ are isomorphic to $H^0(f^{(i)}_{k,p})$ and $H^0(f^{(j)}_{l,s})$ respectively in the bounded derived category $\mathcal{D}^b(\Lambda(\infty)\mathchar`-\mathsf{mod}_\mathsf{fcg})$ thanks to Proposition~\ref{Prop:KS} (i) and Lemma \ref{lem:KKR} (iii). In particular, there is an isomorphism $$\Hom_{\mathcal{K}^b(\Lambda(\infty)\injc)}(f^{(i)}_{k,p}, f^{(j)}_{l,s}[1])\cong \Ext^1_{\Lambda(\infty)}(H^0(f^{(i)}_{k,p}), H^0(f^{(j)}_{l,s})).$$
By \cite[Propositions 5.6 \& A.1]{FM} and Remark \ref{rem:KKR}, we know that the dimension of the latter is equal to the right-hand side of \eqref{eq:HomF}. 
\end{remark}

\begin{theorem}[{\confer \cite[Conjecture 5.17]{FM}}] \label{thm:oKR2}
We have the equality
\begin{equation} \label{eq:oKR2} 
\mathfrak{o}(W^{(i)}_{k,q^{p+d_i}}, W^{(j)}_{l,q^{s+d_j}}) = \dim_{\mathbb{C}} \Hom_{\mathcal{K}^b(\Lambda(\infty)\injc)}(f^{(i)}_{k,p}, f^{(j)}_{l,s}[1]) 
\end{equation} 
for any $(i,p), (j,s) \in \Gamma_0$ and $k,l \in \mathbb{Z}_{>0}$.
\end{theorem}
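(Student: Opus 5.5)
The plan is to deduce Theorem~\ref{thm:oKR2} from Theorem~\ref{Thm:oMN=Ext1}. The one thing to establish is that, for $M=W^{(i)}_{k,q^{p+d_i}}$, the rigid complex $f_M$ of Lemma~\ref{Lem:KM} is isomorphic to $f^{(i)}_{k,p}$; the same then holds for $N=W^{(j)}_{l,q^{s+d_j}}$ and $f^{(j)}_{l,s}$. KR modules in $\mathcal{C}_\mathbb{Z}$ are reachable, as noted after Definition~\ref{def:reachableCZ}. So once the two identifications are in place, \eqref{eq:oKR2} is exactly \eqref{eqn:o-E-infty}. Combining it with Proposition~\ref{Prop:HomF} then gives Theorem~\ref{thm:oKR}.

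To identify $f_M$ I will use the uniqueness part of Lemma~\ref{Lem:KM} (equivalently Lemma~\ref{lem:gvec-rigid}). A rigid two-term complex is determined by its $g$-vector. Lemma~\ref{lem:KKR}~(i) shows that $f^{(i)}_{k,p}$ is rigid, so it remains to check that ${\bf g}(f^{(i)}_{k,p})={\bf g}_M^\infty$.

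First I compute the left side. By Definition~\ref{def:gvec}~(i), $f^{(i)}_{k,p}\colon I_{i,p+2kd_i}\to I_{i,p}$ sits in degrees $0,1$, so ${\bf g}(f^{(i)}_{k,p})={\bf e}_{(i,p)}-{\bf e}_{(i,p+2kd_i)}$.

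Next I compute the right side. The Drinfeld polynomial of $W^{(i)}_{k,q^{p+d_i}}$ is $\prod_{m=0}^{k-1}\boldsymbol{\pi}_{i,q^{p+d_i+2md_i}}$. Hence $u_{i,r}(M)=1$ exactly for $r=p+(2m+1)d_i$ with $0\le m\le k-1$, and $u_{i',r}(M)=0$ otherwise. By Definition~\ref{def:gvec}~(ii), $g_{(i',r)}=u_{i',r+d_{i'}}(M)-u_{i',r-d_{i'}}(M)$. This vanishes unless $i'=i$, and along the $i$-th row the sum telescopes. The result is $g_{(i,p)}=1-0=1$, $g_{(i,p+2kd_i)}=0-1=-1$, and all other entries are $0$. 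So ${\bf g}_M^\infty={\bf e}_{(i,p)}-{\bf e}_{(i,p+2kd_i)}$, matching the left side.

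The step most likely to need care is the indexing: I must confirm that the sign convention $[I_1]-[I_0]$, with $I_0=I_{i,p+2kd_i}$, matches the construction of $f_M$ in Lemma~\ref{Lem:KM}, where $I_M^0=\bigoplus I_v^{[-g_v]_+}$. There the vertex with $g_v=-1$, namely $(i,p+2kd_i)$, lands in degree $0$, which is consistent. With that checked, uniqueness gives $f_M\cong f^{(i)}_{k,p}$. Substituting both identifications into Theorem~\ref{Thm:oMN=Ext1} yields the claim.
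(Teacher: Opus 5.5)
Your proposal is correct and follows essentially the same route as the paper: reachability of the KR modules, the computation ${\bf g}^\infty_M={\bf e}_{i,p}-{\bf e}_{i,p+2kd_i}$ from Definition~\ref{def:gvec}~(ii), rigidity of $f^{(i)}_{k,p}$ from Lemma~\ref{lem:KKR}~(i), the uniqueness in Lemma~\ref{Lem:KM} giving $f_M\cong f^{(i)}_{k,p}$ (and likewise for $N$), and then Theorem~\ref{Thm:oMN=Ext1}. Your explicit telescoping computation of ${\bf g}^\infty_M$ and your check that the degree convention matches $I_M^0=\bigoplus_v I_v^{[-g_v]_+}$ spell out details that the paper leaves implicit.
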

\begin{proof}[Proof of Theorem \ref{thm:oKR} and 
Theorem \ref{thm:oKR2}]
Thanks to Proposition \ref{Prop:HomF}, we know that the statements in Theorem 
\ref{thm:oKR} and Theorem \ref{thm:oKR2} are equivalent. So we only need to prove Theorem \ref{thm:oKR2}.

By \eqref{eqn:M-ip}, we know that the KR module $W^{(i)}_{k,q^{p+d_i}}$ (resp.\ $W^{(j)}_{l,q^{s+d_j}}$)
is contained in the initial monoidal cluster of $\mathcal C_{[p+d_i,p+(2k-1)d_i]}$ (resp.\ $\mathcal C_{[s+d_j,s+(2l-1)d_j]}$). So $M\coloneq W^{(i)}_{k,q^{p+d_i}}$ and $N\coloneq W^{(j)}_{l,q^{s+d_j}}$ are reachable simple modules in $\mathcal C_{\mathbb Z}$.

By Lemma \ref{Lem:KM}, there exist two-term rigid complexes $f_M, f_N\in \mathcal{K}^{[0,1]}(\Lambda(\infty)\injc)$, unique up to isomorphism, such that ${\bf g}(f_M)={\bf g}^\infty_M$ and ${\bf g}(f_N)={\bf g}^\infty_N$. Since
$M=W^{(i)}_{k,q^{p+d_i}}$ and $N=W^{(j)}_{l,q^{s+d_j}}$, by Definition \ref{def:gvec} (ii), we have
\[{\bf g}^\infty_M={\bf e}_{i,p}-{\bf e}_{i,p+2kd_i}\quad\text{and}\quad {\bf g}^\infty_N={\bf e}_{j,s}-{\bf e}_{j,s+2ld_j},
\]
where ${\bf e}_{i,p}$ is the standard basis vector of $\mathbb Z^{\Gamma_0}$ indexed by $(i,p)\in \Gamma_0$. By Lemma \ref{lem:KKR} (i), we know that both of the two-term complexes $f_{k,p}^{(i)}\colon I_{i,p+2kd_i}\to I_{i,p}$ and $f_{l,s}^{(j)}\colon I_{j,s+2ld_j}\to I_{j,s}$ are rigid. Since ${\bf g}(f_{k,p}^{(i)})={\bf g}(f_M)$ and ${\bf g}(f_{l,s}^{(j)})={\bf g}(f_N)$, we have 
\[
f_M\cong f_{k,p}^{(i)}\quad \text{and}\quad f_N\cong f_{l,s}^{(j)}
\] in $\mathcal{K}^{[0,1]}(\Lambda(\infty)\injc)$ by the uniqueness in Lemma \ref{Lem:KM}. Then by Theorem \ref{Thm:oMN=Ext1}, we have \[\mathfrak{o}(W^{(i)}_{k,q^{p+d_i}}, W^{(j)}_{l,q^{s+d_j}}) = \dim_{\mathbb{C}} \Hom_{\mathcal{K}^b(\Lambda(\infty)\injc)}(f^{(i)}_{k,p}, f^{(j)}_{l,s}[1]).\] This completes the proof of Theorem \ref{thm:oKR} and 
Theorem \ref{thm:oKR2}.
\end{proof}

\subsection{Geometric $q$-character formulas for reachable modules}\label{subsec:qch}
We retain the notation from the previous subsections.
In this subsection, we give an interpretation of the $q$-character of reachable modules in terms of the graded representation theory of $\Pi(\infty)$.

Let $M$ be a reachable simple module in $\mathcal C_{\mathbb Z}$. The following notation will be used in Theorem \ref{thm:qchformula}.
\begin{itemize}
    \item $\chi_q(M)\in \mathbb{Z}[Y_{i,p+d_i}^{\pm 1} \mid (i,p) \in \mathtt K]$: the $q$-character of $M$;
    \item $f_M$: the two-term rigid complex in $\mathcal{K}^{[0,1]}(\Lambda(\infty)\injc)$ with ${\bf g}(f_M)={\bf g}_M^\infty$ (see Lemma \ref{Lem:KM});
    \item $K_M$: the graded module in $\Pi(\infty)\mathchar`-\mathsf{Mod}^\mathbb{Z}$ corresponding to $H^0(f_M)\in \Lambda(\infty)\mathchar`-\mathsf{Mod}$ under the identification in Proposition~\ref{rem:compare-gpa};
    \item $\mathop{\mathsf{Gr}^{\mathbb{Z}}_{\mathbf{d}_\bullet}}(K_M)$: the projective variety of (finite-dimensional) $\mathbb{Z}$-graded submodules $V$ of  $K_M$ such that each $\mathbb{Z}$-graded composition multiplicity $[V\colon S_i\langle p \rangle]$ satisfies
\[[V\colon S_i \langle p \rangle]= \dim_{\mathbb{C}}(e_i V)_p = d_{i,p}\]
for $\mathbf{d}_{\bullet}=(d_{i, p})_{(i,p)\in \mathtt{I}_0\times \mathbb{Z}}\in \bigoplus_{\mathtt{I}_0\times \mathbb{Z}}\mathbb{N}$. Here $S_i\langle p \rangle$ is the $p$-th grading shift of the simple top of the indecomposable projective module $P_i\coloneqq \Pi(\infty)e_i$;
\item $F^{\mathbb{Z}}_{K_M}(\mathbf{y})$: a polynomial in $
\mathbb{Z}[y_{i,p}\mid (i,p)\in \mathtt{I}_0\times \mathbb{Z}]$ 
  defined by
\[F^{\mathbb{Z}}_{K_M}(\mathbf{y})\coloneqq \sum_{\mathbf{d}_{\bullet}\in \bigoplus_{\mathtt I_0\times\mathbb Z}\mathbb N} \chi(\mathop{\mathsf{Gr}^{\mathbb{Z}}_{\mathbf{d}_{\bullet}}}(K_M))\mathbf{y}^{\mathbf{d}_\bullet},\]
where $\chi(-)$ denotes the Euler-Poincar\'e characteristic and $\mathbf{y}^{\mathbf{d}_\bullet}\coloneqq \prod_{(i,p)\in {\mathtt{I}_0\times \mathbb{Z}}}y_{i,p}^{d_{i,p}}$ for $\mathbf{d}_{\bullet}=(d_{i, p})_{(i,p)\in \mathtt{I}_0\times \mathbb{Z}}\in \bigoplus_{\mathtt{I}_0\times \mathbb{Z}}\mathbb{N}$;
\item $A_{j,s}$: the Laurent monomial in $\mathbb Z[Y_{i,p+d_i}^{\pm 1}\mid (i,p)\in \mathtt{K}]$ defined in 
 \S \ref{subsec:staex}.
\end{itemize}
\begin{remark}
Note that  $F^{\mathbb{Z}}_{K_M}(\mathbf{y})\in \mathbb{Z}[y_{i,p}\mid (i,p)\in \mathtt{I}_0\times \mathbb{Z}]$ actually belongs to $\mathbb{Z}[y_{i,p}\mid (i,p)\in \mathtt{K}]$, since $K_M$ is obtained from a $\Lambda(\infty)$-module.   
\end{remark}

For a polynomial $H({\bf y})\in \mathbb{Z}[y_{i,p}\mid (i,p)\in \mathtt{I}_0\times \mathbb{Z}]$, we denote by $H(A_{j,s}^{-1}\mid (j,s)\in \mathtt K)$ the evaluation of $H({\bf y})$ given by
\begin{align*}
    y_{j,s}\mapsto A_{j,s}^{-1}\;\; \text{for}\;\;(j,s)\in \mathtt{K}\quad\text{and}\quad y_{j',s'}\mapsto 0\;\;\text{for}\;\;(j',s')\in (\mathtt{I}_0\times \mathbb{Z})\setminus \mathtt{K}.
\end{align*}

\begin{theorem}\label{thm:qchformula}
Keep the above notation. 
We have the following equality:
\[\chi_q(M)= \left( \prod_{(i,p) \in \mathtt K} Y_{i,p+d_i}^{u_{i,p+d_i}(M)} \right)F_{K_M}^{\mathbb{Z}}(A_{j,s}^{-1}\mid (j,s)\in \mathtt K).\]
\end{theorem}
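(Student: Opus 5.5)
The plan is to reduce to a finite interval $[a,b]$ and combine three facts. First, the truncated $q$-character gives the $F$-polynomial (Lemma \ref{lem:gFab}). Second, Derksen--Weyman--Zelevinsky identify the $F$-polynomial with the submodule Grassmannian generating function of the decorated representation (Theorem \ref{thm:DWZ-g-F}). Third, the limit process compares $A_{[a,b]}$-modules with $\Lambda(\infty)$-modules (Proposition \ref{prop:TX}, Lemma \ref{Lem:KM}).

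First I fix the interval. By Lemma \ref{lem:clust_ab}, choose $[a,b]$ large enough that $M$ is reachable in $\mathcal C_{[a,b]}$ and $F_M^{-,b}({\bf A}^{-1})=F_M({\bf A}^{-1})$, as in the proof of Corollary \ref{cor:oF2}. Enlarging $[a,b]$ further, I also require that $T_{[a,b]}f_M\cong f_M^{[a,b]}$ (as in the proof of Theorem \ref{Thm:oMN=Ext1}), and that the natural map $H^0(f_M)\to H^0(T_{[a,b]}f_M)$ is an isomorphism of $\Lambda(\infty)$-modules, with $H^0(f_M)$ supported on $(\Gamma^\circ_{[a,b]})_0$. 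For this I need $H^0(f_M)$ to be finite-dimensional. I would argue this by choosing $[a,b]$ so that the isomorphisms $\rho^{rs}_L$ of Lemma \ref{Lem:KM} hold. Then the kernel of $d_M$ computed in the injective envelopes coincides with that of $d_{M;L}$, because $T_L$ is $\Hom_{\Lambda(\infty)}(A_L,-)$, which is left exact and takes the socle-compatible part.

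Next I identify the module. Under the bijection \eqref{eqn:s-t-bijection}, $f_M^{[a,b]}$ corresponds to the negative-reachable decorated representation $\mathcal M^{[a,b]}_{\overline u}$, and its module part is $H^0(f_M^{[a,b]})$. By Theorem \ref{thm:DWZ-g-F}(iii) and the separation formula, $F_{\mathcal M^{[a,b]}_{\overline u}}=F_{\overline u}^{t_0}=F_M^{t_0}$. By Lemma \ref{lem:gFab}(ii), this equals $F_M^{-,b}$ with $y_{(i,p)}=A_{i,p}^{-1}$. It then remains to check that submodules of $H^0(f_M)\cong H^0(f_M^{[a,b]})$ over $A_{[a,b]}$ and over $\Lambda(\infty)$ are the same. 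I will also check that, under Proposition \ref{rem:compare-gpa}, the dimension vector indexed by $\Gamma_0=\mathtt K$ matches the graded composition multiplicities $d_{i,p}=\dim(e_iV)_p$. That gives $\Gr_{\bf v}(H^0(f_M^{[a,b]}))=\Gr^{\mathbb Z}_{{\bf d}_\bullet}(K_M)$, with $d_{j,s}=0$ for $(j,s)\notin\mathtt K$. Hence $F^{\mathbb Z}_{K_M}(A^{-1}_{j,s})=F_M({\bf A}^{-1})$. Finally, the prefactor $\prod Y^{u}$ comes from the form of $\chi_q(M)$ recalled before Lemma \ref{lem:gFab}.

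I expect the main obstacle to be the comparison step $H^0(f_M)\cong H^0(T_{[a,b]}f_M)$, including its finite-dimensionality. The Hom-level isomorphisms of Proposition \ref{prop:TX} do not directly control kernels. My first attempt would be to use the grading: the $\mathbb C[\varepsilon]$-freeness of $\Pi(\infty)$ in Proposition \ref{Prop:KS} bounds the degrees in which $\ker d_M$ can live. If that fails, I would use the fact that $\dim e_vH^0(T_Lf_M)$ is given by the $F$-polynomial degree, which stabilizes in $L$, and pass to the limit.
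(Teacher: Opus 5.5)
Your overall route is the paper's: realize $H^0(f_M)$ as $H^0(f_M^L)$ for a large interval $L=[a,b]$, read off its $F$-polynomial from Theorem~\ref{thm:DWZ-g-F}(iii) and Lemma~\ref{lem:gFab}, and transport submodule Grassmannians through Proposition~\ref{rem:compare-gpa}. The trouble is the step you flagged yourself, because your primary argument for it does not work.

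Left exactness of $T_L=\Hom_{\Lambda(\infty)}(A_L,-)$ only gives $H^0(T_Lf_M)=\ker(T_Ld_M)\cong T_L(\ker d_M)=\Hom_{\Lambda(\infty)}(A_L,H^0(f_M))$. This is the largest submodule of $H^0(f_M)$ supported on $(\Gamma_L^\circ)_0$. So the natural map runs $H^0(T_Lf_M)\hookrightarrow H^0(f_M)$, and its surjectivity is exactly the support/finite-dimensionality statement to be proved. The isomorphisms $\rho^{rs}_L$ concern Hom-spaces between the injective terms and say nothing about it. The grading idea also cannot succeed by itself: a morphism in $\Lambda(\infty)\injc$ can have an infinite-dimensional kernel (e.g.\ $I_v\to 0$). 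So any bound has to use the specific complex $f_M$.

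Your fallback is what the paper does, but ``pass to the limit'' hides the two facts that make it work. Put $X_L\coloneqq\Hom_{\Lambda(\infty)}(A_L,H^0(f_M))\cong H^0(T_Lf_M)\cong H^0(f_M^L)$. Then:
\begin{itemize}
\item[(a)] The $X_L$ form an increasing chain of submodules of $H^0(f_M)$.
\item[(b)] $H^0(f_M)=\bigcup_LX_L$, because $I_M^0=\bigcup_LT_LI_M^0$. Indeed, for fixed $v,w$ the projection $e_v\Lambda(\infty)e_w\to e_vA_Le_w$ is an isomorphism once $L$ is large. Hence every element of $I_v=\mathbb D(e_v\Lambda(\infty))$ lies in $T_LI_v\cong\mathbb D(e_vA_L)$ for $L$ large.
\end{itemize}

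For large $L$, the polynomial $F_{X_L}$ is $F_M(\mathbf A^{-1})$ with $A_{i,p}^{-1}\mapsto y_{(i,p)}$, so it is independent of $L$. Moreover $\deg_{y_v}F_{X_L}=\dim e_vX_L$, since the top term of an $F$-polynomial is $\mathbf y^{\underline{\dim}X_L}$. Hence the chain stabilizes, and (b) gives $H^0(f_M)=X_L$ for all large $L$; in particular $H^0(f_M)$ is finite-dimensional. Without (b), stabilization of $\dim H^0(T_Lf_M)$ alone would not give finite-dimensionality of $H^0(f_M)$.

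With this in place, the rest of your argument goes through as written: submodules over $A_L$ and over $\Lambda(\infty)$ agree, dimension vectors match graded multiplicities, and the prefactor comes from the factorization of $\chi_q(M)$.
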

\begin{proof}
    Choose a finite integer interval $L_0=[a_0,b_0]$ such that $M$ is
reachable in $\mathcal C_{L_0}$. By Lemma~\ref{lem:clust_ab}, $M$ is
reachable in $\mathcal C_L$ for every interval $L\supseteq L_0$.
For $L=[a,b]$, we set
\[
X_L\coloneqq \Hom_{\Lambda(\infty)}(A_L, H^0(f_M))\subseteq H^0(f_M).
\]
Since $\Hom_{\Lambda(\infty)}(A_L, -)$ is left exact, we have an isomorphism
$X_L\cong H^0(T_Lf_M)$, and we can see $X_L\subseteq X_{L'}$ when $L\subseteq L'$.

We write $f_M=(I_M^0\to I_M^1)$. Then we have
\[
I_M^0= \bigcup_{L\supseteq L_0}T_LI_M^0,
\]
because $I_M^0$ is a finite direct sum of objects in $\Lambda(\infty)\injc$. It follows that
\[
H^0(f_M)=\bigcup_{L\supseteq L_0}X_L.
\]
On the other hand, Lemma~\ref{Lem:KM} and the equalities ${\bf g}(T_{L}f_M)={\bf g}_M^\infty|_{(\Gamma_{L}^\circ)_0}=({\bf g}_M^{L})^\circ$ give
$T_Lf_M\cong f_M^L$ as seen in the proof of Theorem~\ref{Thm:oMN=Ext1}, and hence
$X_L\cong H^0(f_M^L)$.

For all sufficiently large $L=[a,b]$, the truncated polynomial
$F_M^{-,b}({\bf A}^{-1})$ coincides with
$F_M({\bf A}^{-1})$. Therefore, by
Lemma~\ref{lem:gFab} and Theorem~\ref{thm:DWZ-g-F}~(iii), the
$F$-polynomials of the modules $X_L$ are independent of $L$.
Since $\deg_{y_v}F_{X_L}=\dim_{\mathbb C}e_vX_L$ for every vertex $v$,
the dimension vectors of the modules $X_L$ are also independent of
$L$ for all sufficiently large $L$. Thus the natural inclusions
$X_L\subseteq X_{L'}$ are equalities for all sufficiently large
$L\subseteq L'$. This yields
$H^0(f_M)=X_L\cong H^0(f_M^L)$
for every sufficiently large $L$. In particular, $H^0(f_M)$ is
finite-dimensional.

Under Proposition~\ref{rem:compare-gpa}, submodules of $H^0(f_M)$
correspond to graded submodules of $K_M$. Consequently, we obtain
\[
F_{K_M}^{\mathbb Z}({\bf y})
=
\left.
F_M({\bf A}^{-1})
\right|_{A_{i,p}^{-1}=y_{i,p}}.
\]
The assertion follows from the factorization of
$\chi_q(M)$ in terms of $F_M({\bf A}^{-1})$.
\end{proof}
As a corollary, we interpret the classical limits of $q$-characters studied in \cite{FR99} for reachable modules in $\mathcal{C}_{\mathbb{Z}}$. Our construction of $\Pi(\infty)$ ensures that each submodule of a finite-dimensional graded $\Pi(\infty)$-module $K_M$ is a nilpotent module.
Indeed, since the homogeneous central element
$\varepsilon$ has degree $2r^\vee>0$, there exists $l\in \mathbb{Z}_{>0}$ such that
$\varepsilon^lK_M=0$. Hence the action of $\Pi(\infty)$ on $K_M$
factors through $\Pi(l)$. Since the arrow ideal
$\mathfrak M_{\widetilde Q}$ is nilpotent in $\Pi(l)$, the module
$K_M$ is nilpotent.
In particular, each submodule of $K_M$ has a composition series of finite length whose composition factors are simple modules $S_i\,(i\in \mathtt{I}_0)$. 

We retain the previous notation. The following are additionally used in Corollary~\ref{cor:qchformula}:
\begin{itemize}
    \item $\mathsf{Gr}_{\mathbf{d}}(K_M)$: the projective variety of (ungraded) submodules $V$ of $K_M$ such that the multiplicity $[V\colon S_i]$ of each composition factor $S_i$ satisfies
    \[[V\colon S_i]=\dim_{\mathbb{C}}(e_i V)= d_i,\]
    where $\mathbf{d}=(d_i)_{i\in \mathtt{I}_0}\in \mathbb{N}^{\mathtt{I}_0}$;
    \item $F_{K_M}(\widetilde{\mathbf{y}})$: a polynomial in $\mathbb{Z}[y_i\mid i\in \mathtt{I}_0]$ defined by
    \[F_{K_M}(\widetilde{\mathbf{y}})=\sum_{\mathbf{d}\in \mathbb{N}^{\mathtt{I}_0}}\chi(\mathsf{Gr}_{\mathbf{d}}(K_M))\widetilde{\mathbf{y}}^{\mathbf{d}}\]
    where we denote $\widetilde{\mathbf{y}}^{\mathbf{d}}\coloneqq \prod_{i\in \mathtt{I}_0}y_i^{d_i}$;
    \item $\widetilde{A}_i\coloneqq e^{\alpha_i}$: a usual formal symbol for each simple root $\alpha_i$;
    \item $\widetilde{\chi}(M)\in \mathbb{Z}[e^{\pm \varpi_i}\mid i\in \mathtt{I}_0]$: the normalized character of $M$ as a finite-dimensional $U_q(\mathfrak{g}_0)$-module, where the normalization is taken with respect to the highest weight term.
\end{itemize}

\begin{corollary}\label{cor:qchformula}
    We keep the above notation. We have the following equality:
    \[\widetilde{\chi}(M)= F_{K_M}(\widetilde{A}_i^{-1} \mid i\in \mathtt{I}_0)\]
    after evaluations $\widetilde{A}_i=e^{\alpha_i}\mapsto \prod_{j\in \mathtt{I}_0}(e^{\varpi_j})^{c_{ji}}$ for $i\in\mathtt{I}_0$.
\end{corollary}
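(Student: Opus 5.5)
The plan is to specialize Theorem~\ref{thm:qchformula}. Both sides of that identity are sent to $\widetilde{\chi}(M)$ by the classical limit. On the left this is the standard Frenkel--Reshetikhin map $Y_{i,p}\mapsto e^{\varpi_i}$, which sends $\chi_q(M)$ to $\chi(M)$. It sends the monomial $\prod Y_{i,p+d_i}^{u_{i,p+d_i}(M)}$ to the highest weight $e^{\lambda}$ of $M$, and each $A_{j,s}$ to $\prod_{i}(e^{\varpi_i})^{c_{ij}}$, the image of $e^{\alpha_j}$. After dividing by $e^{\lambda}$ we get
\[
\widetilde{\chi}(M)=F^{\mathbb Z}_{K_M}(A_{j,s}^{-1})\big|_{A_{j,s}\mapsto \widetilde A_j}.
\]
Here $F^{\mathbb Z}_{K_M}$ involves only the variables $y_{j,s}$ with $(j,s)\in\mathtt K$, so the evaluation that sends the other variables to $0$ loses nothing.

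What remains is the purely geometric identity
\[
F^{\mathbb Z}_{K_M}(\mathbf y)\big|_{y_{j,s}\mapsto y_j}=F_{K_M}(\widetilde{\mathbf y}),
\]
that is, for each $\mathbf d\in\mathbb N^{\mathtt I_0}$,
\[
\chi(\mathsf{Gr}_{\mathbf d}(K_M))=\sum_{\mathbf d_\bullet\mapsto \mathbf d}\chi(\mathsf{Gr}^{\mathbb Z}_{\mathbf d_\bullet}(K_M)),
\]
where $\mathbf d_\bullet\mapsto\mathbf d$ means $\sum_p d_{i,p}=d_i$. To prove it I would use a $\mathbb C^\times$-action. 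The grading on $K_M$ gives an action $t\cdot v=t^p v$ on $(K_M)_p$. Because $\Pi(\infty)$ is graded and $K_M$ is finite-dimensional, this action preserves submodules and hence acts algebraically on the projective variety $\mathsf{Gr}_{\mathbf d}(K_M)$. Its fixed points are exactly the graded submodules. Since $K_M$ is nilpotent (as noted just before the corollary), $[V\colon S_i]=\dim e_iV$, and the fixed locus decomposes as $\bigsqcup_{\mathbf d_\bullet\mapsto\mathbf d}\mathsf{Gr}^{\mathbb Z}_{\mathbf d_\bullet}(K_M)$. This union is finite because $K_M$ is finite-dimensional.

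The key step, and the main point to check carefully, is the Białynicki-Birula fact that $\chi(X)=\chi(X^{\mathbb C^\times})$ for an algebraic $\mathbb C^\times$-action on a complex variety. Together with additivity of $\chi$ over the finite disjoint union of closed pieces of the fixed locus, this gives the displayed identity. The only remaining verification is that the pieces $\mathsf{Gr}^{\mathbb Z}_{\mathbf d_\bullet}(K_M)$ are unions of connected components of the fixed locus, which holds because the graded dimension vector is locally constant on that locus. Substituting $y_i=\widetilde A_i^{-1}$ then gives the corollary.
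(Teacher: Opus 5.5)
Your proposal is correct and is essentially the paper's proof. You specialize Theorem~\ref{thm:qchformula} through the Frenkel--Reshetikhin classical limit \cite[Theorem~3]{FR99}, and you identify $\chi(\mathsf{Gr}_{\mathbf d}(K_M))$ with $\sum_{\mathbf d_\bullet\mapsto\mathbf d}\chi(\mathsf{Gr}^{\mathbb Z}_{\mathbf d_\bullet}(K_M))$ using the grading $\mathbb C^\times$-action (whose fixed locus is the graded Grassmannian), Bia{\l}ynicki-Birula's theorem \cite{BB73}, and additivity over the finite decomposition of the fixed locus by graded dimension vectors; your write-up is slightly more explicit than the paper's about where the highest monomial and the $A_{j,s}$ go under the specialization.
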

\begin{proof}
    Let $\mathsf{Gr}^{\mathbb{Z}}_{\mathbf{d}}(K_M)$ be the projective variety of $\mathbb{Z}$-graded submodules of $K_M$ with dimension vector $\mathbf{d}=(d_i)_{i\in \mathtt{I}_0}\in \mathbb{N}^{\mathtt{I}_0}$.
    We have a $\mathbb{C}^{\times}$-action on $K_M$ defined by
    \[t. m = t^{\deg(m)}m\]
    for each homogeneous element $m \in K_M$, and it induces a $\mathbb{C}^{\times}$-action on $\mathsf{Gr}_{\mathbf{d}}(K_M)$.
    The projective variety $\mathsf{Gr}^{\mathbb{Z}}_{\mathbf{d}}(K_M)$ is the fixed point set of $\mathsf{Gr}_{\mathbf{d}}(K_M)$ with respect to this $\mathbb{C}^{\times}$-action. In particular, we obtain an equality
$\chi(\mathsf{Gr}_{\mathbf{d}}(K_M))=\chi(\mathsf{Gr}^{\mathbb{Z}}_{\mathbf{d}}(K_M))$
    thanks to Bia{\l}ynicki-Birula's theorem~\cite{BB73}.
    We have a decomposition
    \begin{equation}\label{eq:Grdecomp}
        \mathsf{Gr}^{\mathbb{Z}}_{\mathbf{d}}(K_M)= \bigsqcup_{\substack{\sum_{p\in \mathbb{Z}}d_{i,p}=d_i\\ (i\in \mathtt{I}_0)}}\mathsf{Gr}^{\mathbb{Z}}_{\mathbf{d}_{\bullet}}(K_M)
    \end{equation}
    into a disjoint union of finitely many subvarieties $\mathsf{Gr}^{\mathbb{Z}}_{\mathbf{d}_{\bullet}}(K_M)$ studied in Theorem~\ref{thm:qchformula}.  Since the Euler-Poincar\'e characteristic $\chi(\mathsf{Gr}^{\mathbb{Z}}_{\mathbf{d}}(K_M))$ is equal to the sum of all the $\chi(\mathsf{Gr}^{\mathbb{Z}}_{\mathbf{d}_{\bullet}}(K_M))$ in \eqref{eq:Grdecomp}, Theorem~\ref{thm:qchformula} and \cite[Theorem~3]{FR99} show that the construction of $F_{K_M}$ is compatible with our normalized specialization $\chi_q(M)\mapsto \widetilde{\chi}(M)$. Thus we obtain the assertion.
    
    Note that $M$ is not necessarily irreducible as a $U_q(\mathfrak{g}_0)$-module, but one can easily recover the ordinary character $\chi(M)$ from our normalized one $\widetilde{\chi}(M)$ by using information of the lowest weight thanks to Weyl's character formula.
\end{proof}
\begin{remark}
    \begin{enumerate}
        \item[(i)] When $\mathfrak{g}_0$ is of simply-laced type and $f_M=f_{1, p}^{(i)}$, then the projective variety $\mathsf{Gr}_{\mathbf{d}}(K_M)$ (resp.\ $\mathsf{Gr}^{\mathbb{Z}}_{\mathbf{d}}(K_M)$) in Corollary~\ref{cor:qchformula} is identical to the (resp.\ $\mathbb{Z}$-graded) submodule Grassmannian of an (resp.\ $\mathbb{Z}$-graded) injective module over the classical preprojective algebra $\Pi(1)$. It is known that our (resp.\ $\mathbb{Z}$-graded) submodule Grassmannian is homeomorphic to (resp.\ the fixed point set of a $\mathbb{C}^\times$-action in) a certain Lagrangian subvariety of Nakajima quiver variety (\confer\cite{Lus98,ST11,Shi10}).
        \item[(ii)] For $\mathfrak{g}$ of arbitrary untwisted type, the realization of (truncated) $q$-characters of KR modules similar to Theorem~\ref{thm:qchformula} was studied in \cite[Theorem~4.8]{HL16}. In particular, the submodule Grassmannian which they studied is identical to ours via the identification given in Proposition~\ref{rem:compare-gpa} when their formula in \textit{loc.~cit.} gives the complete $q$-character (see \cite[Remark~4.9]{HL16}).
    \end{enumerate}
\end{remark}
\begin{remark} \label{rem:twisted-case}
It is straightforward to extend the discussions in \S\ref{sec:pole-order} and \S\ref{sec: application} 
to the category $\mathscr{C}_{\mathfrak{g}}^{[a',b'],\mathfrak{s}}$ for any admissible sequence $\mathfrak{s}$ and any finite interval $[a',b']$ in the sense of \cite[\S6]{kkop-2024}, including twisted types, with obvious modifications.
(Strictly speaking, for twisted type, we have to replace ``$q$-characters'' with ``twisted $q$-characters'' in the sense of \cite{Her10}.)
We restricted our exposition to the case of the subcategories $\mathcal{C}_{[a,b]}$ (of untwisted type) only for the sake of simplicity.

It can also be seen that the case of twisted type is parallel to the case of untwisted type, through the folding isomorphism introduced by Hernandez \cite{Her10}.
Let $\mathfrak{g}$ be an untwisted affine Lie algebra of simply-laced type $\mathrm{X}_N^{(1)}$ ($\mathrm{X} \in \{ \mathrm{A, D, E}\}$), and $\mathfrak{g}^{(r)}$ the twisted affine Lie algebra of type $\mathrm{X}_N^{(r)}$ ($r \in \{2,3\}$).
Then, for any admissible sequence $\mathfrak{s}$ and an integer interval $[a',b']$, we have the associated monoidal categories $\mathcal{C}^{[a',b'], \mathfrak{s}}_{\mathfrak{g}}$ and $\mathcal{C}^{[a',b'], \mathfrak{s}}_{\mathfrak{g}^{(r)}}$ for $\mathfrak{g}$ and $\mathfrak{g}^{(r)}$ respectively. 
By \cite{kkop-2024}, both categories are $\mathsf{\Lambda}$-monoidal categorifications of a common cluster algebra, whose initial monoidal clusters consist of certain KR modules in both cases.  
The folding isomorphism restricts to an explicit ring isomorphism $K_0(\mathcal{C}^{[a',b'], \mathfrak{s}}_{\mathfrak{g}}) \cong K_0(\mathcal{C}^{[a',b'], \mathfrak{s}}_{\mathfrak{g}^{(r)}})$, 
under which KR modules in $\mathcal{C}^{[a',b'], \mathfrak{s}}_{\mathfrak{g}}$ are in bijection with KR modules in $\mathcal{C}^{[a',b'], \mathfrak{s}}_{\mathfrak{g}^{(r)}}$.
Since the initial clusters match under the folding isomorphism, it induces a bijection of reachable modules\footnote{Very recently, this bijection was extended to all simple modules in \cite{NW26}.}.
Moreover, the folding isomorphism is compatible with the (twisted) $q$-character homomorphisms by construction.
Therefore, for our purposes, there is no essential difference between the untwisted cases and the twisted cases.  
\end{remark}

\appendix
\section{Decorated representations and numerical invariants}\label{sec: appendixA}

Fix any base field $\Bbbk $. Let $Q$ be a finite quiver and let $\widehat{ \Bbbk  Q}$ be its complete path algebra. Let $\fM \coloneqq \mathfrak{M}_Q$ denote the Jacobson radical of $\widehat{ \Bbbk  Q}$.
We refer to an element $r\in \widehat{ \Bbbk  Q}$ as \emph{basic} if $r$ is a formal linear sum of paths in $Q$ with a common source $i$ and a common target $j$. Note that any ideal of $\widehat{ \Bbbk  Q}$ is generated by basic elements.

We endow the algebra $\widehat{ \Bbbk  Q}$ with the $\mathfrak{M}$-adic topology given by a fundamental system $\{\mathfrak{M}^p\}_{p\geq 0}$ of open neighborhoods of $0$.
We choose an ideal $I$ of $\widehat{ \Bbbk  Q}$ generated by finitely many basic elements in $\mathfrak{M}^2$.  Let $\overline I= \bigcap_{p\geq 0}(I+\mathfrak{M}^p)$ be the $\mathfrak{M}$-adic closure of $I$. Throughout the appendix, we consider the following complete and separated algebra \[J \coloneqq \widehat{\Bbbk  Q}/\overline I \cong \varprojlim_{p\geq 0} \widehat{\Bbbk  Q}/ (I + \fM^p)\] with respect to the $\fM$-adic topology. Here we also use $\fM$ to denote the Jacobson radical of $J$, as it is given by the natural projection.

Since each $J_p \coloneqq \widehat{\Bbbk  Q}/ (I + \fM^p)$ is a finite-dimensional algebra, the $\fM$-adic topology of $J$ coincides with a complete and separated topology given by the fundamental system $\{U_\lambda\}_{\lambda}$ of open neighborhoods of $0$ consisting of ideals $U_\lambda$ such that $J/U_\lambda$ is finite-dimensional. Such a topological algebra is called a {\em pseudo-compact algebra}~\cite{Gab62}. As pointed out in \cite[\S 7]{Keller-Yang_2011}, the Jacobian algebra of a quiver with potential is an example of a pseudo-compact algebra.

In general, the algebra $J$ considered above is neither noetherian nor coherent, and the category of finitely generated (resp.\ finitely presented) modules is not abelian but just an exact category. However, it is well-known that $J$ is a \emph{semi-perfect} algebra (\confer \cite[Lemma~4.3]{KM22}), \ie it satisfies the following two conditions (see \cite[\S 27]{AF92}): 
\begin{itemize}
\item the ring $J/\fM$ is semisimple; \item any idempotent of $J/\fM$ is the image of an idempotent of $J$ under the projection. 
\end{itemize}
In particular, the category $J\projc$ of finitely generated projective $J$-modules is a Krull--Schmidt category (see \cite[Proposition~4.1]{Kra15}).

\subsection{Injective objects in the category of locally nilpotent modules}\label{subsecA:inj}
Keep the previous setting. We prepare several basic facts about modules over the algebra $J$.

\begin{lemma}\label{lem:fd_fp}
    Every finite-dimensional $J$-module is finitely presented.
\end{lemma}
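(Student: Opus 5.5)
The plan is to show directly that a finite-dimensional $J$-module $M$ has a presentation $P_1\to P_0\to M\to 0$ with $P_0,P_1$ finitely generated projective. The key input is that the Jacobson radical $\fM$ acts nilpotently on $M$. Since $M$ is finite-dimensional and $J/\fM$ is semisimple, the descending chain $\fM^pM$ stabilizes. By Nakayama's lemma for the finitely generated module $\fM^pM$, the stable value must be zero. Concretely, $\fM^{p}M=\fM^{p+1}M$ forces $\fM^pM=0$, because $\fM$ is the Jacobson radical and $\fM^pM$ is finitely generated (it is finite-dimensional). Hence the action of $J$ on $M$ factors through the finite-dimensional algebra $J_p=\widehat{\Bbbk Q}/(I+\fM^p)$ for some $p$.

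Next I construct the projective cover. Write $M/\fM M\cong\bigoplus_i S_i^{a_i}$ and set $P_0=\bigoplus_i (Je_i)^{a_i}$, which is finitely generated projective. Lift generators to obtain a surjection $P_0\to M$, using Nakayama again. It remains to see that the kernel $K$ is finitely generated. The kernel contains $\fM^pP_0$.

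The main step is to show that $\fM^p$ is a finitely generated left ideal of $J$. This uses that $Q$ is finite: $\fM^p$ is generated as a left ideal by the finitely many paths of length exactly $p$. Indeed, since $J$ is complete, any element of $\fM^p$, written as a convergent sum of paths of length $\ge p$, can be rewritten as $\sum_{\text{paths }w,\ |w|=p} c_w\, w$ with $c_w\in J$; the infinite sums defining the $c_w$ converge in the $\fM$-adic topology. Consequently $\fM^pP_0$ is finitely generated. Moreover $K/\fM^pP_0$ is a submodule of the finite-dimensional module $P_0/\fM^pP_0$. Here $P_0/\fM^pP_0$ is finite-dimensional because $J_p$ is finite-dimensional, and this is where the closedness and finiteness assumptions enter. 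So $K/\fM^pP_0$ is finite-dimensional and hence finitely generated.

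Combining the last two facts, $K$ is an extension of finitely generated modules, so it is finitely generated. This yields a surjection $P_1\to K$ from a finitely generated projective, which completes the presentation. The only step I expect to need care is the convergence argument identifying $\fM^p$ with $\sum_{|w|=p}Jw$ in the completed quotient. One must pass through the closure $\overline I$, but the image of $\sum_{|w|=p}\widehat{\Bbbk Q}\,w$ is exactly the image of $\fM^p$, so no issue arises.
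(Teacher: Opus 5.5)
Your proof is correct, but it follows a different route from the paper's. The paper argues structurally. Since $J$ is semi-perfect, a finite-dimensional $M$ has finite length with composition factors among the simples $S_i$. Each $S_i$ is finitely presented by \cite[Proposition~3.3]{BIRSm11}, and finitely presented modules are closed under extensions by the horseshoe lemma, so induction on length finishes.

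You instead build the presentation directly. Nakayama gives $\fM^pM=0$ for some $p$, and the kernel $K$ of $P_0\to M$ is squeezed between $\fM^pP_0$ and $P_0$, with $P_0/\fM^pP_0$ finite-dimensional. Everything then hinges on your observation that $\fM^p=\sum_{|w|=p}Jw$ is a finitely generated left ideal, which holds because $Q$ is finite and $\widehat{\Bbbk Q}$ is complete. The case $p=1$ of that observation ($\fM e_i$ is generated by the finitely many arrows starting at $i$) is exactly what makes $S_i$ finitely presented. So your argument in effect reproves the cited input for all $p$ at once and replaces the length induction by a single step. It is self-contained and shows transparently that only finiteness of $Q$ and completeness are used; the paper's version is shorter given the citation.

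Two small points of precision:
\begin{itemize}
\item The stabilization of the chain $\fM^pM$ only needs $\dim M<\infty$; semisimplicity of $J/\fM$ plays no role there.
\item Passing from ``$\fM^p$ is finitely generated'' to ``$\fM^pP_0$ is finitely generated'' uses that $\fM^p$ is two-sided. This gives $\fM^pP_0=\sum_j\fM^px_j$ for generators $x_j$ of $P_0$, so $\fM^pP_0$ is generated by the finitely many elements $wx_j$.
\end{itemize}
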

\begin{proof}
    Since $J$ is a semi-perfect algebra, each finite-dimensional $J$-module $M$ is of finite length whose composition factors are given by the simple modules $S_i$ with
 $i\in Q_0$. We know from \cite[Proposition~3.3]{BIRSm11} that each $S_i$ is finitely presented. If we have a short exact sequence
    $0\to X\to Y\to Z \to 0$ with $X, Z$ finitely presented, then $Y$ is also finitely presented by the horseshoe lemma. Since $M$ is contained in the extension closure of $\{S_i\mid i\in Q_0\}$, we obtain the assertion.
\end{proof}
\begin{definition}
    We consider the following subcategories of $J\Mod$.
    \begin{enumerate}
    \item[(i)]  Let $J\modfd$ denote the full subcategory of finite-dimensional $J$-modules.
    \item [(ii)]  Let $J\modfg$ denote the full subcategory of finitely generated $J$-modules.
        \item[(iii)] Let $J\modfp$ denote the full subcategory of finitely presented $J$-modules.
        \item[(iv)] Let $J\projc$ denote the full subcategory of finitely generated projective $J$-modules.
    \end{enumerate}
\end{definition}
\begin{lemma}[{\cite{KM22,Keller-Yang_2011}}] \label{lem:A-Krull-Sch}
    Keep the above notation. We have the following.
    \begin{enumerate}
        \item[(i)] The category $\cK^b(J\projc)$ is Krull--Schmidt.
        \item[(ii)] The subcategory $\cK^{[-1, 0]}(J\projc)$ of two-term complexes in  $\cK^b(J\projc)$ is Krull--Schmidt.
        \item[(iii)] The exact category $J\modfp$ of finitely presented $J$-modules is Krull--Schmidt.
    \end{enumerate}
\end{lemma}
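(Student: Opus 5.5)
The proof reduces all three assertions to facts about finitely generated projective $J$-modules, using that $J$ is semi-perfect and complete with respect to the $\fM$-adic topology.

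\emph{Step 1 (the category $J\projc$).} Since $J$ is semi-perfect, every finitely generated projective module is a finite direct sum of the indecomposable projectives $P_i=Je_i$, $i\in Q_0$. Each $\operatorname{End}_J(P_i)\cong e_iJe_i$ is local, because its Jacobson radical is $e_i\fM e_i$ with quotient $\Bbbk$. Hence $J\projc$ is Krull--Schmidt, as already recalled before the statement (\cite[Proposition~4.1]{Kra15}). It also follows that $J\projc$ is idempotent complete.

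\emph{Step 2 (assertion (i)).} I would invoke the standard criterion that a triangulated category is Krull--Schmidt exactly when it is idempotent complete and the endomorphism ring of every object is semi-perfect. For idempotent completeness, $\cK^b(\mathcal P)$ is idempotent complete whenever $\mathcal P$ is an idempotent complete additive category (Balmer--Schlichting). This is the main obstacle. The homotopy category is not Hom-finite, so the usual finite-dimensional argument is unavailable, and semi-perfectness of $\operatorname{End}(X)$ has to be shown directly. For a bounded complex $X$ of objects in $J\projc$, the ring $\operatorname{End}_{\cK^b}(X)$ is a quotient of the ring $E$ of chain endomorphisms. Each entry $\operatorname{Hom}_J(P_i,P_j)\cong e_iJe_j$ is pseudo-compact, so $E$ is a closed subring of a finite product of such spaces and is itself pseudo-compact. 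A pseudo-compact ring is semi-perfect, since idempotents lift along the complete topology. A quotient of a semi-perfect ring by a two-sided ideal is again semi-perfect; to apply this one has to check that the null-homotopic maps form a closed ideal, or else work with the minimal-complex reduction. That check is the technical heart, and I would take the argument of \cite{KM22} (as in \cite{Keller-Yang_2011}) for this point.

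\emph{Step 3 (assertions (ii) and (iii)).} Assertion (ii) follows from (i): $\cK^{[-1,0]}(J\projc)$ is a full additive subcategory closed under direct summands, because a summand of a two-term complex is homotopy equivalent to a two-term complex after removing contractible summands. A full subcategory of a Krull--Schmidt category that is closed under summands is Krull--Schmidt. For (iii), send a finitely presented module $M$ to its minimal projective presentation $P_1\to P_0$. Minimal presentations exist by semi-perfectness. The functor $\cK^{[-1,0]}(J\projc)\to J\modfp$ given by $H^0$ is full and essentially surjective, and its kernel is the ideal of morphisms factoring through objects of the form $P\to 0$. So $J\modfp$ is equivalent to a quotient of the Krull--Schmidt category in (ii) by an ideal, and quotients of Krull--Schmidt categories by ideals remain Krull--Schmidt, since local endomorphism rings stay local or become zero.
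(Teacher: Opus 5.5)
Your outline is correct, but it takes a different route from the paper. The paper proves nothing here: it simply cites \cite[Corollary~4.6~(ii), Lemma~4.10]{KM22} for all three assertions, and \cite[Lemma~2.17]{Keller-Yang_2011} for (i).

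You instead make (ii) and (iii) formal consequences of (i):
\begin{itemize}
\item For (ii), you use closure of two-term complexes under summands. This is best justified by uniqueness of homotopically minimal (radical) representatives in $\cK^b$ of the Krull--Schmidt category $J\projc$.
\item For (iii), you use the equivalence $J\modfp\simeq\cK^{[-1,0]}(J\projc)/[\operatorname{add}J[1]]$, together with the fact that ideal quotients of Krull--Schmidt categories are Krull--Schmidt.
\item For (i), you reduce to Krause's criterion (split idempotents plus semi-perfect endomorphism rings), with Balmer--Schlichting supplying idempotent completeness.
\end{itemize}
This costs length, but it shows exactly where completeness of $J$ is used: semi-perfectness of rings of chain endomorphisms. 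That input is essential, since over a non-complete local ring Krull--Schmidt already fails for finitely presented modules.

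Two points in your Step~2 need correcting. Once they are fixed, your argument needs no citation at all.

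\emph{The closedness check is unnecessary.} The closedness of null-homotopic maps is not the technical heart. A factor ring of a semi-perfect ring by an \emph{arbitrary} two-sided ideal is semi-perfect: a complete orthogonal set of local idempotents maps to one whose corner rings are local or zero. So $\operatorname{End}_{\cK^b}(X)=E/N$ is semi-perfect as soon as $E$ is.

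\emph{Pseudo-compactness alone does not give semi-perfectness.} The claim that a pseudo-compact ring is semi-perfect is false in general; $\prod_{\mathbb N}\Bbbk$ with the product topology is a counterexample. What works here is the following argument.
\begin{itemize}
\item Let $U_n\subseteq E$ be the chain maps $f$ with $f(X^k)\subseteq\fM^nX^k$ for all $k$. These are two-sided ideals with $U_nU_m\subseteq U_{n+m}$, and $E\cong\varprojlim_n E/U_n$.
\item Geometric series converge in $E$, so $U_1\subseteq\operatorname{rad}E$.
\item $E/U_1$ embeds in the finite-dimensional algebra $\prod_k\operatorname{End}_{J/\fM}(X^k/\fM X^k)$. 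Hence $E/\operatorname{rad}E$ is finite-dimensional semisimple.
\item Idempotents lift first modulo the nilpotent ideal $\operatorname{rad}(E/U_1)$, and then along $E\to E/U_1$ by completeness.
\end{itemize}
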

\begin{proof}
        All the assertions are special cases in \cite[Corollary~4.6~(ii), Lemma~4.10]{KM22}. We remark that the first assertion (i) also follows from \cite[Lemma~2.17]{Keller-Yang_2011}.
     \end{proof}
\begin{remark}
    By Schanuel's lemma, each finitely presented module over a semi-perfect ring has a minimal projective presentation.
\end{remark}
We will rephrase the above facts in terms of (locally nilpotent) finitely copresented modules.
   
\begin{definition} \label{def:l-nil}
Let $M$ be a $J$-module.
    \begin{enumerate}
        \item[(i)] A $J$-module $M$ is \emph{nilpotent} if  there exists some $m\in \mathbb{Z}_{>0}$ such that $\fM^m M=\{0\}$.
        \item[(ii)] A $J$-module $M$ is \emph{locally nilpotent} if  for each $x\in M$ there exists $m\in \mathbb{Z}_{> 0}$ such that $\fM^m\cdot x = 0$.
    \end{enumerate}
\end{definition}
\begin{remark}
    Every nilpotent $J$-module is locally nilpotent, and so every finite-dimensional $J$-module is locally nilpotent (\confer \cite[\S 10]{DWZ08}).
\end{remark}
\begin{remark}\label{rem:Gro}
    One can easily check that the category $J\Modln$ of locally nilpotent $J$-modules is an abelian category. It is also directly confirmed that $J\Modln$ is a Grothendieck abelian category (\ie it is closed under arbitrary coproducts, has filtered  colimits with the exactness property, and has a system of generators $\{J/\fM^m\}_{m\in \mathbb{Z}_{>0}}$). In particular, each object in $J\Modln$ has an injective envelope \cite{Gab62}. Thus we can see each finitely copresented locally nilpotent $J$-module has a minimal injective copresentation in $J\Modln$.
    The relative injective envelope $E_{\mathsf{ln}}(M)$ of an object $M\in J\Modln$ does not necessarily coincide with the injective envelope $E(M)$ of $M$ in the category of all the $J$-modules. But there is an inclusion $E_{\mathsf{ln}}(M) \subseteq E(M)$.
\end{remark}
 Let $J_l\coloneqq J/\fM^l$, which is a finite-dimensional algebra  for each $l\in \mathbb{Z}_{\geq 1}$. The following characterization of locally nilpotent modules is immediate from the definition:
\begin{lemma}\label{lem:locnilp}
    Let $M$ be a $J$-module. The following are equivalent:
    \begin{enumerate}
        \item[(i)] A $J$-module $M$ is locally nilpotent.
        \item[(ii)]  There are inclusions $M_m \to M_{m+1}$ between submodules $M_m\,(m\in \mathbb{Z}_{>0})$ of $M$ such that each $M_m$ is a $J_m$-module and there is an isomorphism $M\cong \varinjlim_{m> 0} M_m$.
        \item[(iii)] A $J$-module $M$ is isomorphic to the direct limit of a direct system consisting of finite-dimensional $J$-modules.
    \end{enumerate}
\end{lemma}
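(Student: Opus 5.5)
The plan is to prove the cycle (i)$\Rightarrow$(iii)$\Rightarrow$(ii)$\Rightarrow$(i), using only that $\fM$ is finitely generated as a two-sided ideal (by the arrows) and that each $J_m=J/\fM^m$ is finite-dimensional.

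For (i)$\Rightarrow$(ii), put $M_m\coloneqq\{x\in M\mid \fM^m x=0\}$, the annihilator of $\fM^m$ in $M$.
\begin{itemize}
\item Since $\fM^m$ is a two-sided ideal, $M_m$ is a $J$-submodule.
\item $M_m$ is annihilated by $\fM^m$, so it is naturally a $J_m$-module.
\item Since $\fM^{m+1}\subseteq\fM^m$, we have inclusions $M_m\subseteq M_{m+1}$.
\item Local nilpotency says exactly that $M=\bigcup_m M_m$, so $M\cong\varinjlim_{m>0} M_m$ along these inclusions.
\end{itemize}

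For (ii)$\Rightarrow$(iii), write each $M_m$, viewed as a $J$-module, as the union (direct limit) of its finitely generated $J$-submodules. A finitely generated submodule of $M_m$ is a finitely generated $J_m$-module. Since $J_m$ is finite-dimensional, such a module is finite-dimensional. The family of all finitely generated submodules of $M$ lying in some $M_m$ is directed under inclusion, because a finite sum of such submodules lies in $M_{\max m}$. Its union is $M$, so $M$ is a direct limit of finite-dimensional $J$-modules.

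For (iii)$\Rightarrow$(i), write $M\cong\varinjlim_\lambda N_\lambda$ with each $N_\lambda$ finite-dimensional. Every finite-dimensional $J$-module is nilpotent: it has finite length with composition factors $S_i$, and $\fM$ kills each $S_i$, so $\fM^{\ell}N_\lambda=0$ where $\ell$ is the length. Any $x\in M$ is the image of some $y\in N_\lambda$. Then $\fM^{\ell}y=0$, so $\fM^{\ell}x=0$.

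The only point needing care is the claim that finite-dimensional modules are nilpotent. This follows from the semi-perfectness and finite-length discussion already used in Lemma~\ref{lem:fd_fp}, which shows that $\fM$ acts by zero on each simple module $S_i$. The rest is routine.
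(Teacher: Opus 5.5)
Your proof is correct and uses essentially the same ingredients as the paper: the torsion submodules $M_m=\{x\in M\mid \mathfrak{M}^m x=0\}$, the finite-dimensionality of finitely generated submodules (cyclic ones, in the paper) because they are $J_m$-modules, and the nilpotency of finite-dimensional modules, for which you supply the composition-length argument that the paper takes for granted. The only differences are organizational: you run the cycle (i)$\Rightarrow$(ii)$\Rightarrow$(iii)$\Rightarrow$(i) instead of proving (i)$\Leftrightarrow$(ii) and (i)$\Leftrightarrow$(iii) separately, and your opening sentence announces the cycle in a different order from the one you actually carry out, which is harmless.
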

\begin{proof}
    The $\fM^m$-torsion submodule $M_m\coloneqq \{x\in M\mid \fM^m\cdot x= 0\}$ of $M$ has a $J_m$-module structure and it is isomorphic to $\Hom_J(J_m, M)$. A $J$-module $M$ is locally nilpotent if and only if $M=\bigcup_{m>0}M_m$ by definition. This yields the equivalence (i)$\Leftrightarrow$(ii).

    We prove (i)$\Leftrightarrow$(iii). Let $M$ be a locally nilpotent $J$-module. For each $x\in M$, there exists $m$ such that $\fM^m\subseteq \mathop{\rm{ann}}(x)$
    by definition. Then we have $Jx \cong J/\mathop{\rm{ann}}(x)$ and there is a surjection $J/\fM^m \to J/\mathop{\rm{ann}}(x)$. Thus $Jx$ is finite-dimensional. If we take $\Fcal$ as the filtered set of finite-dimensional submodules of $M$ with respect to natural inclusions, then the previous argument shows $\bigcup_{V\in \Fcal}V = M$. Conversely, let $M \coloneqq \varinjlim_{\mu} V_{\mu}$, where $\{V_\mu\}$ forms an inductive system consisting of finite-dimensional $J$-modules. We have the canonical maps $\iota_\mu\colon V_\mu \to \varinjlim_{\mu} V_{\mu}$ and $M= \bigcup_\mu \iota_\mu(V_\mu)$. Since each $\iota_\mu(V_\mu)$ is finite-dimensional and thus nilpotent, the module $M$ is locally nilpotent.
\end{proof}
\begin{definition}\label{def:fcp} We consider the following subcategories of $J\Modln$.

    \begin{enumerate}
        \item [(i)] Let $J\modfcg$ be the full subcategory of finitely cogenerated locally nilpotent $J$-modules.
        \item[(ii)] Let $J\modfcp$ be the full subcategory of finitely copresented locally nilpotent $J$-modules.
        \item[(iii)] Let $J\injc$ be the full subcategory of finitely cogenerated injective objects in $J\Modln$.
    \end{enumerate}
\end{definition}

Note that for each $M\in J\modfg$ (resp.\ $N\in J\modfcg$), we have a finitely generated module $J_l\otimes_J M$ (resp.\ a finitely cogenerated module $\Hom_J(J_l, N)$) over a finite-dimensional algebra $J_l$ for each $l\in \mathbb{Z}_{>0}$.
In particular, we can introduce the following continuous version of $\Bbbk $-duality:

\begin{definition}\label{def:Cdual}
    \begin{enumerate}
        \item[(i)] The contravariant functor $D\colon J\modfg\to J^\op\modfcg$ is defined by
        \[D(-)\coloneqq \varinjlim_{l>0}\Hom_{\Bbbk }((J_l\otimes_J -), \Bbbk ),\] where the direct limit is taken with respect to the direct system of natural inclusions.
        \item[(ii)] The contravariant functor $D'\colon J\modfcg \to J^\op\modfg$ is defined by \[D'(-)\coloneqq \varprojlim_{m>0}\Hom_{\Bbbk }(\Hom_{J}(J_m, -), \Bbbk ),\] where the inverse limit is taken with respect to the inverse system of natural projections.
    \end{enumerate}
     
\end{definition}
\begin{remark}
A module $D(M)$ for $M\in J\modfg$ is just a submodule of $\Hom_{\Bbbk }(M, \Bbbk )$, and $D(M)$ is not isomorphic to $\Hom_{\Bbbk }(M, \Bbbk )$ in general.
\end{remark}
\begin{remark}\label{rem:dual}
    One can directly extend the functor defined in Definition~\ref{def:Cdual}~(ii) to a functor $D'(-)\colon J\Modln \to J^\op\Mod$. Since
    the inverse limit in $\varprojlim_{m>0}\Hom_{\Bbbk }(\Hom_{J}(J_m, -), \Bbbk )$ always commutes with $\Hom$,  we have the following natural isomorphisms
\[D'(-)\cong \Hom_{\Bbbk }(\varinjlim_{m>0}\Hom_{J}(J_m, -), \Bbbk )\cong \Hom_{\Bbbk }(-, \Bbbk ).\]
In particular, the functor $D'(-)\colon J\Modln \to J^\op\Mod$ is faithful and exact. However, this extended functor is not full (\confer Remark~\ref{rem:contdual}).
\end{remark}

In the proof of Theorem~\ref{thm:Dequiv1} below, we use the following conventions:
For a finitely generated $J$-module $M$ (resp.\ finitely cogenerated $J$-module $N$) and $p\in\mathbb N$, we set
\[
M_{(p)}\coloneqq J_p\otimes_J M\in J_p \mathchar`-\modfg,\quad N^{(p)}\coloneqq \Hom_J(J_p,N)\in J_p \mathchar`-\modfcg. 
\]
 For similar $J^\op$-modules $M$, $N$, we also apply the analogous notations $M_{(p)}\coloneqq J_p^\op\otimes_{J^\op} M$ and $N^{(p)}\coloneqq \Hom_{J^\op}(J_p^\op,N)$.
\begin{theorem}\label{thm:Dequiv1}
   The functor $D$ gives a contravariant additive equivalence $D\colon J\projc \cong J^\op\injc$.
\end{theorem}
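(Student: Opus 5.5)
The plan is to reduce everything to finite-dimensional algebras $J_l$, where the usual $\Bbbk$-duality $\Hom_\Bbbk(-,\Bbbk)$ exchanges $J_l\projc$ and $J_l^\op\injc$, and then pass to the limit using the direct-limit construction of $D$.

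\textbf{Step 1 (objects).} By Lemma~\ref{lem:A-Krull-Sch} and semi-perfectness, every object of $J\projc$ is a finite direct sum of the indecomposable projectives $P_i=Je_i$, $i\in Q_0$. Since $D$ is additive by construction, it suffices to compute $D(P_i)$. We have $(P_i)_{(l)}=J_le_i$, so
\[
\Hom_\Bbbk\bigl((P_i)_{(l)},\Bbbk\bigr)=\Hom_\Bbbk(J_le_i,\Bbbk).
\]
This is the injective hull of the simple right module $S_i^\op$ over $J_l^\op$. The transition maps, dual to the projections $J_{l+1}e_i\to J_le_i$, are injective. Hence
\[
I_i^\op:=D(P_i)=\varinjlim_l \Hom_\Bbbk(J_le_i,\Bbbk)
\]
is a locally nilpotent $J^\op$-module with socle $S_i^\op$, and is therefore finitely cogenerated.

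\textbf{Step 2 (injectivity in $J^\op\Modln$).} To check that $I_i^\op$ is injective in $J^\op\Modln$, I would use the Baer-type criterion in the Grothendieck category of Remark~\ref{rem:Gro}, with generators $J_m^\op$. It then suffices to see that
\[
\Hom(-,I_i^\op)\cong \varinjlim_l \Hom(-,\Hom_\Bbbk(J_le_i,\Bbbk))
\]
is exact on finite-dimensional modules, because finite-dimensional modules are finitely presented (Lemma~\ref{lem:fd_fp}), so $\Hom$ commutes with the filtered colimit. On $J_m^\op$-modules the colimit stabilises at $l\ge m$, where $\Hom(X,\Hom_\Bbbk(J_le_i,\Bbbk))\cong \Hom_\Bbbk(e_iX,\Bbbk)$, which is exact. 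By Lemma~\ref{lem:locnilp}, every locally nilpotent module is a union of such modules, so exactness extends.

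Essential surjectivity comes from Krull--Schmidt in $J^\op\injc$. A finitely cogenerated injective locally nilpotent module has a finite semisimple socle $\bigoplus (S_i^\op)^{a_i}$. It is therefore the injective envelope $\bigoplus (I_i^\op)^{a_i}$, since $I_i^\op$ is an essential extension of $S_i^\op$ and is injective by Step 2.

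\textbf{Step 3 (full faithfulness).} This is the main obstacle. We need
\[
\Hom_J(P_i,P_j)\cong e_iJe_j\longrightarrow \Hom_{J^\op}(I_j^\op,I_i^\op)
\]
to be bijective; the variance follows from contravariance of $D$. I would compute the target as follows. By Step 2,
\[
\Hom(I_j^\op,I_i^\op)=\varprojlim_m \Hom\bigl((I_j^\op)^{(m)},I_i^\op\bigr),
\]
where $(I_j^\op)^{(m)}=\Hom_\Bbbk(J_me_j,\Bbbk)$. Each term lands in $(I_i^\op)^{(m)}$, and by finite-dimensional duality over $J_m$ it equals $e_iJ_me_j$. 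Passing to the limit gives $\varprojlim_m e_iJ_me_j=e_iJe_j$, since $J$ is complete and separated. The delicate points are checking that these identifications are compatible with the transition maps, and that the map induced by $D$ agrees with this one. I would verify the compatibility levelwise, using naturality of finite-dimensional duality.
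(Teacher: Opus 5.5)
Your proof is correct. Its first two steps are the paper's. First, $D(P)$ is injective in $J^\op\Modln$ by Baer's criterion with the generators $J_m^\op$. The paper factors a map $X\to D(P)$ through some $(P_{(k)})^\vee$ and extends it using injectivity over $J_k^\op$; this is the same as your observation that $\Hom(-,D(Je_i))\cong\Hom_\Bbbk((-)e_i,\Bbbk)$ is exact on finite-dimensional modules. Second, the essential-socle argument shows that every object of $J^\op\injc$ is isomorphic to $\bigoplus_i D(Je_i)^{a_i}$.

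You diverge only in the last step. The paper uses that decomposition to show that the continuous dual $D'$ sends $J^\op\injc$ into $J\projc$. It then asserts $D'\circ D\cong\mathrm{id}$ and $D\circ D'\cong\mathrm{id}$, citing $P\cong\varprojlim_l P_{(l)}$, $I\cong\varinjlim_l I^{(l)}$ and levelwise finite-dimensional duality. You instead prove full faithfulness directly, computing $\Hom_{J^\op}(D(Je_j),D(Je_i))\cong\varprojlim_m e_iJ_me_j\cong e_iJe_j$ on the $\fM^m$-torsion layers.

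Your version makes explicit the computation the paper leaves as ``one can directly confirm''. It also avoids checking that those two isomorphisms are natural. The compatibility you flag is indeed routine: $D$ of right multiplication by $a\in e_iJe_j$ restricts on the $m$-th layer to the $\Bbbk$-dual of right multiplication by $\bar a\in e_iJ_me_j$. The paper's version has the advantage of producing an explicit quasi-inverse $D'$, which is exact and faithful. That functor is reused in Theorem~\ref{thm:Dequiv2} and Proposition~\ref{prop:DD}.
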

\begin{proof}
We set $(-)^\vee \coloneqq \Hom_{\Bbbk }(-, \Bbbk )$. For any $P \in J\projc$ (resp.\ $I \in J^\op \injc$), one can directly confirm that $P_{(l)}$ (resp.\ $I^{(l)}$) is a finitely generated projective (resp.\ finitely cogenerated injective) module over the finite-dimensional algebra $J_l$ (resp.\ $J_l^\op$) for each $l \in \mathbb{Z}_{>0}$ by universal properties.

For a fixed $P\in J\projc$, we show that $D(P)\in J^\op\injc$. We know that $(P_{(l)})^\vee \in J^\op_l\injc$ and $D(P)=\varinjlim_{l>0} (P_{(l)})^\vee\in J^\op\modfcg$. Since the category $J^\op\Modln$ is a Grothendieck abelian category with a system of generators  $ \{J_l^\op\}_{l>0}$ (see Remark~\ref{rem:Gro}), we will use Baer's criterion for a Grothendieck abelian category (see \cite[\S 1.10,~Lemme~1]{Tohoku}, \cite[Prop. 8.4.7]{KS06}) to show $D(P)$ is an injective object in $J^\op\Modln$. For
any submodule $X$ of $J_m^{\op}\in J^\op\Modln$ with $m>0$ and any morphism $f\colon X\to D(P)$, we need to prove that $f$ factors through the inclusion $i_X\colon X\hookrightarrow J_m^{\op}$. Since $D(P)=\varinjlim_{l>0} (P_{(l)})^\vee$,  there exists $k\geq m$ such that $f\colon X\to D(P)$ factors through the inclusion $\iota_{k}\colon (P_{(k)})^\vee \hookrightarrow D(P)$.  Then we have the following commutative diagram:
\[\begin{tikzcd}
	{J_m^\op} && {(P_{(k)})^\vee} && {D(P)} \\
	\\
	X
	\arrow[dotted,"{g_k}", from=1-1, to=1-3]
	\arrow["{\iota_k}", hook, from=1-3, to=1-5]
	\arrow["{i_X}", hook, from=3-1, to=1-1]
	\arrow["{f_k}", from=3-1, to=1-3]
	\arrow["f"', from=3-1, to=1-5]
\end{tikzcd}\]
Since the three $J^\op$-modules $X, J_m^{\op}$ and $(P_{(k)})^\vee$ can be viewed as $J^\op_k$-modules and $(P_{(k)})^\vee$ is an injective  $J^\op_k$-module, there exists a homomorphism $g_k\colon J_m^{\op}\to (P_{(k)})^\vee$ such that $f_k=g_ki_X$. Hence, we have
$f=\iota_kf_k=\iota_kg_ki_X$. This proves that $f\colon X \to D(P)$ factors through the inclusion $i_X\colon X\hookrightarrow J_m^{\op}$. Then by  Baer's criterion for the Grothendieck abelian category $J^\op\Modln$, we know that $D(P)\in J^\op\modfcg$ is an injective object in $J^\op\Modln$. Hence, $D(P)\in J^\op\injc$.

For a fixed $I\in J^\op\injc$, we show that $D'(I)\in J\projc$. We claim that
the socle $\operatorname{soc}I$ of  $I$ is an essential submodule. Indeed, if we assume $\{0\}\neq X\subset I$ and $0\neq x\in X$ and choose $r$
minimal such that $x\mathfrak M^r=0$, then we have
$\{0\}\neq x\mathfrak M^{r-1}\subseteq
X\cap\operatorname{soc}I$.
Thus $\operatorname{soc}I$ is essential in $I$.
Set $E_i\coloneqq D(Je_i)$. By construction, we have an isomorphism
\[
\operatorname{soc}E_i \cong (J_1e_i)^\vee(\eqqcolon S_i^{\op})
\]
in $J^{\op}\Modln$.
Since we have proved that $D(P)\in J^\op\injc$ for each $P\in J\projc$ and its socle is essential, we know that $E_i$ is the injective envelope of $S_i^{\op}$ in $J^{\op}\Modln$.
Since $I$ is
finitely cogenerated, its socle has finite length and we write
\[
\operatorname{soc}I\cong
\bigoplus_{i\in Q_0}(S_i^{\op})^{a_i}.
\]
Here all $a_i$ are finite.
Since both $I$ and $\bigoplus_{i\in Q_0}E_i^{a_i}$ are injective envelopes
of $\operatorname{soc}I$ in $J^{\op}\Modln$, we have
$I\cong\bigoplus_{i\in Q_0}E_i^{a_i}$.
By construction, we obtain isomorphisms
$I^{(l)}\cong
\bigoplus_{i\in Q_0}\bigl((J_le_i)^\vee\bigr)^{a_i}$.
Thus, by using $J\cong\varprojlim_{l>0} J_l$ and the fact that inverse
limits commute with finite direct sums, we obtain
\[
D'(I)\cong
\varprojlim_{l>0} (I^{(l)})^\vee
\cong \bigoplus_{i\in Q_0}(Je_i)^{a_i}\in J\projc.
\]

One can directly confirm the following isomorphisms\footnote{The isomorphisms here  can be  regarded as a special case of Proposition~\ref{prop:DD}.}
\[D'(D(P)) \cong P\quad \text{and}\quad 
D(D'(I)) \cong  I
\]
(\confer Remark~\ref{rem:dual}). The key reason is that we have isomorphisms $P \cong \varprojlim_{l>0} P_{(l)}$, $I \cong \varinjlim_{l>0} I^{(l)}$, and a duality $(-)^\vee\colon J_l\projc \cong J_l^\op\injc$ for each finite-dimensional algebra $J_l$. 
Thus $D$ and $D'$ give contravariant additive equivalences between $J\projc$ and $J^\op\injc$. 
\end{proof}
\begin{theorem}\label{thm:Dequiv2}
    The functor $D$ gives a contravariant exact equivalence $D\colon J\modfp\cong J^\op\modfcp$. The functor $D'$ gives a quasi-inverse of $D$.
\end{theorem}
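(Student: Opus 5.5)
The plan is to reduce the statement to the finite-dimensional truncations $J_l=J/\fM^l$ and glue along the direct/inverse systems, in the same way as in the proof of Theorem~\ref{thm:Dequiv1}. First we check that $D$ sends $J\modfp$ into $J^\op\modfcp$. Take $M\in J\modfp$. By the Remark after Lemma~\ref{lem:A-Krull-Sch}, $M$ has a minimal projective presentation $P_1\xrightarrow{\,d\,}P_0\to M\to 0$ with $P_0,P_1\in J\projc$. Applying $J_l\otimes_J-$ (right exact) gives presentations $(P_1)_{(l)}\to(P_0)_{(l)}\to M_{(l)}\to0$ over $J_l$. Dualizing with $(-)^\vee$ gives exact sequences
\[
0\to (M_{(l)})^\vee\to ((P_0)_{(l)})^\vee\to((P_1)_{(l)})^\vee .
\]
Filtered colimits are exact, so passing to $\varinjlim_l$ yields an exact sequence $0\to D(M)\to D(P_0)\to D(P_1)$. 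By Theorem~\ref{thm:Dequiv1} both $D(P_0)$ and $D(P_1)$ lie in $J^\op\injc$. Hence $D(M)$ is a finitely copresented locally nilpotent module, i.e.\ $D(M)\in J^\op\modfcp$.

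Dually, take $N\in J^\op\modfcp$ with a copresentation $0\to N\to I^0\to I^1$ in $J^\op\Modln$, where $I^0,I^1\in J^\op\injc$. Applying $\Hom(J_l^\op,-)$ (left exact) and then $(-)^\vee$ gives right exact sequences $((I^1)^{(l)})^\vee\to((I^0)^{(l)})^\vee\to (N^{(l)})^\vee\to 0$ of finite-dimensional modules. Inverse limits of surjective systems of finite-dimensional spaces are exact, since the Mittag-Leffler condition holds. Therefore $D'(I^1)\to D'(I^0)\to D'(N)\to0$ is exact, and $D'(N)\in J\modfp$ by Theorem~\ref{thm:Dequiv1}.

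Next we construct natural isomorphisms $D'D\cong\mathrm{id}$ and $DD'\cong\mathrm{id}$ on these subcategories. On projectives and on injectives these isomorphisms are already given by Theorem~\ref{thm:Dequiv1}. For $M\in J\modfp$, apply $D'D$ to the presentation $P_1\to P_0\to M\to 0$. By the exactness just proved, $D'D(P_1)\to D'D(P_0)\to D'D(M)\to0$ is exact. Naturality of the isomorphisms on projectives, combined with the five lemma on cokernels, then gives $D'D(M)\cong M$. The argument for $DD'(N)\cong N$ is symmetric, using kernels instead of cokernels. Since both functors are additive, it follows that $D$ is a contravariant equivalence with quasi-inverse $D'$; in particular $D$ is full and faithful.

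It remains to show that $D$ is exact. Let $0\to X\to Y\to Z\to0$ be a conflation in $J\modfp$. For each $l$ the sequence $X_{(l)}\to Y_{(l)}\to Z_{(l)}\to 0$ is exact, but $J_l\otimes_J-$ need not preserve the left injection. The plan is to use the Artin--Rees-type fact that the kernels $\ker(X_{(l)}\to Y_{(l)})$ form a system that is zero in the colimit after dualization; that is, the pro-system of kernels is pro-zero. This holds because $X$ is finitely presented and $\fM^lY\cap X\subseteq\fM^{l-c}X$ for a suitable $c$, up to passing to cofinal subsystems.

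This pro-zero property is the step I expect to be the main obstacle. The reason is that $J$ is not noetherian, so the usual Artin--Rees lemma does not apply. I would try to prove it degreewise through the pseudo-compact structure, using that $X$ is the inverse limit $\varprojlim X_{(l)}$ of finite-dimensional modules, which follows from finite presentation and $J=\varprojlim J_l$. If that fails, the fallback is to use instead that $D'$ is exact on $J^\op\Modln$ (Remark~\ref{rem:dual}), so that $D'$ is an exact quasi-inverse, and then deduce exactness of $D$ from it. This route requires identifying the conflations on both sides, namely that a sequence in $J^\op\modfcp$ is a conflation if and only if it is exact in $J^\op\Modln$.
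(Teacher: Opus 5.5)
Your first three steps match the paper's argument: left exactness of $D$ via $J_l\otimes_J-$, $(-)^\vee$ and exact filtered colimits; exactness of $D'$ via Mittag-Leffler; and $D'D\cong\mathrm{id}$, $DD'\cong\mathrm{id}$ from (co)presentations, Theorem~\ref{thm:Dequiv1} and the five lemma. The gap is the exactness of $D$, which your proposal leaves open.

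Your primary route rests on the uniform bound $\fM^lY\cap X\subseteq\fM^{l-c}X$, and you give no argument for it. Since $J$ is not noetherian, Artin--Rees is not available. In any case a uniform $c$ is more than you need. It suffices that for every $l$ there is $l'\ge l$ with $\fM^{l'}Y\cap X\subseteq\fM^lX$, i.e.\ that the system $K_l\coloneqq(\fM^lY\cap X)/\fM^lX=\ker(X_{(l)}\to Y_{(l)})$ is pro-zero. This does follow along the lines you sketch, but you are missing the closing step:
\begin{itemize}
\item Left exactness of $\varprojlim$, together with completeness $X\cong\varprojlim X_{(l)}$ and $Y\cong\varprojlim Y_{(l)}$ (Proposition~\ref{lem:limisom}), gives $\varprojlim K_l=\ker(X\to Y)=0$.
\item A system of finite-dimensional spaces is Mittag-Leffler, so its limit surjects onto the stable images.
\item Hence every stable image vanishes, which is exactly the pro-zero property.
\end{itemize}

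Your fallback is the route the paper takes, but as formulated it does not close either. An exact quasi-inverse does not by itself make $D$ exact. Identifying conflations is also not the issue, since both exact structures are by definition inherited from $J\Mod$ and $J^\op\Modln$. The missing ingredient is that $D'\cong\Hom_\Bbbk(-,\Bbbk)$ on all of $J^\op\Modln$ (Remark~\ref{rem:dual}) is exact \emph{and faithful}, so it reflects exactness. Concretely, let $f$ be a monomorphism in $J\modfp$ and take the cokernel in $J^\op\Modln$. Then
\[
D'(\mathrm{Cok}\,D(f))\cong\ker D'D(f)\cong\ker f=0,
\]
so $\mathrm{Cok}\,D(f)=0$. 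Together with left exactness of $D$, this proves exactness. This is the paper's entire argument for the last step, and it is much shorter than the pro-zero route.
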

\begin{proof}
    First we show that $D$ gives a categorical equivalence $J\modfp \cong J^\op\modfcp$ whose quasi-inverse is given by $D'$. We take any $X\in J\modfp$ and any $Y\in J^\op\modfcp$. Since the functor $D$ is left exact by definition, we have $D(X)\in J^\op\modfcp$ thanks to Theorem~\ref{thm:Dequiv1}. Similarly, we have $D'(Y)\in J\modfp$ because $D'$ is exact.
     
We can obtain the following isomorphisms\footnote{The isomorphisms here  also follow from Proposition~\ref{prop:DD}.} 
\[D'(D (X))\cong X\quad\text{and}\quad  D(D'(Y))\cong Y\]
by taking (co)presentations thanks to Theorem~\ref{thm:Dequiv1} and the five lemma.  Thus we have a contravariant equivalence $J\modfp \cong J^\op\modfcp$, and $D$ and $D'$ are mutually quasi-inverses.

We know that the functor $D'$ is exact and $D$ is left exact. Now let us prove that $D$ is also right exact. Let $f: M\to N$ be an injective morphism in $J\modfp$. Applying the exact functor $D'$ in Remark~\ref{rem:dual}, we have $D'(\mathrm{Cok}(D(f))) \cong \mathrm{Ker}(D'(D(f))) \cong  \mathop{\rm{Ker}}(f) = \{ 0 \}$, since we know $D' \circ D \cong \mathop{\rm{id}}_{J\modfp}$. It implies that  $\mathop{\rm{Cok}}(D(f))=\{0\}$ as $D'$ is faithful. This shows $D$ is exact.
\end{proof} 
Since there is an $A$-duality $\Hom_{A}(-, A)\colon A\projc \xrightarrow{\sim} A^\op\projc$ for any unital ring $A$, we have an additive equivalence $\nu\coloneqq D\circ \Hom_{J}(-, J)\colon J\projc \to J\injc$ and a triangle equivalence $\nu[-1]\colon \cK^b(J\projc)\xrightarrow{\sim} \cK^b(J\injc)$. Here we also denote the derived functor by the same symbol $\nu$. Then the following result follows from Lemma~\ref{lem:A-Krull-Sch}.
\begin{lemma}\label{lem:injKS}
    Keep the above notation. We have the following:
    \begin{enumerate}
        \item[(i)] The categories $J\injc$ and $\cK^b(J\injc)$ are Krull--Schmidt.
        \item[(ii)] The subcategory $\cK^{[0, 1]}(J\injc)$ of two-term complexes in  $\cK^b(J\injc)$ is Krull--Schmidt.
        \item[(iii)] The exact category $J\modfcp$ is Krull--Schmidt.
    \end{enumerate}
\end{lemma}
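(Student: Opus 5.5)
The plan is to transport everything along the contravariant additive equivalence relating projectives and injectives, and then invoke Lemma~\ref{lem:A-Krull-Sch}. Being Krull--Schmidt is preserved under (contravariant) additive and triangle equivalences: an object decomposes into finitely many summands with local endomorphism rings if and only if its image does. Endomorphism rings are only replaced by their opposite rings, and a ring is local exactly when its opposite is.

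For (i), I would use the equivalence $\nu = D\circ\Hom_J(-,J)\colon J\projc\to J\injc$ just constructed. Here $\Hom_J(-,J)\colon J\projc\to J^\op\projc$ is a contravariant duality. Theorem~\ref{thm:Dequiv1}, applied to $J^\op$ (which is again of the same form $\widehat{\Bbbk Q^\op}/\overline{I^\op}$), shows that $D\colon J^\op\projc\to J\injc$ is a contravariant equivalence. Hence $\nu$ is a covariant additive equivalence. Since $J\projc$ is Krull--Schmidt (semi-perfectness, \cite[Proposition~4.1]{Kra15}), so is $J\injc$. Extending $\nu$ termwise gives the triangle equivalence $\cK^b(J\projc)\simeq\cK^b(J\injc)$, and Lemma~\ref{lem:A-Krull-Sch}~(i) then gives the claim for $\cK^b(J\injc)$.

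For (ii), I would check that this equivalence carries $\cK^{[-1,0]}(J\projc)$ onto $\cK^{[0,1]}(J\injc)$. A termwise equivalence preserves the degrees in which a complex lives, so after the shift $[-1]$ built into $\nu[-1]$ the range $[-1,0]$ becomes $[0,1]$. The degree bookkeeping is the only thing to verify here. Once it is in place, Lemma~\ref{lem:A-Krull-Sch}~(ii) transfers along the restricted equivalence.

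For (iii), I would apply Theorem~\ref{thm:Dequiv2} to $J^\op$. This yields a contravariant exact equivalence $J^\op\modfp\simeq J\modfcp$. Lemma~\ref{lem:A-Krull-Sch}~(iii), again applied to $J^\op$, then transfers the Krull--Schmidt property to $J\modfcp$.

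The main point needing care is that all cited results hold for $J^\op$ as well as $J$. This amounts to noting that the appendix setup is symmetric under passing to the opposite quiver. No genuine obstacle is expected beyond this.
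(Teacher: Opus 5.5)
Your proposal is correct and follows the paper's argument: transport Lemma~\ref{lem:A-Krull-Sch} along the equivalence $\nu = D\circ\Hom_J(-,J)$ and its termwise extension $\nu[-1]$, and in part~(iii) along the duality of Theorem~\ref{thm:Dequiv2} applied to $J^\op$. Your extra checks (the shift carrying $[-1,0]$ to $[0,1]$, and the symmetry of the setup under passing to $J^\op$) are exactly the details the paper leaves implicit.
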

 Each two-term complex in $\cK^{[0, 1]}(J\injc)$ is homotopy equivalent to a unique complex of the form $C_{\mathrm{min}}\oplus I'\oplus I''[-1]$ (up to isomorphism). Here $C_{\mathrm{min}}=(I^0\xrightarrow{f} I^1)$ is a two-term complex such that $f$ is a left and right minimal morphism in $J\injc$, and $I', I''\in J\injc$.
 We denote $M\coloneqq H^0(C_{\mathrm{min}}\oplus I')$ and the socle of $I''$ by $V$.
 Thanks to Lemma~\ref{lem:injKS}, this notation yields a bijection $\Phi$ from the set of pairs $(M, V)$, consisting of an object $M\in J\modfcp$ and a finitely generated semisimple $J$-module $V$, to the set of isoclasses of objects in $\cK^{[0, 1]}(J\injc)$.

 We consider the subcategory $\cK_{\rm fd}^{[0, 1]}(J\injc)$ of $\cK^{[0, 1]}(J\injc)$ whose $0$-th cohomology is finite-dimensional. We refer to the corresponding pairs $(M,V)$ as \emph{decorated representations} of $J$ under the above bijection.

\subsection{$g$-vector and partial $E$-invariant for decorated representations}
 We still keep the algebra $J$ as in previous subsection. 
\begin{lemma}\label{lem:fdHom}
    \begin{enumerate}
        \item[(i)] Let $M\in J\modfd$ and let $I\in J\injc$. Then we have an inequality \[\dim_{\Bbbk }\Hom_J(M, I)<\infty.\]
        \item[(ii)] Let $M^\bullet\in \Dcal^b(J\modfd)$ and let $I^\bullet\in \cK^b(J\injc)$. Then we have an inequality \[\dim_{\Bbbk }\Hom_{\Dcal^b(J\Modln)}(M^\bullet, I^\bullet)<\infty.\]
    \end{enumerate}
 \end{lemma}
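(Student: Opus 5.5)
For (i), I will reduce to a finite-dimensional algebra. Since $M$ is finite-dimensional, it is nilpotent, so there is $m$ with $\fM^m M=0$. Then any $J$-homomorphism $M\to I$ lands in the $\fM^m$-torsion part $I^{(m)}=\Hom_J(J_m,I)$, and we get
\[\Hom_J(M,I)\cong\Hom_{J_m}(M,I^{(m)}).\]
Theorem~\ref{thm:Dequiv1} and its proof (applied to $J^\op$, or its analogue for $J$) show that $I\cong\bigoplus_{i\in Q_0}D(e_iJ)^{a_i}$ with all $a_i$ finite. Hence $I^{(m)}\cong\bigoplus_i\bigl((e_iJ_m)^\vee\bigr)^{a_i}$, which is finite-dimensional because $J_m$ is. The Hom space between two finite-dimensional modules is finite-dimensional, which gives (i).

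For (ii), I first note that objects of $J\injc$ are injective in $J\Modln$. A bounded complex of such injectives is K-injective, so
\[\Hom_{\Dcal^b(J\Modln)}(M^\bullet,I^\bullet)\cong\Hom_{\cK(J\Modln)}(M^\bullet,I^\bullet).\]
This identification needs boundedness of $I^\bullet$, which we have. I will then argue by induction on the number of nonzero cohomology groups of $M^\bullet$, using truncation triangles and the long exact Hom sequence. A finite-dimensionality bound passes along triangles because of exactness in the middle term. This reduces the claim to $M^\bullet=N[k]$ with $N\in J\modfd$, where $N$ is viewed as a complex concentrated in a single degree. Next I induct on the length of $I^\bullet$ using its brutal (stupid) truncation triangles, which reduces to $I^\bullet=I[l]$ with $I\in J\injc$. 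Finally, $\Hom_{\Dcal}(N,I[j])=\Ext^j_{J\Modln}(N,I)$, which vanishes for $j\ne0$ since $I$ is injective, and for $j=0$ is finite-dimensional by (i).

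The only step needing care is the identification of $\Hom_{\Dcal^b(J\Modln)}$ with homotopy classes, that is, the K-injectivity of bounded complexes of injectives in the Grothendieck category $J\Modln$ (Remark~\ref{rem:Gro}). This is standard, so I expect no real obstacle. Alternatively, the truncation argument can be carried out purely with Ext groups and avoid this identification altogether.
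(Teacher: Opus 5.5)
Your proof is correct, but it takes a somewhat different route from the paper in both parts.

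\textbf{Part (i).} The paper simply invokes the duality of Theorem~\ref{thm:Dequiv2}: $\Hom_J(M,I)\cong\Hom_{J^{\op}}(D'(I),D'(M))$, where $D'(I)$ is finitely generated projective and $D'(M)\cong\Hom_\Bbbk(M,\Bbbk)$ is finite-dimensional. You instead use $\fM^m M=0$ to land in the torsion part $I^{(m)}$, and read off $\dim_\Bbbk I^{(m)}<\infty$ from the decomposition of $I$ in the proof of Theorem~\ref{thm:Dequiv1}. This is more hands-on and needs only that structural fact about $I$, not the full exact duality.

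\textbf{Part (ii).} The paper uses K-injectivity of $I^\bullet$ to get $\Hom_{\Dcal^b(J\Modln)}(M^\bullet,I^\bullet)\cong H^0(\Hom^\bullet_J(M^\bullet,I^\bullet))$. Since $M^\bullet$ is a bounded complex of finite-dimensional modules, this Hom-complex is bounded with finite-dimensional terms by (i), so the argument is one line.

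Your d\'evissage along canonical truncations of $M^\bullet$ and brutal truncations of $I^\bullet$ is longer. It also never actually uses the homotopy-category identification you state first: canonical truncation triangles are triangles in $\Dcal$, and your base case needs only $\Ext^j_{J\Modln}(N,I)=0$ for $j\neq 0$. What your route buys is that it applies to any bounded complex with finite-dimensional cohomology, without choosing a representative with finite-dimensional terms and without appealing to K-injectivity.
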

 \begin{proof}
    The assertion (i) is obvious from Theorem~\ref{thm:Dequiv2}.
Since $I^\bullet\in \cK^b(J\injc)$ is $\cK$-injective in $\mathcal{K}(J\Modln)$, we have an isomorphism $\Hom_{\Dcal^b(J\Modln)}(M^\bullet, I^\bullet)\cong H^0(\Hom^\bullet_J(M^\bullet, I^\bullet))$. The assertion (ii) follows from (i).
 \end{proof}
 
 By the Krull--Schmidt properties and our discussions, we have isomorphisms $K_0(\cK^b(J\injc))\cong K_0(J\injc) \cong \mathbb{Z}^n$ and $K_0(\Dcal^b(J\modfd))\cong K_0(J\modfd)\cong \mathbb{Z}^n$ of Grothendieck groups, where $n=|Q_0|$.
 By Lemma~\ref{lem:fdHom}, for
$M^\bullet\in\mathcal D^b(J\modfd)$ and
$I^\bullet\in\mathcal K^b(J\injc)$, the vector spaces
\[
\Hom_{\mathcal D^b(J\Modln)}
(M^\bullet,I^\bullet[m])
\]
are finite-dimensional and vanish for all but finitely many
$m\in\mathbb Z$. Hence we have a bilinear pairing
\[
\langle-,-\rangle_{\rm EP}\colon
K_0(\mathcal D^b(J\modfd))
\times K_0(\mathcal K^b(J\injc))
\longrightarrow\mathbb Z
\]
defined by
\[
\langle[M^\bullet],[I^\bullet]\rangle_{\rm EP}
\coloneqq
\sum_{m\in\mathbb Z}(-1)^m
\dim_{\Bbbk}
\Hom_{\mathcal D^b(J\Modln)}
(M^\bullet,I^\bullet[m]).
\]
The pairing is well-defined since it is the Euler characteristic
of the bounded Hom-complex
$\Hom_J^\bullet(M^\bullet,I^\bullet)$, and Euler
characteristics are additive with respect to distinguished
triangles in either variable.
Note that we have an equality
\[\langle[S_i],[I_j]\rangle_{\rm EP}=\delta_{ij}\]
for each simple $J$-module $S_i$ and each indecomposable object $I_j \in J\injc$ ($i, j\in Q_0$).
 \begin{definition}[$g$-vector, $h$-vector and partial $E$-invariant]
    Let $\mathcal{M}$ be a decorated representation of $J$ and $\Phi(\mathcal M)=I_{\mathcal{M}}^0\to I_{\mathcal{M}}^1$ the corresponding two-term complex in $\cK^{[0, 1]}(J\injc)$ under the bijection $\Phi$.
    \begin{itemize}
        \item [(i)]  The \emph{$g$-vector} $\mathbf{g}(\mathcal{M})$ of $\mathcal M$ is defined by
$$\mathbf{g}(\mathcal{M})\coloneqq -[\Phi(\mathcal M)]=[I_{\mathcal M}^1]-[I_{\mathcal M}^0]\in K_0(\cK^b(J\injc))\cong \mathbb Z^n.$$ 
\item[(ii)] The \emph{$h$-vector} $\mathbf{h}(\mathcal{M})$ of $\mathcal M$ is defined by
\[
\mathbf{h}(\mathcal{M})=-([I_{\mathcal{M}}^0\colon I_1],\ldots,[I_{\mathcal{M}}^0\colon I_n])^T\in\mathbb Z^n,
\]
where $[I_{\mathcal{M}}^0\colon I_i]$ is 
the multiplicity of $I_i$ in $I_{\mathcal{M}}^0$.
\item[(iii)] The {\em partial $E$-invariant} $E^{\rm inj}(\mathcal{M},\mathcal{M}')$ and {\em $E$-invariant} of a pair $(\mathcal{M},\mathcal{M}')$ of decorated representations of $J$ are defined by
\begin{eqnarray*}
    E^{\rm inj}(\mathcal{M},\mathcal{M}')&\coloneqq&
    \dim_{\Bbbk} \Hom_{\cK^{b}(J\mathchar`-\mathsf{inj})}(\Phi(\mathcal M),\Phi(\mathcal M')[1]),\\
     E^{\rm sym}(\mathcal{M},\mathcal{M}')&\coloneqq&  E^{\rm inj}(\mathcal{M},\mathcal{M}')+ E^{\rm inj}(\mathcal{M}',\mathcal{M}).
\end{eqnarray*}
    \end{itemize}
 \end{definition}
 
\begin{remark}
From the proof of Theorem \ref{thm:g-DWZ}, we see that the $g$-vector defined above is consistent with the definitions used in \cite[(1.13)]{DWZ10} and  \cite[\S3.1]{CLS-2015}. 
\end{remark}

  Note that  both of the categories $\cK^b(J\injc)$ and $\Dcal^b(J\modfd)$ have infinite-dimensional Hom-spaces in general. By definition, the decorated representations correspond to the two-term complexes in  $\cK_{\rm fd}^{[0,1]}(J\injc)$. In this case,  the following proposition implies that
  $$E^{\rm inj}(\mathcal M,\mathcal M')=\dim_\Bbbk \Hom_{\mathcal K^{b}(J\mathchar`-\mathsf{inj})}(\Phi(\mathcal M),\Phi(\mathcal M')[1])<\infty.$$ 
 
 \begin{proposition} \label{pro:E-hom}
 For two decorated representations $\mathcal M=(M, V)$ and $\mathcal M'=(M', V')$ of $J$, we have
\begin{eqnarray}\label{eqn:e-inv-DWZ}
  E^{\rm inj}(\mathcal M,\mathcal M')=\langle\underline{\dim} M, \mathbf{g}(\mathcal{M}')\rangle + \dim_{\Bbbk} \Hom_J(M, M' ).  
\end{eqnarray}
\end{proposition}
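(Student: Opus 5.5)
We compute $\Hom_{\cK^b(J\injc)}(\Phi(\mathcal M),\Phi(\mathcal M')[1])$ by replacing $\Phi(\mathcal M)$ with the module $M$ and then using the Euler pairing $\langle-,-\rangle_{\rm EP}$ introduced above. Write $\Phi(\mathcal M)=C\oplus I_V[-1]$, where $C=(I^0_M\to I^1_M)$ is the minimal injective copresentation of $M$ and $I_V$ is the injective envelope of $V$ placed in degree $1$ (in the normalization under which $\Phi(\mathcal M)=(I^0\to I^1\oplus I_V)$ in cohomological degrees $0,1$). Likewise write $\Phi(\mathcal M')=C'\oplus I_{V'}[-1]$. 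The Hom-space splits into four summands, and we treat them one at a time.

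\textbf{Step 1 (the decorations).} The summand $\Hom(I_V[-1],\Phi(\mathcal M')[1])=\Hom(I_V,\Phi(\mathcal M')[2])$ vanishes, since $\Phi(\mathcal M')[2]$ is concentrated in degrees $-2,-1$ and $I_V$ sits in degree $0$. The summand $\Hom(C,I_{V'}[-1][1])=\Hom(C,I_{V'})$ consists of maps from $C$ to a complex concentrated in degree $0$. Homotopies are maps $I^1_M\to I_{V'}$, and chain maps are maps $I^0_M\to I_{V'}$ vanishing on $M=\ker(I^0_M\to I^1_M)$. By injectivity of $I_{V'}$ and exactness of $0\to M\to I^0_M\to I^1_M$, such a map factors through $\operatorname{im}\subset I^1_M$ and extends to $I^1_M$, so it is null-homotopic. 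Thus the only contribution from $V'$ comes through the Euler pairing computed in Step 2.

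\textbf{Step 2 (the main term).} By Lemma~\ref{lem:fdHom} and the $\cK$-injectivity of bounded complexes of injectives, $C\simeq M$ in $\Dcal^b(J\Modln)$, so
\[\Hom_{\cK^b}(C,\Phi(\mathcal M')[m])\cong\Hom_{\Dcal^b(J\Modln)}(M,\Phi(\mathcal M')[m]).\]
This space vanishes for $m<0$, because $\Phi(\mathcal M')[m]$ is then concentrated in degrees $\ge 1$, i.e.\ in strictly positive degrees. It vanishes for $m\ge2$ for degree reasons, since $\Phi(\mathcal M')$ has amplitude $[0,1]$ and consists of injectives. Hence
\[\dim\Hom(M,\Phi(\mathcal M'))-\dim\Hom(M,\Phi(\mathcal M')[1])=\langle[M],[\Phi(\mathcal M')]\rangle_{\rm EP}.\]
From $\langle [S_i],[I_j]\rangle_{\rm EP}=\delta_{ij}$ and additivity over a composition series of $M$, the right-hand side equals $\langle\underline{\dim}M,-\mathbf g(\mathcal M')\rangle$, using $[\Phi(\mathcal M')]=-\mathbf g(\mathcal M')$.

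\textbf{Step 3 (the degree-$0$ Hom).} It remains to identify $\Hom_{\Dcal}(M,\Phi(\mathcal M'))$ with $\Hom_J(M,M')$. The complex $\Phi(\mathcal M')$ has $H^0=M'$, and the decoration summand $I_{V'}[-1]$ contributes only to $H^1$. A morphism $M\to\Phi(\mathcal M')$ in the derived category is a map $M\to I'^0$ landing in the kernel of the differential, i.e.\ in $M'$. Homotopies come from degree $-1$, which is zero, so there are none. Hence $\Hom_{\Dcal}(M,\Phi(\mathcal M'))=\Hom_J(M,M')$.

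Combining Steps 1--3 gives $E^{\rm inj}(\mathcal M,\mathcal M')=\langle\underline{\dim}M,\mathbf g(\mathcal M')\rangle+\dim\Hom_J(M,M')$, which is \eqref{eqn:e-inv-DWZ}. The main obstacle is the sign and shift bookkeeping for the decoration summand $I_V$: one must check that the convention for $\Phi$ and for $\mathbf g$ agrees with $\mathbf g(\mathcal M)=[I_V]+[I^+]-[I^-]$. A secondary point is justifying that bounded complexes in $J\injc$ are $\cK$-injective in $\cK(J\Modln)$ even though $J$ is only pseudo-compact; for this we use Remark~\ref{rem:Gro}.
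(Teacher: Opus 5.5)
Your route matches the paper's: pass from $\Phi(\mathcal M)$ to $M$, evaluate with $\langle-,-\rangle_{\rm EP}$, and identify the degree-$0$ term with $\Hom_J(M,M')$. The final identity is right. However, two of your intermediate claims are false, and the first one carries weight in the argument.

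\textbf{The claim $C\simeq M$ is false.} The complex $C=(I^0_M\to I^1_M)$ is only a copresentation, not a resolution. Its cohomology $H^1(C)=\operatorname{Cok}(I^0_M\to I^1_M)$ is nonzero whenever $M$ has injective dimension at least $2$ in $J\Modln$. For example, take the Jacobian algebra of the oriented $3$-cycle with $W=\beta_3\beta_2\beta_1$ from \S4 and $M=S_1$; then $0\to S_1\to I_1\to I_3\to S_2\to 0$ is exact, so $H^1(C)\cong S_2$.

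Consequently the comparison $\Hom_{\cK^b}(C,\Phi(\mathcal M')[m])\cong\Hom_{\Dcal^b}(M,\Phi(\mathcal M')[m])$ fails in general for $m\in\{-1,0\}$. At $m=0$ the discrepancy is $\Hom_{\Dcal}(H^1(C),\Phi(\mathcal M')[1])$.

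You only need the case $m=1$, and that case is true, but for a different reason. Apply $\Hom(-,\Phi(\mathcal M')[1])$ to the truncation triangle $M\to C\to H^1(C)[-1]\to M[1]$. The error terms are $\Hom_{\Dcal}(H^1(C),\Phi(\mathcal M')[2])$ and $\Hom_{\Dcal}(H^1(C),\Phi(\mathcal M')[3])$. Both vanish because $\Phi(\mathcal M')$ is a $\cK$-injective complex concentrated in degrees $0,1$.

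If you run the same argument with $\Phi(\mathcal M)$ in place of $C$ (note $H^1(\Phi(\mathcal M))=H^1(C)\oplus I_V$), Step 1 becomes unnecessary. This is exactly the ``standard $t$-structure'' step in the paper's proof.

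\textbf{The second claim of Step 1 is false.} The summand $\Hom_{\cK}(C,I_{V'})$ does not vanish. Because $I_{V'}$ sits in degree $0$ and $C^1$ maps to $0$, every map $I^0_M\to I_{V'}$ is a chain map; there is no condition. The maps vanishing on $M$ are the null-homotopic ones, of the form $h\circ d$ with $h\colon I^1_M\to I_{V'}$; they are not the chain maps. Hence
\[
\Hom_{\cK}(C,I_{V'})\cong\Hom_J(M,I_{V'}),
\]
which has dimension $\langle\underline{\dim}M,\underline{\dim}V'\rangle$.

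This is precisely the $V'$-part of $\langle\underline{\dim}M,\mathbf g(\mathcal M')\rangle$. Your Step 2 uses all of $\Phi(\mathcal M')[1]$ as target, so it silently counts this summand again. As written, ``combining Steps 1--3'' is therefore inconsistent: if the summand really vanished, the formula would be off by $\langle\underline{\dim}M,\underline{\dim}V'\rangle$.

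Delete that claim. Only the vanishing of $\Hom(I_V[-1],\Phi(\mathcal M')[1])$ is needed, and with the truncation argument above not even that. With these repairs, the Euler-pairing computation with $M$ in Step 2 and the identification in Step 3 are correct as they stand.
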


\begin{proof}
     We twist the Euler-Poincar\'e pairing $\langle-,-\rangle_{\mathrm{EP}} \colon K_0(\mathcal{D}^b(J\modfd))\times K_0(\mathcal{K}^b(J\injc))\to \mathbb{Z}$ as follows:
    \[\langle [M^\bullet], -[I^\bullet]\rangle_{\mathrm{EP}} =\sum_{k\in \mathbb{Z}}(-1)^{k-1}\dim_{\Bbbk }\Hom_{\mathcal{D}^b(J\Modln)}(M^\bullet, I^\bullet [k])\]
    We regard  $M= H^0(\Phi(\mathcal{M}))$ and $M'= H^0(\Phi(\mathcal{M}'))$ as objects in $\mathcal{D}^b(J\modfd)$. Then we have the following equalities:
    \begin{align*}
        \langle\underline{\dim} M, \mathbf{g}(\mathcal{M}')\rangle
        &=\langle[M], -[\Phi(\mathcal{M}')]\rangle_{\mathrm{EP}}\\
        &=-\dim_{\Bbbk}\Hom_{\mathcal{D}^b(J\Modln)}(M, \Phi(\mathcal{M}'))+\dim_{\Bbbk}\Hom_{\mathcal{D}^b(J\Modln)}(M, \Phi(\mathcal{M}')[1])\\
        &=-\dim_{\Bbbk} \Hom_J(M, M' ) + \dim_{\Bbbk }\Hom_{\mathcal{K}^b(J\mathchar`-\mathsf{inj})}(\Phi(\mathcal{M}), \Phi(\mathcal{M}')[1])\\
         &=-\dim_{\Bbbk} \Hom_J(M, M' ) + E^{\rm inj}(\mathcal M,\mathcal M').
    \end{align*}
    For the third equality, we used the standard $t$-structure in $\mathcal{D}^b(J\Modln)$ and the fact that $\Phi(\mathcal{M})$ and $\Phi(\mathcal{M}')$ are concentrated in cohomological degree $0$ and $1$. Then the result follows.
\end{proof}

\subsection{Comparison with DWZ's $g$-vectors and partial $E$-invariants}
The notions of $g$-vectors, $h$-vectors and (partial) $E$-invariants were first introduced by Derksen--Weyman--Zelevinsky~\cite{DWZ10}
for decorated representations of Jacobian algebras using slightly different definitions. In this subsection, we prove that the definitions given in the previous subsection are consistent with Derksen--Weyman--Zelevinsky's definitions. So in this subsection, we assume that $J$ is the Jacobian algebra of a quiver with potential $(Q,W)$.

We use $\mathbf{g}_\DWZ(\mathcal{M})$,  $\mathbf{h}_{\DWZ}(\mathcal{M})$  and $E^{\mathrm{inj}}_\DWZ(\mathcal{M}, \mathcal{M}')$ to denote $g$-vector, $h$-vector and partial $E$-invariant defined in \cite{DWZ10} by Derksen--Weyman--Zelevinsky. The reader can refer to \cite{DWZ10} or the proof of Theorem \ref{thm:g-DWZ} for the precise definitions.

    The following result is an analogue of \cite[Proposition~4.8]{Plamondon-2011} in the setting of generalized cluster categories.
    \begin{theorem}[{\confer\cite[Proposition~4.8]{Plamondon-2011}}]\label{thm:g-DWZ}
       Let $J=J(Q,W)$ be the Jacobian algebra associated to a quiver with potential $(Q,W)$. Let $\mathcal M$ and $\mathcal M'$ be two decorated representations of $J$. Then we have \begin{enumerate}
\item[(i)]$\mathbf{g}_\DWZ(\mathcal{M})=\mathbf{g}(\mathcal{M})$;
            \item[(ii)]
            $\mathbf{h}_{\DWZ}(\mathcal{M})=\mathbf{h}(\mathcal{M})$;
            \item[(iii)]
$E^{\mathrm{inj}}_\DWZ(\mathcal{M}, \mathcal{M}')=E^{\mathrm{inj}}(\mathcal{M}, \mathcal{M}')$. In particular, $E^{\mathrm{inj}}_\DWZ(\mathcal{M}, \mathcal{M}')$ takes values in $\mathbb Z_{\geq 0}$. 
        \end{enumerate}
    \end{theorem}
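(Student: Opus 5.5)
We compare both sides through the minimal injective copresentation and DWZ's definitions, which use the minimal projective presentation data of $M$ (equivalently, for a finite-dimensional $J$-module, dimensions of kernels/cokernels of the maps $\alpha_k,\beta_k,\gamma_k$ at each vertex). Recall that for $\mathcal M=(M,V)$ DWZ set $h_{\DWZ,k}(\mathcal M)=-\dim\ker\beta_k$ and $g_{\DWZ,k}(\mathcal M)=\dim\ker\gamma_k-\dim M_k+\dim V_k$, where $M_{\rm in}(k)\xrightarrow{\alpha_k}M_k\xrightarrow{\beta_k}M_{\rm out}(k)$ and $\gamma_k\colon M_{\rm out}(k)\to M_{\rm in}(k)$ is built from $\partial W$. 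The plan is to identify these quantities with multiplicities of indecomposable injectives $I_k$ in the minimal copresentation $0\to M\to I^0_M\to I^1_M$ in $J\Modln$.

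\textbf{Step 1 ($h$-vector).} By Lemma~\ref{lem:fdHom} and $\langle [S_i],[I_j]\rangle_{\rm EP}=\delta_{ij}$, the multiplicity $[I^0_M\colon I_k]=\dim\Hom_J(S_k,M)=\dim\operatorname{soc}(M)_k$ since $I^0_M$ is the injective envelope (Remark~\ref{rem:Gro}) and socles are essential (proof of Theorem~\ref{thm:Dequiv1}). The socle at $k$ is the set of $m\in M_k$ killed by all arrows out of $k$, i.e.\ $\ker\beta_k$. The decoration $V$ contributes only $I_V[-1]$, i.e.\ to the degree-$1$ term, so (ii) follows.

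\textbf{Step 2 ($g$-vector).} Applying $\Hom_J(S_k,-)$ to the minimal copresentation gives $[I^1_M\colon I_k]=\dim\Ext^1_J(S_k,M)$ (minimality makes the induced maps on $\Hom(S_k,-)$ vanish appropriately). Using the explicit projective resolution beginning of $S_k$ over the Jacobian algebra, $\bigoplus_{a:k\to j}Je_j\to\ldots$ and the relations $\partial_aW$ (the standard truncated resolution, cf.\ \cite[Proposition~3.3]{BIRSm11}, which we may take $Je_k\leftarrow\bigoplus Je_{t(a)}\leftarrow\bigoplus Je_{s(b)}$), $\Ext^1(S_k,M)$ is computed as homology of $M_k\xrightarrow{\beta_k}M_{\rm out}(k)\xrightarrow{\gamma_k}M_{\rm in}(k)$, i.e.\ $\ker\gamma_k/\im\beta_k$. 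Hence $[I^1_M\colon I_k]-[I^0_M\colon I_k]=\dim\ker\gamma_k-\operatorname{rk}\beta_k-\dim\ker\beta_k=\dim\ker\gamma_k-\dim M_k$, and adding $\dim V_k$ from $I_V$ yields (i). The main obstacle is this step: identifying the second term of the projective resolution of $S_k$ in the pseudo-compact setting and checking the map is $\gamma_k$ up to sign/duality; I would work in $J_p$ for large $p$ and pass to the limit, using Theorem~\ref{thm:Dequiv2} to dualize projective presentations into injective copresentations.

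\textbf{Step 3 ($E$-invariant).} DWZ define $E^{\rm inj}_\DWZ(\mathcal M,\mathcal M')=\dim\Hom_J(M,M')+\langle\underline{\dim}M,\mathbf g_\DWZ(\mathcal M')\rangle$. By (i) this equals the right side of Proposition~\ref{pro:E-hom}, hence $E^{\rm inj}(\mathcal M,\mathcal M')$, which is a dimension of a Hom-space and thus nonnegative.
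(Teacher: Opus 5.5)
Your proposal is correct and follows the paper's proof essentially step for step. The paper applies $\Hom_J(-,M)$ to the complex of \cite[Proposition~4.2]{BIRSm11} to obtain $M(i)\xrightarrow{\beta_i}M_{\mathrm{out}}(i)\xrightarrow{\gamma_i}M_{\mathrm{in}}(i)$. It identifies $\ker\beta_i$ with $\Hom_J(S_i,M)$ and $\ker\gamma_i/\im\beta_i$ with $\Ext^1_J(S_i,M)$, reads these off as $[I^0_M\colon I_i]$ and $[I^1_M\colon I_i]$, and deduces (iii) from Proposition~\ref{pro:E-hom}.

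The one detail you should make explicit concerns which $\Ext^1$ is being computed. The multiplicities in the minimal copresentation compute $\Ext^1$ in $J\Modln$, since $I^0_M$ and $I^1_M$ are injective only there. The projective resolution of $S_i$ computes $\Ext^1$ in all $J$-modules. These agree because $J\modfd\subseteq J\Modln$ are extension-closed in the full module category; the paper states exactly this at that step.

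Your planned detour through $J_p$ is unnecessary, because the BIRS complex is already available for the complete Jacobian algebra.
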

    \begin{proof}
        By \cite[Proposition~4.2]{BIRSm11}, we have the following complex for each $i\in Q_0$:
    \begin{equation}\label{eq:cpxSi}
        P_i\xrightarrow{(\phi(a))_a} \bigoplus_{a\in Q_1, t(a)=i} P_{s(a)} \xrightarrow{\phi(\partial_{ba}W)_{a,b}} \bigoplus_{b\in Q_1, s(b)=i} P_{t(b)} \xrightarrow{(\phi(b))_b} P_i \to S_i \to 0.
    \end{equation}
    Here $\phi\colon \widehat{\Bbbk Q} \to J$ is the natural projection.
    The above complex is exact except at the second term from the left. Thus we know an explicit description of the first three terms of the projective resolution of the simple $J$-module $S_i$ for each $i\in Q_0$.
    We apply the functor $\Hom_J(-, M)$ to this complex for a decorated representation $\mathcal{M}=(M, V)= (\bigoplus_{i\in Q_0} M(i), \bigoplus_{j\in Q_0} V(j))$. Then we have the following complex which is consistent with \cite[(1.8)]{DWZ10}:
    \[M(i)\xrightarrow{\beta_i}M_{\mathrm{out}}(i)\xrightarrow{\gamma_i}M_{\mathrm{in}}(i) \xrightarrow{\alpha_i} M(i),\]
    where we denote $M(k)=e_k M\cong \Hom_J(P_k, M)$, and
    \begin{align*}
        M_{\mathrm{out}}(i)=\bigoplus_{b\in Q_1, s(b)=i} M(t(b)),\quad M_{\mathrm{in}}(i)=\bigoplus_{a\in Q_1, t(a)=i} M(s(a)).
    \end{align*}
    In the above, the maps $\alpha_i, \beta_i, \gamma_i$ are naturally induced maps from~\eqref{eq:cpxSi}.

    Denote $\Phi(\mathcal{M})=(I_{M}^0\xrightarrow{f} I_{M}^1)\oplus (0\to I_{V})$,
     where $I_M^0 \xrightarrow{f} I_M^1$ is the minimal injective copresentation of $M$ and $I_{V}$ is the injective envelope of $V$. We have the following equalities:
    \begin{align*}
        & \quad \dim_{\Bbbk} \ker \gamma_i - \dim_{\Bbbk} M(i) + \dim_{\Bbbk} V(i)\\ 
        &= \dim_{\Bbbk}(\ker \gamma_i/\im \beta_i) + (\dim_{\Bbbk}(\im \beta_i) - \dim_{\Bbbk} M(i)) + \dim_{\Bbbk} V(i)\\ 
        &= \dim_{\Bbbk}(\ker \gamma_i/\im \beta_i) - \dim_{\Bbbk}(\ker\beta_i) + \dim_{\Bbbk} V(i)\\
        &=\dim_{\Bbbk}\Ext^1_J(S_i, M) - \dim_{\Bbbk}\Hom_J(S_i, M) +  \dim_{\Bbbk} V(i)\\
        &=(\dim_{\Bbbk}\Ext^1_{J\Modln}(S_i, M) - \dim_{\Bbbk}\Hom_{J\Modln}(S_i, M)) +  \dim_{\Bbbk} V(i)\\
        &= [I_M^1\colon I_i]-[I_M^0\colon I_i]+ [I_V\colon I_i]\\
        &= \mathbf{g}_i(\mathcal{M}),
    \end{align*}
    where the left-hand side is the $i$-th component of $\mathbf{g}_\DWZ(\mathcal{M})$ and the right-hand side is that of $\mathbf{g}(\mathcal{M})$ by definition.
    (For the fourth equality above, we used the fact that $J\modfd\subseteq J\Modln$ and they are extension closed subcategories of $J\Mod$.)  So we have $\mathbf{g}_\DWZ(\mathcal{M})=\mathbf{g}(\mathcal{M})$.

    In the above argument, we see that $- \dim_\Bbbk(\ker\beta_i)=-[I_M^0:I_i]$,
    where the left-hand side is the $i$-th component of $\mathbf{h}_\DWZ(\mathcal{M})$ and the right-hand side is that of $\mathbf{h}(\mathcal{M})$ by definition. So we have $\mathbf{h}_\DWZ(\mathcal{M})=\mathbf{h}(\mathcal{M})$.

Since the right-hand side of \eqref{eqn:e-inv-DWZ} is $E^{\mathrm{inj}}_\DWZ(\mathcal{M}, \mathcal{M}')$ by definition in \cite[\S7]{DWZ10},   $E^{\mathrm{inj}}_\DWZ(\mathcal{M}, \mathcal{M}')=E^{\mathrm{inj}}(\mathcal{M}, \mathcal{M}')$ is exactly the equality \eqref{eqn:e-inv-DWZ}. 
    \end{proof}

\subsection{Remarks on topologies on modules}
Let $J$ be a pseudo-compact algebra as introduced at the beginning of this section again.
As a supplement, we mention several topologies on modules over the algebra $J$, and the duality between pseudo-compact modules and locally nilpotent modules related to the functors $D$ and $D'$.
\begin{proposition}\label{lem:limisom}
    \begin{enumerate}
        \item[(i)] For each $M\in J\modfp$, there is an isomorphism
        \[M\cong \varprojlim_m (J_m\otimes_J M)\cong \varprojlim_m (M/\fM^m M).\]
        In other words, the $\fM$-adic topology on $M\in J\modfp$ is complete and separated.
        \item[(ii)] For each $N\in J\modfcp$, there is an isomorphism
        \[N\cong \varinjlim_l \Hom_J(J_l, N).\]  
    \end{enumerate}
\end{proposition}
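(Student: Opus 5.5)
Part (i) reduces to a presentation argument, and part (ii) to a finite-length argument; I would prove them in that order.

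For (i), take $M\in J\modfp$ and fix a finite presentation $P_1\xrightarrow{d}P_0\to M\to 0$ with $P_0,P_1\in J\projc$. Each finitely generated projective is a finite direct sum of $Je_i$. Since $J\cong\varprojlim_m J_m$, we get $Je_i\cong\varprojlim_m J_me_i$, and hence $P\cong\varprojlim_m(J_m\otimes_J P)$ for $P\in J\projc$.

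Apply the right exact functor $J_m\otimes_J-$ to the presentation. This gives a compatible family of exact sequences of finite-dimensional modules
\[J_m\otimes_J P_1\to J_m\otimes_J P_0\to J_m\otimes_J M\to 0.\]
Now pass to $\varprojlim_m$. All terms are finite-dimensional, so the Mittag-Leffler condition holds and $\varprojlim$ is exact on these inverse systems. To apply this properly, I split each sequence into the short exact sequences $0\to K_m\to J_m\otimes P_0\to J_m\otimes M\to0$ and $J_m\otimes P_1\twoheadrightarrow K_m$, where $K_m$ is the image. The systems $\{K_m\}$ and $\{J_m\otimes P_1\}$ consist of finite-dimensional spaces, so $\varprojlim^1$ vanishes on them. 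The limit sequence $P_1\to P_0\to\varprojlim_m(J_m\otimes_J M)\to0$ is therefore exact.

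It remains to identify the limit of the images with the image of $d$. The limit of the images is the closure of $\im d$ in $P_0$. I would argue that $\im d$ is already closed, as follows. First, $P_1$ is pseudo-compact and $d$ is continuous. Second, the image of a pseudo-compact module under a continuous map is closed, by the standard compactness-type argument for inverse limits of finite-dimensional spaces (equivalently, $\varprojlim$ of the surjections $J_m\otimes P_1\to K_m$ is surjective by Mittag-Leffler). Comparing with the original presentation via the five lemma then yields $M\cong\varprojlim_m(J_m\otimes_J M)$. The identification $J_m\otimes_J M\cong M/\fM^mM$ is immediate.

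\textbf{Main obstacle.} I expect the delicate point to be the surjectivity of $P_1\to\varprojlim_m K_m$, that is, the closedness of the image. Without it, we would only know that $\varprojlim_m(J_m\otimes_J M)$ is a quotient of $P_0$ by the closure of $\im d$.

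For (ii), the quickest route is to observe that $N\in J\modfcp$ is by definition locally nilpotent. By Lemma \ref{lem:locnilp} (i)$\Rightarrow$(ii), $N=\bigcup_l N_l$ with $N_l=\{x\in N\mid \fM^lx=0\}\cong\Hom_J(J_l,N)$, and these inclusions form the direct system in question, so $N\cong\varinjlim_l\Hom_J(J_l,N)$. Alternatively, one could dualize (i) through the equivalence $D\colon J^\op\modfp\cong J\modfcp$ of Theorem \ref{thm:Dequiv2}, using that $D$ converts $\varprojlim$ of $J_m$-truncations into $\varinjlim$ of $\Hom(J_m,-)$. The direct argument is simpler, though, and I would present that one.
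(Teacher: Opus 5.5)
Your proposal is correct and follows the paper's proof. For (i) the paper also takes a finite projective presentation, applies $J_m\otimes_J-$, uses the Mittag-Leffler condition for the finite-dimensional inverse systems, identifies $\varprojlim_m(J_m\otimes_J P_i)\cong P_i$, and concludes by the five lemma; for (ii) it likewise just invokes local nilpotence. The closedness of the image that you flag as the main obstacle is already handled by that same Mittag-Leffler step, applied to the finite-dimensional kernels of $J_m\otimes_J P_1\twoheadrightarrow K_m$, so no separate pseudo-compactness argument is needed.
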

\begin{proof}
        (i). Let $P_1 \to P_0 \to M \to 0$ be a projective presentation in $J\modfp$ of $M$. Then we have the exact sequence
        \begin{equation}\label{eq:pres1}
            J_m\otimes_J P_1 \to J_m\otimes_J P_0 \to J_m\otimes_J M \to 0.
        \end{equation}
        Since each object appearing in \eqref{eq:pres1} is finite-dimensional, the induced inverse systems satisfy the Mittag-Leffler conditions and we obtain an exact sequence
        \[\varprojlim_m (J_m\otimes_J P_1) \to \varprojlim_m (J_m\otimes_J P_0) \to \varprojlim_m (J_m\otimes_J M) \to 0.\]
        Since the algebra $J$ is complete and separated with respect to the $\fM$-adic topology, we have isomorphisms $\varprojlim_m (J_m \otimes_J P_i) \cong P_i\,(i=0, 1)$. By the five lemma, we obtain the assertion~(i).

        The assertion~(ii) is obvious since $N$ is locally nilpotent.
    \end{proof}
\begin{proposition}\label{prop:DD}
    \begin{enumerate}
        \item[(i)] For each $\fM$-adic complete object $M\in J\modfg$, we have $D'(D(M))\cong M$.
        \item[(ii)] For any object $N\in J^\op\modfcg$, we have $D(D'(N))\cong N$.
    \end{enumerate}
\end{proposition}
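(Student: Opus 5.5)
We first record how $D$ and $D'$ are built from finite-dimensional pieces, and then reduce both isomorphisms to ordinary $\Bbbk$-duality over the finite-dimensional algebras $J_l$. For $M\in J\modfg$, write $M_{(l)}=J_l\otimes_J M$. Then $D(M)=\varinjlim_l (M_{(l)})^\vee$, where $(-)^\vee=\Hom_{\Bbbk}(-,\Bbbk)$. Each $(M_{(l)})^\vee$ is a finite-dimensional $J_l^\op$-module.

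For (i), we compute $D(M)^{(m)}=\Hom_{J^\op}(J_m^\op,D(M))$, the $\fM^m$-torsion part of $D(M)$. Since $\Hom_{J^\op}(J_m^\op,-)$ commutes with filtered colimits (because $J_m^\op$ is finitely presented), we get
\[
D(M)^{(m)}\cong\varinjlim_l \Hom_{J_l^\op}\bigl(J_m^\op,(M_{(l)})^\vee\bigr).
\]
By finite-dimensional duality over $J_l$, this colimit is $\varinjlim_{l}((M_{(l)})_{(m)})^\vee$. For $l\ge m$ we have $(M_{(l)})_{(m)}=M_{(m)}$, so the system is eventually constant and $D(M)^{(m)}\cong (M_{(m)})^\vee$. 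Applying $D'$ gives
\[
D'(D(M))\cong\varprojlim_m (M_{(m)})^{\vee\vee}\cong\varprojlim_m M/\fM^m M.
\]
The $\fM$-adic completeness assumption identifies this limit with $M$. We will check that this identification is the canonical evaluation map, so that it is $J$-linear.

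For (ii), we take $N\in J^\op\modfcg$ and write $N^{(m)}=\Hom_{J^\op}(J_m^\op,N)$. Then $D'(N)=\varprojlim_m (N^{(m)})^\vee$, and we need $D'(N)_{(l)}$. The main obstacle is that $J_l\otimes_J-$ need not commute with inverse limits in general. Here each $(N^{(m)})^\vee$ is finite-dimensional and the transition maps are surjective, being dual to the inclusions $N^{(m)}\subseteq N^{(m+1)}$. Moreover, $J_l\otimes_J X=X/\fM^l X$ and $\fM^l$ is finitely generated as a left ideal, since $I\subseteq\fM^2$ and $\fM$ is generated by the arrows. We then plan to use Mittag-Leffler exactness, as in Proposition~\ref{lem:limisom}, to get
\[
D'(N)_{(l)}\cong\varprojlim_m\bigl((N^{(m)})^\vee\bigr)_{(l)}\cong\varprojlim_m \bigl((N^{(m)})^{(l)}\bigr)^\vee .
\]
For $m\ge l$ we have $(N^{(m)})^{(l)}=N^{(l)}$, so this limit stabilizes to $(N^{(l)})^\vee$. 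Hence $D(D'(N))\cong\varinjlim_l N^{(l)}\cong N$, where the last step holds because $N$ is locally nilpotent (Lemma~\ref{lem:locnilp}).

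Finally, we verify that all identifications are natural in $M$ and $N$, which makes the isomorphisms functorial. Finiteness of the cogeneration of $N$ is used only to know that each $N^{(m)}$ is finite-dimensional, so that double duality applies. This holds because the socle of $N$ has finite length, and hence each $N^{(m)}$ has finite length.
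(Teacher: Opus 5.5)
Your proof is correct and follows the same strategy as the paper. Both reduce each part, through the truncations $J_l$, to ordinary $\Bbbk$-duality over finite-dimensional algebras, using finite presentation of $J_l$ (Lemma~\ref{lem:fd_fp}), completeness of $M$ in (i) and local nilpotence of $N$ in (ii).

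The paper's bookkeeping is shorter because it invokes Remark~\ref{rem:dual}, i.e.\ $D'(X)\cong\Hom_{\Bbbk}(X,\Bbbk)$ for locally nilpotent $X$. Then (i) is just $\bigl(\varinjlim_l (M_{(l)})^\vee\bigr)^\vee\cong\varprojlim_l M_{(l)}\cong M$, and (ii) uses $J_l\otimes_J N^\vee\cong\bigl(\Hom_{J^\op}(J_l^\op,N)\bigr)^\vee$ directly. This spares you computing the torsion parts of $D(M)$ and the Mittag-Leffler interchange of $J_l\otimes_J-$ with $\varprojlim_m$. For that interchange you actually need $\fM^l$ finitely generated as a \emph{right} ideal, not a left one as you wrote; this does hold.
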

\begin{proof}

For  $M \in J\modfg$ and $N \in J^\op \modfcg$, we know that $J_l \otimes_J M$ and $\Hom_{J^{\op}}(J_l^\op,N)$ are finite-dimensional  $J_l$-modules and $J_l^{\op}$-modules, respectively for each $l \in \mathbb{Z}_{>0}$. Set $(-)^\vee \coloneqq \Hom_{\Bbbk }(-, \Bbbk )$.

(i) Let $M \in J\modfg$ be an $\fM$-adic complete object. We have  the following isomorphisms:
    \[
    D'(D(M)) \overset{\text{Rem~\ref{rem:dual}}}{\cong} (\varinjlim_l (J_l \otimes_J M)^\vee)^\vee 
     \cong \varprojlim_l ((J_l \otimes_J M )^\vee)^\vee 
     \cong \varprojlim_l (J_l \otimes_J M) 
    \cong M.
    \]

   (ii) By Lemma~\ref{lem:fd_fp}, we know that the finite-dimensional $J^\op$-module $J_l^\op$ is finitely presented. Thus for  $N\in J^\op\modfcg$, we have the isomorphism $J_l \otimes_J (N^\vee) \cong (\Hom_{J^{\op}}(J_l^\op,N))^\vee$ of finite-dimensional $J_l$-modules. Then we have
    \[
    D(D'(N)) \overset{\text{Rem~\ref{rem:dual}}}{\cong} \varinjlim_l (J_l \otimes_J (N^\vee) )^\vee  
    \cong \varinjlim_l ((\Hom_{J^{\op}}(J_l^\op,N))^\vee)^\vee \cong \varinjlim_l \Hom_{J^{\op}}(J_l^\op,N)
    \overset{\text{Lem \ref{lem:locnilp}}}{\cong} N.\qedhere\]
    \end{proof}
\begin{remark}\label{rem:contdual}
     A $J$-module $M$ is \emph{pseudo-compact} if and only if $M$ has a complete and separated linear topology given by a fundamental system $\{U_\mu\}_{\mu\in I}$ of (open) neighborhoods of $0$ consisting of submodules $U_\mu$ for which $M/U_\mu$ is of finite length. By the result in \cite[Proposition~2.3]{Bru66} and Lemma~\ref{lem:locnilp}, there is a contravariant equivalence between the category $J\Modpct$ of pseudo-compact topological $J$-modules and the category $J^\op\Modln$, where the morphisms in $J\Modpct$ are given by continuous $J$-module homomorphisms with respect to the pseudo-compact topology. For each $\fM$-adic complete object $M\in J\modfg$, one can see $M$ is pseudo-compact, and the pseudo-compact topology on $M$ coincides with the $\fM$-adic one since each $M/\fM^m M$ is of finite length. Note that the $J$-homomorphisms among such objects are always continuous.
    One can see our equivalences in Theorem~\ref{thm:Dequiv1} and Theorem~\ref{thm:Dequiv2} are obtained by restricting the equivalence in \textit{loc.~cit.}
\end{remark}
\begin{remark}
    Since finite-dimensional $J$-modules are compact objects in the module category thanks to Lemma~\ref{lem:fd_fp},
    one can show that the category $J\Modln$ is equivalent to the ind-completion of the category $J\modfd$ of finite-dimensional $J$-modules thanks to Lemma~\ref{lem:locnilp}. In particular, the category of pseudo-compact topological $J^\op$-modules is equivalent to the pro-completion of the category $J^\op\modfd$ via the contravariant equivalence in Remark~\ref{rem:contdual}.
    
    Dually to the compact property of finite-dimensional modules, we have an isomorphism $\varinjlim_m \Hom_J(X/\fM^m X, Y)\cong \Hom_{J}(\varprojlim_m X/\fM^m X, Y)$ for $X\in J\modfg$ and $Y\in J\modfd$ since finite-dimensional $J$-modules are nilpotent.
    As a generalization, if we take a pseudo-compact $J$-module $X\cong \varprojlim_\lambda X_\lambda$ with $X_\lambda\in J\modfd$ and an object $Y\in J\modfd$, then we have an isomorphism $\varinjlim_\lambda\Hom_J(X_\lambda, Y)\cong  \Hom_{J\Modpct}(\varprojlim_\lambda X_\lambda, Y)$.
\end{remark}
\section*{Acknowledgements}
The authors thank Hideto Asashiba and Bernard Leclerc for helpful discussions.
In particular, aspects of \S \ref{sec: application} were motivated by discussions during joint work with Bernard Leclerc. The authors are grateful for his collaboration.
\bibliography{myref}
\end{document}